\documentclass[11pt,letterpaper]{amsart}
\usepackage{amsmath,amssymb,amsfonts}
\usepackage{mathtools}
\usepackage{bbm}
\usepackage{enumerate}
\usepackage{amscd, amssymb, latexsym, amsmath, amscd}
\usepackage[all]{xy}
\usepackage{pb-diagram}
\usepackage{mathrsfs}
\usepackage{pictexwd,dcpic}
\usepackage{hyperref,url}
\usepackage{color}

\usepackage{extarrows}

\usepackage{pb-diagram}
\theoremstyle{plain}

\newcounter{theorem}[section]
\numberwithin{theorem}{section}
\numberwithin{equation}{subsection}

\newtheorem{Thm}[theorem]{Theorem}

\newtheorem{Cor}[theorem]{Corollary}
\newtheorem{Prop}[theorem]{Proposition}
\newtheorem{Lem}[theorem]{Lemma}
\newtheorem{Que}[theorem]{Question}

\theoremstyle{remark}

\newtheorem{Rmk}[theorem]{Remark}

\newcommand{\Hom}{\operatorname{Hom}}
\newcommand{\Ext}{\operatorname{Ext}}
\newcommand{\EP}{\operatorname{EP}}

\newcommand{\GL}{\operatorname{GL}}

\newcommand{\SL}{\operatorname{SL}}


\newcommand{\Ind}{\operatorname{Ind}}

\newcommand{\Jac}{\operatorname{Jac}}

\newcommand{\Irr}{\operatorname{Irr}}

\newcommand{\St}{\operatorname{St}} 
\newcommand{\Speh}{\operatorname{Speh}} 
\newcommand{\omgn}{\omega^\natural} 

\newcommand{\C}{\mathbb C}

\newcommand{\Z}{\mathbb{Z}}
\newcommand{\R}{\mathbb{R}}

\newcommand{\bm}{\begin{multline*}}
\newcommand{\tu}{\end  {multline*}}

\def\calC{\mathcal{C}}

\def\calH{\mathcal{H}}

\def\calM{\mathcal{M}}

\def\calP{\mathcal{P}}

\def\calR{\mathcal{R}}
\def\calS{\mathcal{S}}

\def\ra{\rightarrow}
\def\lra{\longrightarrow}

\setlength{\oddsidemargin}{0.2in}
\setlength{\evensidemargin}{0.2in}
\setlength{\textwidth}{6.1in}

\long\def\zjl#1{{{\color{blue}ZJL: #1}}}

\title
{Godement--Jacquet L-function and homological theta lifting}
\author{Rui Chen, Yufeng Li, Xiaohuan Long, Chenhao Tang, Jialiang Zou} 

\address{School of Mathematical Sciences, Zhejiang University, 866 Yuhangtang Road, Hangzhou 310058, China }
\email{rchenmat@zju.edu.cn}

\address{Beijing International Center for Mathematical Research, Peking University, No.5 Yiheyuan Road, Beijing 100871, China }
\email{2401110016@pku.stu.edu.cn}

\address{School of Mathematical Sciences, University of Chinese Academy of Sciences, 19A Yuquan Road, Beijing 100049, China }
\email{longxiaohuan20@mails.ucas.ac.cn}

\address{School of Mathematics, Nanjing University, 22 Hankou Road, Nanjing 210093, China }
\email{chenhaotang@smail.nju.edu.cn}

\address{Department of Mathematics, University of Michigan, 530 Church St, Ann Arbor, MI 48109, U.S.A. }
\email{jlzou@umich.edu}

\begin{document}

\begin{abstract}
In this paper we investigate the theta lifting of type II dual pairs over a non-Archimedean local field, by combining the homological method of Adams--Prasad--Savin and the analytic method of Fang--Sun--Xue. We have three main results: 1. we determine completely the big theta lift of an irreducible representation when its Godement--Jacquet L-function is holomorphic at a critical point; 2. we compute the big theta lift of all characters, hence determine the space of eigendistributions on matrix spaces for all characters; 3. we show that the Weil representation is projective if and only if the dual pair is almost in the stable range.  
\end{abstract}

\maketitle

\textit{Keywords}: L-function, Theta lifting, Ext spaces\\

\textit{2020 Mathematics subject classification}: 11F70, 22E50

\section{Introduction and main results}

Determining the invariant or coinvariant of a space under an action of a group is a classical problem of fundamental interests, and it appears almost everywhere in representation theory. For example, in the classical invariant theory, one is trying to determine the invariant part of $V^{\otimes p}\otimes \left(V^*\right)^{\otimes q}$ under the diagonal action of $\GL\left(V\right)$, where $V$ is a vector space. As another example, in the local setting of the relative Langlands programme, given a spherical pair $\left(G,H\right)$ and an irreducible representation $\pi$ of $G$, one is interested in determining $H$-invariant linear functionals
$\Hom_H\left(\pi,\C\right)$
on $\pi$.

\vskip 5pt

Here comes our main problem to study in this paper. Let $F$ be a non-Archimedean local field of characteristic zero, and denote by $|\cdot|$ the absolute value of $F$. Let $M_{n,m}$ be the space of $n\times m$ matrices over $F$, and $G_n$ the group of invertible matrices in $M_{n,n}$. Denote by $\calS\left(M_{n,m}\right)$ the space of Schwartz functions (i.e. locally constant and compactly supported functions) over $M_{n,m}$, equipped with the group action of $G_n\times G_m$ given by
\begin{equation}\label{E:DefWeilRep}
    \left(\omega_{n,m}\left(g,h\right)\varphi\right)\left(X\right) = |\det g|^{-\frac{m}{2}}|\det h|^{\frac{n}{2}}\varphi\left(g^{-1}Xh\right),
\end{equation}
where $\left(g,h\right)\in G_n\times G_m$, $\varphi\in \calS\left(M_{n,m}\right)$ and $X\in M_{n,m}$. The twist $|\det g|^{-\frac{m}{2}}|\det h|^{\frac{n}{2}}$ here is to make this representation \textit{unitary} under the inner product of $L^2$-functions. 
Given any irreducible smooth representation $\pi$ of $G_n$, we would like to study the coinvariant of the tensor product of its contragredient $\pi^\vee$ with $\omega_{n,m}$ (or, the maximal $\pi$-isotypic quotient of $\omega_{n,m}$, the so called ``\textit{big theta lift}'')
\[
   \Theta_{n,m}\left(\pi\right) = \left(\omega_{n,m}\otimes\pi^\vee\right)_{G_n},
\]
as a representation of $G_m$. It is known that $\Theta_{n,m}\left(\pi\right)$ is of finite length. When $\pi=\eta$ is a character, the linear dual of $\Theta_{n,m}\left(\pi\right)$ is just the space of $\eta^{-1}$-eigendistributions on $M_{n,m}$. 

\vskip 5pt

The problem stated above is known as the \textit{type II theta lifting}. It has been studied by many people, yet it is not completely understood. The most celebrated and significant result, known as the \textit{Howe duality}, is due to M\'inguez \cite[Thm. 1]{MR2504432} in this case.

\begin{Thm}[Howe duality]\label{T:HoweDuality}
Suppose that $n\leq m$.  

\begin{enumerate}
    \item For any irreducible smooth representation $\pi$ of $G_n$, there is a unique irreducible smooth representation $\sigma$ of $G_m$, such that
    \[
        \Hom_{G_n\times G_m}\left(\omega_{n,m},\pi\boxtimes\sigma\right) \neq 0,
    \]
    and in which case the dimension of the Hom space is $1$.

    \vskip 5pt

    \item If we write $\pi$ as a Langlands quotient
    \[
        \pi = LQ\left(\tau_1\times\tau_2\times\cdots\times \tau_r\right),
    \]
    where each $\tau_i$ is an irreducible essential discrete series representation, then the representation $\sigma$ in (1) is the Langlands quotient
    \[
     \sigma = LQ \left(|\cdot|^{-\frac{m-n-1}{2}}\times\cdots\times |\cdot|^{\frac{m-n-1}{2}}\times \tau_1^\vee\times\cdots\times\tau_r^\vee\right).
    \]
\end{enumerate}
\end{Thm}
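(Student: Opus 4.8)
The plan is to deduce both parts from a structural analysis of the big theta lift, organised as an induction on $n$. By Frobenius reciprocity, using that $\omega_{n,m}\otimes\pi^\vee$ is admissible over $G_n$, one has for every irreducible smooth $\sigma$ of $G_m$
\[
\Hom_{G_n\times G_m}\left(\omega_{n,m},\pi\boxtimes\sigma\right)\;\cong\;\Hom_{G_m}\left(\Theta_{n,m}(\pi),\sigma\right).
\]
Hence part (1) is equivalent to the statement that $\Theta_{n,m}(\pi)$ is nonzero and has an irreducible cosocle occurring in it with multiplicity one; writing $\theta_{n,m}(\pi)$ for that cosocle, part (2) is its explicit identification with the stated $\sigma$. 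So it suffices to prove: \emph{(I)} $\Theta_{n,m}(\pi)\neq 0$; \emph{(II)} the cosocle of $\Theta_{n,m}(\pi)$ is irreducible and multiplicity-free; \emph{(III)} the Langlands-quotient formula. Step \emph{(I)}, together with a lower bound of $1$ on $\dim\Hom_{G_n\times G_m}\left(\omega_{n,m},\pi\boxtimes\sigma\right)$, I would get from explicit nonzero intertwiners: for $m=n$ this is (essentially) the Godement--Jacquet zeta integral pairing $\calS(M_{n,n})\otimes\pi^\vee$ against matrix coefficients of $\pi$, and for $m>n$ a degenerate variant.

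The engine for \emph{(II)} and \emph{(III)} is the orbit filtration of the Weil representation. Decomposing $M_{n,m}$ into $G_n\times P$-orbits for a maximal parabolic $P=MN$ of $G_m$ with Levi $G_k\times G_{m-k}$ — finitely many, indexed by the rank of the relevant block — produces a Kudla-type filtration of the normalised Jacquet module $r_N(\omega_{n,m})$ whose graded pieces are, up to explicit half-integral powers of $|\det|$ fixed by \eqref{E:DefWeilRep}, parabolically induced from smaller Weil representations $\omega_{n-j,\,m-k}$ tensored with Schwartz spaces carrying the residual $G_n$-action. As Jacquet functors commute with $G_n$-coinvariants, this descends to a filtration of $r_N\left(\Theta_{n,m}(\pi)\right)$ whose graded pieces are built from $\Theta_{n-j,\,m-k}\left(\pi_{[j]}\right)$ for $\pi_{[j]}$ running over Jacquet modules of $\pi$. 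Together with the Langlands (equivalently Zelevinsky) classification of $\Irr(G_n)$ — every $\pi$ is the Langlands quotient $LQ(\tau_1\times\cdots\times\tau_r)$ of a standard module built from essentially square-integrable pieces — this sets up the induction. The base cases are handled directly: the big theta lift of a cuspidal representation and, computed along the same lines, of an arbitrary character.

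For a character $\eta$ of $G_n$ one computes $\Theta_{n,m}(\eta)=\left(\calS(M_{n,m})\otimes\eta^{-1}\right)_{G_n}$ from the rank stratification of $M_{n,m}$ under $G_n$: for all but finitely many unramified twists $\eta$ the lower-rank strata have vanishing $G_n$-coinvariants — their stabilisers are of mirabolic type and act through nontrivial characters of $F^\times$ — so $\Theta_{n,m}(\eta)$ is the degenerate principal series induced from the parabolic with Levi $G_{m-n}\times G_n$ from a twist of $\mathbf 1_{G_{m-n}}\boxtimes\eta^\vee$, whose unique irreducible quotient is $LQ\left(|\cdot|^{-\frac{m-n-1}{2}}\times\cdots\times|\cdot|^{\frac{m-n-1}{2}}\times\eta^\vee\right)$; the remaining $\eta$ are treated by bringing in the lower strata and checking the cosocle is unchanged. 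For general $\pi=LQ(\tau_1\times\cdots\times\tau_r)$, with the $\tau_i$ in decreasing exponent, peel off $\tau_1$ by parabolic induction, use the Kudla filtration to relate $\Theta_{n,m}(\pi)$ to $\tau_1'\times\Theta_{n-n_1,\,m-n_1}\left(LQ(\tau_2\times\cdots\times\tau_r)\right)$ plus lower terms (here $n_1$ is the size of $\tau_1$), and apply the inductive hypothesis: since $n$ and $m$ both drop by $n_1$, the ``extra block'' $|\cdot|^{-\frac{m-n-1}{2}}\times\cdots\times|\cdot|^{\frac{m-n-1}{2}}$ of length $m-n$ is reproduced unchanged, and the behaviour of Langlands quotients under induction gives \emph{(III)} (in particular $\theta_{n,n}(\pi)=\pi^\vee$ when $m=n$, the empty-block case). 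Injectivity of $\pi\mapsto\theta_{n,m}(\pi)$ is then read off from cuspidal supports.

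I expect \emph{(II)} to be the main obstacle. The filtration of $\Theta_{n,m}(\pi)$ has several graded pieces, each of which could a priori contribute irreducible quotients; one must show exactly one survives in the cosocle, with multiplicity one, and that the competitors are ``covered'' deeper in the filtration — this is the combinatorially delicate core of the argument, and it requires careful tracking of cuspidal supports and exponents inside the Zelevinsky/Langlands classification, the multiplicity-one part resting on linear-independence properties of the relevant Jacquet modules. A secondary technical burden is making the Kudla filtration and its $|\det|$-normalisations completely precise at the strata where the rank drops, which demands a careful $G_n\times P$-orbit analysis of $M_{n,m}$. As an alternative to this geometric route one could run the same induction homologically, in the style of Adams--Prasad--Savin and of the present paper, computing the Euler--Poincar\'e pairing $\EP_{G_n}(\omega_{n,m},\pi)$ and the higher $\Ext$-groups; but one must beware that $\omega_{n,m}$ is in general \emph{not} projective over $G_n$ — as the paper's third main result shows — so the higher $\Ext$-terms do not vanish for free and must be controlled.
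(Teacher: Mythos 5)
This statement is not proved in the paper at all: it is quoted as Mínguez's theorem (\cite[Thm.~1]{MR2504432}), so the relevant benchmark is his proof, which indeed runs on the toolkit you name — Kudla's filtration, the rank stratification, and induction through the Zelevinsky/Langlands classification. Your proposal reproduces the scaffolding of that argument but not its content. The decisive assertion of the theorem is exactly your step \emph{(II)}: that the cosocle of $\Theta_{n,m}(\pi)$ is irreducible and occurs with multiplicity one, equivalently the uniqueness and multiplicity-one statements of part (1). You explicitly defer this as ``the main obstacle'' and ``the combinatorially delicate core,'' with no argument for why exactly one constituent survives in the cosocle or why the lower graded pieces of the Kudla filtration cannot contribute a second copy of the same quotient. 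Since that is the Howe duality statement itself, the proposal is a plan rather than a proof: everything that is actually carried out (the adjunction $\Hom_{G_n\times G_m}(\omega_{n,m},\pi\boxtimes\sigma)\simeq\Hom_{G_m}(\Theta_{n,m}(\pi),\sigma)$, nonvanishing via zeta integrals, the character computation) is either routine or only establishes existence, not uniqueness or multiplicity one.

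Two further points would need repair even as a plan. First, in the inductive step you pass from $\pi=LQ(\tau_1\times\cdots\times\tau_r)$ to the standard module and back; but $\Theta_{n,m}$ is not an exact functor, and $\Hom(\omega_{n,m},-\boxtimes\sigma)$ applied to the surjection from the standard module onto $\pi$ gives no surjectivity in the direction you need, so ``peel off $\tau_1$ and apply the inductive hypothesis'' does not by itself control $\Theta_{n,m}(\pi)$ — this is precisely where Mínguez (and, in the Ext-setting, the present paper's use of Corollary \ref{C:KudlaIndCompatible} under an L-function hypothesis) must do careful work, and where the hypothesis-free claim of part (2) requires the full multiplicity-one machinery. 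Second, your assertion that for characters the lower-rank strata die except for finitely many twists is consistent with the paper's Lemma \ref{L:ExtGradedPiecesRankFil}, but note that in the exceptional cases the paper shows $\Theta_{n,m}(\eta)$ is a reducible degenerate principal series (Theorem \ref{T:Main-2}), so ``checking the cosocle is unchanged'' there is again a nontrivial socle/cosocle analysis of the kind you have postponed, not a routine verification.
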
 
\vskip 5pt

We need to explain our notations a bit. 

\begin{itemize}
    \item For any two smooth representations $\pi_1$ and $\pi_2$ of $G_{k_1}$ and $G_{k_2}$, we use $\pi_1\times \pi_2$ to denote the \textit{normalized} parabolic induction of $\pi_1\boxtimes\pi_2$ to $G_{k_1+k_2}$ with respect to the \textit{standard} parabolic subgroup $P_{k_1,k_2}$ whose Levi component is $G_{k_1}\times G_{k_2}$. Here ``standard'' means it contains the subgroup of upper triangular matrices. 

    \vskip 5pt

    \item For any irreducible smooth representation $\tau$ of $G_k$, there is a unique real number $s\in\R$ such that the central character of $\tau|\det|^{-s}$ is unitary. We set $e\left(\tau\right)=s$. By the \textit{Langlands classification} \cite{MR507262}, if $\tau_1,\tau_2,\cdots,\tau_r$ are irreducible essential discrete series representations such that
    \[
        e\left(\tau_1\right) \geq e\left(\tau_2\right)\geq \cdots \geq e\left(\tau_r\right),
    \]
    then the parabolic induction $\tau_1\times\tau_2\times\cdots\times\tau_r$ has a unique irreducible quotient, which is called the \textit{Langlands quotient} and will be denoted by $LQ\left(\tau_1\times\tau_2\times\cdots\times\tau_r\right)$. When the inequality does not hold, we extend the definition by setting
    \[
        LQ\left(\tau_1\times\tau_2\times\cdots\times\tau_r\right) := LQ\left(\tau'_1\times\tau'_2\times\cdots\times\tau'_r\right),
    \]
    where $\left\{\tau'_1,\cdots,\tau'_r\right\}$ is any rearrangement of $\left\{\tau_1,\cdots,\tau_r\right\}$ such that $e\left(\tau'_1\right)\geq\cdots\geq e\left(\tau'_r\right)$.
\end{itemize}
\vskip 5pt

The result of M\'inguez explicitly and completely determines the co-socle of $\Theta_{n,m}\left(\pi\right)$ (namely, the \textit{small theta lift}, which is usually denoted by $\theta_{n,m}\left(\pi\right)$). However, it does not determine $\Theta_{n,m}\left(\pi\right)$ itself, which is more natural, more complicated, and in applications one often needs to deal with it. To study the structure of $\Theta_{n,m}\left(\pi\right)$, there are two approaches by Adams--Prasad--Savin \cite{MR3753906} and Fang--Sun--Xue \cite{FSX}, which we shall describe now. 

\vskip 5pt

\subsection{Homological approach \`a la Adams--Prasad--Savin} 
\label{sub:homological_approach}

From the definition of the big theta lift $\Theta_{n,m}\left(\pi\right)$, it is easy to see that
\[
    \Theta_{n,m}\left(\pi\right)^\vee \simeq \Hom_{G_n}\left(\omega_{n,m},\pi\right)_{sm},
\]
where the subscript ``$sm$'' means taking $G_m$-smooth vectors. The idea of Adams--Prasad--Savin is to apply techniques in homological algebra to the problem: we should consider not only the Hom space, but also its derived functors $\Ext^i_{G_n}\left(\omega_{n,m},\pi\right)_{sm}$ and the Euler--Poincar\'e characteristic
\[
    \EP_{G_n}\left(\omega_{n,m},\pi\right)_{sm} = \sum_i (-1)^i \Ext^i_{G_n}\left(\omega_{n,m},\pi\right)_{sm},
\]
in the level of the Grothendieck group. Since all ``boundaries'' disappear when taking the alternating sum, it is not surprising that the Euler--Poincar\'e characteristic has a very fruitful formula \cite[Thm. 1.3]{MR3753906}.

\begin{Thm}\label{T:APS-EP-fomula}
Let $\pi$ be an irreducible smooth representation of $G_n$. Then
\[
    \EP_{G_n}\left(\omega_{n,m},\pi\right)_{sm} = \begin{cases}
        \pi\times \mathbbm{1}_{m-n} \quad &\textit{if }n\leq m;\\[10pt]

        0 \quad &\textit{otherwise}.
    \end{cases}
\]
Here $\mathbbm{1}_{m-n}$ stands for the trivial representation of $G_{m-n}$.
\end{Thm}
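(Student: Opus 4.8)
The plan is to peel off $\omega_{n,m}=\calS(M_{n,m})$ stratum by stratum along the $G_n\times G_m$-orbit decomposition of the matrix space, to treat each stratum by a Shapiro-type reduction for Euler--Poincar\'e characteristics, and to conclude with a vanishing statement over general linear Levi subgroups. For $0\le r\le\min(n,m)$ put $\calO_r=\{X\in M_{n,m}:\operatorname{rank}X=r\}$. The loci $\{\operatorname{rank}X>r\}$ are open and $G_n\times G_m$-stable, so they furnish a finite decreasing filtration of $\omega_{n,m}$ by subrepresentations whose successive quotients are the $\calS(\calO_r)$. Since Euler--Poincar\'e characteristics are additive along short exact sequences, it suffices to compute each $\EP_{G_n}(\calS(\calO_r),\pi)_{sm}$; that each $\Ext^i_{G_n}(\calS(\calO_r),\pi)_{sm}$ is a finite-length $G_m$-representation, nonzero for only finitely many $i$, will be visible from the reduction below.

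Each $\calO_r$ is a single $G_n\times G_m$-orbit, $\calO_r\cong(G_n\times G_m)/H_r$, where $H_r$ is the stabiliser of the standard rank-$r$ matrix: the fibre product of the parabolic $P_{r,n-r}\subseteq G_n$ with the lower-triangular parabolic $\overline P_{r,m-r}\subseteq G_m$ over the common $G_r$-factor of their Levi subgroups. Hence $\calS(\calO_r)\cong\ind_{H_r}^{G_n\times G_m}(\chi_r)$ for a character $\chi_r$ of $H_r$ assembled from the twist in \eqref{E:DefWeilRep} and the modulus character of the orbit. Restricting to $G_n$ and running Mackey theory, the $G_n$-orbits on $(G_n\times G_m)/H_r$ form the single $G_m$-homogeneous space $G_m/\overline P_{r,m-r}$ (a Grassmannian), with $G_n$-stabiliser $R_{n,r}:=N_{P_{r,n-r}}\rtimes G_{n-r}$ of the base point --- the kernel of $P_{r,n-r}\twoheadrightarrow G_r$. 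Shapiro's lemma for compactly supported induction from a closed subgroup (valid up to a modulus twist, the homogeneous spaces in play carrying invariant measures), together with the bookkeeping of the residual $G_r=P_{r,n-r}/R_{n,r}$-action, then gives in the Grothendieck group of finite-length $G_m$-representations
\[
\EP_{G_n}\bigl(\calS(\calO_r),\pi\bigr)_{sm}\;=\;\mathbbm{1}_{m-r}\times\EP_{R_{n,r}}\bigl(\chi_r|_{R_{n,r}},\ \pi|_{R_{n,r}}\otimes\delta_r\bigr),
\]
where the inner Euler--Poincar\'e characteristic is regarded as a virtual $G_r$-representation via the residual action, $\delta_r$ is an explicit twist, and $\mathbbm{1}_{m-r}\times(-)$ denotes normalised parabolic induction to $G_m$.

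To evaluate the inner term, observe that $\chi_r|_{R_{n,r}}$ is trivial on $N_{P_{r,n-r}}$, and that the smooth group cohomology $H^*(N_{P_{r,n-r}},-)$ of the $F$-points of a unipotent group is concentrated in degree $0$ and computes the $N_{P_{r,n-r}}$-invariants (iterating Hochschild--Serre over the abelian subquotients $\simeq\mathbb G_a$, using that $F$ has no nonzero smooth homomorphism into $(\C,+)$). The Hochschild--Serre spectral sequence for $N_{P_{r,n-r}}\lhd R_{n,r}$ thus reduces the inner term to $\EP_{G_{n-r}}\bigl(\chi'_r,\ \pi^{N_{P_{r,n-r}}}\bigr)$, where $\chi'_r$ is the relevant restriction of $\chi_r\otimes\delta_r$ to $G_{n-r}$, with the residual $G_r$-action inherited from the $G_r\times G_{n-r}$-structure of $\pi^{N_{P_{r,n-r}}}$. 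By Casselman's theory $\pi^{N_{P_{r,n-r}}}$ has finite length over $G_r\times G_{n-r}$; writing its composition factors as $\rho_\alpha\boxtimes\tau_\alpha$ one gets $\EP_{G_{n-r}}(\chi'_r,\pi^{N_{P_{r,n-r}}})=\sum_\alpha[\rho_\alpha]\cdot\EP_{G_{n-r}}(\chi'_r,\tau_\alpha)$ in the Grothendieck group of $G_r$. The decisive input is that the Euler--Poincar\'e pairing on $\GL_k(F)$ vanishes identically on finite-length representations for every $k\ge1$: every Bernstein component of $\GL_k(F)$ is a category of modules over the coordinate ring of a \emph{smooth} affine variety of positive dimension (the unramified-twist directions always contribute a $\GL_1$), and the Euler characteristic of $\Ext$ between finite-length coherent sheaves on such a variety vanishes. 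Applying this with $k=n-r$ kills every stratum with $r<n$: in particular \emph{all} strata when $n>m$, and all but the top stratum when $n\le m$. For $n\le m$ the surviving stratum is $r=n$, where $R_{n,n}=\{1\}$, so the inner term is simply $[\pi]$ viewed as a $G_n$-representation through the residual $G_r=G_n$-action; feeding this back through the induction and the twist $\delta_n$ yields $\mathbbm{1}_{m-n}\times\pi=\pi\times\mathbbm{1}_{m-n}$, while for $n>m$ we get $0$.

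The main obstacle is twofold. On the structural side one must set up rigorously, in the generality needed, Shapiro's lemma with modulus twist for closed subgroups, the sheaf-theoretic Mackey decomposition over the Grassmannian, the degree-$0$ concentration of smooth unipotent cohomology, and the finiteness statements legitimising the Grothendieck-group manipulations; these follow the pattern of Prasad's work on $\Ext$-analogues of branching laws and of Casselman's Jacquet-module theory, and are essentially formal once correctly organised. On the computational side --- and here the real work lies --- one must track every normalising character that enters: the $|\det|^{\mp\bullet/2}$ twist in \eqref{E:DefWeilRep}, the modulus characters $\delta_{H_r}$, $\delta_{P_{r,n-r}}$, $\delta_{\overline P_{r,m-r}}$, and the half-power shifts built into normalised induction, so that the surviving term lands exactly on $\pi\times\mathbbm{1}_{m-n}$ with no residual twist. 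I expect this final piece of exponent bookkeeping --- which is precisely what the normalisation in \eqref{E:DefWeilRep} is designed to make come out trivial --- to be the genuine difficulty; the remainder is an assembly of standard homological machinery.
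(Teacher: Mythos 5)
Your skeleton is essentially the one used by Adams--Prasad--Savin, whose proof this paper cites rather than reproves: filter $\calS(M_{n,m})$ by rank, use additivity of $\EP$ along the filtration, kill every stratum of rank $r<n$ by the vanishing of the Euler--Poincar\'e pairing on $\GL_{n-r}(F)$ for finite-length representations when $n-r\ge 1$ (correct, though your ``coherent sheaves on a smooth affine variety'' justification should be replaced by something cleaner, e.g.\ the $\Z$-quotient $v(\det)$: kernel and cokernel of an endomorphism of a finite-dimensional space have equal dimension), and compute the open stratum, present only when $n\le m$, from the projectivity of $\calS(G_n)$ as a $G_n$-module. Those two decisive inputs are sound and do yield $\pi\times\mathbbm{1}_{m-n}$ resp.\ $0$. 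The problem is the middle reduction from $\EP_{G_n}(\calS(\calO_r),\pi)_{sm}$ to an Euler characteristic over the Levi, which as written rests on two invalid steps.

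First, there is no ``Shapiro lemma with modulus twist'' computing $\Ext^*_{G_n}\bigl(\ind_{R_{n,r}}^{G_n}(-),\pi\bigr)$ in terms of the naive restriction $\pi|_{R_{n,r}}$: the subgroup $R_{n,r}=N_{P_{r,n-r}}\rtimes G_{n-r}$ is closed but not open, compact induction from it is not left adjoint to restriction, and already at the level of $\Hom$ the correct adjunction is Bernstein's second adjointness, which replaces restriction by the Jacquet module along the \emph{opposite} parabolic. Second, and more seriously, your claim that smooth cohomology of the $F$-points of a unipotent group is concentrated in degree $0$ for arbitrary smooth coefficients is false: for $N=F$ with the translation action, the exact sequence $0\to V_0\to\calS(F)\xrightarrow{\ \int\ }\C\to 0$ together with $\calS(F)^{F}=0$ forces $H^1(F,V_0)\neq 0$; the absence of smooth homomorphisms $F\to(\C,+)$ only gives $H^1(N,\C)=0$ for trivial coefficients. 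What is true, and what your argument actually needs, is the exactness of $N$-\emph{coinvariants} (the Jacquet functor) plus second adjointness. The repair is exactly the packaging this paper provides: write the stratum as the parabolic induction $\tau_r\simeq\Ind_{P_r\times\overline Q_r}^{G_n\times H_m}\bigl(\omega^\natural_r\otimes\xi_r\bigr)$ (Lemma \ref{L:rank-filtration}), apply the Ext-version of second adjointness (Lemma \ref{L:IndJacAdj}(2)) and the K\"unneth formula (Theorem \ref{T:Kunneth}); this produces Euler characteristics against $\Jac_{\overline P_r}\pi$ over $G_r\times G_{n-r}$, and your $\GL_{n-r}$-vanishing then kills the stratum exactly as intended, while the $r=n$ piece gives $\Ind_{\overline Q_n}^{H_m}(\pi\boxtimes\mathbbm{1}_{m-n})$ up to the twist $\xi_n$, whose class is $\pi\times\mathbbm{1}_{m-n}$. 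With that replacement (and the exponent bookkeeping you explicitly defer, which is routine once the normalized inductions are in place) the proof closes and coincides with the cited one.
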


\vskip 5pt

With this result at hand, one can determine $\Theta_{n,m}\left(\pi\right)$ (at least in the Grothendieck group level) as long as one knows each $\Ext^i_{G_n}\left(\omega_{n,m},\pi\right)_{sm}$ for all $i>0$. In the case that $n\leq m$, it is also expected by Adams--Prasad--Savin \cite[Que. 6.3]{MR3753906} that 
\[
    \Ext^i_{G_n}\left(\omega_{n,m},\pi\right)_{sm} = 0
\]
for all $i>0$ if $\pi$ is tempered. So in this special case, one expects that $\Theta_{n,m}\left(\pi\right)\simeq \pi^\vee\times \mathbbm{1}_{m-n}$ is irreducible.

\vskip 5pt


\subsection{Analytic approach \`a la Fang--Sun--Xue} 
\label{sub:analytic_approach_}

The idea of Fang--Sun--Xue is based on the deep connection between the \textit{Godement--Jacquet L-function} and the \textit{equal rank} type II theta lifting (i.e. $n=m$ case). Recall that for a given $\varphi\in\calS\left(M_{n,n}\right)$ and a matrix coefficient $f$ of $\pi$, one can define the \textit{zeta integral}
\[
    Z\left(s,\varphi,f\right) = \int_{G_n}\varphi\left(g\right) f\left(g\right)|\det g|^{s+\frac{n-1}{2}}\, dg,
\]
which is absolutely convergent when ${\rm Re}\left(s\right)$ is sufficiently large, and admits a meromorphic continuation to the whole complex plane. Let $\varphi$ and $f$ vary. The Godement--Jacquet L-function by definition is the \textit{greatest common divisor} of all these zeta integrals. Hence one obtains a non-zero element
\[
    \frac{Z\left(s,-,-\right)}{L\left(s,\pi\right)}\in\Hom_{G_n\times G_n}\left(\omega_{n,n},\pi|\det|^{s-\frac{1}{2}}\boxtimes\pi^\vee|\det|^{-s+\frac{1}{2}}\right).
\]
By the multiplicity one result of M\'inguez (see Theorem \ref{T:HoweDuality}(1)), this is the only non-zero element up to scalar in the above Hom space.

\vskip 5pt

Now note that there is a natural open piece of $M_{n,n}$, namely the set of invertible matrices $G_n$. When ${\rm supp}\left(\varphi\right)\subset G_n$, the zeta integral is always absolutely convergent no matter how large ${\rm Re}\left(s\right)$ is. Consider the map
\[
    \Hom\left(\omega_{n,n},\pi|\det|^{s-\frac{1}{2}}\boxtimes\pi^\vee|\det|^{-s+\frac{1}{2}}\right) \lra \Hom\left(\omega^0_{n,n},\pi|\det|^{s-\frac{1}{2}}\boxtimes\pi^\vee|\det|^{-s+\frac{1}{2}}\right)
\]
induced by the natural embedding $G_n\hookrightarrow M_{n,n}$. Here $\omega^0_{n,n}$ is the subrepresentation of $\omega_{n,n}$ with the underlying vector space $\calS\left(G_n\right)$. Combining with the multiplicity one result, one can see immediately that if the L-function $L\left(s,\pi\right)$ is holomorphic at $s=1/2$, then the above map is non-zero. In this case, the boundary piece $M_{n,n}\backslash G_n$ does not contribute to $\Theta_{n,n}\left(\pi\right)$. This observation leads us to the following result \cite[Thm. 1.3]{FSX}.

\begin{Thm}
Let $\pi$ be an irreducible smooth representation of $G_n$. Suppose that at least one of $L\left(s,\pi\right)$ and $L\left(s,\pi^\vee\right)$ is holomorphic at $s=1/2$. Then $\Theta_{n,n}\left(\pi\right)\simeq \pi^\vee$ is irreducible.
\end{Thm}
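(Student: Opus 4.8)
\emph{Proof sketch.} The plan is to isolate the open piece $G_n\subset M_{n,n}$ and show the singular locus does not contribute. Consider the short exact sequence of $G_n\times G_n$-representations
\[
0\longrightarrow \omega^0_{n,n}\longrightarrow \omega_{n,n}\longrightarrow \omega^{\partial}_{n,n}\longrightarrow 0,
\]
where $\omega^0_{n,n}$ is the subrepresentation on $\calS(G_n)$ and $\omega^{\partial}_{n,n}$ is supported on $M_{n,n}\setminus G_n$. Since $G_n\times G_n$ acts transitively on $G_n$ by $(g,h)\cdot X=gXh^{-1}$ with stabiliser the diagonal $\Delta G_n$, on which the twist in \eqref{E:DefWeilRep} restricts to the trivial character (because $m=n$), a direct computation with compact induction from $\Delta G_n$ gives
\[
\Theta^0_{n,n}(\pi)\ :=\ \bigl(\omega^0_{n,n}\otimes\pi^\vee\bigr)_{G_n}\ \simeq\ \pi^\vee .
\]
Applying the right-exact functor $(-\otimes\pi^\vee)_{G_n}$ to the short exact sequence yields a right-exact sequence
\[
\pi^\vee\ \simeq\ \Theta^0_{n,n}(\pi)\ \xrightarrow{\ \alpha\ }\ \Theta_{n,n}(\pi)\ \longrightarrow\ \bigl(\omega^{\partial}_{n,n}\otimes\pi^\vee\bigr)_{G_n}\ \longrightarrow\ 0 ,
\]
so it suffices to prove that $\alpha$ is injective and that the boundary term vanishes.

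By Howe duality (Theorem~\ref{T:HoweDuality}, with $m=n$, so that the factor $|\cdot|^{-\frac{m-n-1}{2}}\times\cdots\times|\cdot|^{\frac{m-n-1}{2}}$ is empty and $\sigma=\pi^\vee$), together with the natural isomorphism $\Hom_{G_n\times G_n}(\omega_{n,n},\pi\boxtimes\sigma)\simeq\Hom_{G_n}(\Theta_{n,n}(\pi),\sigma)$, the representation $\Theta_{n,n}(\pi)$ has $\pi^\vee$ as its \emph{unique} irreducible quotient and $\Hom_{G_n}(\Theta_{n,n}(\pi),\pi^\vee)$ is one-dimensional; fix a generator $q\colon\Theta_{n,n}(\pi)\twoheadrightarrow\pi^\vee$. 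By naturality of the above isomorphism in the first variable, $q$ corresponds to a nonzero element $\Phi\in\Hom_{G_n\times G_n}(\omega_{n,n},\pi\boxtimes\pi^\vee)$, and $q\circ\alpha$ corresponds to the restriction $\Phi|_{\omega^0_{n,n}}$. Thus everything comes down to showing $\Phi|_{\omega^0_{n,n}}\neq 0$.

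This is exactly where the hypothesis on the Godement--Jacquet $L$-function enters. Since $\Hom_{G_n\times G_n}(\omega_{n,n},\pi\boxtimes\pi^\vee)$ is one-dimensional, we may take $\Phi$ to be the value at $s=\tfrac12$ of the canonical intertwiner $Z(s,-,-)/L(s,\pi)$; this value is nonzero because the functionals $Z(s,\varphi,f)/L(s,\pi)$ have no common zero (they generate the unit ideal). For $\varphi\in\calS(G_n)$ the integral $Z(s,\varphi,f)=\int_{G_n}\varphi(g)f(g)|\det g|^{s+\frac{n-1}{2}}\,dg$ is entire in $s$, so $\Phi(\varphi,f)=Z(\tfrac12,\varphi,f)/L(\tfrac12,\pi)$. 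Now $L(s,\pi)$ is holomorphic at $s=\tfrac12$ (the hypothesis) and an Euler factor never vanishes, so $L(\tfrac12,\pi)$ is finite and nonzero; had $L(s,\pi)$ a pole at $s=\tfrac12$ the factor $1/L(\tfrac12,\pi)$ would be zero and $\Phi$ would vanish identically on $\calS(G_n)$. Since $L(\tfrac12,\pi)$ is finite, it remains to note that $\int_{G_n}\varphi(g)f(g)|\det g|^{n/2}\,dg$ is manifestly nonzero for a suitable matrix coefficient $f\neq 0$ of $\pi$ and a small bump function $\varphi$ supported where $f$ does not vanish. Hence $\Phi|_{\omega^0_{n,n}}\neq 0$, i.e.\ $q\circ\alpha\neq 0$. (If instead $L(s,\pi^\vee)$ is holomorphic at $s=\tfrac12$, one runs the same argument for $\pi^\vee$ and transports the conclusion back via the Fourier transform on $\calS(M_{n,n})$, i.e.\ the Godement--Jacquet functional equation.)

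To conclude: $q\circ\alpha$ is a nonzero endomorphism of the irreducible $\pi^\vee$, hence an isomorphism by Schur's lemma. In particular $\alpha$ is injective, and $q$ restricts to an isomorphism on $\alpha(\Theta^0_{n,n}(\pi))$, so $\Theta_{n,n}(\pi)=\alpha(\Theta^0_{n,n}(\pi))\oplus\ker q\simeq\pi^\vee\oplus\ker q$. If $\ker q\neq 0$, then --- being a submodule of the finite-length module $\Theta_{n,n}(\pi)$ --- it has an irreducible quotient $\tau$, and the composite $\Theta_{n,n}(\pi)\twoheadrightarrow\ker q\twoheadrightarrow\tau$ is a nonzero map vanishing on $\alpha(\Theta^0_{n,n}(\pi))$; by uniqueness of the irreducible quotient $\tau\simeq\pi^\vee$, and this map is linearly independent from $q$, contradicting $\dim\Hom_{G_n}(\Theta_{n,n}(\pi),\pi^\vee)=1$. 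Hence $\ker q=0$, the boundary does not contribute, and $\Theta_{n,n}(\pi)\simeq\pi^\vee$. The genuinely hard input is the third paragraph --- knowing that the meromorphically continued Godement--Jacquet intertwiner at the critical point $s=\tfrac12$ restricts to a \emph{nonzero} functional on $\calS(G_n)$, and matching this restriction with the cosocle map --- which is the Fang--Sun--Xue dichotomy ``the open orbit contributes iff $L(s,\pi)$ is regular at $s=\tfrac12$''; the rest (the modulus-character bookkeeping for $\Theta^0_{n,n}(\pi)\simeq\pi^\vee$ and the finite-length manipulations) is routine.
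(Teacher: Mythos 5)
Your proof is correct, and it reaches the conclusion by a genuinely different route from the paper's (the statement itself is quoted from Fang--Sun--Xue; the in-paper counterpart is the $n=m$ case of Theorem \ref{T:Main-1}, proved in Section \ref{sec:proof_of_the_main_theorem_i}). You share with the paper the analytic heart: the normalized Godement--Jacquet intertwiner $Z(s,-,-)/L(s,\pi)$ at $s=\tfrac12$ spans the one-dimensional space $\Hom_{G_n\times G_n}\left(\omega_{n,n},\pi\boxtimes\pi^\vee\right)$, and holomorphy of $L(s,\pi)$ at $\tfrac12$ forces its restriction to $\calS\left(G_n\right)$ to be nonzero, with the case where only $L(s,\pi^\vee)$ is holomorphic handled by the Fourier/MVW symmetry exactly as in Lemma \ref{L:MVW4rier} and Corollary \ref{C:ExtvsMVW}; this is the content of Proposition \ref{P:Holo-L-vanish-boundary} specialised to $n=m$. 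Where you diverge is the passage from ``the open cell contributes'' to $\Theta_{n,n}\left(\pi\right)\simeq\pi^\vee$: the paper does this homologically, first proving Ext-vanishing from holomorphy via Kudla's filtration (Proposition \ref{P:Holo-L-vanish-Ext}) and then invoking the Adams--Prasad--Savin Euler--Poincar\'e formula (Theorem \ref{T:APS-EP-fomula}) to see that the injective restriction map $\Hom_{G_n}\left(\omega_{n,n},\pi\right)_{sm}\to\Hom_{G_n}\left(\omega^0_{n,n},\pi\right)_{sm}\simeq\pi$ is an isomorphism; you instead work covariantly, compute the open-orbit coinvariants $\left(\omega^0_{n,n}\otimes\pi^\vee\right)_{G_n}\simeq\pi^\vee$ directly (your modulus bookkeeping is right: the twist in (\ref{E:DefWeilRep}) is trivial on the diagonal stabiliser precisely because $m=n$), then split $\Theta_{n,n}\left(\pi\right)=\alpha\left(\Theta^0_{n,n}\left(\pi\right)\right)\oplus\ker q$ via Schur's lemma and kill $\ker q$ by a second application of Howe duality multiplicity one (using finite length of $\Theta_{n,n}\left(\pi\right)$ to produce an irreducible quotient of $\ker q$). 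Your route is more elementary and self-contained for the equal-rank case, avoiding Ext groups and the EP formula altogether; the price is that it yields no Ext-vanishing statement, and the splitting trick does not transfer to $n<m$, where the open-cell contribution $\mathbbm{1}_{m-n}\times\pi^\vee$ is generally reducible and the paper's homological input (together with the socle-irreducibility argument inside Proposition \ref{P:Holo-L-vanish-boundary}) becomes essential.
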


\vskip 5pt

Here the L-function $L\left(s,\pi^\vee\right)$ is also involved because there is an extra symmetry of the Weil representation $\omega_{n,m}$ given by the Fourier transformation (or \textit{MVW-involution})
\[
    \omega_{n,m}^{MVW}\simeq \omega_{n,m},
\]
see Lemma \ref{L:MVW4rier}. Readers may also consult \cite[Lem. 4.3]{MR3930015} for a detailed explication.

\vskip 5pt


\subsection{Main result I: combination of two approaches} 
\label{sub:combination_of_two_approaches}

To summarize, in the equal rank case (i.e. $n=m$), we have two implications:
\[
\xymatrix{
\fbox{$\begin{array}{c}
    \Ext_{G_n}^i\left(\omega_{n,n},\pi\right)_{sm}=0\\
    \textit{for all }i>0
\end{array}$} \ar@{=>}_{\textit{A-P-S}}[rd] \ar@{--}[r]
&\textit{relation?} \ar@{--}[r]
& \fbox{$\begin{array}{c}
    L\left(s,\pi\right) \textit{ or } L\left(s,\pi^\vee\right)\\
    \textit{holomorphic at } s=1/2
\end{array}$}\ar@{=>}^{\textit{F-S-X}}[ld]\\
&\fbox{$\begin{array}{c}
    \Theta_{n,n}\left(\pi\right)\simeq \pi^\vee\\
    \textit{is irreducible}
\end{array}$}}
\]
It is natural to ask that whether there is any relation between the Ext-vanishing and the holomorphicity of the L-function. Moreover, one would like to go beyond the equal rank case. Our first main result answers this question.

\begin{Thm}\label{T:Main-1}
Suppose that $n\leq m$. Let $\pi$ be an irreducible smooth representation of $G_n$. If $L\left(s,\pi\right)$ or $L\left(s,\pi^\vee\right)$ is holomorphic at $s=\frac{1+m-n}{2}$, then we have
\[
    \Ext_{G_n}^i\left(\omega_{n,m},\pi\right)_{sm} = 0
\]
for all $i>0$. Moreover, we have 
\[
    \Theta_{n,m}\left(\pi\right) \simeq \begin{cases}
       \mathbbm{1}_{m-n}\times \pi^\vee \quad & \textit{if $L\left(s,\pi\right)$ is holomorphic at $s=\frac{1+m-n}{2}$};\\[10pt]

        \pi^\vee\times \mathbbm{1}_{m-n} \quad & \textit{if $L\left(s,\pi^\vee\right)$ is holomorphic at $s=\frac{1+m-n}{2}$}
    \end{cases}
\]
as finite length smooth representations. In particular, if both $L\left(s,\pi\right)$ and $L\left(s,\pi^\vee\right)$ are holomorphic at $s=\frac{1+m-n}{2}$, then $\Theta_{n,m}\left(\pi\right)$ is irreducible.
\end{Thm}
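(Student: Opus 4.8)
The plan is to combine an orbit stratification of $M_{n,m}$ with the two inputs recalled above. First, by the Fourier transform (MVW) isomorphism $\omega_{n,m}^{\mathrm{MVW}}\simeq\omega_{n,m}$ of Lemma \ref{L:MVW4rier}, precomposing with the automorphism $g\mapsto{}^{t}g^{-1}$ of $G_n$ (under which an irreducible representation becomes its contragredient) and $h\mapsto{}^{t}h^{-1}$ of $G_m$ yields $\Ext^i_{G_n}(\omega_{n,m},\pi)_{sm}=0\iff\Ext^i_{G_n}(\omega_{n,m},\pi^\vee)_{sm}=0$ and $\Theta_{n,m}(\pi)\simeq\Theta_{n,m}(\pi^\vee)^{\mathrm{MVW}}$. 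Since $(\tau_1\times\tau_2)^{\mathrm{MVW}}\simeq\tau_2^{\mathrm{MVW}}\times\tau_1^{\mathrm{MVW}}$, $\mathbbm{1}_{m-n}^{\mathrm{MVW}}\simeq\mathbbm{1}_{m-n}$ and $\pi^{\mathrm{MVW}}\simeq\pi^\vee$ for irreducible $\pi$, it suffices to treat one of the two cases — say $L(s,\pi^\vee)$ holomorphic at $s_0:=\frac{1+m-n}{2}$ — and to prove $\Ext^{>0}_{G_n}(\omega_{n,m},\pi)_{sm}=0$ together with $\Theta_{n,m}(\pi)\simeq\pi^\vee\times\mathbbm{1}_{m-n}$; the other case then follows by applying this to $\pi^\vee$ and transporting by MVW, which turns $\pi^\vee\times\mathbbm{1}_{m-n}$ into $\mathbbm{1}_{m-n}\times\pi^\vee$.

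Now stratify $M_{n,m}$ by rank. Since $n\le m$, the full-rank locus $\Omega$ is the unique open $G_n\times G_m$-orbit, and there is a short exact sequence of smooth $G_n\times G_m$-representations
\[
0\lra\calS(\Omega)\lra\omega_{n,m}\lra\calS\big(M^{<n}_{n,m}\big)\lra 0,
\]
with $M^{<n}_{n,m}$ the closed locus of rank $<n$. Applying $\Hom_{G_n}(-,\pi)_{sm}$ produces a long exact sequence of $G_m$-representations, so the statement reduces to two claims: (i) $\calS(\Omega)$ is a projective object of the category of smooth $G_n$-representations, and $\Hom_{G_n}(\calS(\Omega),\pi)_{sm}\simeq\pi\times\mathbbm{1}_{m-n}$; (ii) if $L(s,\pi^\vee)$ is holomorphic at $s_0$, then $\Ext^i_{G_n}(\calS(M^{<n}_{n,m}),\pi)_{sm}=0$ for all $i\ge 0$, i.e.\ the boundary contributes nothing. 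Granting these, the long exact sequence collapses to $\Ext^i_{G_n}(\omega_{n,m},\pi)_{sm}\simeq\Ext^i_{G_n}(\calS(\Omega),\pi)_{sm}$ for every $i$; hence $\Ext^{>0}$ vanishes, and $\Theta_{n,m}(\pi)^\vee=\Hom_{G_n}(\omega_{n,m},\pi)_{sm}\simeq\pi\times\mathbbm{1}_{m-n}$, so $\Theta_{n,m}(\pi)\simeq\pi^\vee\times\mathbbm{1}_{m-n}$. (This is consistent with Theorem \ref{T:APS-EP-fomula}, which gives $[\Theta_{n,m}(\pi)^\vee]=\EP_{G_n}(\omega_{n,m},\pi)_{sm}=[\pi\times\mathbbm{1}_{m-n}]$ in the Grothendieck group, and which may be used in place of the direct computation in (i).) Finally, if both $L(s,\pi)$ and $L(s,\pi^\vee)$ are holomorphic at $s_0$, the two cases combine to $\Theta_{n,m}(\pi)\simeq\pi^\vee\times\mathbbm{1}_{m-n}\simeq\mathbbm{1}_{m-n}\times\pi^\vee$, and irreducibility follows from the purely $\GL$-theoretic fact that $\pi^\vee\times\mathbbm{1}_{m-n}$ is irreducible exactly when neither $L(s,\pi)$ nor $L(s,\pi^\vee)$ has a pole at $s_0$ — a reducibility statement for a degenerate induction that one reads off from the Zelevinsky classification and the shape of Godement--Jacquet $L$-factors.

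Claim (i) is geometric. Sending a full-rank matrix to its row space presents $\Omega$ as a Zariski-locally trivial principal $G_n$-bundle over the Grassmannian $\mathrm{Gr}(n,m)$, the fiber over the standard point being $\{(A\mid 0):A\in G_n\}\cong G_n$ with $G_n$ acting by left translation twisted by the character $|\det|^{-m/2}$ of $\omega_{n,m}$. Stratifying $\mathrm{Gr}(n,m)$ by its finitely many Schubert cells, over each of which the bundle trivializes, equips $\calS(\Omega)$ with a finite filtration by $G_n$-stable subspaces whose graded pieces are, as $G_n$-representations, direct sums of copies of $\calS(G_n)$ up to a character twist. Since $\Hom_{G_n}(\calS(G_n),V)\simeq V$ naturally for every smooth $V$, the representation $\calS(G_n)$ is projective; hence so is each graded piece, and hence $\calS(\Omega)$ is projective. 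Pushing $\Hom_{G_n}(\calS(\Omega),\pi)_{sm}$ down to $\mathrm{Gr}(n,m)$, which is a single $G_m$-orbit $Q\backslash G_m$ for the maximal parabolic $Q$ with Levi $G_n\times G_{m-n}$, and applying Frobenius reciprocity, identifies it — after bookkeeping the Weil-representation normalization and $\delta_Q$ — with the degenerate principal series $\pi\times\mathbbm{1}_{m-n}$.

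Claim (ii) is the crux, and the step I expect to be the main obstacle. Stratify $M^{<n}_{n,m}$ again by rank: as a $G_n$-variety the rank-$k$ stratum ($0\le k\le n-1$) fibers over $\mathrm{Gr}(k,n)$ with fiber the \emph{open} stratum $\Omega_{k,m}\subset M_{k,m}$, so its Schwartz space is $\operatorname{c-Ind}_{Q_k}^{G_n}\!\big(\calS(\Omega_{k,m})\otimes\chi_k\big)$ for the parabolic $Q_k\subset G_n$ with Levi $G_k\times G_{n-k}$ and an explicit character $\chi_k$. By the exactness of parabolic induction and of Jacquet functors together with Bernstein's second adjointness, the contribution of this stratum is computed by $\Ext^\bullet$ over $G_k\times G_{n-k}$ of a twist of $\calS(\Omega_{k,m})$ against the normalized Jacquet module $r_{Q_k}(\pi)$; and since $\calS(\Omega_{k,m})$ is $G_k$-projective by claim (i) applied with $(k,m)$ in place of $(n,m)$, this $\Ext$ is concentrated in degree $0$ and reduces to a $\Hom$ that is nonzero only if some irreducible constituent $\rho\boxtimes\rho'$ of $r_{Q_k}(\pi)$ has $\rho'$ a prescribed unramified twist of a character of $G_{n-k}$. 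The remaining and decisive point is that the occurrence of such a constituent in the Jacquet modules of $\pi$ is governed precisely by poles of $L(s,\pi^\vee)$: the extreme stratum $k=n-1$ corresponds exactly to the point $s_0=\frac{1+m-n}{2}$, and the lower strata to points at which a pole is forced by a pole at $s_0$ — equivalently one runs a downward induction on $n$ with $m-n$ fixed, the base cases $n\le1$ being immediate. Thus holomorphy of $L(s,\pi^\vee)$ at $s_0$ kills every boundary stratum, yielding (ii). The delicate parts are the normalization bookkeeping that matches Jacquet-module exponents to the Godement--Jacquet $L$-factor, and the verification that the single holomorphy hypothesis at $s_0$ controls all strata at once; an alternative to the stratum-by-stratum analysis is to argue directly in the style of Fang--Sun--Xue, showing that when $L(s,\pi^\vee)$ is holomorphic at $s_0$ the intertwining functional defining $\Theta_{n,m}(\pi)$ is already supported on the open stratum $\calS(\Omega)$.
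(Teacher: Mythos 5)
There is a genuine gap, and it sits exactly where you yourself flagged it, in claim (ii) — but the problem is not just delicate bookkeeping: the hypothesis you attach to the boundary is the wrong one, and the claim is false as stated. The rank-$k$ strata pair against the character $\xi_k$ of Lemma \ref{L:rank-filtration} and hence against the exponents at the \emph{other} end of the segments of $\pi$ than those controlled by $L\left(s,\pi^\vee\right)$: vanishing of the boundary contribution is governed by holomorphy of $L\left(s,\pi\right)$ at $s_0$, not of $L\left(s,\pi^\vee\right)$ (this is the content of Proposition \ref{P:Holo-L-vanish-boundary} and the Remark following it). Concretely, take $n=1$, $m=2$, $\pi=|\cdot|^{-1}$, so $s_0=1$: then $L\left(s,\pi^\vee\right)=L\left(s,|\cdot|\right)$ is holomorphic at $s_0$, yet the rank-zero stratum is the character $|\cdot|^{-1}\boxtimes|\det|^{\frac{1}{2}}$ of $G_1\times H_2$, which has non-zero $\Hom_{G_1}$ \emph{and} $\Ext^1_{G_1}$ against $\pi$. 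Relatedly, your claim (i) comes out in the wrong order: the row-space fibration has structure group the \emph{opposite} parabolic, and gives $\Hom_{G_n}\left(\calS\left(\Omega\right),\pi\right)_{sm}\simeq\Ind_{\overline{Q}_n}^{H_m}\left(\pi\boxtimes\mathbbm{1}_{m-n}\right)\simeq\mathbbm{1}_{m-n}\times\pi$, not $\pi\times\mathbbm{1}_{m-n}$; in the asymmetric case these are non-isomorphic, and in the example above the theorem gives $\Hom_{G_1}\left(\omega_{1,2},\pi\right)_{sm}\simeq|\cdot|^{-1}\times\mathbbm{1}_1$ while the open stratum gives $\mathbbm{1}_1\times|\cdot|^{-1}$, so the boundary \emph{must} contribute. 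Structurally, any argument in which the closed strata contribute nothing in any degree can only ever output $\Theta_{n,m}\left(\pi\right)\simeq\mathbbm{1}_{m-n}\times\pi^\vee$, so it cannot reproduce the case distinction that is the point of the theorem.

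Even after swapping to the correct hypothesis ($L\left(s,\pi\right)$ holomorphic at $s_0$), your plan needs the boundary to die in \emph{all} degrees, which is stronger than what is proved or needed: holomorphy at the single point $s_0$ rules out the character $\xi_k$ matching a constituent of $\Jac_{\overline{P}_k}\pi$ on the nose, but it does not rule out higher extensions with constituents sharing its cuspidal support (e.g.\ twisted Steinbergs), so ``every stratum vanishes in every degree'' is precisely the unproved step, and your sketch offers no mechanism for it. The paper circumvents this: it proves only the degree-zero boundary vanishing (Proposition \ref{P:Holo-L-vanish-boundary}, via the non-equal-rank zeta integral, multiplicity one in the Howe duality and socle irreducibility of $\mathbbm{1}_{m-n}\times\pi^\vee$; alternatively via the rank filtration), and it obtains the $\Ext^{>0}$-vanishing by a completely different device, namely Kudla's filtration of the Jacquet modules $\Jac_{P_k}\omega_{n,m}$ (Corollary \ref{C:KudlaIndCompatible}) applied to the standard module above $\pi$, followed by a degree-shifting induction using Proposition \ref{P:GJLfunction}(2) and vanishing of Ext above the split rank, with the MVW/Fourier symmetry (Corollary \ref{C:ExtvsMVW}) to pass between the two L-function hypotheses. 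The formula for $\Theta_{n,m}\left(\pi\right)$ then follows by combining the injectivity of restriction to the open stratum with the Euler--Poincar\'e formula (Theorem \ref{T:APS-EP-fomula}) and the already-established Ext-vanishing, not from a termwise collapse of your open/closed long exact sequence. Finally, the ``purely $\GL$-theoretic'' criterion you invoke at the end (irreducibility of $\pi^\vee\times\mathbbm{1}_{m-n}$ exactly when neither L-factor has a pole at $s_0$) is asserted without proof and is not a standard citable fact; the paper instead deduces irreducibility in the both-holomorphic case by comparing the two case formulas.
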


\vskip 5pt

\begin{Rmk}\label{RMK:holomorphicity-L-n=1}
It is worth noting that when $n=1$, for any character $\pi$, either $L\left(s,\pi\right)$ or $L\left(s,\pi^\vee\right)$ is holomorphic at $s=\frac{1+m-n}{2}$. In this special case the theorem recovers an old result of Xue \cite[Thm. 1.1]{MR3673813}, at least over non-Archimedean local fields. Indeed, when $n=1$ our argument also works for Archimedean local fields (under the setting of \textit{Schwartz analysis} as in \cite{MR4211018}), provided the projectivity of the Weil representation as a $G_1$-module.
\end{Rmk}

\vskip 5pt

To prove this theorem we combine the homological approach and the analytic approach. Firstly we use the \textit{Kudla's filtration} of Jacquet modules of $\omega_{n,m}$ to show that the holomorphicity of L-functions implies the Ext-vanishing. Then by adapting the zeta integral to the non equal rank case we construct a non-zero element in the space 
\[
    \Hom_{G_n\times G_m}\left(\omega_{n,m},\pi\boxtimes\theta_{n,m}\left(\pi\right)\right).
\]
Finally by mimicking Fang--Sun--Xue's argument and combining with our Ext-vanishing result we can reduce the computation of the big theta lift to the computation of the contribution of the open piece of full rank matrices, hence obtain the result. As a direct consequence, we would like to highlight the following irreducibility result of the big theta lift of tempered representations. This recovers and generalizes a very recent result of Xue on the almost equal rank type II theta lifting \cite[Thm. 3.11]{xue2023full}.

\begin{Cor}
In the context of Theorem \ref{T:Main-1}, if $\pi$ is tempered, then $\Theta_{n,m}\left(\pi\right)$ is irreducible.   
\end{Cor}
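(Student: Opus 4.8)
The plan is to reduce the assertion to the last sentence of Theorem~\ref{T:Main-1}. Since $n\leq m$, the relevant critical point satisfies $\frac{1+m-n}{2}\geq\frac12>0$, so it suffices to prove that, whenever $\pi$ is tempered, both $L\left(s,\pi\right)$ and $L\left(s,\pi^\vee\right)$ are holomorphic on the half-plane ${\rm Re}\left(s\right)>0$. The contragredient of a tempered representation is again tempered, so it is enough to deal with $L\left(s,\pi\right)$.

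First I would invoke the structure of tempered representations of general linear groups together with the multiplicativity of Godement--Jacquet L-functions. Any tempered representation $\pi$ of $G_n$ is an irreducible parabolic induction $\pi\simeq\tau_1\times\cdots\times\tau_r$ with each $\tau_i$ a discrete series representation, i.e. $e\left(\tau_i\right)=0$, and one has $L\left(s,\pi\right)=\prod_{i=1}^{r}L\left(s,\tau_i\right)$. Thus the problem is reduced to a single discrete series $\tau_i$. Next, recall that every discrete series of a general linear group over $F$ is a generalized Steinberg $\tau_i=\St_{k}\left(\rho\right)$ attached to a unitary supercuspidal $\rho$, with $L\left(s,\St_{k}\left(\rho\right)\right)=L\left(s+\tfrac{k-1}{2},\rho\right)$, and that for a unitary supercuspidal $\rho$ the factor $L\left(s,\rho\right)$ is either $1$ or of the form $\left(1-\alpha q^{-s}\right)^{-1}$ with $|\alpha|=1$. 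Hence all poles of $L\left(s,\tau_i\right)$ lie on the line ${\rm Re}\left(s\right)=-\tfrac{k-1}{2}\leq 0$, so $L\left(s,\tau_i\right)$, and therefore $L\left(s,\pi\right)$, is holomorphic for ${\rm Re}\left(s\right)>0$. The same applies to $L\left(s,\pi^\vee\right)$, so both are holomorphic at $s=\frac{1+m-n}{2}$, and Theorem~\ref{T:Main-1} yields the irreducibility of $\Theta_{n,m}\left(\pi\right)$.

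I do not expect a genuine obstacle here: the argument is essentially the correct bookkeeping of standard analytic facts about Godement--Jacquet L-functions of discrete series. The one point deserving care is that one must use that $\pi$ is tempered, and not merely unitary or essentially tempered. For instance, an irreducible $\pi$ whose Langlands data involve both sufficiently positive and sufficiently negative exponents --- already for $n=2$ the representation $|\cdot|^{-c}\times|\cdot|^{c}$ of $G_2$ with $c=\tfrac{1+m-n}{2}\geq 1$ --- has a pole of both $L\left(s,\pi\right)$ and $L\left(s,\pi^\vee\right)$ at the critical point, so the hypothesis of Theorem~\ref{T:Main-1} genuinely fails; temperedness is exactly what rules this out, by forcing all poles of $L\left(s,\pi\right)$ into ${\rm Re}\left(s\right)\leq 0$.
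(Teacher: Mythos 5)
Your proof is correct and follows the same route as the paper: the paper's proof is exactly the observation that temperedness forces both $L\left(s,\pi\right)$ and $L\left(s,\pi^\vee\right)$ to be holomorphic on ${\rm Re}\left(s\right)>0$, hence at $s=\frac{1+m-n}{2}>0$, so Theorem \ref{T:Main-1} applies. Your extra bookkeeping (full induction from discrete series, multiplicativity, and the location of poles of $L\left(s,\St_k\left(\rho\right)\right)$ on ${\rm Re}\left(s\right)=-\frac{k-1}{2}\leq 0$) is just the standard justification of that one-line observation.
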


\vskip 5pt

\begin{proof}
Just notice that if $\pi$ is tempered, then both $L\left(s,\pi\right)$ and $L\left(s,\pi^\vee\right)$ are holomorphic in the right half plane ${\rm Re}\left(s\right)>0$.

\end{proof}

\vskip 5pt

Part of our argument can be generalized to the Archimedean case as well. We refer the readers to Section \ref{sub:digression_the_archimedean_case} for more details.

\vskip 5pt


\subsection{Main result II: eigendistributions on matrix spaces} 
\label{sub:main_result_ii_eigendistributions_on_matrix_spaces}

After establishing our first result Theorem \ref{T:Main-1}, the next natural question that one can ask is: in this context (i.e. $n\leq m$), what if neither $L\left(s,\pi\right)$ nor $L\left(s,\pi^\vee\right)$ is holomorphic at $s=\frac{1+m-n}{2}$? Our second result concerns this case.

\vskip 5pt

This kind of representations already show up when $n=m=2$: the trivial representation $\mathbbm{1}_2$ of $G_2$ is self-dual and its L-function has a pole at $s=1/2$. In \cite[Thm. 2.10]{xue2023full}, Xue computed $\Theta_{2,2}\left(\mathbbm{1}_2\right)$ by making use of the so called ``\textit{generalized semi-invariant distributions}'' (see \cite{MR3623236}). It quickly turns out that
\[
    \Theta_{2,2}\left(\mathbbm{1}_2\right) \simeq |\cdot|^{\frac{1}{2}}\times|\cdot|^{-\frac{1}{2}}
\]
is not irreducible, and 
\[
    \Ext_{G_2}^i\left(\omega_{2,2},\mathbbm{1}_2\right)_{sm} = \begin{cases}
        {\rm St}_{2} \quad &\textit{if }i=1;\\[10pt]

        0 \quad &\textit{if }i>1.
    \end{cases}
\]
Here ${\rm St}_{2}$ is the Steinberg representation of $G_2$. When $\min\left\{m,n\right\} >2$, there are many more such representations $\pi$. 
In this paper we treat the case when $\pi=\eta$ is a character of $G_n$. A direct computation (by using Proposition \ref{P:GJLfunction} below) shows that only when $m\leq 2n-2$, and
\[
    \eta = |{\det}_n|^{k-\frac{m}{2}}
\]
for some positive integer $1+m-n\leq k\leq n-1$, the L-functions $L\left(s,\eta\right)$ and $L\left(s,\eta^\vee\right)$ can have poles at $s=\frac{1+m-n}{2}$ simultaneously. Here the subscript ``$n$'' in $\det_n$ is employed to indicate that it is the determinant of $G_n$. We compute all Ext spaces associated to the Weil representation and these characters explicitly.

\begin{Thm}\label{T:Main-2}
Suppose that $n\leq m$. Let $\eta$ be a smooth character of $G_n$. If neither $L\left(s,\eta\right)$ nor $L\left(s,\eta^\vee\right)$ is holomorphic at $s=\frac{1+m-n}{2}$, or equivalently, if $m\leq 2n-2$ and
\[
    \eta = |\det|^{k-\frac{m}{2}}
\]
for some positive integer $1+m-n\leq k\leq n-1$, then we have
\[
    \Ext_{G_n}^i \left(\omega_{n,m}, \eta\right)_{sm} = 
    \begin{cases}
        |{\det}_{m-k}|^{\frac{k-n}{2}}\times|{\det}_k|^{\frac{n-m+k}{2}} \quad & \textit{if } i = 0; \\[10pt]

        \pi\left(n,m;k\right) \quad & \textit{if } i = 1; \\[10pt]

       0  \quad & \textit{otherwise}.
     \end{cases} 
\]
Here $\pi\left(n,m;k\right)$ is the unique irreducible quotient of the degenerate principal series representation $|{\det}_{m-k}|^{\frac{k-n}{2}}\times|{\det}_k|^{\frac{n-m+k}{2}}$. In particular, we have
\[
    \Theta_{n,m}\left(\eta\right) = |{\det}_{m-k}|^{\frac{n-k}{2}}\times|{\det}_k|^{\frac{m-n-k}{2}}
\]
is not irreducible.
\end{Thm}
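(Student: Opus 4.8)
The plan is to compute the Ext spaces $\Ext^i_{G_n}(\omega_{n,m},\eta)_{sm}$ by induction on $m-n$, feeding the outcome into the Euler--Poincar\'e formula (Theorem \ref{T:APS-EP-fomula}) to pin down $\Theta_{n,m}(\eta)$ in the Grothendieck group, and then using a separate argument to upgrade the identity to one of honest representations. For the inductive step I would exploit Kudla's filtration of the Jacquet module of $\omega_{n,m}$ along a maximal parabolic of $G_m$ (with Levi $G_{m-1}\times G_1$): this expresses $(\omega_{n,m})_{N}$ as an extension whose graded pieces involve $\omega_{n-1,m-1}$ and $\omega_{n,m-1}$ twisted by characters. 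Since $\eta=|\det|^{k-m/2}$ is a character of $G_n$, restricting along $G_n$ and taking $\Ext^\bullet$ produces a long exact sequence relating the Ext groups in level $(n,m)$ to those in levels $(n-1,m-1)$ and $(n,m-1)$, both of which fall under either Theorem \ref{T:Main-1} (when the relevant L-function becomes holomorphic) or a lower instance of the present theorem. The combinatorics of the condition $1+m-n\le k\le n-1$ is exactly what controls which branch of the induction one lands in, and tracking the character twists carefully shows the surviving contribution in degree $1$ is the degenerate principal series quotient $\pi(n,m;k)$.

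Once the Ext groups are known, Theorem \ref{T:APS-EP-fomula} gives
\[
    \Theta_{n,m}(\eta) = \Ext^0 - \Ext^1 = \bigl(|{\det}_{m-k}|^{\frac{k-n}{2}}\times|{\det}_k|^{\frac{n-m+k}{2}}\bigr) - \pi(n,m;k)
\]
in the Grothendieck group, after applying the MVW-involution (Lemma \ref{L:MVW4rier}) to pass between $\eta$ and $\eta^\vee$ and to rewrite the induced representations in the stated form. To identify $\Theta_{n,m}(\eta)$ as an actual representation, I would first note that $\Theta_{n,m}(\eta)^\vee \simeq \Hom_{G_n}(\omega_{n,m},\eta)_{sm}$ is the $\Ext^0$ term, a degenerate principal series $\Pi := |{\det}_{m-k}|^{\frac{k-n}{2}}\times|{\det}_k|^{\frac{n-m+k}{2}}$; dualizing, $\Theta_{n,m}(\eta)$ is a submodule (indeed the maximal $\eta$-isotypic quotient contributes a subobject of the contragredient of $\Pi$). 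Combining the Grothendieck-group identity with the known socle/cosocle structure of such degenerate principal series for $G_m$ — these are multiplicity-free of length two here, with the Langlands quotient $\pi(n,m;k)$ as the unique irreducible quotient and a complementary irreducible sub — forces $\Theta_{n,m}(\eta)$ to be the whole length-two representation $|{\det}_{m-k}|^{\frac{n-k}{2}}\times|{\det}_k|^{\frac{m-n-k}{2}}$, hence reducible.

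The main obstacle I anticipate is the bookkeeping in the long exact sequence coming from Kudla's filtration: one must verify that the connecting maps vanish (or are forced by a dimension/central-character count) so that the Ext groups split as claimed rather than merging into a single larger module, and one must confirm that no extra graded piece of the filtration contributes in degrees $\ge 2$. A secondary subtlety is the base case of the induction — the smallest $(n,m)$ with $1+m-n\le k\le n-1$ nonempty, essentially the $\Theta_{2,2}(\mathbbm 1_2)$ computation recalled in the introduction — which must be established directly, either by the semi-invariant distribution method of Xue or by a hands-on analysis of $\calS(M_{n,m})$ stratified by matrix rank. Finally, identifying the degenerate principal series structure for $G_m$ (length, socle, cosocle) in the precise range of parameters occurring here requires invoking the classification of such reducibility points, which is standard but needs to be cited carefully.
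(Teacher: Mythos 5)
Your plan diverges from the paper's (which stratifies $\calS\left(M_{n,m}\right)$ by matrix rank, reduces to exactly two graded pieces, and glues them), but more importantly it has two genuine gaps. First, the inductive mechanism is not valid as stated: Kudla's filtration along a maximal parabolic $Q\subset H_m$ filters the Jacquet module $\Jac_Q\,\omega_{n,m}$, and taking $\Ext^\bullet_{G_n}\left(-,\eta\right)$ of that object does \emph{not} produce a long exact sequence for $\Ext^\bullet_{G_n}\left(\omega_{n,m},\eta\right)_{sm}$ itself. Frobenius reciprocity only trades an induction in the $G_n$-variable of the \emph{target} for a Jacquet module of $\omega_{n,m}$ along a parabolic of $G_n$ (this is exactly how the paper's Corollary \ref{C:KudlaIndCompatible} works); a filtration along an $H_m$-parabolic at best constrains the Jacquet modules of the Ext spaces as $H_m$-representations, and even that is delicate because $\Hom_{G_n}\left(\omega_N,\eta\right)$ identifies with the $N$-\emph{invariants}, not the $N$-coinvariants, of $\Hom_{G_n}\left(\omega,\eta\right)$. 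Knowing Jacquet modules of the Ext spaces does not determine the Ext spaces, so the proposed recursion in $(n,m)\rightsquigarrow(n-1,m-1),(n,m-1)$ does not get off the ground.

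Second, the step you flag as "verify the connecting maps vanish (or are forced by a dimension/central-character count)" is precisely the crux, and it cannot be settled by such counts. In the paper's gluing sequence the two competing outcomes --- namely $\Hom_{G_n}\left(\omega_{n,m},\eta\right)_{sm}$ of length two with $\Ext^1\simeq\pi\left(n,m;k\right)$, versus a length-three $\Hom$ with $\Ext^1$ the full degenerate principal series --- have the same central characters, the same irreducible constituents up to the ambiguity, and the same Euler--Poincar\'e characteristic, so Theorem \ref{T:APS-EP-fomula} and exponent bookkeeping cannot distinguish them. The paper kills the connecting map $\mathscr{I}_k$ by proving that $\Hom_{G_n}\left(\omega_{n,m},\eta\right)_{sm}$ is \emph{socle irreducible}, via an induction on $n$ that embeds $\eta$ into $|\cdot|^{k+\frac{1-n-m}{2}}\times|\det_{n-1}|^{k-\frac{m-1}{2}}$, applies Corollary \ref{C:KudlaIndCompatible} together with Theorem \ref{T:Main-1}, and then computes the Jacquet functors $\Jac_x$ to show the socle occurs with multiplicity one; nothing of this sort appears in your proposal, so the decisive step is missing. (A minor further slip: once $\Ext^0$ is known you have $\Theta_{n,m}\left(\eta\right)^\vee\simeq\Hom_{G_n}\left(\omega_{n,m},\eta\right)_{sm}$ directly; the Euler--Poincar\'e formula gives $\Ext^0-\Ext^1=\eta\times\mathbbm{1}_{m-n}$, not "$\Theta=\Ext^0-\Ext^1$".)
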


\vskip 5pt

The main ingredient of the proof of this theorem is the so called ``\textit{rank filtration}'' of the Weil representation. We first compute the Ext spaces associated to each graded piece, and then ``glue'' them together by using the usual long exact sequence argument. At some point we need to show that certain map appears in the long exact sequence is actually zero, we prove this by computing some Jacquet modules. These arguments works equally well when $n>m$, leading us to the following complementary result.

\begin{Thm}\label{T:Main-2-complement}
Suppose that $n>m$, and let $\eta$ be a smooth character of $G_n$. Then
\[
    \Ext_{G_n}^i\left(\omega_{n,m},\eta\right)_{sm} = 0
\] 
for all $i\geq 0$, unless $\eta=|\det|^{k-\frac{m}{2}}$ for some integer $0\leq k\leq m$; in which case we have
\[
    \Ext_{G_n}^i\left(\omega_{n,m},\eta\right)_{sm} \simeq 
    \begin{cases}
        |\det_{m-k}|^{\frac{k-n}{2}}\times|\det_k|^{\frac{n-m+k}{2}} \quad & \textit{if } i = 0,1; \\[10pt]

       0  \quad & \textit{otherwise}.
     \end{cases}
\]
In particular, we have $\Theta_{n,m}\left(\eta\right) = |\det_{m-k}|^{\frac{n-k}{2}}\times|\det_k|^{\frac{m-n-k}{2}}$ is irreducible.   
\end{Thm}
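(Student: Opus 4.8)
The plan is to stratify the matrix space by rank. Since $n>m$, every matrix in $M_{n,m}$ has rank at most $m$; writing $\Omega_j\subset M_{n,m}$ for the locally closed subset of matrices of rank exactly $j$, the subspaces $I_j:=\calS\left(\left\{X\in M_{n,m}:\operatorname{rank}X\geq j\right\}\right)$ form a decreasing filtration
\[
\omega_{n,m}=I_0\supset I_1\supset\cdots\supset I_m\supset I_{m+1}=0
\]
by $G_n\times G_m$-subrepresentations, with $I_j/I_{j+1}\simeq\calS\left(\Omega_j\right)$. I would first compute $\Ext^i_{G_n}\left(\calS\left(\Omega_j\right),\eta\right)_{sm}$ for every $j$, and then assemble these through the long exact sequences coming from $0\to I_{j+1}\to I_j\to\calS\left(\Omega_j\right)\to 0$.

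For a fixed graded piece I would use the geometry of $\Omega_j$. The map $X\mapsto\ker X$ realizes $\Omega_j$ as a $G_n$-invariant fibration over $\operatorname{Gr}\left(m-j,m\right)=G_m/Q$, where $Q$ is the maximal parabolic of $G_m$ with Levi $G_j\times G_{m-j}$; the fibre is the $G_n$-space of full-rank $n\times j$ matrices, which itself fibres (via $X\mapsto\operatorname{im}X$) over $\operatorname{Gr}\left(j,n\right)=G_n/P_{j,n-j}$ with fibre $\calS\left(G_j\right)$, the Levi block $G_j\subset P_{j,n-j}$ acting by left translation and the unipotent radical trivially, everything twisted by explicit powers of $|\det|$ read off from \eqref{E:DefWeilRep}. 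Unwinding, $\calS\left(\Omega_j\right)$ becomes a parabolic induction from $Q$ of $\Ind^{G_n}_{P_{j,n-j}}\calS\left(G_j\right)$ tensored with explicit characters of $G_{n-j}$ and $G_{m-j}$, the residual $G_j$-translations from the two Levi blocks (left from $P_{j,n-j}$, right from $Q$) commuting. Because $G_n/P_{j,n-j}=\operatorname{Gr}\left(j,n\right)$ is compact, Bernstein's second adjunction applies and turns $\Ext^i_{G_n}\left(\calS\left(\Omega_j\right),\eta\right)_{sm}$ into $\Ext^i$ over the Levi $G_j\times G_{n-j}$ against the Jacquet module of $\eta$ — which, $\eta$ being a character, is itself a character of $G_j\times G_{n-j}$; exactness of the Jacquet functor means there is no unipotent-cohomology correction. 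By Künneth this splits off a factor $\Ext^p_{G_j}\left(\calS\left(G_j\right),\chi\right)$, which vanishes for $p\geq 1$ since $\calS\left(G_j\right)$ is projective as a smooth $G_j$-module under left translation, and for $p=0$ is one-dimensional, carrying an explicit character of the remaining (right-translation) $G_j$; and a factor $H^q\left(G_{n-j}\left(F\right),\mu\right)$ for a character $\mu$, which vanishes for all $q$ unless $\mu=\mathbbm{1}$ and equals $\C$ in degrees $q=0,1$ when $\mu=\mathbbm{1}$ — the smooth cohomology of $G_{n-j}\left(F\right)$ with coefficients in a character being that of $F^\times$. The triviality of $\mu$ is precisely the condition $\eta=|\det|^{k-\frac m2}$ with $k$ the unique index making the relevant power of $|\det|$ on $G_{n-j}$ cancel; this $k$ lies in $\left[0,m\right]$ exactly because $n>m$, and it selects which strata contribute. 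So $\Ext^i_{G_n}\left(\calS\left(\Omega_j\right),\eta\right)_{sm}$ is concentrated in degrees $0$ and $1$, where it is one and the same representation of $G_m$, namely the induction from $Q$ of the explicit characters found above.

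It remains to glue. Ascending the filtration, the only delicate point is that the connecting maps $\Ext^i_{G_n}\left(I_{j+1},\eta\right)_{sm}\to\Ext^{i+1}_{G_n}\left(\calS\left(\Omega_j\right),\eta\right)_{sm}$ vanish whenever both sides are nonzero; I would prove this as in Theorem \ref{T:Main-2}, by computing the relevant Jacquet modules and observing that source and target have disjoint cuspidal supports. Granting that, $\Ext^i_{G_n}\left(\omega_{n,m},\eta\right)_{sm}$ is read off from the contributing stratum: it vanishes unless $\eta=|\det|^{k-\frac m2}$ with $0\leq k\leq m$, in which case $\Ext^0\simeq\Ext^1$ is the asserted $|{\det}_{m-k}|^{\frac{k-n}{2}}\times|{\det}_k|^{\frac{n-m+k}{2}}$ and $\Ext^{\geq 2}=0$; the isomorphism $\Ext^0\simeq\Ext^1$ reflects $H^0\left(G_{n-j}\left(F\right),\mathbbm{1}\right)\simeq H^1\left(G_{n-j}\left(F\right),\mathbbm{1}\right)$, and is consistent with Theorem \ref{T:APS-EP-fomula}, which forces the Euler--Poincar\'e characteristic to vanish for $n>m$. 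Then $\Theta_{n,m}\left(\eta\right)$ follows from $\Theta_{n,m}\left(\eta\right)^\vee\simeq\Ext^0_{G_n}\left(\omega_{n,m},\eta\right)_{sm}$ by taking contragredient, and its irreducibility is automatic: when $n>m$ the two exponents of the degenerate principal series differ by $\frac m2-n$, of absolute value strictly greater than $\frac m2$, so the two Zelevinsky segments are not linked.

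The step I expect to be the main obstacle is the bookkeeping of modulus characters — in the identification of $\calS\left(\Omega_j\right)$ as an induced representation, in the passage through Bernstein's second adjunction, and in the Jacquet module of $\eta$ — since the precise power of $|\det|$ on each Levi block is exactly what pins down the index $k$ and the shape of the answer, and a sign slip there would be fatal. A secondary technical point is the vanishing of the connecting maps: morally clear from disjointness of cuspidal supports, but requiring the Jacquet-module computation above to be made rigorous.
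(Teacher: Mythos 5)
Your proposal follows essentially the same route as the paper: the rank filtration of $\omega_{n,m}$, the computation of $\Ext^{*}_{G_n}$ against each graded piece via second adjointness, the K\"unneth formula, projectivity of $\calS\left(G_j\right)$ as a $G_j$-module, and the fact that $\Ext^{*}$ between equal characters of $G_{n-j}$ is $\C$ in degrees $0,1$ (this is Lemma \ref{L:ExtGradedPiecesRankFil}, built on Lemma \ref{L:rank-filtration}), followed by gluing and the unlinked-segments argument for irreducibility. The one point where you diverge is the gluing: since $n>m$ forces every stratum to have a nontrivial $G_{n-j}$-factor, the matching condition $\eta=|\det|^{j-\frac m2}$ singles out a \emph{unique} stratum $j$ for a given $\eta$, so at most one graded piece has nonzero $\Ext$ in any degree and all connecting maps vanish for trivial reasons; your proposed Jacquet-module/``disjoint cuspidal support'' argument (and the comparison with Theorem \ref{T:Main-2}, where the vanishing of the connecting map is genuinely delicate and is proved by a socle-irreducibility induction rather than by disjointness of supports) is therefore unnecessary here --- this is precisely why the $n>m$ case is so much simpler than $n\leq m$.
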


As explicated before, the linear dual of the big theta lift of $\eta$ is exactly the space of $\eta^{-1}$-eigendistributions on $M_{n,m}$. Hence our Theorem \ref{T:Main-2} and Theorem \ref{T:Main-2-complement}, together with Theorem \ref{T:Main-1}, completely determine the space of eigendistributions on matrix spaces for all characters.

\vskip 5pt

\begin{Rmk}
In the case that $n\geq m$, there are two cases worth noting: 

\begin{itemize}
    \item if $\eta = |\det|^{\frac{m}{2}}$, then the space of $\eta^{-1}$-eigendistributions on $M_{n,m}$ is one dimensional, spanned by the \textit{$\delta$-distribution} at $0\in M_{n,m}$;

    \vskip 5pt

    \item if $\eta = |\det|^{-\frac{m}{2}}$, then the space of $\eta^{-1}$-eigendistributions on $M_{n,m}$ is also one dimensional, spanned by the \textit{Haar measure} on $M_{n,m}$.
\end{itemize}
\vskip 5pt
Moreover, these two distributions are related by the Fourier transformation.  
\end{Rmk}

\vskip 5pt

As one can see from our second main result, for a character $\eta$ of $G_n$, there exists some $i>0$ such that
\[
    \Ext_{G_n}^i\left(\omega_{n,m},\eta\right)_{sm} \neq 0
\]
if and only if both $L\left(s,\eta\right)$ and $L\left(s,\eta^\vee\right)$ have poles at $s=\frac{1+m-n}{2}$. We should mention to the readers that there is an enlightening work of Hong--Sun \cite{MR3623236}, in which they directly related the Laurent expansion of zeta integrals to generalized semi-invariant distributions. We hope to investigate the link between Ext spaces and generalized semi-invariant distributions in our future works. 

\vskip 5pt


\subsection{Main result III: projectivity of the Weil representation} 
\label{sub:main_result_iii}

In \cite[Que. 6.3]{MR3753906}, Adams--Prasad--Savin also posed the following question: given $n\leq m$, when is the Weil representation $\omega_{n,m}$ projective as a smooth $G_n$-module? We have already seen from Theorem \ref{T:Main-2} that the Weil representation is \textit{not} projective when $m\leq 2n-2$, since there does exist some characters $\eta$ of $G_n$ such that
\[
    \Ext_{G_n}^1\left(\omega_{n,m},\eta\right)_{sm} \neq 0.
\]
Our third result answers this question of Adams--Prasad--Savin.

\begin{Thm}\label{T:Main-3}
The Weil representation $\omega_{n,m}$, regarded as a smooth $G_n$-module, is projective if and only if $m\geq 2n-1$.   
\end{Thm}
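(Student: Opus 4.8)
The plan is to establish the two directions separately. The ``only if'' direction is already essentially done: Theorem \ref{T:Main-2} exhibits, for every $n$ and every $m$ with $n\leq m\leq 2n-2$, a character $\eta=|\det|^{k-\frac m2}$ (take for instance $k=n-1$, which satisfies $1+m-n\leq k\leq n-1$ since $m\leq 2n-2$) with $\Ext^1_{G_n}(\omega_{n,m},\eta)_{sm}\neq 0$. A projective object $P$ in the category of smooth $G_n$-modules satisfies $\Ext^i_{G_n}(P,-)_{sm}=0$ for all $i>0$; so $\omega_{n,m}$ cannot be projective when $m\leq 2n-2$, i.e.\ projectivity forces $m\geq 2n-1$. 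It remains to prove the converse: if $m\geq 2n-1$ then $\omega_{n,m}$ is projective as a smooth $G_n$-module.

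For the ``if'' direction I would argue via the rank filtration of $\omega_{n,m}=\calS(M_{n,m})$ by $G_n$-submodules according to the rank of a matrix, exactly the tool invoked for Theorem \ref{T:Main-2}. The top graded piece is $\calS(M_{n,m}^{\mathrm{full}})\simeq\omega^0$-type space of full-rank $n\times m$ matrices; since $m\geq n$, an $n\times m$ matrix of full rank $n$ has row space all of $F^n$, and $G_n$ acts freely on this open set. Concretely $M_{n,m}^{\mathrm{full}}\simeq G_n\backslash$-bundle, and $\calS$ of a space with free $G_n$-action (with the twisting character built in) is an induced — hence projective — smooth $G_n$-module; this is where the hypothesis $m\geq n$ enters for the open stratum. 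The lower graded pieces are supported on rank $\leq r$ matrices for $r<n$; a rank-$r$ matrix factors as (column space in $M_{n,r}$) times (full-rank $r\times m$ matrix), and one computes that the associated graded of $\calS(M_{n,m})$ at rank $r$ is, as a $G_n$-module, of the form $\ind_{P}^{G_n}$ of $\calS$ of the Grassmannian-type data tensored with $\calS(\text{full rank }r\times m)$, where $P$ is the parabolic stabilizing an $r$-plane. The key numerical point is that for \emph{every} $r\leq n$ the ``inner'' factor $\calS(M_{r,m}^{\mathrm{full}})$ with $m\geq 2n-1\geq n+r-1\geq 2r-1$ — wait, more precisely $m\geq 2n-1$ guarantees $m-r\geq 2n-1-r\geq n-1\geq r-1$ for all $r\leq n$, so each inner Weil representation $\omega_{r,m}$ lands in the ``almost stable range or better'' regime and one shows directly it is projective as a $G_r$-module. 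Then each graded piece is $\ind$ of something projective, hence projective as a $G_n$-module.

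Once every graded piece of the rank filtration is projective as a smooth $G_n$-module, I would conclude that $\omega_{n,m}$ itself is projective: an extension of projectives is projective provided the filtration is finite (it is — there are only $n+1$ strata) and provided one has control of the relevant $\Ext^1$'s gluing the pieces, which vanish precisely because the sub and quotient in each short exact sequence of the filtration are projective. More robustly, one can run the long exact sequence of $\Ext^\bullet_{G_n}(-,\tau)_{sm}$ against an arbitrary smooth $\tau$ through the filtration: projectivity of each graded piece kills all higher $\Ext$, so by the snake/long-exact-sequence bootstrap $\Ext^i_{G_n}(\omega_{n,m},\tau)_{sm}=0$ for all $i>0$ and all $\tau$, which is exactly projectivity of $\omega_{n,m}$.

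The main obstacle I anticipate is the careful bookkeeping on the lower rank strata: one must identify the associated graded of the rank filtration precisely as a $G_n$-module (a semidirect-product/Mackey computation involving the parabolic $P$ stabilizing an $r$-plane, its Levi $G_r\times G_{n-r}$, and the Schwartz space of the open orbit), check that the twisting by $|\det|$-powers in \eqref{E:DefWeilRep} matches the modulus characters so that what appears really is a genuine (normalized) induced module and hence projective, and verify the inequality $m-r\geq r-1$ uniformly in $r\leq n$ under $m\geq 2n-1$ so that the inner Weil representations are themselves projective — this last point is an induction on $n$, with the open-stratum computation as the base case. The transversality of ``$\calS$ of a free-action space is projective'' with the smooth (non-unitary, non-admissible) setting should be handled by the standard fact that $\ind_H^G$ of a projective $H$-module is a projective $G$-module, together with $\calS(G_n\backslash X)$-type identifications.
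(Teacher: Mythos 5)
Your ``only if'' direction is correct and is exactly the paper's: Theorem \ref{T:Main-2} produces a character (e.g.\ $\eta=|\det|^{(n-1)-\frac m2}$ when $n\leq m\leq 2n-2$) with $\Ext^1_{G_n}\left(\omega_{n,m},\eta\right)_{sm}\neq 0$, so $\omega_{n,m}$ is not projective in that range. The problem is your ``if'' direction, whose central claim is false: the graded pieces of the rank filtration are \emph{not} projective as $G_n$-modules for $m\geq 2n-1$ (nor for any $m$). By Lemma \ref{L:rank-filtration}, for $0\leq k<n$ the stratum $\tau_k$ is induced from data on the Levi $G_k\times G_{n-k}$ which on the $G_k$-factor is indeed a projective module ($\omega^\natural_k$, i.e.\ copies of $\calS\left(G_k\right)$), but on the $G_{n-k}$-factor is just the character $|\det|^{\frac{k-m}{2}}$ tensored with a multiplicity space carrying the trivial $G_{n-k}$-action; a character of the noncompact group $G_{n-k}$ is not projective, and parabolic induction of a non-projective Levi module need not be projective. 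Concretely, Lemma \ref{L:ExtGradedPiecesRankFil}(1) shows $\Ext^1_{G_n}\left(\tau_k,|\det|^{k-\frac m2}\right)_{sm}\neq 0$ for \emph{every} $0\leq k<\min\left\{n,m\right\}$, with no constraint relating $m$ and $2n-1$. So your plan ``each stratum is $\ind$ of something projective, hence projective, hence the whole thing is projective'' breaks at the first nontrivial stratum; the projectivity of $\omega_{n,m}$ for $m\geq 2n-1$ is an emergent property of the nontrivial extensions between strata, not a stratum-by-stratum phenomenon, and your inequality $m-r\geq r-1$ does not address the $G_{n-r}$-factor where the failure occurs.

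There is a second, independent gap in your concluding ``bootstrap'': to deduce projectivity of a smooth module that is not finitely generated, one needs $\Ext^1_{G_n}\left(\omega_{n,m},\tau\right)_{sm}=0$ for \emph{all} smooth $\tau$, and what is actually accessible by the filtration/Kudla techniques is vanishing against \emph{irreducible} $\pi$ only; that is weaker in general. The paper bridges this by a three-step argument you would also need: (i) show that for each open compact $K\subset H_m$ the level-$K$ part $\omega_{n,m}^K$ is a locally finite $G_n$-module (this is where the rank filtration is genuinely used); (ii) prove $\Ext^i_{G_n}\left(\omega_{n,m},\pi\right)_{sm}=0$ for all irreducible $\pi$ and $i>0$ (Proposition \ref{P:AlmostStableRangeExtVanishing}), by induction on $n$ using Zelevinsky's classification and Corollary \ref{C:KudlaIndCompatible}, the hypothesis $m\geq 2n-1$ entering precisely through the inequality $x_1\leq\frac{n-1}{2}<\frac{1+m-n}{2}$ for the leading cuspidal exponent, and then invoke the Chan--Savin criterion \cite[Thm. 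A.1]{MR3910471} to conclude projectivity of $\omega_{n,m}^K$; (iii) pass from $\omega_{n,m}^K$ to $\omega_{n,m}$ by decomposing into $K$-isotypic components, each a direct summand of some $\omega_{n,m}^{K_0}$. Your proposal omits all three of these steps and substitutes the false stratum-projectivity claim, so as written the ``if'' direction does not go through.
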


\vskip 5pt

This theorem generalizes the result \cite[Lem. 6.1]{MR3753906} for $n=1$. Recall that we say the dual pair $G_n\times G_m$ is in the \textit{stable range} if $m\geq 2n$. Therefore the condition $m\geq 2n-1$ means that the dual pair $G_n\times G_m$ is ``almost'' in the stable range. The proof of this theorem has three parts. Firstly, we show that for each open compact subgroup $K$ of $G_m$, the $K$-fixed part $\omega_{n,m}^K$ of the Weil representation is a so called ``\textit{locally finite}'' representation of $G_n$, namely, for each Bernstein block the projection of $\omega_{n,m}^K$ to this block is a finitely generated module. The main tool we shall use is the rank filtration of the Weil representation $\omega_{n,m}$. Secondly, by a general result of Chan--Savin \cite[Thm. A.1]{MR3910471}, to show $\omega_{n,m}^K$ is projective, it suffices to show that
\[
    \Ext_{G_n}^i\left(\omega_{n,m},\pi\right)_{sm} = 0
\]
for all irreducible smooth representation $\pi$ of $G_n$ and all $i>0$. When $\pi$ is supercuspidal, we shall prove this by using the rank filtration; when $\pi$ is not supercuspidal, we shall prove this by using the \textit{Zelevinsky's classification} together with the Kudla's filtration. We will see in the proof that the condition $m\geq 2n-1$ guarantees that all ``boundary terms'' vanish, hence we can always reduce the computation to lower rank case. Thirdly, we pass from the ``level $K$'' part $\omega_{n,m}^K$ to $\omega_{n,m}$ by using some elementary homological algebra argument.

\vskip 5pt

Here is a direct consequence of Theorem \ref{T:Main-3} that worth noting.

\begin{Cor}
Suppose that $m\geq 2n-1$. Let $\pi$ be an irreducible smooth representation of $G_n$. Then up to semi-simplification, we have 
\[
    \Theta_{n,m}\left(\pi\right) = \pi^\vee \times \mathbbm{1}_{m-n}.
\]   
Furthermore, if $\pi$ is unitary, then $\Theta_{n,m}\left(\pi\right)$ is unitary and irreducible.
\end{Cor}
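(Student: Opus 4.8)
The plan is to deduce the corollary from Theorem \ref{T:Main-3} (projectivity of $\omega_{n,m}$ as a smooth $G_n$-module) together with Theorem \ref{T:APS-EP-fomula} (the Euler--Poincaré formula). First I would observe that projectivity immediately gives the semisimplification statement: since $\omega_{n,m}$ is projective as a $G_n$-module, so is $\omega_{n,m}\otimes\pi^\vee$ (tensoring a projective smooth $G_n$-module by any smooth representation preserves projectivity, as $\Hom_{G_n}(\omega_{n,m}\otimes\pi^\vee, -)\simeq\Hom_{G_n}(\omega_{n,m}, -\otimes\pi)$ and the latter is exact). Consequently $\Ext^i_{G_n}(\omega_{n,m},\pi)_{sm}=0$ for all $i>0$, so in the Grothendieck group the Euler--Poincaré characteristic collapses to $\Theta_{n,m}(\pi)^\vee=\Hom_{G_n}(\omega_{n,m},\pi)_{sm}$ in degree zero. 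Dualizing and invoking Theorem \ref{T:APS-EP-fomula} (which applies since $m\geq 2n-1\geq n$) yields $\Theta_{n,m}(\pi)=\pi^\vee\times\mathbbm{1}_{m-n}$ in the Grothendieck group, which is exactly the asserted equality up to semisimplification.

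For the second assertion, suppose $\pi$ is unitary. The key point is that then $\pi^\vee$ is also unitary, and the representation $\pi^\vee\times\mathbbm{1}_{m-n}$ is a parabolic induction from a unitary representation of a Levi subgroup, hence is itself unitary (unitary parabolic induction). Now $\Theta_{n,m}(\pi)$ is a quotient of $\omega_{n,m}\otimes\pi^\vee$; since by Howe duality (Theorem \ref{T:HoweDuality}) its co-socle $\theta_{n,m}(\pi)$ is irreducible, and by Theorem \ref{T:APS-EP-fomula} combined with the Ext-vanishing the whole of $\Theta_{n,m}(\pi)$ has the same semisimplification as the irreducible-by-unitarity... — here one must be careful: a priori $\pi^\vee\times\mathbbm{1}_{m-n}$ need not be irreducible as an abstract representation. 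So instead I would argue: $\Theta_{n,m}(\pi)$ and $\pi^\vee\times\mathbbm{1}_{m-n}$ have the same semisimplification; the latter is unitary; a unitary finite-length representation is semisimple; hence $\Theta_{n,m}(\pi)$ has semisimple semisimplification equal to $\pi^\vee\times\mathbbm{1}_{m-n}$, but $\Theta_{n,m}(\pi)$ has irreducible co-socle $\theta_{n,m}(\pi)$ by Howe duality, which forces $\pi^\vee\times\mathbbm{1}_{m-n}$ to be irreducible and equal to $\theta_{n,m}(\pi)=\Theta_{n,m}(\pi)$. This simultaneously proves irreducibility of $\pi^\vee\times\mathbbm{1}_{m-n}$ and of $\Theta_{n,m}(\pi)$, and unitarity of $\Theta_{n,m}(\pi)$ follows since it is isomorphic to the unitary representation $\pi^\vee\times\mathbbm{1}_{m-n}$.

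The main obstacle I anticipate is the bookkeeping in the unitary case: establishing the isomorphism $\Theta_{n,m}(\pi)\simeq\pi^\vee\times\mathbbm{1}_{m-n}$ on the nose (not merely up to semisimplification) requires combining the co-socle description from Howe duality with the semisimplicity forced by unitarity, and one needs to be sure that ``same semisimplification $+$ irreducible co-socle $+$ target is semisimple'' genuinely implies isomorphism — it does, because an object with semisimple semisimplification is itself semisimple, and a semisimple object with irreducible co-socle is irreducible. The remaining steps (projectivity $\Rightarrow$ Ext-vanishing $\Rightarrow$ Grothendieck-group identity via Theorem \ref{T:APS-EP-fomula}) are formal once Theorem \ref{T:Main-3} is in hand.
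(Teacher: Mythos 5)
Your first paragraph is correct and is exactly the paper's (very short) proof: projectivity from Theorem \ref{T:Main-3} kills the higher Ext spaces, so the Euler--Poincar\'e formula of Theorem \ref{T:APS-EP-fomula} collapses to degree zero and gives $\Theta_{n,m}\left(\pi\right) = \pi^\vee\times\mathbbm{1}_{m-n}$ in the Grothendieck group. (The aside about $\omega_{n,m}\otimes\pi^\vee$ being projective is unnecessary: the vanishing of $\Ext^i_{G_n}\left(\omega_{n,m},\pi\right)_{sm}$ for $i>0$ is immediate from projectivity of $\omega_{n,m}$ itself.)

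The unitary half, however, has a genuine gap. Your key inference is ``an object with semisimple semisimplification is itself semisimple,'' and this is false: \emph{every} finite-length representation has semisimple semisimplification, by definition of $s.s.$, so this condition carries no information about $\Theta_{n,m}\left(\pi\right)$. Concretely, if $\pi^\vee\times\mathbbm{1}_{m-n}$ were a direct sum $A\oplus B$ of two non-isomorphic irreducibles, a non-split extension of $A$ by $B$ would have semisimplification $A\oplus B$ \emph{and} irreducible co-socle, so ``same semisimplification $+$ irreducible co-socle $+$ the semisimplification is unitary'' does not force irreducibility of $\Theta_{n,m}\left(\pi\right)$, and your argument never rules this scenario out. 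The missing ingredient is Bernstein's theorem that parabolic induction of irreducible \emph{unitary} representations of a Levi subgroup of a $p$-adic general linear group is irreducible: since $\pi^\vee\boxtimes\mathbbm{1}_{m-n}$ is irreducible unitary, $\pi^\vee\times\mathbbm{1}_{m-n}$ is irreducible (and unitary). With that in hand the conclusion is immediate from your first paragraph: $\Theta_{n,m}\left(\pi\right)$ has irreducible semisimplification, hence has length one and is isomorphic to $\pi^\vee\times\mathbbm{1}_{m-n}$, which gives both irreducibility and unitarity. So the statement is true and your overall strategy matches the paper's, but the irreducibility of the induced representation must come from irreducibility of unitary induction, not from the co-socle/semisimplification argument you propose.
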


\vskip 5pt

\begin{proof}
Just combine Theorem \ref{T:Main-3} with Theorem \ref{T:APS-EP-fomula}. 

\end{proof}

\vskip 5pt

After Theorem \ref{T:Main-3} has been established, there are still many interesting questions remain to be explored. For example:

\begin{itemize}
    \item Suppose that $m\leq 2n-2$. As we already know, in this case $\omega_{n,m}$ is not projective. Then can we say something about the \textit{projective dimension} of $\omega_{n,m}$? In Proposition \ref{P:BoundExtVanishingDegree} below, we give a uniform upper bound of the projective dimension of $\omega_{n,m}^K$.

    \vskip 5pt

    \item Again suppose that $m\leq 2n-2$. Let $\pi$ be an irreducible smooth representation of $G_n$ such that
    \[
        \Ext_{G_n}^i\left(\omega_{n,m},\pi\right)_{sm} \neq 0
    \]
    for some $i>0$. Then one can ask: are these Ext spaces $\Ext_{G_n}^i\left(\omega_{n,m},\pi\right)_{sm}$ interesting objects to study? Can we upgrade them to a ``\textit{categorical}'' theta lifting?

    \vskip 5pt

    \item Can we describe the restriction of the Weil representation to the smaller member of the dual pair in some more explicit way? For instance, let $I_n\times I_m$ be the Iwahori subgroup of $G_n\times G_m$, can we describe the Iwahori fixed part $\omega_{n,m}^{I_n\times I_m}$ as a module over the Iwahori Hecke algebra?
\end{itemize}
\vskip 5pt
We wish to work on these questions in future works.

\vskip 5pt


\subsection{Speculations: Ext spaces versus local L-functions} 
\label{sub:speculation_i}

Finally in this somewhat speculative subsection, we would like to propose several questions related to the main theme of this paper, and give some examples.

\vskip 5pt

This paper begins by examining the relation between the Ext-vanishing and the holomorphicity of the L-function. In Theorem \ref{T:Main-1} we prove that the holomorphicity of the L-function implies the Ext-vanishing. The first question we would like to propose is the following: to what extent does the converse hold? To be more precise:

\begin{Que}\label{Q:ExtcontrolLfunction}
Suppose that $n\leq m$. Let $\pi$ be an irreducible representation of $G_n$. If 
\[
    \Ext_{G_n}^i\left(\omega_{n,m},\pi\right)_{sm} = 0
\]    
for all $i>0$, then can we conclude that $L\left(s,\pi\right)$ or $L\left(s,\pi^\vee\right)$ is holomorphic at $s=\frac{1+m-n}{2}$?
\end{Que}

\vskip 5pt

When $n< m$, the answer to this naive question is ``\textit{no}'', due to the following two reasons:

\begin{itemize}
    \item One can construct a counterexample by hand: let $x\in\R\backslash\frac{1}{2}\Z$, and consider the irreducible principal series representation
    \[
        \pi = |\cdot|^{\frac{1+m-n}{2}}\times\left(|\cdot|^x\times\cdots\times|\cdot|^x\right)\times |\cdot|^{-\frac{1+m-n}{2}}.
    \]
    Then the Ext-vanishing condition holds for $\pi$, but both $L\left(s,\pi\right)$ and $L\left(s,\pi^\vee\right)$ have poles at $s=\frac{1+m-n}{2}$.

    \vskip 5pt

    \item The Weil representation $\omega_{n,m}$ is projective when $m\geq 2n-1$. In this case the Ext-vanishing condition holds for any irreducible representation of $G_n$, so we can say nothing about $L\left(s,\pi\right)$ or $L\left(s,\pi^\vee\right)$.
\end{itemize}
\vskip 5pt
However, by Theorem \ref{T:Main-2}, if $\pi$ is a character, then the Ext-vanishing does imply the holomorphicity of L-functions. Hence one may ask: if we further assume that $\pi$ is \textit{unitary}, or of \textit{Arthur type}, then what is the answer to the question?

\vskip 5pt

When $n=m$, there is still some hope for the answer of the question to be ``yes''; the principal series $|\cdot|^{\frac{1}{2}}\times\left(|\cdot|^x\times\cdots\times|\cdot|^x\right)\times |\cdot|^{-\frac{1}{2}}$ is no longer a counterexample. In Section \ref{sec:speculation_ext_spaces_versus_l_functions} below, we investigate some low rank examples. We show that:

\begin{Prop}\label{P:leq3computation}
When $n=m\leq 3$, the answer to Question \ref{Q:ExtcontrolLfunction} is ``yes''. More precisely, let $\pi$ be an irreducible representation of $G_n$ such that both $L\left(s,\pi\right)$ and $L\left(s,\pi^\vee\right)$ have poles at $s=1/2$. Then there exists some $i>0$, such that 
\[
    \Ext^i_{G_n}\left(\omega_{n,n},\pi\right)_{sm}\neq 0.
\]
\end{Prop}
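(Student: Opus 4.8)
The plan is to enumerate, for each $n \in \{1,2,3\}$, the irreducible representations $\pi$ of $G_n$ for which both $L(s,\pi)$ and $L(s,\pi^\vee)$ have poles at $s=1/2$, and then exhibit a nonvanishing higher Ext group for each. For $n=1$, there is no such $\pi$, so the statement is vacuous. For $n=2$, a direct computation with the Godement--Jacquet L-function formula (Proposition \ref{P:GJLfunction}) shows that the only such $\pi$ is $\pi = \mathbbm{1}_2$ (up to the observation that $L(s,|\det|^t)$ has a pole at $s=1/2$ exactly when $t = 1/2$, so simultaneous poles for $\pi$ and $\pi^\vee$ force $\pi$ and $\pi^\vee$ to both be $|\det|^{1/2}$-twists... which pins down $\pi = \mathbbm{1}_2$); here the desired nonvanishing $\Ext^1_{G_2}(\omega_{2,2},\mathbbm{1}_2)_{sm} = \St_2$ is recorded already in the introduction (following Xue \cite{xue2023full}), and can be re-derived via the rank filtration as in the proof of Theorem \ref{T:Main-2}.

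The substantive case is $n=m=3$. First I would use Proposition \ref{P:GJLfunction} to list all $\pi$ with the double-pole property. Writing $\pi$ via Zelevinsky/Langlands data, the pole of $L(s,\pi)$ at $s=1/2$ is governed by the segments (or the $e(\tau_i)$'s) appearing, and one finds a short finite list: essentially the characters $\eta = |\det_3|^{k - 3/2}$ with $k \in \{1,2\}$ handled by Theorem \ref{T:Main-2}, together with a small number of non-character representations of $G_3$ built from a Steinberg or trivial representation of $G_2$ twisted appropriately and a character of $G_1$ — concretely the representations whose cuspidal support, after twisting, contains both an exponent forcing a pole for $\pi$ and one forcing a pole for $\pi^\vee$. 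For the characters, Theorem \ref{T:Main-2} already gives $\Ext^1_{G_3}(\omega_{3,3},\eta)_{sm} = \pi(3,3;k) \neq 0$. For each remaining representation on the list I would compute $\EP_{G_3}(\omega_{3,3},\pi)_{sm} = \pi^\vee$ by Theorem \ref{T:APS-EP-fomula}, compute $\Ext^0 = \Theta_{3,3}(\pi)^\vee$ directly (it is a quotient of $\omega_{3,3}$, accessible via Kudla's filtration of the relevant Jacquet modules), observe that $\Theta_{3,3}(\pi) \neq \pi^\vee$ in the Grothendieck group — which is exactly the statement that the open-orbit contribution does not exhaust the big theta lift, a failure forced by the pole of the zeta integral at $s=1/2$ — and conclude from the Euler--Poincaré identity $\pi^\vee = \sum_i (-1)^i \Ext^i$ that $\sum_{i>0} (-1)^i \Ext^i_{G_3}(\omega_{3,3},\pi)_{sm} \neq 0$, hence some $\Ext^i$ with $i>0$ is nonzero.

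Concretely, the cleanest uniform argument avoids computing individual Ext groups beyond degree $0$: it suffices to show $\Theta_{3,3}(\pi) \not\simeq \pi^\vee$ as virtual representations, since then the EP formula forces a higher Ext to be nonzero. And $\Theta_{n,n}(\pi) = \pi^\vee$ (virtually) would imply, by comparing with the open piece $\omega^0_{n,n} = \calS(G_n)$, that the boundary $M_{n,n}\setminus G_n$ contributes nothing — but the boundary contributes nothing precisely when the zeta integral map on $\calS(G_n)$ realizes the full Hom space, which by the Fang--Sun--Xue analysis is obstructed exactly when \emph{both} $L(s,\pi)$ and $L(s,\pi^\vee)$ have poles at $s=1/2$. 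Making this ``boundary contributes'' step rigorous — i.e. turning the pole of the L-function into a genuine nonzero subquotient of $\Theta_{3,3}(\pi)$ supported on the boundary, rather than merely a failure of the open map to be surjective — is the main obstacle, and for $n=3$ I expect to need an explicit analysis of the Jacquet modules of $\omega_{3,3}$ along the parabolic $P_{2,1}$ (equivalently, the rank-$2$ and rank-$1$ graded pieces of the rank filtration) to identify the extra subquotient, much as in the proof of Theorem \ref{T:Main-2}. The finiteness of the list of $\pi$ for $n \le 3$ keeps this manageable; the same strategy would become combinatorially heavier, though not conceptually different, for larger $n$.
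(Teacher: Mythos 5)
Your overall reduction is the same as the paper's: by the Euler--Poincar\'e formula (Theorem \ref{T:APS-EP-fomula}, which for $n=m$ gives $\EP_{G_n}\left(\omega_{n,n},\pi\right)_{sm}=\pi$, not $\pi^\vee$ as you wrote -- a harmless dualization slip since $\Hom_{G_n}\left(\omega_{n,n},\pi\right)_{sm}=\Theta_{n,n}\left(\pi\right)^\vee$), it suffices to show that $\Hom_{G_n}\left(\omega_{n,n},\pi\right)_{sm}$ is not irreducible, and your treatment of $n=1$, $n=2$, and of the characters $|{\det}_3|^{\pm\frac{1}{2}}$ via Theorem \ref{T:Main-2} agrees with the paper; the case $\pi\simeq\chi\times\mathbbm{1}_2$ is also within reach (the paper does it in one line with Corollary \ref{C:KudlaIndCompatible}, reducing to $\Hom_{G_2}\left(\omega_{2,2},\mathbbm{1}_2\right)_{sm}$).

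The genuine gap is in the remaining $G_3$ cases, $\pi=LQ\left(\St\left(\left[\tfrac{3}{2},\tfrac{1}{2}\right]_{\mathbbm{1}_1}\right)\times|\cdot|^{-\frac{1}{2}}\right)$ and its MVW-dual partner, which are exactly the new and hard ones. You propose to compute $\Theta_{3,3}\left(\pi\right)$ directly from Kudla's filtration and to argue that ``the boundary must contribute because both L-functions have poles,'' but as you yourself note, the implication (double pole $\Rightarrow$ boundary contributes) is precisely the unproven converse of the Fang--Sun--Xue mechanism -- it is the content of Question \ref{Q:ExtcontrolLfunction} itself, so it cannot be invoked; and the direct computation is not routine here because the offending exponent is exactly $\frac{1+m-n}{2}=\frac{1}{2}$, where Corollary \ref{C:KudlaIndCompatible} fails and the lower pieces of Kudla's filtration genuinely interfere, while $\pi$ is only a Langlands quotient, so one must also control kernels of the standard module. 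The paper closes this gap with an idea absent from your proposal: it applies the Godement--Jacquet zeta integral to the \emph{reducible}, length-two module $\Pi=|\cdot|^{-\frac{1}{2}}\times\St\left(\left[\tfrac{3}{2},\tfrac{1}{2}\right]_{\mathbbm{1}_1}\right)$ (resp. $\St\left(\left[-\tfrac{1}{2},-\tfrac{3}{2}\right]_{\mathbbm{1}_1}\right)\times|\cdot|^{\frac{1}{2}}$), sitting in a non-split sequence $0\to\pi\to\Pi\to\pi'\to 0$ with $L\left(s,\Pi\right)=L\left(s,\pi\right)$ and $L\left(s,\pi'\right)/L\left(s,\pi\right)$ vanishing at $s=\tfrac{1}{2}$; the normalized zeta integral at $s=\tfrac{1}{2}$ then gives a nonzero map $\Pi\to\Hom_{G_3}\left(\omega_{3,3},\Pi\right)_{sm}$ which, because of that zero, factors through $\Hom_{G_3}\left(\omega_{3,3},\pi\right)_{sm}$ and is injective by Howe duality, exhibiting a length-two subrepresentation and hence reducibility with no boundary analysis at all. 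Without this (or a completed substitute for the deferred Jacquet-module computation), your argument does not establish the $n=3$ case.
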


\vskip 5pt

Indeed, as explicated in Remark \ref{RMK:holomorphicity-L-n=1}, when $n=1$ there is no $\pi$ such that both $L\left(s,\pi\right)$ and $L\left(s,\pi^\vee\right)$ have poles at $s=1/2$; when $n=2$ there is only one such representation $\pi$, namely the trivial representation $\mathbbm{1}_2$, and we already know that 
\[
    \Ext_{G_2}^1\left(\omega_{2,2},\mathbbm{1}_2\right)_{sm} = \St_2
\]
is non-zero. So it only remains to treat the case $n=3$. A key ingredient in our computation is the zeta integral for representations that are \textit{non-irreducible}. We will show that $\Hom_{G_3}\left(\omega_{3,3},\pi\right)_{sm}$ is not irreducible if both $L\left(s,\pi\right)$ and $L\left(s,\pi^\vee\right)$ have poles at $s=1/2$. Then it follows from Theorem \ref{T:APS-EP-fomula} that the assertion in the proposition holds.

\vskip 5pt

It would also be interesting to look at other branching problems to see whether a similar phenomenon as in Theorem \ref{T:Main-1} appears. The first example comes to our mind is the \textit{Gan--Gross--Prasad} model for general linear groups. In the setting of Gan--Gross--Prasad model, given an irreducible smooth representation $\pi$ of $G_{n+1}$ and $\sigma$ of $G_n$, one would like to determine the space
\[
    \Hom_{G_n}\left(\pi,\sigma\right).
\] 
Here $G_n$ is regarded as a subgroup of $G_{n+1}$ via the map
\[
    g\longmapsto \left(\begin{array}{cc}
                            g & ~\\
                            ~ & 1
                        \end{array}\right)
\]
for $g\in G_n$. In \cite{MR3966813}, Prasad suggested to study Ext-analogues $\Ext_{G_n}^i\left(\pi,\sigma\right)$ and Euler--Poincar\'e characteristic $\EP_{G_n}\left(\pi,\sigma\right)$,
he showed that the Euler--Poincar\'e characteristic is well-defined and enjoys a very elegant formula \cite[Thm. 4.2]{MR3966813}. Therefore it will be useful to know precisely when all higher Ext spaces vanish. We would like to ask the following question.

\begin{Que}\label{Speculation-2:ExtGGPandRakinSelberg}
Let $\pi$ be an irreducible smooth representation of $G_{n+1}$ with L-parameter $\phi_\pi$, and $\sigma$ be an irreducible smooth representation of $G_{n}$ with L-parameter $\phi_\sigma$. Suppose that the L-function
\[
    L\left(s,\pi\times\sigma^\vee\right) \coloneqq L\left(s, \phi_\pi\otimes\phi_\sigma^\vee\right)
\]  
is holomorphic at $s=1/2$. Then can we conclude that for all $i>0$
\[
    \Ext_{G_n}^i\left(\pi,\sigma\right) = 0?
\]
\end{Que}

\vskip 5pt

Here are two affirmative evidences for this question. First of all, in \cite[Conj. 5.1]{MR3966813}, Prasad himself conjectured that if $\pi$ and $\sigma$ are \textit{generic}, then all higher Ext spaces vanish. This conjecture of Prasad has been proved by Chan--Savin \cite{MR4291425}. In particular, if $\pi$ and $\sigma$ are tempered, then we know that $L\left(s,\pi\times\sigma^\vee\right)$ is holomorphic at $s=1/2$, and all higher Ext spaces vanish. Secondly, we check the case $n=1$. According to a result of Chan \cite[Thm. 1.1]{MR4270667}, $\pi\,\big|_{G_1}$ is projective unless $\pi$ is a character of $G_2$. Therefore, one knows that
\[
    \Ext_{G_1}^i\left(\pi,\sigma\right) = 0
\]
for all $i>0$ unless $\pi\simeq \sigma\circ\det_2$, in which case $L\left(s,\pi\times\sigma^\vee\right)$ has a pole at $s=1/2$. 

\vskip 5pt

\begin{Rmk}
There exists some irreducible representations $\pi$ and $\sigma$ of $G_{n+1}$ and $G_n$ respectively, such that $\Ext_{G_n}^i\left(\pi,\sigma\right)=0$ for all $i>0$, but $L\left(s,\pi\times\sigma^\vee\right)$ has a pole at $s=1/2$. For example, one can take $\pi=\St_2$ and $\sigma=|\cdot|^1$. By \cite[Thm. 1.1]{MR4270667}, $\St_2\,\big|_{G_1}$ is projective. Hence  $\Ext_{G_1}^i\left(\St_2,|\cdot|^1\right)=0$ for all $i>0$, but $L\left(s,\St_2|\cdot|^{-1}\right)$ has a pole at $s=1/2$. It seems to us that unitary representations, or Arthur type representations, are more appropriate objects to consider in the context of this question. Also, this L-function $L\left(s,\pi\times\sigma^\vee\right)$ might not be optimal, one may consider the function 
\[
    L\left(s;\pi,\sigma\right)\coloneqq \frac{L\left(s,\phi_\pi\otimes\phi_\sigma^\vee\right)\cdot L\left(s,\phi_\pi^\vee\otimes\phi_\sigma\right)}{L\left(s+\frac{1}{2},\phi_\pi\otimes\phi_\pi^\vee\right)\cdot L\left(s+\frac{1}{2},\phi_\sigma\otimes\phi_\sigma^\vee\right)}
\]
defined in \cite[Thm. 3.2]{MR4190046} instead. We refer the readers to a recent paper \cite{MR4846726} for more discussions on Ext-branching laws.
\end{Rmk}

\vskip 5pt

\subsection*{Organization of the paper} 

The paper is organized as follows. In Section \ref{sec:backgrounds}, we review relevant preliminaries, including the classification of irreducible representations of general linear groups, and the local theory of Godement--Jacquet L-functions. Then in Section \ref{sec:the_weil_representation} we recall the setup of the type II theta lifting, and introduce two useful tools: the rank filtration and the Kudla's filtration. The next three sections are devoted to the proofs of our main results: the main result I, II, III will be proved in Section \ref{sec:proof_of_the_main_theorem_i}, Section \ref{sec:proof_of_the_main_theorem_ii}, Section \ref{sec:proof_of_the_main_theorem_iii} respectively. Finally in Section \ref{sec:speculation_ext_spaces_versus_l_functions} we compute some examples to provide some evidence supporting our speculations.

\vskip 5pt


\subsection*{Notations and conventions} 

We end up this introduction by setting some general notations and conventions. 

\begin{itemize}
    \item We fix a uniformizer $\varpi$ of the base field $F$, and denote by $q$ the cardinality of the residue field of $F$.

    \vskip 5pt

    \item In the rest of this paper, all representations are supposed to be smooth unless otherwise stated. So the adjective ``smooth'' will be suppressed.

    \vskip 5pt

    \item We will work in the setting of $\ell$-groups, see \cite[Def. 1]{BNote} for the definition. For an $\ell$-group $G$, we denote by $\calM\left(G\right)$ the category of all smooth representations of $G$, and by $\calM_f\left(G\right)$ the full subcategory consisting of finite length objects. The set of all irreducible representations of $G$ will be denoted by $\Irr\left(G\right)$. The Grothendieck group associated to $\Irr\left(G\right)$ will be denoted by $\calR\left(G\right)$.

    \vskip 5pt

    \item The opposite category of a category $\calC$ will be denoted by $\calC^{op}$.

    \vskip 5pt

    \item For a finite length representation $\Pi\in\calM_f\left(G\right)$, we shall write $s.s.\left(\Pi\right)$ for its semi-simplification, namely the direct sum of all its irreducible subquotients (counting multiplicities). 

    \vskip 5pt

    \item For an algebraic variety $X$ over $F$, we set $\calS\left(X\right)$ to be the space of Schwartz functions (i.e. locally constant and compactly supported functions) on its $F$-points $X\left(F\right)$.

    \vskip 5pt

    \item To distinguish different groups acting on $M_{n,m}$, from now on we shall replace the symbol ``$G$'' by ``$H$'' for all general linear groups \textit{acting on the right} of $M_{n,m}$. Namely, instead of $G_m$ (as in this introduction), from now on we shall use $H_m$ to denote the general linear group acting on the right of $M_{n,m}$, and the same change of notation also applies to all Levi subgroups of $H_m$.


\end{itemize}

\vskip 5pt

\subsection*{Acknowledgements} 
\label{ssub:acknowledgements}

This work was initiated during the summer school ``Algebra and Number Theory 2024'' held by Peking University and the Academy of Mathematics and Systems Science of the Chinese Academy of Sciences. We would like to express our sincere gratitude to Prof. Shou-Wu Zhang and Prof. Liang Xiao. Without their efforts, the summer school could not have been such a success. The first author would like to thank Prof. Binyong Sun for enlightening discussions, and Prof. Dipendra Prasad and Prof. Kei Yuen Chan for several email correspondences. 


\vskip 10pt

\section{Some backgrounds}\label{sec:backgrounds}

In this section we recall some backgrounds which will be used later in proofs of our results. 

\vskip 5pt

\subsection{Parabolic induction, Jacquet module and contragredient} 
\label{sub:parabolic_induction_jacquet_modules_and_dualities}

In the representation theory of $\ell$-groups, the functors of parabolic induction and Jacquet module play important roles. Let $G$ be a reductive $\ell$-group. For a parabolic subgroup $P$ of $G$ and a Levi decomposition $P=MN_P$, where $M$ is a Levi subgroup of $P$ and $N_P$ is the unipotent radical of $P$, recall that one can define:

\begin{itemize}
    \item the normalized parabolic induction functor
    \[
     \Ind_P^G: \calM\left(M\right) \lra \calM\left(G\right),
    \]
    by sending a representation $\left(\tau, V\right)$ of $M$ to its normalized parabolic induction
    \[
    \Ind_P^G\tau = \left\{f:G\ra V~\big|~f\left(pg\right)=\delta_P^{\frac{1}{2}}\left(p\right)\tau\left(p\right)f\left(g\right) \textit{ for all $p\in P$, $g\in G$}\right\};
    \]

    \vskip 5pt

    \item the normalized Jacquet module functor
    \[
      \Jac_P: \calM\left(G\right) \lra \calM\left(M\right),
    \]
    by sending a representation $\left(\pi, V\right)$ of $G$ to its normalized Jacquet module
    \[
        \Jac_P\pi = \delta_P^{-\frac{1}{2}}\cdot V\big/\left\langle \pi\left(n\right)v-v~\big|~n\in N_P,\, v\in V\right\rangle.
    \]
\end{itemize}
\vskip 5pt
Both of these two functors are exact, and they enjoy the Frobenius reciprocity and the second adjointness of Hom spaces, which still hold in the Ext setting by \cite[Lem. 3.2]{MR3753906}.

\begin{Lem}\label{L:IndJacAdj}
Let $G$ be a reductive $\ell$-group, $P$ a parabolic subgroup of $G$ with Levi decomposition $P=MN_P$. Denote by $\overline{P} = M N_{\overline{P}}$ the parabolic subgroup of $G$ opposite to $P$. Then for any $\pi\in\calM\left(G\right)$ and $\tau\in\calM\left(M\right)$, and any $i\geq 0$, we have:

\begin{enumerate}
    \item the Frobenius reciprocity:
    \[
    \Ext_G^i\left(\pi, \Ind_P^G\tau\right) \simeq \Ext_M^i\left(\Jac_P\pi, \tau\right);
    \]

    \vskip 5pt

    \item the second adjointness:
    \[
    \Ext_G^i\left(\Ind_P^G\tau, \pi\right) \simeq \Ext_M^i\left(\tau, \Jac_{\overline{P}}\pi\right).
    \]
\end{enumerate}
\end{Lem}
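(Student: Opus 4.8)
The plan is to reduce both statements to the corresponding isomorphisms of $\Hom$ spaces by the standard derived-functor argument, exploiting the fact that all functors in sight are exact. I will use three inputs, all standard. First, $\calM(G)$ and $\calM(M)$ have enough injectives (they are Grothendieck abelian categories, being categories of non-degenerate modules over the idempotented Hecke algebras) as well as enough projectives. Second, Bernstein's chain of adjunctions between the normalized functors,
\[
    \Jac_P \;\dashv\; \Ind_P^G \;\dashv\; \Jac_{\overline{P}},
\]
i.e. the Frobenius reciprocity $\Hom_G(\pi,\Ind_P^G\tau)\simeq\Hom_M(\Jac_P\pi,\tau)$ and Bernstein's second adjointness $\Hom_G(\Ind_P^G\tau,\pi)\simeq\Hom_M(\tau,\Jac_{\overline{P}}\pi)$ on the level of $\Hom$ spaces. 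Third, the exactness of $\Ind_P^G$, $\Jac_P$ and $\Jac_{\overline{P}}$. The only extra categorical fact needed is the elementary principle that a right adjoint of an exact functor preserves injectives (and a left adjoint of an exact functor preserves projectives).

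For part (1): since $\Jac_P$ is exact and left adjoint to $\Ind_P^G$, the functor $\Ind_P^G$ preserves injectives; since $\Ind_P^G$ is moreover exact, it carries any injective resolution $\tau\to I^\bullet$ in $\calM(M)$ to an injective resolution $\Ind_P^G\tau\to\Ind_P^G I^\bullet$ in $\calM(G)$. Hence
\[
    \Ext_G^i(\pi,\Ind_P^G\tau) = H^i\!\bigl(\Hom_G(\pi,\Ind_P^G I^\bullet)\bigr) \simeq H^i\!\bigl(\Hom_M(\Jac_P\pi,I^\bullet)\bigr) = \Ext_M^i(\Jac_P\pi,\tau),
\]
where the middle isomorphism is the $\Hom$-level Frobenius reciprocity applied termwise; being natural, these isomorphisms commute with the differentials, so this is an isomorphism of complexes. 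Part (2) follows the same pattern after passing to the other variable: since $\Ind_P^G$ is exact and left adjoint to $\Jac_{\overline{P}}$, the functor $\Jac_{\overline{P}}$ preserves injectives, and being exact it sends an injective resolution $\pi\to I^\bullet$ in $\calM(G)$ to an injective resolution $\Jac_{\overline{P}}\pi\to\Jac_{\overline{P}}I^\bullet$ in $\calM(M)$; therefore
\[
    \Ext_M^i(\tau,\Jac_{\overline{P}}\pi) = H^i\!\bigl(\Hom_M(\tau,\Jac_{\overline{P}}I^\bullet)\bigr) \simeq H^i\!\bigl(\Hom_G(\Ind_P^G\tau,I^\bullet)\bigr) = \Ext_G^i(\Ind_P^G\tau,\pi)
\]
by the $\Hom$-level second adjointness. (One could run both computations instead with projective resolutions in the other variable, since $\Jac_P$, resp. $\Ind_P^G$, preserves projectives as its right adjoint is exact.)

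Since the genuine content is all imported — Bernstein's second adjointness for $\Hom$ spaces, the exactness of the Jacquet functors, and the existence of enough injectives — the argument above is purely formal; the only point demanding real care is bookkeeping with the normalizations, namely checking that it is $\Jac_P$ and not $\Jac_{\overline{P}}$ that appears in (1), and that the preserves-injectives principle is applied to the correct adjoint pair in each case. Alternatively, one may simply cite \cite[Lem. 3.2]{MR3753906}, where this is recorded; the sketch is included for completeness.
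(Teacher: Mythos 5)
Your argument is correct, and it is the same standard argument underlying the reference the paper cites for this lemma (\cite[Lem.\ 3.2]{MR3753906}): the paper offers no independent proof, and the cited proof likewise upgrades the Hom-level adjunctions $\Jac_P \dashv \Ind_P^G \dashv \Jac_{\overline{P}}$ to Ext by exactness of the three functors together with the fact that adjoints of exact functors preserve injectives (resp.\ projectives). Your bookkeeping of which Jacquet functor appears in each part, and of where the preservation-of-injectives principle is applied, is accurate, so there is nothing to add.
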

\vskip 5pt

In our later arguments, a Levi subgroup is usually a product of two (or more) groups. The following K\"unneth formula \cite[Thm. 3.5]{prasad2023homological} will be useful.

\begin{Thm}\label{T:Kunneth}
Let $G$ and $H$ be two reductive $\ell$-groups. Let $\pi_1$, $\pi_2$ be two representations of $G$, and $\sigma_1$, $\sigma_2$ be two representations of $H$. Assume that one of the following two conditions hold:
\begin{enumerate}
    \item both the representations $\pi_1$ and $\pi_2$ of $G$ have finite lengths; or

    \vskip 5pt

    \item the representation $\pi_1$ of $G$ and $\sigma_1$ of $H$ have finite lengths.
\end{enumerate}
Then we have
\[
    \Ext_{G\times H}^i\left(\pi_1\boxtimes\sigma_1,\pi_2\boxtimes\sigma_2\right) \simeq \bigoplus_{i=j+k}\Ext^j_G\left(\pi_1,\pi_2\right)\otimes\Ext_H^k\left(\sigma_1,\sigma_2\right).
\]
\end{Thm}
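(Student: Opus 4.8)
The plan is to reduce the general statement to a computation with projective resolutions, and the main work is arranging the hypotheses so that tensoring two resolutions behaves correctly. First I would treat the Hom-level statement ($i=0$), which is the classical fact that $\Hom_{G\times H}(\pi_1\boxtimes\sigma_1,\pi_2\boxtimes\sigma_2)\simeq\Hom_G(\pi_1,\pi_2)\otimes\Hom_H(\sigma_1,\sigma_2)$ for smooth representations; under a finiteness hypothesis on one factor this is standard (a smooth $G\times H$-map out of $\pi_1\boxtimes\sigma_1$ is determined by its "matrix coefficients" in the two variables, and finite length on one side lets one conclude the natural map is an isomorphism rather than just an injection). Then the derived statement should follow by a spectral sequence / double complex argument: choose a projective resolution $P_\bullet\to\pi_1$ in $\calM(G)$ and a projective resolution $Q_\bullet\to\sigma_1$ in $\calM(H)$; one checks that $P_\bullet\boxtimes Q_\bullet$ (outer tensor product, suitably completed/smoothed) is a resolution of $\pi_1\boxtimes\sigma_1$ by projective objects in $\calM(G\times H)$, and that $\Hom_{G\times H}(P_j\boxtimes Q_k,\pi_2\boxtimes\sigma_2)\simeq\Hom_G(P_j,\pi_2)\otimes\Hom_H(Q_k,\sigma_2)$ by the $i=0$ case applied degreewise. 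Taking the totalization of the resulting double complex and running the two standard spectral sequences (one of which degenerates because $\otimes$ over a field is exact, so there are no $\mathrm{Tor}$ correction terms) yields the direct-sum decomposition $\Ext^i_{G\times H}\simeq\bigoplus_{i=j+k}\Ext^j_G\otimes\Ext^k_H$.

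The point where the two alternative hypotheses (1) and (2) enter, and the step I expect to be the main obstacle, is verifying that the outer tensor product of projectives is again projective in $\calM(G\times H)$ and that the degreewise Hom-splitting holds without any topological completion issues. In the category of smooth representations, $P\boxtimes Q$ for $P\in\calM(G)$, $Q\in\calM(H)$ projective need not be projective in full generality, but it is when suitable finiteness is available — e.g. when one can write the projectives as summands of modules induced from compact-open subgroups, for which the outer tensor product is a module induced from a compact-open subgroup of $G\times H$ and hence projective. Under hypothesis (1), both $\pi_1,\pi_2$ have finite length, so one may work within a single Bernstein block (or finitely many), where projective generators are finitely generated and the tensor-product-of-projectives claim is clean; under hypothesis (2), finite length of $\pi_1$ on the $G$-side and $\sigma_1$ on the $H$-side lets one control the resolution $P_\bullet$ of $\pi_1$ and the resolution of $\sigma_1$ simultaneously, again avoiding pathologies. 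In either case the key technical lemma is: for $P\to\pi_1$ a projective resolution with $\pi_1$ of finite length (plus the appropriate finiteness on the other factor), the natural map
\[
    \Ext^i_{G\times H}(\pi_1\boxtimes\sigma_1,\pi_2\boxtimes\sigma_2)\lra \bigoplus_{i=j+k}\Ext^j_G(\pi_1,\pi_2)\otimes\Ext^k_H(\sigma_1,\sigma_2)
\]
is an isomorphism.

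Concretely, the steps in order: (i) establish the $i=0$ Künneth isomorphism for Hom of outer tensor products under (1) or (2); (ii) show that an outer tensor product of projective objects is projective in $\calM(G\times H)$ under the relevant finiteness, so that $P_\bullet\boxtimes Q_\bullet$ computes $\Ext_{G\times H}$; (iii) apply (i) degreewise to identify the double complex $\Hom_{G\times H}(P_\bullet\boxtimes Q_\bullet,\pi_2\boxtimes\sigma_2)$ with $\Hom_G(P_\bullet,\pi_2)\otimes\Hom_H(Q_\bullet,\sigma_2)$; (iv) invoke the algebraic Künneth theorem for complexes of vector spaces over $\C$ (where $\mathrm{Tor}^{\C}_{\geq 1}=0$) to get the cohomology of the total complex as the claimed direct sum. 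Since the paper cites this as \cite[Thm. 3.5]{prasad2023homological}, in the write-up I would give the double-complex argument in outline and refer to that source for the verification that outer tensor products of the relevant projectives are projective, which is the one genuinely $\ell$-group-specific input.
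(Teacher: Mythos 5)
A preliminary remark: the paper does not prove Theorem \ref{T:Kunneth} at all — it is imported verbatim from \cite[Thm. 3.5]{prasad2023homological} — so the only meaningful comparison is with the standard double-complex proof, which is indeed the architecture you describe (tensor two projective resolutions, identify the Hom double complex, apply the K\"unneth theorem for complexes of $\C$-vector spaces).

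Within that architecture, however, you have located the role of the hypotheses (1)/(2) in the wrong step, and the step you flag as the ``one genuinely $\ell$-group-specific input'' is actually the easy, unconditional one, while the genuinely delicate step is left without a working justification. Projectivity of $P\boxtimes Q$ needs no finiteness: since $\calH\left(G\times H\right)\simeq\calH\left(G\right)\otimes\calH\left(H\right)$, any projective in $\calM\left(G\right)$ is a summand of a (possibly infinite) direct sum of modules $\calH\left(G\right)e_K$, and the outer tensor product of two such sums is a direct sum of modules $\calH\left(G\times H\right)e_{K\times K'}$, hence projective. The crux is instead your step (iii): the natural map $\Hom_G\left(P_j,\pi_2\right)\otimes\Hom_H\left(Q_k,\sigma_2\right)\to\Hom_{G\times H}\left(P_j\boxtimes Q_k,\pi_2\boxtimes\sigma_2\right)$ is injective but in general not surjective for large projectives (already $\Hom\left(\bigoplus_I\C,V\right)\otimes\Hom\left(\bigoplus_J\C,W\right)\neq\left(V\otimes W\right)^{I\times J}$), and ``apply the $i=0$ case degreewise'' is circular because your $i=0$ statement assumed finiteness of the representations being resolved, which the terms $P_j,Q_k$ do not enjoy. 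This is exactly where (1) and (2) enter, and differently. Under (2) one takes both $P_\bullet\to\pi_1$ and $Q_\bullet\to\sigma_1$ to consist of finitely generated projectives (finite length implies finitely generated; boundedness comes from Bernstein's finiteness of cohomological dimension), and then $\Hom_{G\times H}\left(\calH\left(G\right)e_K\boxtimes\calH\left(H\right)e_{K'},\pi_2\boxtimes\sigma_2\right)\simeq\pi_2^K\otimes\sigma_2^{K'}$ gives the degreewise splitting with no condition on $\pi_2,\sigma_2$. Under (1) this is impossible — $\sigma_1$ is arbitrary, so $Q_k$ cannot be taken finitely generated, and your suggestion to work in a single Bernstein block with finitely generated projective generators does not touch the $H$-side; the correct fix is to take only $P_\bullet$ finitely generated, compute $\Hom_{G\times H}\left(\calH\left(G\right)e_K\boxtimes Q_k,\pi_2\boxtimes\sigma_2\right)\simeq\Hom_H\left(Q_k,\pi_2^K\otimes\sigma_2\right)$, and use that $\pi_2^K$ is finite dimensional (admissibility of the finite-length $\pi_2$) to pull it out of the Hom. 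Your proposal never uses the finite length of $\pi_2$, which is a sign it cannot be complete as written, since hypothesis (1) genuinely needs it. Two smaller points: Hom out of a direct-sum totalization is a product totalization, so you should keep at least the finitely generated resolution bounded (or note the Hom double complex has finitely many nonzero columns) before invoking the K\"unneth theorem over $\C$; and the $i=0$ isomorphism is best proved by the same $e_K$-fixed-vector computation rather than by a ``matrix coefficient'' argument.
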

\vskip 5pt

Another important concept is the contragredient representation. This can be also regarded as an exact \textit{contravariant} functor
\[
  (-)^\vee: \calM\left(G\right)^{op} \lra \calM\left(G\right)  
\]
by sending a representation $\pi$ of $G$ to its contragredient $\pi^\vee$. 
For any representation $\pi$ of $G$ and $\sigma$ of $M$, where $M$ is a Levi subgroup contained in a parabolic subgroup $P$ of $G$, we have 
\[
    \Ind_P^G\left(\sigma^\vee\right) \simeq \left(\Ind_P^G \sigma\right)^\vee, \quad
    \Jac_{\overline{P}}\left(\pi^\vee\right) \simeq \left(\Jac_P\pi\right)^\vee.
\]
Later we will also introduce another related \textit{covariant} functor for general linear groups.

\vskip 5pt


\subsection{Hom functor with extra symmetries} 
\label{sub:hom_functor_with_extra_symmetry}

Given two representations $\omega$ and $\pi$ of $G$, one can consider the Ext-spaces 
\[
    \Ext^i_G\left(\omega, \pi\right),\quad  i\geq 0.
\] 
If $\omega$ is moreover a representation of $G\times H$ for another group $H$, then by the functoriality, these Ext spaces will be also equipped with an action of $H$, possibly not smooth. By taking the smooth part, we get bi-functors
\[
  \Ext^i_G\left(-,-\right)_{sm}:\calM\left(G\times H\right)^{op}\times \calM\left(G\right) \lra \calM\left(H\right),\quad i\geq 0.
\]

\begin{Lem}
For each $i\geq 1$, the $i$-th derived functor of $\Hom_G\left(-,-\right)_{sm}=\Ext^0_G\left(-,-\right)_{sm}$ coincides with $\Ext^i_G\left(-,-\right)_{sm}$. More precisely, for any $\omega\in\calM\left(G\times H\right)$ and any $\pi\in\calM\left(G\right)$, we have
\[
    R^i\Hom_G\left(\omega,-\right)_{sm}\left(\pi\right)\simeq R^i\Hom_G\left(-,\pi\right)_{sm}\left(\omega\right) \simeq \Ext^i_G\left(\omega, \pi\right)_{sm}.
\]
Here $R^i\Hom_G\left(\omega,-\right)_{sm}$ means the $i$-th right derived functor of $\Hom_G\left(\omega,-\right)_{sm}$, and likewise $R^i\Hom_G\left(-,\pi\right)_{sm}$ means the $i$-th right derived functor of $\Hom_G\left(-,\pi\right)_{sm}$.
\end{Lem}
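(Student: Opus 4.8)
The plan is to show that $\Hom_G(-,-)_{sm}$ is computed by right-deriving in either variable, and that both derived functors agree with the already-defined $\Ext^i_G(-,-)_{sm}$. The conceptual point is that $\Hom_G(\omega,\pi)_{sm}$ is obtained from the ordinary (non-smooth) $\Hom_G(\omega,\pi)$ by applying the smoothing functor $(-)_{sm}\colon \calM(H)_{\text{all}}\to\calM(H)$ (right adjoint to the forgetful functor from smooth to all representations), and this smoothing functor is \emph{exact}. Exactness of smoothing is the key input: for an $\ell$-group $H$, taking smooth vectors is exact because $\calM(H)$ has a system of idempotents (integration against Haar measures of open compact subgroups) and $V_{sm}=\varinjlim_K V^K=\varinjlim_K e_K V$, a filtered colimit of exact functors $V\mapsto e_K V$ on the category of (possibly non-smooth) $H$-modules with enough open-compact-invariants — or, more simply, one checks directly that $V\mapsto V^K$ is exact for each open compact $K$ (averaging gives a splitting) and filtered colimits are exact.

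\medskip

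\noindent\textbf{Step 1: exactness of smoothing.} First I would record that the functor $(-)_{sm}$ from the category of all $H$-representations to $\calM(H)$ is exact, by the argument just sketched.

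\noindent\textbf{Step 2: computing $R^i\Hom_G(\omega,-)_{sm}$.} Fix $\omega\in\calM(G\times H)$. The functor $\Hom_G(\omega,-)_{sm}\colon \calM(G)\to\calM(H)$ factors as $\calM(G)\xrightarrow{\Hom_G(\omega,-)}\{\text{all }H\text{-reps}\}\xrightarrow{(-)_{sm}}\calM(H)$. Since $\calM(G)$ has enough injectives, take an injective resolution $\pi\to I^\bullet$ in $\calM(G)$. To identify $R^i$ of the composite with the composite of the smoothing functor and $R^i\Hom_G(\omega,-)$, I would invoke the Grothendieck spectral sequence for the composite, which degenerates because the second functor $(-)_{sm}$ is exact (Step 1); this gives $R^i(\Hom_G(\omega,-)_{sm})(\pi)\simeq \big(R^i\Hom_G(\omega,-)(\pi)\big)_{sm}=\Ext^i_G(\omega,\pi)_{sm}$, where the last equality is the definition of $\Ext^i_G(\omega,\pi)_{sm}$ recalled just before the lemma. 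One minor point to address: the objects in an injective resolution of $\pi$ in $\calM(G)$ need not be acyclic for the \emph{non-smooth} functor $\Hom_G(\omega,-)$ a priori — but since $\Hom_G(\omega,-)$ on $\calM(G)$ is what is being derived and $\Ext^i_G(\omega,\pi)$ is by definition its $i$-th derived functor computed via such resolutions, this is automatic. Equivalently, one simply takes $H^i$ of $\Hom_G(\omega,I^\bullet)_{sm}=\big(\Hom_G(\omega,I^\bullet)\big)_{sm}$ and uses exactness of $(-)_{sm}$ to pull it outside the cohomology.

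\medskip

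\noindent\textbf{Step 3: computing $R^i\Hom_G(-,\pi)_{sm}$.} For the contravariant variable, fix $\pi\in\calM(G)$ and take a projective resolution $P_\bullet\to\omega$ in $\calM(G\times H)$ — here one uses that $\calM(G\times H)$ has enough projectives (true for $\ell$-groups). Applying $\Hom_G(-,\pi)_{sm}$ and taking cohomology, the same exactness-of-smoothing argument moves $(-)_{sm}$ outside, yielding $R^i\Hom_G(-,\pi)_{sm}(\omega)\simeq\big(R^i\Hom_G(-,\pi)(\omega)\big)_{sm}=\Ext^i_G(\omega,\pi)_{sm}$, using that $\Ext^i_G(\omega,\pi)$ is balanced (computable from either a projective resolution of the first variable in $\calM(G)$ or an injective resolution of the second). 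A small subtlety: the resolution $P_\bullet\to\omega$ is taken in $\calM(G\times H)$, and one needs each $P_j$ to be acyclic for $\Hom_G(-,\pi)$ as a $G$-module (i.e.\ projective, or at least $\Hom_G(-,\pi)$-acyclic, upon restriction to $G$); since projectives in $\calM(G\times H)$ restrict to projectives in $\calM(G)$ (restriction to $G$ has an exact left adjoint, compact induction along the projection $G\times H\to G$, after choosing appropriate Haar measures), this holds.

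\medskip

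\noindent\textbf{Main obstacle.} I expect the only real work is Step 1 together with checking that the various resolutions used behave well under restriction — i.e.\ that projectives (resp.\ injectives) in $\calM(G\times H)$ restrict to $\Hom_G(-,\pi)$-acyclic (resp.\ $\Hom_G(\omega,-)$-acyclic) objects over $G$, so that the Grothendieck spectral sequence / balancing argument applies. Everything else is formal homological algebra once exactness of $(-)_{sm}$ is in hand. The cleanest writeup is probably to avoid spectral sequences entirely: observe $\Hom_G(\omega,I^\bullet)_{sm}=\big(\Hom_G(\omega,I^\bullet)\big)_{sm}$ as complexes of $H$-modules and, since $(-)_{sm}$ is exact, $H^i$ commutes with it, giving the result directly; and symmetrically with a projective resolution in the first variable.
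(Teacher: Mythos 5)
Your proposal is correct in outline and follows essentially the same route as the paper's proof: factor $\Hom_G\left(-,-\right)_{sm}$ as the ordinary $\Hom_G\left(-,-\right)$ followed by the smoothing functor, use that smoothing is exact (the paper cites \cite[I.1.2 Prop.]{MR2567785}; your averaging/filtered-colimit sketch establishes the same fact), and then compute the derived functors in the two variables via an injective resolution of $\pi$ in $\calM\left(G\right)$ and a projective resolution of $\omega$ in $\calM\left(G\times H\right)$, the second identification resting on the fact that projectives of $G\times H$ restrict to projectives of $G$ (the paper simply cites \cite[Prop. 2.3]{MR3966813} for this).

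The one point to repair is your parenthetical justification of that last fact in Step 3. For the restriction functor $\calM\left(G\times H\right)\to\calM\left(G\right)$ to preserve projectives you need it to admit an exact \emph{right} adjoint; an exact left adjoint would only show that it preserves injectives. Moreover, compact induction from the closed but non-open subgroup $G\times\left\{1\right\}$ is not left adjoint to restriction, and there is no ``compact induction along the projection $G\times H\to G$'' playing this role. The claim itself is true and easy to salvage: either cite \cite[Prop. 2.3]{MR3966813} as the paper does, or note that restriction does have an exact right adjoint, namely smooth induction $\Ind_{G\times\left\{1\right\}}^{G\times H}$, which is exact for $\ell$-groups and satisfies Frobenius reciprocity. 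With that corrected, the rest of your argument is the same formal homological algebra as in the paper's proof.
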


\vskip 5pt

\begin{proof}
The essence of the proof is to deal with the procedure of taking smooth vectors.
Let $\calH$ be the Hecke algebra of $H$, i.e. the convolution algebra of smooth compactly supported distributions on $H$. Recall that any smooth representation of $H$ can be regarded as a non-degenerate module over $\calH$. Hence for any $\omega\in\calM\left(G\times H\right)$ and any $\pi\in\calM\left(G\right)$, the Hom space $\Hom_G\left(\omega,\pi\right)$ is equipped with an $\calH$-module structure given by
\[
    \left(\xi,F\right) \longmapsto F\left(\check{\xi}\cdot-\right)
\]
for $\xi\in\calH$ and $F\in\Hom_G\left(\omega,\pi\right)$. Here $\check{\xi}$ is the push-forward of $\xi$ along the inverse map $h\mapsto h^{-1}$ of $H$.
Consider a new category $\calM\left(\calH\right)$ of \textit{all} modules of $\calH$, not necessarily non-degenerate. Then as the discussion above, the bi-functor $\Hom_G\left(-,-\right)_{sm}$ can be decomposed as the composition
\[
    (-)_{sm}\circ\Hom_G\left(-,-\right):\calM\left(G\times H\right)^{op}\times \calM\left(G\right) \lra \calM\left(\calH\right)\lra \calM\left(H\right),
\]
where $\Hom_G\left(-,-\right)$ is the usual Hom functor, except for that we restrict the first variable to $G\times H$-modules (so that the resulting vector space is an $\calH$-module); and
\[
    (-)_{sm}: \calM\left(\calH\right)\lra \calM\left(H\right)
\]
is the functor of taking smooth vectors, which is exact by \cite[I.1.2 Prop.]{MR2567785}.

\vskip 5pt

Now this lemma follows from this decomposition easily. If $\omega$ is projective as a representation of $G\times H$, then by \cite[Prop. 2.3]{MR3966813} it is also projective as a representation of $G$. Hence the functor
\[
    \Hom_G\left(\omega,-\right)_{sm} = (-)_{sm}\circ\Hom_G\left(\omega,-\right): \calM\left(G\right) \lra \calM\left(\calH\right) \lra \calM\left(H\right)
\]
is exact. Likewise, if $\pi$ is an injective representation of $G$, the functor
\[
   \Hom_G\left(-,\pi\right)_{sm} =  (-)_{sm}\circ \Hom_G\left(-,\pi\right):\calM\left(G\times H\right)^{op}\lra \calM\left(\calH\right) \lra \calM\left(H\right)
\]
is also exact. Therefore we have
\[
    R^i\Hom_G\left(\omega,-\right)_{sm}\left(\pi\right)\simeq R^i\Hom_G\left(-,\pi\right)_{sm}\left(\omega\right) \simeq \Ext^i_G\left(\omega, \pi\right)_{sm}
\]
as desired.

\end{proof}
\vskip 5pt


\subsection{Representations of general linear groups} 
\label{sub:representations_of_general_linear_groups}

In this subsection we collect some results on representations of $p$-adic general linear groups. Let
\[
    \calR= \bigoplus_{n\geq 0} \calR\left(G_n\right).
\]
Recall that for representations $\pi_1$ and $\pi_2$ of $G_{k_1}$ and $G_{k_2}$, one can produce a representation $\pi_1\times \pi_2$ of $G_{k_1+k_2}$ using the normalized parabolic induction. This induces a $\Z$-graded ring structure on $\calR$. We denote this multiplication by
\[
    m: \calR\otimes \calR \lra \calR.
\]
On the other hand, for each $0\leq k \leq n$, there is a unique standard maximal parabolic subgroup $P_{k}$ of 
$G_n$, with Levi component $L_k\simeq G_k\times G_{n-k}$. Taking Jacquet modules along these parabolic subgroups induces a \textit{co-multiplication}, that is, a ring homomorphism
\[
    m^*: \calR \lra \calR\otimes \calR,
\]
given by sending $\pi\in \calR\left(G_n\right)$ to
\[
    m^*\left(\pi\right) = \sum_{0\leq k \leq n} s.s.\Jac_{P_k}\pi.
\]
Then $m$ and $m^*$ make $\calR$ a \textit{graded Hopf algebra}. This fact is useful to our later computations.

\vskip 5pt

Next we recall the notion of \textit{segment}, the building block of representations of general linear groups. A segment $\Delta = \left[x,y\right]_\rho$ is a set of supercuspidal representations of the form
\[
    \rho|\det|^x, \rho|\det|^{x-1}, \cdots, \rho|\det|^y,
\]
where $\rho$ is a \textit{unitary} supercuspidal representation of a general linear group $G_k$, and $x,y\in\R$ with $x-y\in\Z_{\geq 0}$. We note that, although usually the definition of segment doesn't require $\rho$ to be unitary, assuming the unitarity causes no loss of generality. Indeed, for any irreducible supercuspidal representation $\rho'$, its unramified twist $\rho'|\det|^{-e\left(\rho'\right)}$ is unitary. To such a segment one can associate two irreducible representations to it as follows. Consider the parabolic induction of these supercuspidal representations
\[
    \rho|\det|^x\times \rho|\det|^{x-1}\times \cdots\times\rho|\det|^y.
\]
It has a unique subrepresentation (resp. quotient), called the \textit{Steinberg} (resp. \textit{Speh}) representation, denoted by $\St\left(\Delta\right)$ (resp. $\Speh\left(\Delta\right)$). For example, if $\rho=\mathbbm{1}_1$ is the trivial representation of $G_1$, $x=\frac{1}{2}$ and $y=-\frac{1}{2}$, then $\St\left(\Delta\right)$ is the ordinary Steinberg representation $\St_2$ of $G_2$. As another example, if $\rho=\eta$ is a character of $G_1$, then $\Speh\left(\Delta\right)$ is simply the character 
\[
    \left(\eta|\cdot|^{\frac{x+y}{2}}\right)\circ\det
\]
of $G_{x-y+1}$. The Steinberg representations are essentially discrete series, and they exhaust all essentially discrete series of general linear groups, see \cite[Thm. 9.3]{ZeleII}. 

\vskip 5pt

We often need to compute Jacquet modules and parabolic inductions of segments. Jacquet modules of one segment are given by \cite[Prop. 3.4, Prop. 9.5]{ZeleII}. 

\begin{Prop}\label{P:JacSeg}
Let $\Delta = \left[x,y\right]_\rho$ be a segment, with $\rho$ a supercuspidal representation of $G_d$. Set $m=x-y+1$. For $0\leq \alpha\leq m$, we set $\Delta_I^{(\alpha)}=\left[x,x-\alpha+1\right]_\rho$ and $\Delta_{II}^{(\alpha)}=\left[x-\alpha,y\right]_\rho$. Then we have
\[
    \Jac_{P_k}\St\left(\Delta\right) = \St\left(\Delta_I^{(\alpha)}\right)\boxtimes \St\left(\Delta_{II}^{(\alpha)}\right), \quad
    \Jac_{P_k}\Speh\left(\Delta\right) = \Speh\left(\Delta_{II}^{(m-\alpha)}\right)\boxtimes \Speh\left(\Delta_{I}^{(m-\alpha)}\right) 
\]
if $k=\alpha d$ for some $0\leq \alpha\leq m$, and otherwise these Jacquet modules are zero.
\end{Prop}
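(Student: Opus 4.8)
The plan is to prove both identities by induction on the length $m=x-y+1$ of $\Delta=[x,y]_\rho$, the key being to first pin down the Jacquet module along the \emph{minimal} relevant parabolic and then bootstrap to the maximal ones. First, a reduction: every irreducible subquotient of $\Jac_{P_k}\St(\Delta)$ or of $\Jac_{P_k}\Speh(\Delta)$ has cuspidal support contained in the set of supercuspidal representations $\{\rho|\det|^x,\dots,\rho|\det|^y\}$ of $G_d$, hence lives on a product of copies of $G_d$; so these Jacquet modules vanish whenever $d\nmid k$. From now on write $k=\alpha d$ with $0\le\alpha\le m$, and let $R$ be the standard parabolic of $G_{md}$ with Levi $G_d\times\cdots\times G_d$ ($m$ copies).

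The crux is the claim that $\Jac_R\St(\Delta)=\rho|\det|^x\boxtimes\rho|\det|^{x-1}\boxtimes\cdots\boxtimes\rho|\det|^y$, a single irreducible of multiplicity one. When $m=1$ this is trivial, and when $m=2$ it follows from the fact that $\rho|\det|^x\times\rho|\det|^{x-1}$ has length two with constituents $\St(\Delta)$ and $\Speh(\Delta)$, whose Jacquet modules carry the two orderings of $\{\rho|\det|^x,\rho|\det|^{x-1}\}$. For $m\ge 3$ we argue inductively. Since $\St(\Delta)$ is by definition the unique irreducible submodule of the ordered induction $\rho|\det|^x\times\cdots\times\rho|\det|^y$ and parabolic induction is exact, the two submodules $\rho|\det|^x\times\St([x-1,y]_\rho)$ and $\St([x,y+1]_\rho)\times\rho|\det|^y$ of that induction each have $\St(\Delta)$ as their unique irreducible submodule. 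Applying the exact functor $\Jac_R$, using the inductive hypothesis, and computing $\Jac_R$ of the two right-hand sides by the geometric lemma, we see that every constituent of $\Jac_R\St(\Delta)$ is an ordering of $\{\rho|\det|^x,\dots,\rho|\det|^y\}$ that keeps $\{\rho|\det|^{x-1},\dots,\rho|\det|^y\}$ in decreasing order \emph{and} keeps $\{\rho|\det|^x,\dots,\rho|\det|^{y+1}\}$ in decreasing order; the only such ordering is the fully decreasing one, and it has multiplicity at most one because $\Jac_R$ of the full ordered induction is multiplicity free. That this constituent actually occurs is immediate from $\St(\Delta)\hookrightarrow\rho|\det|^x\times\cdots\times\rho|\det|^y$ and Frobenius reciprocity.

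To bootstrap: by transitivity of Jacquet functors $\Jac_R\St(\Delta)$ is recovered from $\Jac_{P_{\alpha d}}\St(\Delta)$, so the latter has total Jacquet module the single multiplicity-one term $(\rho|\det|^x\boxtimes\cdots\boxtimes\rho|\det|^{x-\alpha+1})\boxtimes(\rho|\det|^{x-\alpha}\boxtimes\cdots\boxtimes\rho|\det|^y)$. Hence $\Jac_{P_{\alpha d}}\St(\Delta)$ has exactly one irreducible constituent $\sigma_1\boxtimes\sigma_2$ with nonzero total Jacquet module, and for it Frobenius reciprocity gives $\sigma_1\hookrightarrow\rho|\det|^x\times\cdots\times\rho|\det|^{x-\alpha+1}$ and $\sigma_2\hookrightarrow\rho|\det|^{x-\alpha}\times\cdots\times\rho|\det|^y$, whence $\sigma_1=\St(\Delta_I^{(\alpha)})$ and $\sigma_2=\St(\Delta_{II}^{(\alpha)})$ by the defining property of Steinberg and a cuspidal-support argument. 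Any further constituent would be supercuspidal on each factor, which by cuspidal support forces $\alpha\le 1$ and $m-\alpha\le 1$, i.e. $m\le 2$, where one checks directly that there is none. This gives the Steinberg formula. The Speh formula is obtained by running the identical argument on the reversed induction $\rho|\det|^y\times\rho|\det|^{y+1}\times\cdots\times\rho|\det|^x$, of which $\Speh(\Delta)$ is the unique irreducible submodule; now the roles of the top and bottom of $\Delta$ are interchanged, which is exactly what produces the reversal of the two tensor factors in the statement.

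The main obstacle is the ``upper bound'' in the crux step, i.e. controlling $\Jac_R\St(\Delta)$ from above. One embedding of $\St(\Delta)$ into an ordered induction only pins down the minimal Jacquet module up to shuffling within one fixed block; the decisive point is that intersecting the constraints from splitting off the top of the segment with those from splitting off the bottom leaves precisely the fully decreasing exponent, with multiplicity one. The remaining ingredients --- exactness of Jacquet functors, Frobenius reciprocity, the geometric lemma, and the cuspidal-support bookkeeping --- are routine.
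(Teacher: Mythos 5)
Your proposal is correct, but note that it does not parallel an argument in the paper: the paper offers no proof of this proposition at all, quoting it directly from Zelevinsky \cite[Prop. 3.4, Prop. 9.5]{ZeleII}, so what you have written is a self-contained reconstruction of the Bernstein--Zelevinsky computation. Your structure is sound: the cuspidal-support reduction to $d\mid k$; the determination of the cuspidal-level Jacquet module $\Jac_R\St\left(\Delta\right)$ by intersecting the shuffle constraints coming from the two embeddings $\St\left(\Delta\right)\hookrightarrow \rho|\det|^x\times\St\left(\left[x-1,y\right]_\rho\right)$ and $\St\left(\Delta\right)\hookrightarrow\St\left(\left[x,y+1\right]_\rho\right)\times\rho|\det|^y$ (legitimate because both are submodules of the full decreasing-order induction, whose irreducible socle is $\St\left(\Delta\right)$ by the paper's definition), with multiplicity one forced by multiplicity-freeness of the cuspidal Jacquet module of the full induction; and the bootstrap via transitivity of Jacquet functors plus Frobenius reciprocity. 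Two points deserve tightening. First, the worry in the bootstrap about ``further constituents supercuspidal on each factor'' is vacuous: every irreducible constituent $\sigma_1\boxtimes\sigma_2$ of $\Jac_{P_{\alpha d}}\St\left(\Delta\right)$ has cuspidal support consisting of supercuspidal representations of copies of $G_d$ on each factor, so the cuspidal-level Jacquet modules of $\sigma_1$ and $\sigma_2$ are automatically nonzero and no case discussion for $m\leq 2$ is needed. Second, your Speh argument uses that $\Speh\left(\Delta\right)$ is the unique irreducible submodule of the increasing-order induction, whereas the paper defines it as the unique irreducible quotient of the decreasing-order induction; these agree, but you should justify it, e.g. by applying the exact contravariant functor $\pi\mapsto\left(\pi^c\right)^\vee$ (MVW-involution composed with contragredient, exactly the trick the paper uses in Section 4), which fixes irreducible representations and carries the decreasing-order induction to the increasing-order one, converting the cosocle statement into the socle statement; this also supplies the inductive inputs $\Speh\left(\left[x,y+1\right]_\rho\right)\hookrightarrow \rho|\det|^{y+1}\times\cdots\times\rho|\det|^x$ that your reversed argument needs. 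With these two repairs your argument is a complete and legitimate alternative to citing Zelevinsky.
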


\vskip 5pt

We say two segment $\Delta_1=\left[x_1,y_1\right]_{\rho_1}$ and $\Delta_2=\left[x_2,y_2\right]_{\rho_2}$ are \textit{linked}, if $\Delta_1\not\subset \Delta_2$, $\Delta_2\not\subset \Delta_1$, and $\Delta_1\cup\Delta_2$ is still a segment. If $\Delta_1$ and $\Delta_2$ are linked and $x_2\leq x_1$, we shall say that $\Delta_2$ precedes $\Delta_1$. The following results of Zelevinsky \cite[Thm. 4.2, Prop. 4.6]{ZeleII} describe the parabolic induction of segments.

\begin{Thm}\label{T:IndSeg}
Let $\Delta_1, \Delta_2, \cdots, \Delta_r$ be segments.

\begin{enumerate}
    \item The parabolic induction 
    \[
    \St\left(\Delta_1\right)\times\cdots\times\St\left(\Delta_r\right) \quad \left(\textit{resp. }\Speh\left(\Delta_1\right)\times\cdots\times\Speh\left(\Delta_r\right)\right)
    \]
    is irreducible if and only if for each $1\leq i,j\leq r$, $\Delta_i$ and $\Delta_j$ are not linked.

    \vskip 5pt

    \item Suppose that $\Delta_1$ and $\Delta_2$ are linked, and $\Delta_2$ precedes $\Delta_1$. Let $\hat\Delta=\Delta_1\cap\Delta_2$ and $\check\Delta=\Delta_1\cup\Delta_2$. Then $\St\left(\Delta_1\right)\times\St\left(\Delta_2\right)$ is of length two, and there is a short exact sequence
    \[
        0 \lra \St\big(\hat\Delta\big)\times\St\left(\check\Delta\right) \lra \St\left(\Delta_1\right)\times\St\left(\Delta_2\right) \lra \pi \lra 0,
    \]
    where $\pi=LQ\left(\St\left(\Delta_1\right)\times\St\left(\Delta_2\right)\right)$. Likewise, $\Speh\left(\Delta_1\right)\times\Speh\left(\Delta_2\right)$ is also of length two, and we have
    \[
        0 \lra \widehat{\pi} \lra \Speh\left(\Delta_1\right)\times\Speh\left(\Delta_2\right) \lra \Speh\big(\hat\Delta\big)\times\Speh\left(\check\Delta\right) \lra 0,
    \]
    where $\widehat{\pi}$ is the unique subrepresentation of $\Speh\left(\Delta_1\right)\times\Speh\left(\Delta_2\right)$.
\end{enumerate}  
\end{Thm}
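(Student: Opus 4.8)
The plan is to reduce both parts to the two-segment Steinberg case and then analyze that case by pinning down the socle, the cosocle and the Jordan--H\"older length via a Jacquet module computation.

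\textbf{Reductions.} The Speh assertions follow from the Steinberg ones via the Zelevinsky involution (to be recalled later), which interchanges $\St(\Delta)$ and $\Speh(\Delta)$, is compatible with parabolic induction, and turns the exact sequence of part (2) into the displayed Speh sequence with sub and quotient interchanged; so I may restrict to Steinberg representations. I would first prove the direction ``no pair linked $\Rightarrow$ irreducible'' of part (1) by induction on $r$. The base cases $r\le 2$ with $\Delta_1,\Delta_2$ unlinked --- lying on distinct cuspidal rays $\{\rho|\det|^z\}$, or with $\Delta_1\cup\Delta_2$ not a segment, or with one contained in the other --- are handled by a direct computation of all Jacquet modules via the Bernstein--Zelevinsky geometric lemma together with Proposition~\ref{P:JacSeg}, showing the induced module has the Jacquet modules of a single irreducible; the inductive step uses the principle that if $\sigma$ is irreducible and $\St(\Delta)\times\St(\Delta')$ is irreducible for every segment $\Delta'$ in the support of $\sigma$, then $\St(\Delta)\times\sigma$ is irreducible, which one proves by comparing the Jacquet modules of $\St(\Delta)\times\sigma$ and $\sigma\times\St(\Delta)$ and checking that their extreme subquotients agree, forcing socle and cosocle to coincide. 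Granting this, the other direction of part (1) is immediate from part (2): if some pair $\Delta_i,\Delta_j$ is linked, then after permuting the factors to bring them adjacent (parabolic induction being exact) a proper nonzero submodule of $\St(\Delta_i)\times\St(\Delta_j)$ produces one of the whole product. Hence everything reduces to part (2).

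\textbf{The two-segment case.} Write $\Delta_i=[x_i,y_i]_\rho$ with $\Delta_2$ preceding $\Delta_1$, so $x_2\le x_1$ and (since $\Delta_1\not\subset\Delta_2$) $y_2<y_1$, whence $\check\Delta=[x_1,y_2]_\rho\supsetneq[x_2,y_1]_\rho=\hat\Delta$. Since $e(\St(\Delta_1))=\tfrac{x_1+y_1}{2}>\tfrac{x_2+y_2}{2}=e(\St(\Delta_2))$, the module $\St(\Delta_1)\times\St(\Delta_2)$ is already in Langlands position, so it has $\pi=LQ(\St(\Delta_1)\times\St(\Delta_2))$ as its unique irreducible quotient, with multiplicity one; moreover $\pi\neq\St(\hat\Delta)\times\St(\check\Delta)$ by comparing Langlands data. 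Since $\hat\Delta\subsetneq\check\Delta$ are unlinked, $\St(\hat\Delta)\times\St(\check\Delta)$ is irreducible by part (1), and I would exhibit it as a \emph{proper} submodule of $\St(\Delta_1)\times\St(\Delta_2)$ --- by realizing both sides inside inductions of the twisted cuspidals $\rho|\det|^a$ (in different orderings of the same multiset of exponents) and moving entries past one another through elementary swaps, or equivalently by checking via Frobenius reciprocity and the geometric lemma that $\St(\hat\Delta)\boxtimes\St(\check\Delta)$ occurs in the Jacquet module of $\St(\Delta_1)\times\St(\Delta_2)$ along a suitable maximal parabolic. The cokernel of this embedding is then nonzero with unique irreducible quotient $\pi$, so the short exact sequence will follow once I show the length is exactly two, i.e.\ that $[\St(\Delta_1)\times\St(\Delta_2)]=[\St(\hat\Delta)\times\St(\check\Delta)]+[\pi]$ in the Grothendieck group. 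For this I would compute the Jacquet module of $\St(\Delta_1)\times\St(\Delta_2)$ along the same parabolic using the geometric lemma and Proposition~\ref{P:JacSeg}, read off the multiplicity of $\St(\check\Delta)\boxtimes\St(\hat\Delta)$ and of the Jacquet module of $\pi$, and verify that the remaining terms in the expansion cancel, leaving no room for a third constituent. This yields the length-two statement and the exact sequence; the Speh version then follows by the Zelevinsky involution as above.

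\textbf{Expected main obstacle.} The hard part is the geometric-lemma bookkeeping, which concentrates all the combinatorics of segments and linking, and enters twice: in establishing the propagation-of-irreducibility principle used in the reduction to two segments (and in the unlinked two-segment base cases), and in verifying that the linked two-segment induction has length exactly two rather than more. In each case one must enumerate all the terms the geometric lemma produces for a product of Steinberg representations and confirm the precise cancellations; carrying this out cleanly rather than drowning in the combinatorial expansion is the main difficulty.
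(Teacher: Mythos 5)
First, a point of comparison: the paper does not prove this theorem at all — it is quoted directly from Zelevinsky \cite[Thm. 4.2, Prop. 4.6]{ZeleII} — so you are attempting to reprove a classical result, and the relevant question is whether your outline would actually close. As it stands it has genuine gaps. The most serious is the reduction of the Speh statements to the Steinberg ones via the Zelevinsky involution. That involution is an automorphism of the Grothendieck ring $\calR$; even granting that it exchanges $\St\left(\Delta\right)$ and $\Speh\left(\Delta\right)$, is multiplicative, and carries irreducibles to irreducibles (the last being a deep theorem of Aubert/Moeglin--Waldspurger whose classical proofs themselves rest on the Zelevinsky classification, i.e. essentially on the Speh form of this very theorem, so you risk circularity or at least importing machinery heavier than the target), it is a statement about semi-simplifications only. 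It cannot ``turn the exact sequence of part (2) into the displayed Speh sequence with sub and quotient interchanged'': which constituent of $\Speh\left(\Delta_1\right)\times\Speh\left(\Delta_2\right)$ is the submodule and which is the quotient must be argued separately, e.g. by proving that a product of Speh representations in the appropriate order has a unique irreducible submodule (a Zelevinsky Thm. 6.1-type statement), which is not addressed.

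Second, the two steps you yourself identify as the core — the propagation-of-irreducibility principle (socle $=$ cosocle, occurring with multiplicity one) and the claim that $\St\left(\Delta_1\right)\times\St\left(\Delta_2\right)$ has length exactly two — are announced rather than carried out, and they are precisely where the content of the theorem lies; saying ``the remaining terms in the expansion cancel'' is the theorem, not a proof of it. Zelevinsky's actual argument runs through the Bernstein--Zelevinsky derivative calculus (highest derivatives and an induction on them), which organizes exactly this bookkeeping; a raw geometric-lemma expansion of all Jacquet modules is substantially harder to control, and your sketch gives no mechanism for the multiplicity-one statements needed to pin down the socle and cosocle. A smaller but real imprecision: exhibiting $\St\big(\hat\Delta\big)\times\St\left(\check\Delta\right)$ as a submodule requires $\St\big(\hat\Delta\big)\boxtimes\St\left(\check\Delta\right)$ to occur as a quotient (resp. sub) of the Jacquet module in the correct adjunction (second adjointness resp. Frobenius reciprocity), not merely as a subquotient, so ``occurs in the Jacquet module'' is not enough as stated; one must locate it in an extreme layer of the geometric-lemma filtration. (Also note the degenerate juxtaposed case $\hat\Delta=\emptyset$, where $\St\big(\hat\Delta\big)\times\St\left(\check\Delta\right)=\St\left(\check\Delta\right)$, should be treated uniformly.) Given that the paper simply cites Zelevinsky, the efficient fix is to do the same, or, if you want a self-contained argument, to follow the derivative-based induction rather than the involution shortcut.
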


\vskip 5pt

Finally we describe general representations. For any representation $\pi$ of $G_n$, by the Langlands classification \cite{MR507262} (also \cite[Thm. 6.1]{ZeleII}), there is a sequence of segments $\Delta_1,\Delta_2,\cdots,\Delta_r$, such that
\[
    \pi\simeq LQ\left(\St\left(\Delta_1\right)\times\St\left(\Delta_2\right)\times\cdots\times\St\left(\Delta_r\right)\right).
\]
Moreover, $\pi$ occurs in the induced representation $\St\left(\Delta_1\right)\times\St\left(\Delta_2\right)\times\cdots\times\St\left(\Delta_r\right)$ with \textit{multiplicity one} by \cite[Thm. 7.1]{ZeleII}.

\vskip 5pt

\subsection{MVW-involution of general linear groups} 
\label{sub:mvw_involution_of_general_linear_groups}

We introduce another useful covariant functor for representations of general linear groups, know as the \textit{MVW-involution}. Recall that there is an outer automorphism of $G_n$ given by
\begin{equation*}
 c: G_n \lra G_n, \quad g \longmapsto w\cdot{}^{t}g^{-1}\cdot w^{-1},
\end{equation*}
where the left superscript ``$t$'' means transpose, and $w$ is a (longest) Weyl group element. This automorphism $c$ induces an \textit{involutive} covariant functor (which we still denote by $c$ by abuse of notations)
\[
    c: \calM\left(G_n\right) \lra \calM\left(G_n\right),
\]
sending a representation $\left(\pi,V\right)$ of $G_n$ to 
\[
    \pi^c: G_n \lra \GL\left(V\right), \quad g\longmapsto \pi\left(c\left(g\right)\right).
\]
Sometimes we shall also write $\pi^{MVW}$ for $\pi^c$. A well-known theorem of Gelfand--Kazhdan says that if $\pi$ is irreducible, then $\pi^c\simeq \pi^\vee$. Moreover, this functor is compatible with parabolic inductions in the sense that
\[
    \left(\pi_1 \times \pi_2\right)^c \simeq \pi_2^c \times \pi_1^c
\]
for any representations $\pi_1$ and $\pi_2$ of general linear groups.

\vskip 5pt

As an application, using this functor and results from Proposition \ref{P:JacSeg} and Theorem \ref{T:IndSeg}, we can compute higher Ext spaces between Steinberg (resp. Speh) representations.

\begin{Lem}\label{L:ExtSteinberg}
Let $\Delta$ and $\Delta'$ be segments, and assume that both $\St\left(\Delta\right)$ and $\St\left(\Delta'\right)$ are representations of $G_n$. Then 
\[
    \Ext_{G_n}^i\left(\St\left(\Delta'\right),\St\left(\Delta\right)\right) = \begin{cases}
        \C \quad & \textit{if $\Delta=\Delta'$ and $i=0,1$};\\[10pt]

        0 \quad & \textit{otherwise}.  
    \end{cases}
\]
Similarly, we have
\[
    \Ext_{G_n}^i\left(\Speh\left(\Delta'\right),\Speh\left(\Delta\right)\right) = \begin{cases}
        \C \quad & \textit{if $\Delta=\Delta'$ and $i=0,1$};\\[10pt]

        0 \quad & \textit{otherwise}.  
    \end{cases}
\]
\end{Lem}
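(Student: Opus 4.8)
The plan is to compute $\Ext^i_{G_n}(\St(\Delta'),\St(\Delta))$ by induction on $n = \ell(\Delta)\cdot\deg(\rho)$ (equivalently on the length of the segments), reducing to the supercuspidal case via the second adjointness and Kudla-type manipulations of Jacquet modules. Write $\Delta = [x,y]_\rho$ and $\Delta' = [x',y']_{\rho'}$ with $\rho,\rho'$ unitary supercuspidal of $G_d$, $G_{d'}$ respectively. The base case is when both $\Delta$ and $\Delta'$ are singletons (lengths one), so $\St(\Delta) = \rho|\det|^y$ and $\St(\Delta') = \rho'|\det|^{y'}$ are supercuspidal representations of $G_n$. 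Since supercuspidal representations are projective \emph{and} injective in the category of smooth representations of $G_n$ with fixed cuspidal support (and more simply: any extension of one supercuspidal by another splits, as supercuspidals are both projective and injective in $\calM(G_n)$ up to the issue of the center), we get $\Ext^i_{G_n}(\rho'|\det|^{y'},\rho|\det|^y) = 0$ for $i\geq 1$, and it is $\C$ for $i=0$ iff the two representations coincide. Wait — one must be careful: supercuspidal representations of $G_n$ are projective only in the appropriate Bernstein block, and a nontrivial self-extension \emph{can} exist coming from unramified twists. Precisely, $\Ext^1_{G_n}(\rho,\rho) = \C$ (the tangent space to the line of unramified twists), and $\Ext^i_{G_n}(\rho,\rho) = 0$ for $i\geq 2$; meanwhile for non-isomorphic supercuspidals the Ext spaces all vanish. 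This already exhibits the $\C$ in degrees $0$ and $1$ in the base case.

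For the inductive step, suppose $\Delta$ has length $\geq 2$, so $\St(\Delta) \hookrightarrow \rho|\det|^x \times \St(\Delta^{-})$ where $\Delta^{-} = [x-1,y]_\rho$ — actually it is cleaner to use $\St(\Delta)$ as the unique \emph{subrepresentation} of $\St([x,x]_\rho)\times\St([x-1,y]_\rho) = \rho|\det|^x \times \St(\Delta^{-})$. Rather than working with the subrepresentation, the cleanest route is the MVW-involution: since $c$ is an involutive exact covariant functor with $\St(\Delta)^c \simeq \St(\Delta)^\vee$ and $(\pi_1\times\pi_2)^c \simeq \pi_2^c\times\pi_1^c$, and since $\St(\Delta)^\vee = \St(\Delta^\vee)$ where $\Delta^\vee = [-y,-x]_{\rho^\vee}$, we may freely pass between $\St$ and $\Speh$ statements and between sub- and quotient-realizations. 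So: realize $\St(\Delta)$ as the unique irreducible quotient? No — $\St(\Delta)$ is the \emph{sub}. Instead apply $(-)^\vee$ to turn $\Ext^i_{G_n}(\St(\Delta'),\St(\Delta))$ into $\Ext^i_{G_n}(\St(\Delta^\vee),\St(\Delta'^\vee))$, which is symmetric enough; the real tool is the \emph{second adjointness} (Lemma \ref{L:IndJacAdj}(2)): writing $\St(\Delta')^\vee$ or rather using that $\St(\Delta')$ embeds in a parabolic induction $\rho'|\det|^{x'}\times\St(\Delta'^{-})$ and applying
\[
\Ext^i_{G_n}\bigl(\Ind_P^{G_n}\tau,\St(\Delta)\bigr) \simeq \Ext^i_{L}\bigl(\tau,\Jac_{\overline P}\St(\Delta)\bigr).
\]
Hmm, but $\St(\Delta')$ is a \emph{sub}object of the induction, not the induction itself, so this does not apply directly; instead use the long exact sequence attached to $0\to\St(\Delta')\to\rho'|\det|^{x'}\times\St(\Delta'^-)\to\pi'\to 0$ (Theorem \ref{T:IndSeg}(2)), applied to the functor $\Ext^\bullet_{G_n}(-,\St(\Delta))$, together with the analogous sequence in the second variable, and knock out terms using the inductive hypothesis.

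Concretely, the engine is: (i) $\Ext^i_{G_n}(\rho'|\det|^{x'}\times\St(\Delta'^-),\St(\Delta)) \simeq \Ext^i_{G_{d'}\times G_{n-d'}}(\rho'|\det|^{x'}\boxtimes\St(\Delta'^-),\Jac_{\overline P}\St(\Delta))$ by second adjointness; (ii) compute $\Jac_{\overline P}\St(\Delta)$ via Proposition \ref{P:JacSeg} — it is a sum over $\alpha$ of $\St(\Delta_I^{(\alpha)})\boxtimes\St(\Delta_{II}^{(\alpha)})$, and after applying $\delta_P$-twists from passing $P\to\overline P$ only the terms matching the cuspidal support of $\rho'|\det|^{x'}$ on the first factor survive; (iii) apply the Künneth formula (Theorem \ref{T:Kunneth}) to split the $G_{d'}\times G_{n-d'}$-Ext into a tensor product; (iv) the first tensor factor $\Ext^j_{G_{d'}}(\rho'|\det|^{x'},\cdot)$ forces $\rho' = \rho$ (up to unramified twist) and $x' = x$, and contributes $\C$ in degrees $0$ and $1$; (v) the second tensor factor $\Ext^k_{G_{n-d'}}(\St(\Delta'^-),\St(\Delta^-))$ is handled by induction; (vi) reassemble and then use the long exact sequences in both variables (from Theorem \ref{T:IndSeg}(2)) to strip off the contributions of $\pi' = LQ(\rho'|\det|^{x'}\times\St(\Delta'^-))$ and of the corresponding quotient in the $\St(\Delta)$ variable, checking that all the connecting maps either vanish or produce exactly the claimed cancellation. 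The total Euler characteristic is pinned down throughout by the fact that $\EP_{G_n}(\St(\Delta'),\St(\Delta)) = 0$ unless $\Delta = \Delta'$ (this follows from Prasad's EP formula for $\GL_n\times\GL_n$ Ext, or can be derived from the base case and exactness of Jacquet/induction), so degrees $0$ and $1$ must have equal dimensions — which, combined with the degree-$0$ computation $\Hom_{G_n}(\St(\Delta'),\St(\Delta)) = \C\cdot\delta_{\Delta,\Delta'}$ (Schur, since these are irreducible), forces the answer to be exactly $\C$ in degrees $0,1$ and $0$ above, \emph{provided} we show there is no Ext in degree $\geq 2$. The Speh case is then immediate by applying the MVW-involution $c$, which interchanges $\St(\Delta)$ with $\Speh(\Delta^\vee)$ up to the involution on segments, so the two displayed formulas are equivalent.

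\textbf{Main obstacle.} The delicate point is step (vi): showing that the connecting homomorphisms in the two long exact sequences are the \emph{right} ones, i.e. that the spurious Ext classes which a priori could arise from the quotients $\pi'$ (and its dual counterpart) actually cancel against each other rather than accumulating, so that no $\Ext^{\geq 2}$ survives and $\Ext^1$ stays one-dimensional. One expects this to follow from a careful bookkeeping of cuspidal supports together with the vanishing of $\Ext^\bullet_{G_{n-d'}}$ between non-isomorphic Steinbergs (the inductive hypothesis) and the fact that $\St(\Delta)$ and the quotient $\pi = LQ(\St(\hat\Delta)\times\St(\check\Delta)$-type object $)$ never share the relevant Jacquet module data — but making the connecting maps explicit, or circumventing them via a clean Euler-characteristic-plus-top-cohomology-vanishing argument (showing $\Ext^i = 0$ for $i$ larger than the split rank of the relevant Levi, which here is bounded by the number of segments in a fixed cuspidal line), is where the real work lies. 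An alternative that sidesteps the connecting maps entirely: use second adjointness to rewrite $\Ext^i_{G_n}(\St(\Delta'),\St(\Delta))$, then induct purely on Jacquet-module data, showing directly that $\Jac_{\overline P}\St(\Delta)$ paired against $\St(\Delta')$ in the derived sense collapses to at most a one-dimensional space in degrees $0$ and $1$; this is probably the path the authors take.
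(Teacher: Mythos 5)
Your skeleton (induction on segment length, peel off one supercuspidal, adjointness plus Proposition \ref{P:JacSeg} plus K\"unneth, long exact sequence from the length-two induced representation of Theorem \ref{T:IndSeg}(2), base case $\Ext^1(\rho,\rho)=\C$ and $\Ext^{\geq 2}=0$ for supercuspidals) is essentially the skeleton of the paper's argument, but the step you yourself flag as ``where the real work lies'' is precisely the step you have not supplied, and your suggested ways around it do not close it. The paper's resolution is a specific trick you did not find: apply the MVW-involution composed with the contragredient to the sequence $0 \to \St(\Delta) \to \St(\Delta_-)\times\rho|\det|^y \to \pi \to 0$ to obtain a \emph{second} exact sequence $0 \to \pi \to \rho|\det|^y\times\St(\Delta_-) \to \St(\Delta) \to 0$, in which the induction order is reversed; a Jacquet-module check (via Lemma \ref{L:IndJacAdj} and Proposition \ref{P:JacSeg}) shows the middle term $\rho|\det|^y\times\St(\Delta_-)$ has \emph{no} Ext against $\St(\Delta')$ in any degree, so one gets the clean degree shift $\Ext^i(\St(\Delta'),\St(\Delta)) \simeq \Ext^{i+1}(\St(\Delta'),\pi)$ for all $i\geq 0$. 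Feeding this back into the first long exact sequence pins down every connecting map and kills all $\Ext^{\geq 2}$; no Euler-characteristic input is needed. Your proposed substitute is genuinely insufficient: $\EP_{G_n}$ vanishes identically here (noncompact center), so it only says the alternating sum is zero and cannot by itself exclude contributions in degrees $\geq 2$, nor force $\Ext^1=0$ when $\Delta\neq\Delta'$; and a split-rank bound on cohomological dimension gives vanishing only far above degree $2$, not where you need it.

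Two smaller corrections. First, your reduction of the Speh statement is wrong: the MVW-involution satisfies $\St(\Delta)^c\simeq\St(\Delta)^\vee=\St(\Delta^\vee)$, i.e. it sends Steinberg to Steinberg (with the dual segment), not to a Speh representation; interchanging $\St$ and $\Speh$ is the Aubert--Zelevinsky involution, which is not an exact functor and does not ``immediately'' transport Ext computations. The paper instead runs the same induction again for Speh representations, using the Speh short exact sequence in Theorem \ref{T:IndSeg}(2). Second, your base case treats only singleton against singleton, whereas the induction as you set it up (and as the paper runs it) needs the case of an arbitrary $\St(\Delta')$ against a supercuspidal target; the paper handles this at the outset via the homological duality theorem, $\Ext^1_{G_n}(\St(\Delta'),\rho|\det|^x)\simeq\Hom_{G_n}(\rho|\det|^x,\St(\Delta'))$ with vanishing above degree one. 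This can also be extracted from cuspidal-support considerations as you sketch, but it needs to be stated and used, not left implicit.
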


\vskip 5pt

\begin{proof}
We only prove the assertion for Steinberg representations, the assertion for Speh representations is similar. Suppose that $\Delta=\left[x,y\right]_\rho$ and $\Delta'=\left[x',y'\right]_{\rho'}$, where $\rho$ and $\rho'$ are supercuspidal representations of $G_d$ and $G_{d'}$. Trivially, $\Hom_{G_n}\left(\St\left(\Delta'\right),\St\left(\Delta\right)\right)$ is non-zero (hence one dimensional by Schur's lemma) if and only if $\Delta=\Delta'$. So we mainly concern about higher extensions.

\vskip 5pt

Induction on $x-y+1$. The basic case is that when $x=y$, in which case we have $\St\left(\Delta\right) = \rho|\det|^{x}$. By a homological duality theorem \cite[Thm. 2]{Duality} one knows that only when $i=1$ the Ext space can be non-zero, and
\[
    \Ext_{G_n}^1\left(\St\left(\Delta'\right),\rho|\det|^{x}\right) \simeq \Hom_{G_n}\left(\rho|\det|^{x},\St\left(\Delta'\right)\right).
\]
Hence the Ext space above is non-zero and one dimensional if and only if $\Delta=\Delta'$. These proves the basic case.

\vskip 5pt

Assume that the lemma is true for $\Delta_-=\left[x,y+1\right]_\rho$. Note that there is a short exact sequence
\begin{equation}\label{E:ExtSteinberg.SES-1}
    0 \lra \St\left(\Delta\right) \lra \St\left(\Delta_-\right)\times \rho|\det|^y\lra \pi \lra 0,
\end{equation}
where $\pi = LQ\left(\St\left(\Delta_-\right)\times \rho|\det|^y\right)$. By Lemma \ref{L:IndJacAdj}(1) we have
\begin{align*}
 \Ext_{G_n}^i\left(\St\left(\Delta'\right),\St\left(\Delta_-\right)\times \rho|\det|^y\right) & \simeq \Ext_{G_{n-d}\times G_d}^i\left(\Jac_{P_{n-d}}\St\left(\Delta'\right),\St\left(\Delta_-\right)\boxtimes \rho|\det|^y\right).
\end{align*}
According to Proposition \ref{P:JacSeg}, $\Jac_{P_{n-d}}\St\left(\Delta'\right)=0$ unless $d=\alpha d'$ for some $\alpha\in\Z_{>0}$, and in this case
\[
    \Jac_{P_{n-d}}\St\left(\Delta'\right) \simeq \St\left(\Delta'_I\right)\boxtimes \St\left(\Delta'_{II}\right),
\]
where $\Delta'_I$ and $\Delta'_{II}$ are some segment determined by $\Delta'$. Substituting back and applying Theorem \ref{T:Kunneth}, we get
\[
    \Ext_{G_{n-d}\times G_d}^i\left(\Jac_{P_{n-d}}\St\left(\Delta'\right),\St\left(\Delta_-\right)\boxtimes \rho|\det|^y\right) 
\]
\[
    \simeq \bigoplus_{i=j+k}\Ext_{G_{n-d}}^j\left(\St\left(\Delta'_I\right),\St\left(\Delta_-\right)\right)\otimes \Ext_{G_d}^k\left(\St\left(\Delta'_{II}\right),\rho|\det|^{y}\right).
\]
Hence one concludes from the induction hypothesis that $\Ext_{G_n}^i\left(\St\left(\Delta'\right),\St\left(\Delta_-\right)\times \rho|\det|^y\right)$ is non-zero for some $i$ if and only if $\Delta=\Delta'$; in which case
\[
    \Ext_{G_n}^i\left(\St\left(\Delta'\right),\St\left(\Delta_-\right)\times \rho|\det|^y\right) =
    \begin{cases}
       \C \quad & i=0,2;\\[10pt]

       \C^2 \quad & i=1;\\[10pt]

       0 \quad & \textit{otherwise}. 
    \end{cases}
\]
Appyling the functor $\Hom_{G_n}\left(\St\left(\Delta'\right),-\right)$ to the short exact sequence (\ref{E:ExtSteinberg.SES-1}). If $\Delta=\Delta'$, we get a long exact sequence
\begin{flushleft}
$0 \lra \Ext_{G_n}^1\left(\St\left(\Delta'\right),\St\left(\Delta\right)\right) \lra \C^2 \lra \Ext_{G_n}^1\left(\St\left(\Delta'\right),\pi\right)$
\end{flushleft}
\begin{flushright}
$\lra \Ext_{G_n}^2\left(\St\left(\Delta'\right),\St\left(\Delta\right)\right) \lra \C \lra \Ext_{G_n}^2\left(\St\left(\Delta'\right),\pi\right) \lra \Ext_{G_n}^3\left(\St\left(\Delta'\right),\St\left(\Delta\right)\right),$   
\end{flushright}
together with a degree shifting: for all $i\geq 3$,
\[
    \Ext_{G_n}^{i}\left(\St\left(\Delta'\right),\pi\right) \simeq \Ext_{G_n}^{i+1}\left(\St\left(\Delta'\right),\St\left(\Delta\right)\right).
\]
If $\Delta\neq \Delta'$, we get the same degree shifting as above, but for all $i\geq 0$.

\vskip 5pt

Next we apply both the MVW-involution and the contragredient functor to the short exact sequence (\ref{E:ExtSteinberg.SES-1}). We get another short exact sequence
\begin{equation}\label{E:ExtSteinberg.SES-2}
    0 \lra \pi \lra  \rho|\det|^y\times\St\left(\Delta_-\right)\lra \St\left(\Delta\right) \lra 0.
\end{equation}
Similar to the argument above, by Lemma \ref{L:IndJacAdj}, Proposition \ref{P:JacSeg} and the induction hypothesis, one can conclude that
\[
    \Ext_{G_n}^i\left(\St\left(\Delta'\right),\rho|\det|^y\times\St\left(\Delta_-\right)\right) = 0
\]
for all $i\geq 0$. Again by appyling the functor $\Hom_{G_n}\left(\St\left(\Delta'\right),-\right)$ to the short exact sequence (\ref{E:ExtSteinberg.SES-2}), we get
\[
    \Ext_{G_n}^i\left(\St\left(\Delta'\right),\St\left(\Delta\right)\right) \simeq \Ext_{G_n}^{i+1}\left(\St\left(\Delta'\right),\pi\right)
\]
for all $i\geq 0$. The desire conclusion then follows from the combination of this degree shifting and the long exact sequence/ degree shifting above. These complete the proof.

\end{proof}

\vskip 5pt

\begin{Rmk}
In the case that $\rho\simeq\rho'$ is a character of $G_1$, this lemma is a consequence of \cite[Cor. 2]{MR2173717}.
\end{Rmk}

\vskip 5pt

\subsection{Local zeta integrals of Godement--Jacquet} 
\label{sub:local_zeta_integrals_of_godement_jacquet}

Finally we briefly recall the theory of local zeta integrals developed by Godement--Jacquet \cite{MR342495}. Let $\left(\pi, V\right)$ be an irreducible representation of $G_n$. For a complex variable $s\in\C$, a test function $\varphi\in\calS\left(M_{n,n}\right)$ and a matrix coefficient $f$ of $\pi$, one can define the zeta integral
\[
    Z\left(s,\varphi,f\right) = \int_{G_n}\varphi\left(g\right) f\left(g\right)|\det g|^{s+\frac{n-1}{2}}\, dg,
\]
where $dg$ is a fixed Haar measure of $G_n$. This integral is absolutely convergent when ${\rm Re}\left(s\right)$ is sufficiently large, and admits a meromorphic continuation to the whole complex plane $\C$. Indeed, one knows that $Z\left(s,\varphi,f\right)$ is a rational function in $\C\left(q^{-s}\right)$. Let $\varphi$ and $f$ vary. Then all these zeta integrals form a finitely generated fractional ideal of $\C\left[q^s,q^{-s}\right]$. Godement and Jacquet showed that there exists a \textit{greatest common divisor} of all these zeta integrals, which is of the form
\[
    L\left(s,\pi\right) = \frac{1}{P\left(q^{-s}\right)},
\]
with $P\left(X\right)$ a polynomial in $X$ independent of $\varphi$ or $f$, and the constant term of $P\left(X\right)$ equals to $1$. This $L\left(s,\pi\right)$ is the so called (local) Godement--Jacquet L-function associated to $\pi$. We need the following results  \cite[Prop. 3.5, Cor. 3.6, Prop. 5.11]{MR342495} for later proofs.

\begin{Prop}\label{P:GJLfunction}
Suppose that $\pi = LQ\left(\St\left(\Delta_1\right)\times\St\left(\Delta_2\right)\times\cdots\times\St\left(\Delta_r\right)\right)$, with $\Delta_i=\left[x_i,y_i\right]_{\rho_i}$. 

\begin{enumerate}
    \item We have
\[
    L\left(s,\pi\right) = \prod_{i=1}^{r} L\left(s+x_i,\rho_i\right),
\]
where
\[
    L\left(s,\rho_i\right) = \begin{cases}
       \frac{1}{1-\rho_i\left(\varpi\right)q^{-s}} \quad & \textit{if $\rho_i$ is a unramified character of $G_1$};\\[10pt]

       1 \quad & \textit{otherwise}. 
    \end{cases}
\]

\vskip 5pt

\item For any irreducible subquotient $\pi'$ of $\St\left(\Delta_1\right)\times\St\left(\Delta_2\right)\times\cdots\times\St\left(\Delta_r\right)$, the fraction $L\left(s,\pi'\right)/L\left(s,\pi\right)$ is an entire function.
\end{enumerate}
\end{Prop}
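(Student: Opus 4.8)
The plan is to prove part (1) by reducing, via the Langlands classification and the multiplicativity of Godement--Jacquet zeta integrals, to the supercuspidal and essentially square-integrable cases, and then to deduce part (2) from a formal comparison of the fractional ideals of zeta integrals attached to $\pi'$ and to the full induced representation $\St(\Delta_1)\times\cdots\times\St(\Delta_r)$.

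For part (1) I would argue in three steps. \emph{Step 1 (supercuspidal $\rho$).} If $\rho$ is supercuspidal on $G_d$, its matrix coefficients are compactly supported modulo the center $F^\times$; writing $Z(s,\varphi,f)$ as a compactly supported integral over $F^\times\backslash G_d$ against an integral in the central variable $t\mapsto\varphi(tg_0)$, one sees that all possible poles come from the central direction. When $d\geq 2$ one shows the fractional ideal of zeta integrals equals $\C[q^s,q^{-s}]$, hence $L(s,\rho)=1$; when $d=1$ this is precisely Tate's local computation, giving $L(s,\rho)=(1-\rho(\varpi)q^{-s})^{-1}$ when $\rho$ is unramified and $1$ otherwise. \emph{Step 2 (essentially square-integrable $\pi=\St(\Delta)$, $\Delta=[x,y]_\rho$).} Realize $\St(\Delta)$ as the unique submodule of $\rho|\det|^x\times\cdots\times\rho|\det|^y$ and analyze the asymptotics of its matrix coefficients $f(g)$ as $\det g\to 0$ with $g$ bounded, i.e.\ as $g$ approaches the singular locus $M_{n,n}\setminus G_n$ (zeta integrals with $\varphi$ supported inside $G_n$ are entire, so all poles arise from this boundary). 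The slowest decay occurs along the rank-one-drop direction, where, by Proposition \ref{P:JacSeg}, the governing Jacquet module is $\Jac_{P_d}\St(\Delta)\cong\rho|\det|^x\boxtimes\St([x-1,y]_\rho)$; summing the resulting geometric series in the central parameter of the $G_d$-block shows the poles of $Z(s,\varphi,f)$ are exactly those of $L(s+x,\rho)$, hence $L(s,\St(\Delta))=L(s+x,\rho)$. \emph{Step 3 (general $\pi=LQ(\St(\Delta_1)\times\cdots\times\St(\Delta_r))$).} Since $\pi$ is a quotient of $\St(\Delta_1)\times\cdots\times\St(\Delta_r)$, the multiplicativity of zeta integrals under parabolic induction shows that $\prod_iL(s,\St(\Delta_i))/L(s,\pi)$ is entire, and $\prod_iL(s,\St(\Delta_i))=\prod_iL(s+x_i,\rho_i)$ by Step 2; the reverse statement --- that every pole of this product is actually attained by a matrix coefficient of the Langlands quotient, so that $L(s,\pi)=\prod_iL(s+x_i,\rho_i)$ --- follows from the explicit structure of standard modules, by induction on $\sum_i(x_i-y_i)$. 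This establishes part (1), which is \cite[Prop.~3.5, Cor.~3.6, Prop.~5.11]{MR342495}.

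For part (2) the key point is that every matrix coefficient of an irreducible subquotient $\pi'$ of $\Pi:=\St(\Delta_1)\times\cdots\times\St(\Delta_r)$ is again a matrix coefficient of $\Pi$: writing $\pi'=A/B$ with $B\subset A\subseteq\Pi$, a matrix coefficient $g\mapsto\langle\pi'(g)\bar v,\lambda'\rangle$ of $\pi'$ equals $g\mapsto\langle\Pi(g)v,\lambda\rangle$ for any lift $v\in A$ of $\bar v$ and any extension $\lambda\in\Pi^\vee$ of $\lambda'\in(A/B)^\vee\subseteq A^\vee$ (the restriction map $\Pi^\vee\to A^\vee$ being surjective by exactness of $(-)^\vee$). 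By the multiplicativity of zeta integrals under parabolic induction, every $Z(s,\varphi,f')$ with $f'$ a matrix coefficient of $\pi'$ therefore lies in $\bigl(\prod_iL(s,\St(\Delta_i))\bigr)\cdot\C[q^s,q^{-s}]=L(s,\pi)\cdot\C[q^s,q^{-s}]$, the equality being part (1). Since $L(s,\pi')$ generates the fractional $\C[q^s,q^{-s}]$-ideal spanned by the $Z(s,\varphi,f')$, we conclude that $L(s,\pi')\in L(s,\pi)\cdot\C[q^s,q^{-s}]$, i.e.\ $L(s,\pi')/L(s,\pi)\in\C[q^s,q^{-s}]$, which is in particular an entire function.

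The main obstacle is Step 2 of part (1): pinning down $L(s,\St(\Delta))=L(s+x,\rho)$ requires controlling the full leading asymptotic expansion of the matrix coefficients of a discrete series representation near the singular locus of $M_{n,n}$ --- equivalently, the exact contribution of the boundary $M_{n,n}\setminus G_n$ to the zeta integral --- together with a non-vanishing assertion guaranteeing that the expected pole at $s=-x$ genuinely occurs and is not cancelled. The multiplicativity inputs used in Step 3 and in part (2) are more routine, amounting to unfolding the zeta integral of an induced representation over the unipotent radical, but need to be set up carefully at the level of the fractional ideals of $\C[q^s,q^{-s}]$.
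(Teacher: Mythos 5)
The paper does not prove this proposition at all: it is quoted verbatim from Godement--Jacquet, as the citation \cite[Prop.~3.5, Cor.~3.6, Prop.~5.11]{MR342495} indicates, so the only meaningful comparison is between your sketch and that source. Your part (2) is essentially the standard (and correct) argument: matrix coefficients of a subquotient of $\Pi=\St(\Delta_1)\times\cdots\times\St(\Delta_r)$ are matrix coefficients of $\Pi$, and the unfolding of zeta integrals over the unipotent radical places all of them in $\bigl(\prod_i L(s,\St(\Delta_i))\bigr)\cdot\C[q^s,q^{-s}]$, whence the divisibility of the gcd's. This part is fine, granting part (1).

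Part (1), however, has genuine gaps. First, Step 3 has the divisibility backwards: the multiplicativity/unfolding argument bounds the poles of zeta integrals of the \emph{induced} module, so what it yields is that $L(s,\pi)/\prod_i L(s+x_i,\rho_i)$ is entire --- not, as you assert, that $\prod_i L(s,\St(\Delta_i))/L(s,\pi)$ is entire. The latter statement (every pole of the product is actually a pole of $L(s,\pi)$) is precisely the hard half of the equality, and it cannot be extracted from the standard module: the Langlands quotient has far fewer matrix coefficients than $\Pi$, so the upper bound for $\Pi$ does not descend to a lower bound for $\pi$. Your ``follows from the explicit structure of standard modules, by induction on $\sum_i(x_i-y_i)$'' names no mechanism; one must actually exhibit Schwartz functions and coefficients of $\pi$ itself attaining the poles (or argue via the functional equation together with the unramified/explicit computations, which is what Godement--Jacquet do). Second, Step 2 is an assertion, not a proof: to read off $L(s,\St(\Delta))=L(s+x,\rho)$ from coefficient asymptotics near $M_{n,n}\setminus G_n$ you must control the contributions of \emph{all} boundary strata (all exponents of $\Jac_{P_{\alpha d}}\St(\Delta)$ in Proposition \ref{P:JacSeg}, not just the rank-one drop), keep track of the Haar-measure Jacobians in the singular-value decomposition, and, crucially, prove a non-vanishing statement excluding cancellation so that the predicted pole genuinely occurs; none of this is supplied, and you yourself identify it as the unresolved core. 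Finally, a smaller point in Step 1: for supercuspidal $\rho$ on $G_d$ with $d\geq 2$, the central-direction geometric series is killed by \emph{cuspidality} (the inner integral of a coefficient against $\chi\circ\det$ over $Z\backslash G_d$ vanishes because coefficients integrate to zero over unipotent radicals), not merely by compact support modulo the center; as written, the reason ``$d\geq 2$'' is doing no work. So the proposal, taken as a proof rather than as a pointer back to \cite{MR342495}, is incomplete exactly where the content of part (1) lies.
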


\vskip 10pt

\section{The Weil representation} 
\label{sec:the_weil_representation}

Recall that we have defined the Weil representation $\omega_{n,m}$ of $G_n\times H_m$ in (\ref{E:DefWeilRep}). In this section we take a closer look at this representation, and collect some useful results to study this representation.

\vskip 5pt

\subsection{MVW-involution as Fourier transformation} 
\label{sub:mvw_involution_as_fourier_transformation}

In Section \ref{sub:mvw_involution_of_general_linear_groups} we have introduced the functor of MVW-involution. This functor can be extended to the category of representations of $G_n\times H_m$. Consider the outer automorphism
\[
    c: G_n\times H_m \lra G_n\times H_m, \quad \left(g,h\right) \longmapsto \left(w_n{}^{t}g^{-1} w_n^{-1}, w_m{}^{t}h^{-1} w_m^{-1}\right).
\]
Here as before, the superscript ``$t$'' means transpose, and $w_n$ (resp. $w_m$) is a (longest) Weyl group element of $G_n$ (resp. $H_m$). This outer automorphism induces a functor
\[
    c: \calM\left(G_n\times H_m\right) \lra \calM\left(G_n\times H_m\right), \quad \pi \longmapsto \pi^c.
\]
Sometimes we would like to write $\pi^{MVW}$ for $\pi^c$. The point here is that for the Weil representation $\omega_{n,m}$, the MVW-involution can be alternatively realized as Fourier transformation.

\begin{Lem}\label{L:MVW4rier}
As representations of $G_n\times H_m$, we have
\[
    \omega_{n,m}^{MVW} \simeq \omega_{n,m}.
\]   
\end{Lem}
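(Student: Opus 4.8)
The plan is to exhibit an explicit $G_n\times H_m$-equivariant isomorphism between $\omega_{n,m}^{MVW}$ and $\omega_{n,m}$, realized on the Schwartz space $\calS(M_{n,m})$ via the Fourier transform. Fix a nontrivial additive character $\psi$ of $F$ and a self-dual Haar measure on $M_{n,m}$, and let $\widehat{\varphi}(Y)=\int_{M_{n,m}}\varphi(X)\,\psi(\Tr({}^tXY))\,dX$ be the associated Fourier transform, which is a linear automorphism of $\calS(M_{n,m})$. The claim is that $\varphi\mapsto\widehat{\varphi}$ (possibly precomposed with $X\mapsto w_n^{-1}X w_m$ or a similar normalization to match the chosen Weyl elements) intertwines the action $\omega_{n,m}\circ c$ with the action $\omega_{n,m}$.

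First I would write out both actions explicitly. Using the definition \eqref{E:DefWeilRep}, for $(g,h)\in G_n\times H_m$ the operator $\omega_{n,m}^{MVW}(g,h)$ acts as $\omega_{n,m}(w_n{}^tg^{-1}w_n^{-1},\,w_m{}^th^{-1}w_m^{-1})$; unwinding this, $\bigl(\omega_{n,m}^{MVW}(g,h)\varphi\bigr)(X)$ equals $|\det g|^{m/2}|\det h|^{-n/2}$ times $\varphi$ evaluated at $w_n{}^tg\,w_n^{-1}\,X\,w_m{}^th^{-1}w_m^{-1}$, after observing that $|\det(w_n{}^tg^{-1}w_n^{-1})|=|\det g|^{-1}$ and likewise for $h$. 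Second, I would compute how Fourier transform interacts with the substitution $X\mapsto g^{-1}Xh$ and the determinant twists: the standard change-of-variables identity $\widehat{(X\mapsto \varphi(aXb))}(Y)=|\det a|^{-m}|\det b|^{-n}\,\widehat{\varphi}({}^ta^{-1}Y\,{}^tb^{-1})$ on $M_{n,m}$ (with $a\in G_n$, $b\in H_m$), together with the pairing $\Tr({}^tXY)$ being invariant under $(X,Y)\mapsto({}^tX,{}^tY)$ appropriately, should convert the action of $\omega_{n,m}$ on $\widehat{\varphi}$ into the action of $\omega_{n,m}^{MVW}$ on $\varphi$. Carefully bookkeeping the powers of $|\det g|$ and $|\det h|$ — the $\pm m/2$, $\pm n/2$ from the Weil representation twist plus the $-m$, $-n$ from Fourier-transforming the linear substitution — is exactly the point where the transpose-inverse in $c$ is needed to make everything cancel.

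The main obstacle, and the only genuinely delicate point, is matching the Weyl-element conjugations built into $c$ with the transpose that appears naturally in Fourier transform on a space of rectangular matrices; the transpose of an $n\times m$ matrix lives in $M_{m,n}$, not $M_{n,m}$, so the "naive" Fourier transform does not act on $\calS(M_{n,m})$ — one must either pair $M_{n,m}$ with itself via $(X,Y)\mapsto\psi(\Tr({}^tXY))$ (which is symmetric in $X,Y$ and lands in the right space) and then check equivariance, or absorb a transpose into the definition of $c$ by the choice of $w_n,w_m$. I would handle this by noting that conjugation by the longest Weyl element is, up to an inner automorphism, the transpose-inverse map, so $g\mapsto w_n{}^tg^{-1}w_n^{-1}$ and $g\mapsto{}^tg^{-1}$ define the same functor on $\calM(G_n)$ up to isomorphism; hence it suffices to prove the intertwining for the model automorphism $(g,h)\mapsto({}^tg^{-1},{}^th^{-1})$, which is precisely what the Fourier transform computation yields. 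Finally I would remark that the statement (and this proof) is well known and refer to \cite[Lem.~4.3]{MR3930015} as the excerpt already does, so the proof can be kept brief.
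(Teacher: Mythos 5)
Your proposal is correct and follows essentially the same route as the paper: the paper's proof simply cites \cite[Lem.~4.3]{MR3930015} and exhibits the Fourier transform $\varphi\mapsto\bigl(X\mapsto\int_{M_{n,m}}\varphi(Y)\,\psi_F(\operatorname{tr}({}^tX\cdot Y))\,dY\bigr)$ on $\calS\left(M_{n,m}\right)$ as the intertwining isomorphism, which is exactly your construction. Your additional bookkeeping (the change-of-variables computation and the observation that conjugation by the longest Weyl element is inner, hence harmless) just fills in the details the paper leaves to the reference.
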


\vskip 5pt

\begin{proof}
This is \cite[Lem. 4.3]{MR3930015}; the isomorphism is given by the Fourier transformation
\begin{align*}
    \calS\left(M_{n,m}\right) &\lra \calS\left(M_{n,m}\right),
\end{align*}  
sending a test function $\varphi\in\calS\left(M_{n,m}\right)$ to 
\[
    X\longmapsto \int_{M_{n,m}}\varphi\left(Y\right)\psi_F\left({\rm tr}_m\left({}^tX\cdot Y\right)\right)dY.
\]
Here $\psi_F$ is a non-trivial additive character of $F$, and ${\rm tr}_m$ is the trace map of $M_{m,m}$.

\end{proof}

\vskip 5pt

Consequently, we deduce that:

\begin{Cor}\label{C:ExtvsMVW}
For any representation $\pi$ of $G_n$, we have
\[
    \Ext_{G_n}^i\left(\omega_{n,m},\pi^c\right)_{sm} \simeq \Ext_{G_n}^i\left(\omega_{n,m},\pi\right)^c_{sm}
\] 
as representations of $H_m$, for all $i\geq 0$.  
\end{Cor}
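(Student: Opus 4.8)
The statement to prove is Corollary \ref{C:ExtvsMVW}: for any representation $\pi$ of $G_n$ we have $\Ext_{G_n}^i(\omega_{n,m},\pi^c)_{sm} \simeq \Ext_{G_n}^i(\omega_{n,m},\pi)^c_{sm}$ as representations of $H_m$, for all $i\geq 0$. The plan is to combine the MVW-invariance of the Weil representation (Lemma \ref{L:MVW4rier}) with the functoriality of the Ext bifunctor, being careful about how the outer automorphism $c$ of $G_n\times H_m$ decomposes into the two separate twists $c_n$ on $G_n$ and $c_m$ on $H_m$.

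First I would unwind what the covariant functor $c$ does on Ext groups. Since $c\colon G_n\to G_n$ is an automorphism, precomposition with $c$ is an exact autoequivalence of $\calM(G_n)$, and for any two representations $A,B$ of $G_n$ it induces a natural isomorphism $\Ext_{G_n}^i(A,B)\xrightarrow{\sim}\Ext_{G_n}^i(A^{c_n},B^{c_n})$ (one can see this by transporting a projective resolution of $A$ through the equivalence, or simply invoke that $c_n$ is an equivalence of categories). Apply this with $A=\omega_{n,m}$ and $B=\pi$, but regard everything as $G_n\times H_m$-equivariant so as to track the $H_m$-action: the automorphism we must apply on the product is $c=c_n\times c_m$, and since it is the product automorphism we get a natural isomorphism
\[
\Ext_{G_n}^i(\omega_{n,m},\pi)_{sm}\;\xrightarrow{\ \sim\ }\;\Ext_{G_n}^i\bigl((\omega_{n,m})^{c_n},\pi^{c_n}\bigr)_{sm},
\]
where the left side carries an $H_m$-action which, after this identification, gets twisted by $c_m$ on the right — in other words the right-hand side as an $H_m$-module is exactly $\Ext_{G_n}^i(\omega_{n,m}^{c_n},\pi^{c_n})_{sm}^{c_m}$. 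Here $\omega_{n,m}^{c_n}$ means we twist the $G_n$-action only, leaving the $H_m$-action untwisted; equivalently $\omega_{n,m}^{c_n} = (\omega_{n,m}^{c})^{c_m}$, i.e. $\omega_{n,m}^{MVW}$ further twisted back by $c_m$ in the $H_m$-variable.

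Now I would feed in Lemma \ref{L:MVW4rier}, which gives $\omega_{n,m}^{MVW}=\omega_{n,m}^{c}\simeq\omega_{n,m}$ as $G_n\times H_m$-representations. Twisting both sides of this isomorphism by $c_m$ in the $H_m$-variable yields $\omega_{n,m}^{c_n}\simeq\omega_{n,m}^{c_m}$ as $G_n\times H_m$-representations. But $c_m$ acts only on the $H_m$-factor, so on $\Ext_{G_n}^i(-,\pi^{c_n})_{sm}$ this twist is invisible for the first variable's role and merely reindexes the $H_m$-action: substituting $\omega_{n,m}^{c_n}\simeq\omega_{n,m}$ up to an overall $c_m$-twist of the $H_m$-structure, we obtain
\[
\Ext_{G_n}^i(\omega_{n,m},\pi)_{sm}^{c_m}\;\simeq\;\Ext_{G_n}^i(\omega_{n,m},\pi^{c_n})_{sm},
\]
which is precisely the claim after writing $(-)^c$ for $(-)^{c_m}$ on $H_m$-modules and $\pi^c$ for $\pi^{c_n}$ — consistent with the notational conventions of Section \ref{sub:mvw_involution_of_general_linear_groups}.

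The main obstacle, and the only place demanding genuine care, is the bookkeeping of the two twists: the symbol $c$ denotes an automorphism of the product group, but the statement mixes a $G_n$-twist on $\pi$ with an $H_m$-twist on the output module, so one must consistently separate $c=c_n\times c_m$ and verify that the natural isomorphism from functoriality intertwines the $H_m$-actions exactly with the extra $c_m$-twist and no more. Everything else — exactness of parabolic-type functors is not even needed here — reduces to the formal fact that an outer automorphism induces an autoequivalence of the smooth category and hence isomorphisms on all Ext groups, together with the exactness of the smooth-vectors functor $(-)_{sm}$ already recorded in the preceding lemma. I would present this cleanly in two or three lines once the twist conventions are pinned down.
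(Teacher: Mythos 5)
Your proof is correct and takes essentially the same route as the paper: Lemma \ref{L:MVW4rier} combined with the formal compatibility of $\Hom_{G_n}\left(\omega_{n,m},-\right)_{sm}$ with the twist $c=c_n\times c_m$, followed by deriving (the paper phrases this as the functor identity $\Hom_{G_n}\left(\omega_{n,m},-\right)_{sm}\circ c \simeq c\circ \Hom_{G_n}\left(\omega_{n,m},-\right)_{sm}$ and then deriving both sides). Only tidy the intermediate sentence asserting that the right-hand side is $\Ext^i_{G_n}\left(\omega_{n,m}^{c_n},\pi^{c_n}\right)_{sm}^{c_m}$: with your conventions $\Ext^i_{G_n}\left(\omega_{n,m}^{c_n},\pi^{c_n}\right)_{sm}$ is canonically the untwisted $\Ext^i_{G_n}\left(\omega_{n,m},\pi\right)_{sm}$ as an $H_m$-module, and the $c_m$-twist only appears when you replace $\omega_{n,m}^{c_n}$ by $\omega_{n,m}^{c_m}$ via the lemma --- your final displayed isomorphism is the correct statement.
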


\vskip 5pt

\begin{proof}
Using the previous lemma, one can check by hand that
\[
    \Hom_{G_n}\left(\omega_{n,m},-\right)_{sm}\circ c \simeq c \circ \Hom_{G_n}\left(\omega_{n,m},-\right)_{sm}
\]
as functors from $\calM\left(G_n\right)$ to $\calM\left(H_m\right)$. The desired conclusion then follows from deriving both sides of above. 

\end{proof}

\vskip 5pt


\subsection{The rank filtration on the Weil representation} 
\label{sub:the_rank_filtration}

Since the Weil representation is realized on the space of Schwartz functions on the matrices, there is a natural filtration induced by ranks of matrices. To be more precise, we have a $G_n\times H_m$-invariant filtration on $\calS\left(M_{n,m}\right)$:

\[
    0=R_{a+1} \subset R_a \subset R_{a-1} \subset \cdots \subset R_0 = \calS\left(M_{n,m}\right).
\]
Here $a=\min\left\{n,m\right\}$, and for each $0\leq k\leq a$, $R_k$ is the space of Schwartz functions supported on the matrices in $M_{n,m}$ of rank greater than or equal to $k$. Let $\Omega_k$ be the set of matrices in $M_{n,m}$ of rank $k$. Then as vector spaces, we have
\[
    R_k/R_{k+1} \simeq \calS\left(\Omega_k\right).
\]
We shall denote by $\tau_k$ the action of $G_n\times H_m$ on $R_k/R_{k+1}$ inherited from $\omega_{n,m}$. According to \cite[Sect. 2]{MR2504432} (or \cite[Sect. 5.1]{MR3753906}), this representation $\tau_k$ can be described as follows.

\begin{Lem}\label{L:rank-filtration}
Let $P_k$ be the standard parabolic subgroup of $G_n$ with Levi component $G_k\times G_{n-k}$, and $Q_k$ the standard parabolic subgroup of $H_m$ with Levi component $H_k\times H_{m-k}$. Denote by $\overline{Q}_k$ the parabolic subgroup of $H_m$ opposite to $Q_k$. Then we have
\[
    \tau_k \simeq \Ind_{P_k\times \overline{Q}_k}^{G_n\times H_m} \left(\omega^\natural_k\otimes\xi_k\right),
\]
where the parabolic induction is normalized, and:
\begin{itemize}
     \item $\omega^\natural_k$ is the natural geometric action of $G_k\times H_k$ on $\calS\left(G_k\right)$, namely
     \[
        \left(\omgn_k\left(g,h\right)\varphi\right) \left(X\right) = \varphi\left(g^{-1}Xh\right)
     \]
     for $\left(g,h\right)\in G_k\times H_k$, $\varphi\in\calS\left(G_k\right)$ and $X\in G_k$;

     \vskip 5pt

     \item $\xi_k$ is a character of the standard Levi subgroup of $P_k\times \overline{Q}_k$, given by
     \[  \xi_k = 
     \begin{cases}
       |\det|^{\frac{k-n-m}{2}}  \quad & \textit{on } G_{k} ; \\[10pt]

       |\det|^{\frac{k-m}{2}}  \quad & \textit{on } G_{n-k} ; \\[10pt]

       |\det|^{\frac{n+m-k}{2}}  \quad & \textit{on } H_{k} ; \\[10pt]

       |\det|^{\frac{n-k}{2}}  \quad & \textit{on } H_{m-k}.
     \end{cases}
     \]
\end{itemize} 
\end{Lem}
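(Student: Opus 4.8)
The plan is to identify the graded piece $\tau_k$ --- which by construction is $\calS(\Omega_k)$ carrying (the descent of) the action \eqref{E:DefWeilRep} --- with the claimed induced representation, by exploiting that $\Omega_k$ is a single orbit. First I would note that $G_n\times H_m$ acts transitively on $\Omega_k$ through $(g,h)\colon X\mapsto g^{-1}Xh$ (any two $n\times m$ matrices of the same rank differ by invertible row and column operations), and fix the base point
\[
    X_0=\begin{pmatrix} I_k & 0\\ 0& 0\end{pmatrix},
\]
so that $\Omega_k\cong \big(G_n\times H_m\big)/S_k$ with $S_k=\operatorname{Stab}(X_0)$. Writing $g\in G_n$ and $h\in H_m$ in block form for the partitions $n=k+(n-k)$ and $m=k+(m-k)$, the equation $g^{-1}X_0h=X_0$ forces $g\in P_k$, $h\in\overline{Q}_k$, and the $G_k$-block of $g$ to equal the $H_k$-block of $h$; hence
\[
    S_k=\big(G_k^{\Delta}\times G_{n-k}\times H_{m-k}\big)\ltimes\big(N_{P_k}\times N_{\overline{Q}_k}\big)\ \subseteq\ P_k\times\overline{Q}_k,
\]
where $G_k^{\Delta}=\{(a,a):a\in G_k\}\subseteq G_k\times H_k$ and $N_{P_k},N_{\overline{Q}_k}$ are the unipotent radicals. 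In particular $S_k$ contains $N_{P_k}\times N_{\overline{Q}_k}$ and meets the Levi of $P_k\times\overline{Q}_k$ exactly in $G_k^{\Delta}\times G_{n-k}\times H_{m-k}$.

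The next step is to turn this orbit description into an induced representation and then to keep track of modulus characters. Since the twist $|\det g|^{-m/2}|\det h|^{n/2}$ in \eqref{E:DefWeilRep} is the restriction of a character of $G_n\times H_m$, the action $\tau_k$ differs from the pure geometric action on $\calS\big((G_n\times H_m)/S_k\big)$ only by this character, so by the projection formula $\tau_k\simeq\operatorname{c-Ind}_{S_k}^{G_n\times H_m}(\mu_k)$, where $\mu_k$ is the restriction of $(g,h)\mapsto|\det g|^{-m/2}|\det h|^{n/2}$ to $S_k$ (no extra modulus factor appears, as we induce functions and both groups are unimodular); on the Levi part of $S_k$ one has $\mu_k=|\det a|^{(n-m)/2}$ on $G_k^{\Delta}$, $|\det d|^{-m/2}$ on $G_{n-k}$, $|\det s|^{n/2}$ on $H_{m-k}$. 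I would then compute in stages, $\operatorname{c-Ind}_{S_k}^{G_n\times H_m}=\operatorname{c-Ind}_{P_k\times\overline{Q}_k}^{G_n\times H_m}\circ\operatorname{c-Ind}_{S_k}^{P_k\times\overline{Q}_k}$. Because $S_k$ contains the unipotent radical $N_{P_k}\times N_{\overline{Q}_k}$, on which $\mu_k$ is trivial, the inner induction is inflated from the Levi; its $G_k\times H_k$-component is $\operatorname{c-Ind}_{G_k^{\Delta}}^{G_k\times H_k}$ of a character restricting to $|\det a|^{(n-m)/2}$ on $G_k^{\Delta}$, which (using $\operatorname{c-Ind}_{G_k^{\Delta}}^{G_k\times H_k}(\mathbf{1})\simeq\calS(G_k)$ and the projection formula) is $\omgn_k$ tensored with any extension of that character, conveniently $|\det|^{-m/2}\boxtimes|\det|^{n/2}$. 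Thus the inner step gives $\omgn_k$ tensored with the Levi character equal to $|\det|^{-m/2}$ on $G_k$ and $G_{n-k}$ and $|\det|^{n/2}$ on $H_k$ and $H_{m-k}$. Finally $P_k\times\overline{Q}_k$ has compact quotient, so the outer compact induction is ordinary parabolic induction, and passing to the normalized induction $\operatorname{Ind}$ multiplies the inducing character by $\delta_{P_k}^{-1/2}\boxtimes\delta_{\overline{Q}_k}^{-1/2}=\delta_{P_k}^{-1/2}\boxtimes\delta_{Q_k}^{1/2}$; substituting $\delta_{P_k}(a,d)=|\det a|^{n-k}|\det d|^{-k}$ and $\delta_{Q_k}(a,s)=|\det a|^{m-k}|\det s|^{-k}$ and collecting exponents yields precisely the character $\xi_k$ of the statement.

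The genuinely delicate point is this last bookkeeping: three independent families of modulus characters are in play --- the normalization in \eqref{E:DefWeilRep}, the passage from functions on the homogeneous space $(G_n\times H_m)/S_k$ to $\operatorname{c-Ind}$, and the normalization of parabolic induction --- and moreover only the \emph{sum} of the $G_k$- and $H_k$-exponents is intrinsic, the way that sum is distributed between $\omgn_k$ and $\xi_k$ being a matter of convention that must be fixed compatibly with the rest of the paper. Making every exponent come out to the stated $\xi_k$ is the only real work here; alternatively, one simply invokes the parallel computations of M\'inguez \cite[Sect.~2]{MR2504432} or Adams--Prasad--Savin \cite[Sect.~5.1]{MR3753906}.
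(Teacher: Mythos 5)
Your proposal is correct: the orbit--stabilizer identification of $\calS(\Omega_k)$ with $\operatorname{c-Ind}_{S_k}^{G_n\times H_m}(\chi|_{S_k})$, the passage through the unipotent radical, the identification $\operatorname{c-Ind}_{G_k^\Delta}^{G_k\times H_k}(\mathbbm{1})\simeq\omgn_k$, and the final modulus-character bookkeeping (using $\delta_{P_k}(a,d)=|\det a|^{n-k}|\det d|^{-k}$, $\delta_{\overline{Q}_k}=\delta_{Q_k}^{-1}$) all check out and reproduce exactly the character $\xi_k$ of the statement. The paper itself offers no proof, simply citing M\'inguez and Adams--Prasad--Savin, and your argument is precisely the standard computation carried out in those references, so it matches the intended justification.
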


\vskip 5pt

Using this filtration, Adams--Prasad--Savin showed that for any irreducible representation $\pi$ of $G_n$ and any $*\geq 0$, $\Ext_{G_n}^*\left(\omega_{n,m},\pi\right)_{sm}$ is a finite length representation of $H_m$, hence $\EP_{G_n}\left(\omega_{n,m},\pi\right)_{sm}$ is well-defined in the Grothendieck group. We refer readers to \cite[Prop. 1.1, Prop. 5.17]{MR3753906} for more details.

\vskip 5pt


\subsection{Jacquet modules of the Weil representation} 
\label{sub:kudla_s_filtration}

In our later computations, we often need to deal with Jacquet modules of the Weil representation. The \textit{Kudla's filtration} \cite[Prop. 3.2]{MR2504432} provides a fruitful description of these Jacquet modules. For each $0\leq k\leq n$, again we let $P_k$ be the standard parabolic subgroup of $G_n$ with Levi component $M_k = G_k\times G_{n-k}$.

\begin{Prop}\label{P:KudlaFiltration}
The normalized Jacquet module $\Jac_{P_k}\omega_{n,m}$ of $\omega_{n,m}$ has an $M_k\times H_m$-invariant filtration
\[
    0= J_{k+1}\subset J_k \subset J_{k-1} \subset \cdots \subset J_0 = \Jac_{P_k}\omega_{n,m},
\]
and for each $0\leq i \leq k$, the successive quotient $\lambda_i=J_i/J_{i+1}$ is given by
\[
    \lambda_i \simeq \Ind_{P_{k-i,i}\times G_{n-k} \times Q_i}^{M_k\times H_m}\left(\mu_{k,i}\otimes \omgn_i\otimes\omega_{n-k,m-i}\right).
\]
Here:
\begin{itemize}
    \item $P_{k-i,i}$ is the standard parabolic subgroup of $G_k$ with Levi component $G_{k-i}\times G_i$;

    \vskip 5pt

    \item $Q_i$ is the standard parabolic subgroup of $H_m$ with Levi component $H_{i}\times H_{m-i}$; if $Q_i$ does not exists, then $\lambda_i$ is interpreted to be zero;

    \vskip 5pt

    \item $\omgn_i$ is the natural geometric action of $G_i\times H_i$ on $\calS\left(G_i\right)$;

    \vskip 5pt

    \item $\omega_{n-k,m-i}$ is the Weil representation associated to $G_{n-k}\times H_{m-i}$;

    \vskip 5pt

    \item $\mu_{k,i}$ is a character of the standard Levi subgroup of $P_{k-i,i}\times G_{n-k} \times Q_i$, given by
    \[  \mu_{k,i} = 
     \begin{cases}
       |\det|^{\frac{m-n+k-i}{2}}  \quad & \textit{on } G_{k-i} ; \\[10pt]

       |\det|^{\frac{m-n+2k-i}{2}}  \quad & \textit{on } G_{i} ; \\[10pt]

       |\det|^{\frac{k-i}{2}}  \quad & \textit{on } G_{n-k} ; \\[10pt]

       |\det|^{\frac{n-m-2k+i}{2}}  \quad & \textit{on } H_{i} ; \\[10pt]

       |\det|^{\frac{i-k}{2}}  \quad & \textit{on } H_{m-i}.
     \end{cases}
     \]
\end{itemize}
\end{Prop}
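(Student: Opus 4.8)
The statement concerns only the $G_n$-module structure (with its commuting $H_m$-action) of the Schwartz model $\calS(M_{n,m})$, so the plan is to compute $\Jac_{P_k}\omega_{n,m}$ directly from that model by analysing coinvariants, in the spirit of the Bernstein--Zelevinsky theory of Jacquet modules of representations realised on Schwartz spaces. Write a matrix $Y\in M_{n,m}$ as $Y=(Y_1;Y_2)$ with $Y_1\in M_{k,m}$ the top $k$ rows and $Y_2\in M_{n-k,m}$ the bottom $n-k$ rows. The unipotent radical $N_{P_k}\cong M_{k,n-k}$ acts by $(\omega_{n,m}(X_0)\varphi)(Y)=\varphi(Y_1-X_0Y_2,\,Y_2)$ and consists of matrices of determinant one, so no twist intervenes; hence $\Jac_{P_k}\omega_{n,m}$ is, up to the normalising character $\delta_{P_k}^{-1/2}$, the space of $N_{P_k}$-coinvariants of $\calS(M_{n,m})$, regarded as a representation of $M_k\times H_m=G_k\times G_{n-k}\times H_m$.

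First I would stratify. The subsets $\{Y:\operatorname{rank}(Y_2)\geq r\}$ are open in $M_{n,m}$ and stable under $N_{P_k}$ and under $M_k\times H_m$, so extension by zero gives a filtration of $\calS(M_{n,m})$ whose graded pieces are $\calS(\{\operatorname{rank}(Y_2)=r\})$; as the coinvariant functor is exact, this descends to a filtration of $\Jac_{P_k}\omega_{n,m}$. On the stratum $\{\operatorname{rank}(Y_2)=r\}$ the group $N_{P_k}$ acts along the fibres of $Y\mapsto Y_2$ by translation by $U_{Y_2}:=\{Z\in M_{k,m}:\text{the rows of }Z\text{ lie in the row span of }Y_2\}$, so the coinvariants of that piece are the Schwartz space of the vector bundle over the rank-$r$ locus $B_r\subset M_{n-k,m}$ with fibre $M_{k,m}/U_{Y_2}$. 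Choosing the base point $Y_2^{(0)}=(I_r\mid 0)$, computing its stabiliser in $G_{n-k}\times H_m$ (a ``block upper/lower triangular'' subgroup whose Levi contains a diagonally embedded $G_r$, from the identification of the row span of $Y_2^{(0)}$ with $F^r$), and noting that the fibre over $Y_2^{(0)}$ is $M_{k,m-r}$ acted on non-trivially only by $G_k$ on the left and by the $G_{m-r}$-block of the stabiliser on the right, Mackey theory identifies this graded piece with a representation compactly induced from that stabiliser, with inducing datum $\calS(G_r)\otimes\calS(M_{k,m-r})$ twisted by an explicit character.

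The second step is to reorganise this into the filtration of the proposition. On each graded piece I further filter using the rank filtration of the fibre factor $\calS(M_{k,m-r})$ supplied by Lemma \ref{L:rank-filtration}; its index, call it $s$, works out to be $\operatorname{rank}(Y)-\operatorname{rank}(Y_2)$, and one obtains a double filtration of $\Jac_{P_k}\omega_{n,m}$ indexed by pairs $(r,s)$. For a fixed value $s=i$ the corresponding piece carries $\calS(G_i)=\omgn_i$ on a $G_i\times H_i$ with $G_i\subseteq G_k$ and $H_i\subseteq H_m$; since the $i$-dimensional span of the ``new'' part of $Y_1$ is transverse to the row span of $Y_2$, one may choose coordinates putting that span among $i$ of the $m$ columns, after which $Y_2$ becomes an arbitrary element of $M_{n-k,m-i}$ whose rank is exactly the remaining index $r$. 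Reading Lemma \ref{L:rank-filtration} in reverse for $\omega_{n-k,m-i}=\calS(M_{n-k,m-i})$, the pieces with $i$ fixed assemble precisely into $\lambda_i=\Ind_{P_{k-i,i}\times G_{n-k}\times Q_i}^{M_k\times H_m}(\mu_{k,i}\otimes\omgn_i\otimes\omega_{n-k,m-i})$, with $\lambda_i=0$ when $i>m$ as there is then no $Q_i$. The character $\mu_{k,i}$ and the choice of $P_{k-i,i}$, $Q_i$ over their opposites are pinned down by keeping track of $\delta_{P_k}^{1/2}$, the normalising twists of the inner inductions, and the Jacobian of the affine $N_{P_k}$-action; the specialisation $k=n$, where $Y_2$ is vacuous and the statement must reduce to Lemma \ref{L:rank-filtration} itself, is a convenient consistency check.

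The main obstacle is the reorganisation in the second step: the invariant $\operatorname{rank}(Y)-\operatorname{rank}(Y_2)$ governing the final filtration is not semicontinuous on $M_{n,m}$, so the filtration is not visible from any naive stratification by it; one has to go through the auxiliary stratification by $\operatorname{rank}(Y_2)$ and then show that the resulting $(r,s)$-double filtration collapses to the single one claimed, equivalently arrange the bookkeeping so the graded pieces appear in the correct nesting. Getting the character $\mu_{k,i}$ exactly right is a secondary but error-prone point, best handled by fixing normalisations via the $k=n$ case and propagating. An alternative route with the same collapse issue would apply $\Jac_{P_k}$ directly to the rank filtration of $\omega_{n,m}$ as a $G_n\times H_m$-representation and invoke the Bernstein--Zelevinsky geometric lemma for $\Jac_{P_k}\circ\Ind_{P_j}^{G_n}$ --- whose double cosets for $\GL_n$ are indexed by a single integer --- reducing repeatedly to Jacquet modules of $\calS(G_j)$, i.e. to Schwartz spaces of the free quotients $N\backslash G_j$.
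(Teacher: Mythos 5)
First, note that the paper offers no proof of this proposition: it is quoted verbatim from M\'inguez \cite[Prop.~3.2]{MR2504432}, so your attempt has to be measured against the standard argument rather than an internal one. Your first step is sound: identifying $\Jac_{P_k}\omega_{n,m}$ with the $N_{P_k}$-coinvariants of $\calS(M_{n,m})$, stratifying by $r=\operatorname{rank}(Y_2)$, computing the fibrewise coinvariants as the Schwartz space of the bundle with fibre $M_{k,m}/U_{Y_2}\cong M_{k,m-r}$, and unwinding by Mackey theory would indeed recover the correct composition factors, and with care the correct twists. The genuine gap is exactly the point you flag and then defer to ``bookkeeping''. Your construction forces the subrepresentations to be indexed by $r$: the open sets $\{\operatorname{rank}(Y_2)\ge r\}$ give subobjects $F_r$, and $i=\operatorname{rank}(Y)-\operatorname{rank}(Y_2)$ only appears as an inner filtration of each layer $F_r/F_{r+1}$. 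The proposition asserts a filtration nested the opposite way, and the two orderings are incompatible: the constituent with $(r,i)=(1,0)$ lies in your subobject $F_1$ but in the top Kudla layer $\lambda_0$, while the constituent with $(r,i)=(0,1)$ lies in your quotient $F_0/F_1$ but inside the Kudla subobject $J_1$. Hence no refinement, coarsening or re-nesting of your double filtration can produce $J_\bullet$; in particular the pieces with $i$ fixed are separated by layers of different $i$ in your filtration and cannot ``assemble into $\lambda_i$'' there without proving nontrivial splitting statements that you do not address. Your alternative route (geometric lemma applied to the rank filtration) has the same defect: it computes constituents, i.e.\ the semisimplification, not the asserted filtration, and the $\lambda_i$ are in general non-semisimple extensions of those constituents.

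The missing ingredient is a device making the Kudla index semicontinuous, and the standard one is a partial Fourier transform in the $Y_1$-variable. After this transform the translation action of $N_{P_k}$ becomes multiplication by the characters $\psi(\operatorname{tr}(X_0\,Y_2\,{}^tZ_1))$, so by the usual localization lemma for coinvariants of Schwartz spaces, $\Jac_{P_k}\omega_{n,m}$ is identified (up to explicit twists) with $\calS$ of the incidence variety $\{(Z_1,Y_2)\in M_{k,m}\times M_{n-k,m}\,:\,Y_2\,{}^tZ_1=0\}$. On this variety $i=\operatorname{rank}(Z_1)$ \emph{is} lower semicontinuous, so $J_i$ is literally the subspace of functions supported on $\{\operatorname{rank}(Z_1)\ge i\}$, and for $Z_1$ of rank $i$ the variable $Y_2$ ranges over all of $\Hom(F^{n-k},R(Z_1)^{\perp})\cong M_{n-k,m-i}$ with no rank constraint---which is exactly why the full Weil representation $\omega_{n-k,m-i}$, rather than a single rank stratum of it, occurs in $\lambda_i$; the orbit analysis of rank-$i$ matrices $Z_1$ then yields the $\Ind_{P_{k-i,i}\times G_{n-k}\times Q_i}$ structure with $\omgn_i$ and the character $\mu_{k,i}$. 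Incidentally, your proposed consistency check is also off in a telling way: at $k=n$ the proposition does not reduce to Lemma \ref{L:rank-filtration} itself but to its image under the Fourier transform of Lemma \ref{L:MVW4rier} (already the top quotients differ: $|\det_n|^{m/2}\boxtimes|\det_m|^{-n/2}$ versus $|\det_n|^{-m/2}\boxtimes|\det_m|^{n/2}$), which is another sign that the Fourier transform is the idea your argument is missing.
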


\vskip 5pt

We end up this section with a standard application of the Kudla's filtration. Let $\Delta = \left[x,y\right]_\rho$ be a segment. Assume that $\St\left(\Delta\right)$ is an irreducible representation of $G_k$.

\begin{Cor}\label{C:KudlaIndCompatible}
Suppose that either $\rho\not\simeq\mathbbm{1}_1$, or $y\neq \frac{1+m-n}{2}$. Then for any representation $\pi_0$ of $G_{n-k}$, and any $i\geq 0$, we have
\[
    \Ext_{G_n}^i\left(\omega_{n,m}, \St\left(\Delta\right)\times\pi_0\right)_{sm} \simeq \St\left(\Delta\right)\times \Ext_{G_{n-k}}^i\left(\omega_{n-k,m-k}, \pi_0\right)_{sm} 
\]
as representations of $H_m$.
\end{Cor}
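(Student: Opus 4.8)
The plan is to use the second adjointness (Lemma~\ref{L:IndJacAdj}(2)) to replace $\Ext^i_{G_n}(\omega_{n,m}, \St(\Delta)\times\pi_0)_{sm}$ by an Ext group over the Levi $G_k\times G_{n-k}$ against the Jacquet module $\Jac_{\overline{P}_k}\omega_{n,m}$, and then analyze the latter via Kudla's filtration (Proposition~\ref{P:KudlaFiltration}). Concretely: writing $\St(\Delta)\times\pi_0 = \Ind_{P_k}^{G_n}(\St(\Delta)\boxtimes\pi_0)$, the second adjointness gives
\[
    \Ext^i_{G_n}(\omega_{n,m}, \St(\Delta)\times\pi_0)_{sm} \simeq \Ext^i_{G_k\times G_{n-k}}(\St(\Delta)\boxtimes\pi_0, \Jac_{\overline{P}_k}\omega_{n,m})_{sm},
\]
where the $H_m$-action is carried along. (One must be a little careful because of the opposite parabolic $\overline{P}_k$, but applying the MVW-involution/Fourier transform to convert $\Jac_{\overline{P}_k}$ into a statement about $\Jac_{P_k}$ — or else just using $\Jac_{\overline{P}_k}\omega_{n,m} \simeq (\Jac_{P_k}\omega_{n,m}^\vee)^\vee$ together with Lemma~\ref{L:MVW4rier} — reduces everything to the filtration as stated.)

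Next, I would feed Kudla's filtration into this computation. The filtration has graded pieces $\lambda_i = \Ind_{P_{k-i,i}\times G_{n-k}\times Q_i}^{M_k\times H_m}(\mu_{k,i}\otimes\omgn_i\otimes\omega_{n-k,m-i})$ for $0\le i\le k$. The claim is that only the top piece $\lambda_k$ survives when we $\Ext$ against $\St(\Delta)\boxtimes\pi_0$. For the $G_k$-factor we need to compute $\Ext^j_{G_k}(\St(\Delta), \text{(the $G_k$-part of $\lambda_i$)})$. The $G_k$-part of $\lambda_i$ is the parabolically induced representation $\Ind_{P_{k-i,i}}^{G_k}(\mu_{k,i}|_{G_{k-i}}\otimes (\mu_{k,i}|_{G_i}\cdot \text{something from }\omgn_i))$, which by Frobenius reciprocity for $\Ext$ reduces to computing $\Jac$ of $\St(\Delta)$ along $P_{k-i,i}$. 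By Proposition~\ref{P:JacSeg}, $\Jac_{P_{k-i,i}}\St(\Delta)$ is a single segment pair, so we get contributions exactly when the supercuspidal support matches up; a bookkeeping check on the exponents in $\mu_{k,i}$ versus the exponents $x, x-1, \dots, y$ appearing in $\Delta$ shows that, for $i<k$, the required matching forces the segment $\Delta$ to have right endpoint $y = \frac{1+m-n}{2}$ and $\rho\simeq\mathbbm{1}_1$ — precisely the excluded case. Hence under the hypothesis, all $\lambda_i$ with $i<k$ contribute nothing, so $\Jac_{\overline{P}_k}\omega_{n,m}$ is "as good as" its top quotient $\lambda_k \simeq \Ind(\dots \otimes \omgn_k\otimes\omega_{n-k,m-k})$ from the point of view of $\Ext$-ing against $\St(\Delta)\boxtimes\pi_0$.

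Finally, I would carry out the computation for the surviving piece $\lambda_k$. Here the $G_k$-side is $\omgn_k\otimes(\text{character})$ on $\calS(G_k)$, which as a $G_k$-module is (a twist of) the regular representation $\calS(G_k)$; since $\calS(G_k)$ is projective as a smooth $G_k$-module and $\Hom_{G_k}(\calS(G_k), V) \simeq V$ (smoothly) functorially, the $\Ext^j_{G_k}$ against it vanishes for $j>0$ and in degree $0$ simply evaluates, pulling out a copy of $\St(\Delta)$. The $H_i$-factor (with $i=k$) gets absorbed into the parabolic induction over $H_m$, turning $\Ind$ over $Q_k$ of $\St(\Delta)\boxtimes(\cdots)$ into $\St(\Delta)\times(\cdots)$; matching normalizing exponents of $\mu_{k,k}$, $\xi$, and the standard twist in $\omega_{n-k,m-k}$, what remains on the $G_{n-k}\times H_{m-k}$ part is exactly $\Ext^i_{G_{n-k}}(\omega_{n-k,m-k},\pi_0)_{sm}$. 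Assembling via the Künneth formula (Theorem~\ref{T:Kunneth}) and collecting the $H_m$-action gives $\St(\Delta)\times \Ext^i_{G_{n-k}}(\omega_{n-k,m-k},\pi_0)_{sm}$, as desired.

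I expect the main obstacle to be the exponent bookkeeping in the previous paragraph: one has to juggle the character $\mu_{k,i}$ from Kudla's filtration, the $\delta_{P}^{1/2}$ twists from normalized induction and Jacquet functors, the intrinsic $|\det|$-twist built into the definition \eqref{E:DefWeilRep} of $\omega_{n,m}$, and the segment exponents $[x,y]_\rho$, and verify that (a) for $i<k$ the mismatch is total unless $(\rho,y)=(\mathbbm{1}_1,\tfrac{1+m-n}{2})$, and (b) for $i=k$ everything lines up to produce precisely $\omega_{n-k,m-k}$ with no residual twist. A secondary technical point is justifying the interchange of $\Ext$ with the (infinite-dimensional, not finite-length) parabolic inductions appearing in the $\lambda_i$ and with the smooth-vectors functor; this is handled by Lemma~\ref{L:IndJacAdj}, the exactness of $\Jac$ and $\Ind$, and the remarks in Section~\ref{sub:hom_functor_with_extra_symmetry} on deriving $\Hom_G(-,-)_{sm}$.
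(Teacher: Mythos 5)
Your overall strategy is the same as the paper's (reduce to the Levi by adjunction, run Kudla's filtration, kill the pieces $\lambda_j$ with $j<k$ by an exponent/cuspidal-support mismatch under the hypothesis, and evaluate the top piece using the regular representation $\calS\left(G_k\right)$ plus the K\"unneth formula), but your pivotal first step is not valid as written. You claim that writing $\St\left(\Delta\right)\times\pi_0=\Ind_{P_k}^{G_n}\left(\St\left(\Delta\right)\boxtimes\pi_0\right)$ and invoking the second adjointness yields
$\Ext^i_{G_n}\left(\omega_{n,m},\St\left(\Delta\right)\times\pi_0\right)_{sm}\simeq \Ext^i_{G_k\times G_{n-k}}\left(\St\left(\Delta\right)\boxtimes\pi_0,\Jac_{\overline{P}_k}\omega_{n,m}\right)_{sm}$.
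This swaps the two arguments of $\Ext$: the induced representation sits in the \emph{second} slot, so the applicable adjunction is Frobenius reciprocity (Lemma \ref{L:IndJacAdj}(1)), which gives $\Ext^i_{G_k\times G_{n-k}}\left(\Jac_{P_k}\omega_{n,m},\St\left(\Delta\right)\boxtimes\pi_0\right)_{sm}$ with the Jacquet module of $\omega_{n,m}$ in the \emph{first} slot; the second adjointness (Lemma \ref{L:IndJacAdj}(2)) only applies when the induced representation is in the first slot. Since $\Ext$ is not symmetric and $\omega_{n,m}$ is far from finite length, the object you go on to analyze, $\Ext^i\left(\St\left(\Delta\right)\boxtimes\pi_0,\Jac_{\overline{P}_k}\omega_{n,m}\right)$, is not identified with the one in the corollary; your MVW/Fourier caveat only addresses which parabolic appears, not this swap. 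The slip also surfaces later: in your treatment of $\lambda_k$ you use projectivity of $\calS\left(G_k\right)$ and $\Hom_{G_k}\left(\calS\left(G_k\right),V\right)\simeq V$, which has $\calS\left(G_k\right)$ back in the first slot, inconsistently with your initial reduction.

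The repair is exactly the paper's argument, and your mismatch bookkeeping then goes through essentially verbatim: apply Lemma \ref{L:IndJacAdj}(1) to get $\Ext^i_{G_k\times G_{n-k}}\left(\Jac_{P_k}\omega_{n,m},\St\left(\Delta\right)\boxtimes\pi_0\right)_{sm}$; for $0\leq j\leq k-1$ the graded piece $\lambda_j$ of Kudla's filtration is parabolically induced and sits in the first slot, so the second adjointness moves the Jacquet functor onto the target, producing $\Jac_{\overline{P}_{k-j,j}}\left(\St\left(\Delta\right)\right)$; if $\rho\not\simeq\mathbbm{1}_1$ there is no Hom or Ext in any degree between the resulting Steinberg factor and the character $\mu_{k,j}$ (Lemma \ref{L:ExtSteinberg} together with the duality theorem cited in the paper), and if $\rho\simeq\mathbbm{1}_1$ with $y\neq\frac{1+m-n}{2}$ the central characters disagree, so all $\lambda_j$ with $j<k$ die; the surviving piece $\lambda_k$ is then evaluated via the projectivity of $\omgn_k$ as a $G_k$-module and the K\"unneth formula, yielding $\St\left(\Delta\right)\times\Ext^i_{G_{n-k}}\left(\omega_{n-k,m-k},\pi_0\right)_{sm}$. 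As submitted, however, the reduction step fails and the proof has a genuine gap.
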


\vskip 5pt

\begin{proof}
The proof is similar to that of \cite[Prop. 5.2]{MR3714507}. By the Frobenius reciprocity (see Lemma \ref{L:IndJacAdj}(1)), we have
\[
    \Ext_{G_n}^i\left(\omega_{n,m}, \St\left(\Delta\right)\times\pi_0\right)_{sm} \simeq \Ext_{G_k\times G_{n-k}}^i\left(\Jac_{P_k}\omega_{n,m}, \St\left(\Delta\right)\boxtimes\pi_0\right)_{sm}.
\]
To compute the right hand side, we appeal to the Kudla's filtration. For all $0\leq j\leq k-1$ and $*\geq 0$, we have
\[
    \Ext_{G_k\times G_{n-k}}^*\left(\lambda_j, \St\left(\Delta\right)\boxtimes\pi_0\right)_{sm}
\]
\begin{align*}
     &\simeq \Ind_{Q_j}^{H_m}\left(\Ext_{G_k\times G_{n-k}}^*\left(\Ind_{P_{k-j,j}\times G_{n-k}}^{G_k\times G_{n-k}}\left(\mu_{k,j}\otimes\omgn_j\otimes\omega_{n-k,m-j}\right), \St\left(\Delta\right) \boxtimes \pi_0\right)\right).
\end{align*} 
Then by the second adjointness (see Lemma \ref{L:IndJacAdj}(2)), the latter is isomorphic to
\begin{equation}\label{E:IndExt}
    \Ind_{Q_j}^{H_m}\left(\Ext_{G_{k-j}\times G_j\times G_{n-k}}^*\left(\mu_{k,j}\otimes\omgn_j\otimes\omega_{n-k,m-j}, \Jac_{\overline{P}_{k-j,j}}\left(\St\left(\Delta\right)\right)\boxtimes \pi_0\right)\right).
\end{equation}
Now, suppose that 
\[
    \Jac_{\overline{P}_{k-j,j}}\left(\St\left(\Delta\right)\right) = \delta_1\boxtimes\delta_2,
\]
where $\delta_1$ and $\delta_2$ are essentially discrete series of $G_{k-j}$ and $G_j$, respectively. If $\rho\not\simeq\mathbbm{1}_1$, then by Lemma \ref{L:ExtSteinberg} and \cite[Thm. 2]{Duality}, as representations of $G_{k-j}$, there is no Hom or Ext between $\delta_1$ and $\mu_{k,j}$. On the other hand, if $\rho\simeq\mathbbm{1}_1$ but $y\neq \frac{1+m-n}{2}$, by Proposition \ref{P:JacSeg} we have
\[
    \delta_1 = \St\left(\Delta_{II}^{(j)}\right),
\]
where $\Delta_{II}^{(j)} = \left[y+k-j-1,y\right]_{\mathbbm{1}_1}$.
In this case easy to see that the central character of $\delta_1$ is different from that of $\mu_{k,j}$. Therefore under our assumption, the Ext space in (\ref{E:IndExt}) is always zero. This implies that
\begin{align*}
\Ext_{G_k\times G_{n-k}}^i\left(\Jac_{P_k}\omega_{n,m}, \St\left(\Delta\right)\boxtimes\pi_0\right)_{sm} &\simeq \Ext_{G_k\times G_{n-k}}^i\left(\lambda_k, \St\left(\Delta\right)\boxtimes\pi_0\right)_{sm}\\
&\simeq \St\left(\Delta\right) \times \Ext_{G_{n-k}}^i\left(\omega_{n-k,m-k}, \pi_0\right)_{sm}
\end{align*}
as desired. This completes the proof.

\end{proof}

\vskip 10pt



\section{Proof of the main theorem I} 
\label{sec:proof_of_the_main_theorem_i}

This section is devoted to the proof of the main result I: Theorem \ref{T:Main-1}. So in this section, we assume that $n\leq m$. In particular, we have $\frac{1+m-n}{2}>0$.

\vskip 5pt

\subsection{Holomorphicity of the L-function implies Ext-vanishing} 
\label{sub:holomorphicity_of_the_l_function_implies_ext_vanishing}

We start with an easy computation of the local L-function. Let
\begin{equation}\label{E:ProofMain-IpiLQ}
    \pi = LQ\left(\St\left(\Delta_1\right)\times\St\left(\Delta_2\right)\times\cdots\times\St\left(\Delta_r\right)\right)
\end{equation} 
be an irreducible representation of $G_n$, with $\Delta_i=\left[x_i,y_i\right]_{\rho_i}$ for $1\leq i \leq r$. Let $\check\Delta_i=\left[-y_i,-x_i\right]_{\rho_i^\vee}$. Then we have
\[
    \pi^\vee = LQ\left(\St\left(\check\Delta_r\right)\times \cdots\times \St\left(\check\Delta_2\right)\times \St\left(\check\Delta_1\right)\right).
\]
By Proposition \ref{P:GJLfunction}(1), the L-function associated to $\pi^\vee$ is 
\[
    L\left(s,\pi^\vee\right) = \prod_{i=1}^{r} L\left(s-y_i,\rho_i^\vee\right).
\]
It follows that $L\left(s,\pi^\vee\right)$ is holomorphic at $s=\frac{1+m-n}{2}$ if and only if for any $1\leq i \leq r$, either $\rho_i\not\simeq\mathbbm{1}_1$, or $y_i\neq \frac{1+m-n}{2}$. Note that this condition is exactly the one appears in Corollary \ref{C:KudlaIndCompatible}! Therefore for all $*>0$ we have
\[
    \Ext_{G_n}^*\left(\omega_{n,m},\St\left(\Delta_1\right)\times\St\left(\Delta_2\right)\times\cdots\times\St\left(\Delta_r\right)\right)_{sm}
\]
\[
    \simeq \St\left(\Delta_1\right)\times\St\left(\Delta_2\right)\times\cdots\times\St\left(\Delta_r\right) \times \Ext_{G_0}^*\left(\omega_{0,m-n},\mathbbm{1}_0\right)_{sm} = 0,
\]
since the trivial group $G_0$ is compact. Based on this computation, we deduce that:

\begin{Prop}\label{P:Holo-L-vanish-Ext}
Suppose that $n\leq m$, and either $L\left(s,\pi\right)$ or $L\left(s,\pi^\vee\right)$ is holomorphic at $s=\frac{1+m-n}{2}$. Then
\[
    \Ext_{G_n}^*\left(\omega_{n,m},\pi\right)_{sm}=0
\] 
for all $*> 0$.  

\end{Prop}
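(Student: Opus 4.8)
The plan is to reduce the claim to the computation just carried out, in two steps. First I would use the MVW-involution to interchange the two cases: since $\pi$ is irreducible we have $\pi^c\simeq\pi^\vee$, so Corollary \ref{C:ExtvsMVW} gives $\Ext^i_{G_n}(\omega_{n,m},\pi^\vee)_{sm}\simeq\Ext^i_{G_n}(\omega_{n,m},\pi)^c_{sm}$; as $(-)^c$ is an involutive, hence invertible, endofunctor of $\calM(H_m)$, the Ext-spaces attached to $\pi$ vanish in positive degrees exactly when those attached to $\pi^\vee$ do. Since $L(s,(\pi^\vee)^\vee)=L(s,\pi)$, it therefore suffices to treat the case where $L(s,\pi^\vee)$ is holomorphic at $s_0:=\frac{1+m-n}{2}$ --- which is precisely the hypothesis under which the computation preceding the statement shows, by iterating Corollary \ref{C:KudlaIndCompatible} one segment at a time (the quantity $m-n$ being preserved at each stage), that $\Ext^i_{G_n}(\omega_{n,m},\Pi)_{sm}=0$ for all $i>0$, where $\Pi:=\St(\Delta_1)\times\cdots\times\St(\Delta_r)$ and $\pi=LQ(\Pi)$.

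The second step is to pass from the standard module $\Pi$ to its unique irreducible quotient $\pi$ by d\'evissage. I would consider the short exact sequence $0\to K\to\Pi\to\pi\to 0$ and run an induction along the Zelevinsky ordering of multisegments. Every composition factor $\pi'$ of $K$ is a composition factor of $\Pi$, hence is the Langlands quotient of a product of Steinberg representations obtained from $\{\Delta_1,\dots,\Delta_r\}$ by successively replacing a linked pair $\Delta_i,\Delta_j$ by $\Delta_i\cap\Delta_j$ and $\Delta_i\cup\Delta_j$ as in Theorem \ref{T:IndSeg}(2); this puts its defining data strictly below $\{\Delta_1,\dots,\Delta_r\}$ in a finite, well-founded order. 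Because such an operation leaves unchanged both the supercuspidal supports and the multiset of lower endpoints of the segments, each segment occurring for $\pi'$ again satisfies the condition ``$\rho\not\simeq\mathbbm 1_1$ or $y\neq s_0$'' of Corollary \ref{C:KudlaIndCompatible} --- equivalently, $L(s,(\pi')^\vee)$ remains holomorphic at $s_0$, which also follows directly from Proposition \ref{P:GJLfunction}(2) applied to $\pi^\vee=LQ(\St(\check\Delta_r)\times\cdots\times\St(\check\Delta_1))$. By the inductive hypothesis $\Ext^i_{G_n}(\omega_{n,m},\pi')_{sm}=0$ for all $i>0$ and all such $\pi'$, so $\Ext^i_{G_n}(\omega_{n,m},K)_{sm}=0$ for $i>0$; the base case is an irreducible $\Pi$, where $\pi=\Pi$ and there is nothing more to prove. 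Finally, feeding the short exact sequence into the long exact sequence of $\Ext$-groups, for each $i\geq 1$ the terms $\Ext^i_{G_n}(\omega_{n,m},K)_{sm}$, $\Ext^i_{G_n}(\omega_{n,m},\Pi)_{sm}$ and $\Ext^{i+1}_{G_n}(\omega_{n,m},K)_{sm}$ all vanish, forcing $\Ext^i_{G_n}(\omega_{n,m},\pi)_{sm}=0$.

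I expect the only real obstacle to be the bookkeeping in the second step: one must make sure that the holomorphicity hypothesis --- equivalently, the per-segment condition in Corollary \ref{C:KudlaIndCompatible} --- really is inherited by the standard module of every composition factor of $\Pi$, and that the induction is genuinely well founded. Both are handled above, by tracking the lower endpoints of segments through the elementary operations of Theorem \ref{T:IndSeg}(2) (or, more cleanly, via Proposition \ref{P:GJLfunction}(2)) and by noting that the set of multisegments below a fixed one is finite. Everything else is the computation already obtained together with the standard long exact sequence in $\Ext$.
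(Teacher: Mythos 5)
Your argument is correct and is essentially the paper's own proof: the same MVW reduction (via Corollary \ref{C:ExtvsMVW}) to the case where $L\left(s,\pi^\vee\right)$ is holomorphic at $\frac{1+m-n}{2}$, the same vanishing for the standard module $\Pi$ obtained by iterating Corollary \ref{C:KudlaIndCompatible}, and the same d\'evissage through $0\to K\to\Pi\to\pi\to 0$ with Proposition \ref{P:GJLfunction}(2) guaranteeing that every constituent again satisfies the holomorphicity hypothesis. The only difference is how the recursion is grounded: you induct on the Zelevinsky ordering of multisegments (which requires the standard fact that all constituents of a standard module are Langlands quotients attached to strictly smaller multisegments), whereas the paper avoids this input by iterating with a degree shift and stopping after $n$ steps, since $\Ext^{*+n}_{G_n}$ vanishes once the degree exceeds the split rank of $G_n$.
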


\vskip 5pt

\begin{proof}
Let $\pi$ be of the form as in (\ref{E:ProofMain-IpiLQ}), and we fix a degree $*>0$. We first assume that $L\left(s,\pi^\vee\right)$ is holomorphic at $s=\frac{1+m-n}{2}$. Consider the short exact sequence
\[
    0 \lra \varkappa \lra \Pi \xlongrightarrow[\quad]{p} \pi \lra 0,
\]
where $\Pi=\St\left(\Delta_1\right)\times\St\left(\Delta_2\right)\times\cdots\times\St\left(\Delta_r\right)$, the map $p$ is the natural projection, and $\varkappa$ is the kernel of $p$. Applying the functor $\Hom_{G_n}\left(\omega_{n,m},-\right)_{sm}$ to it, the associated long exact sequence reads
\[
    \cdots\lra\Ext_{G_n}^*\left(\omega_{n,m},\Pi\right)_{sm} \lra \Ext_{G_n}^*\left(\omega_{n,m},\pi\right)_{sm} \lra \Ext_{G_n}^{*+1}\left(\omega_{n,m},\varkappa\right)_{sm}\lra\cdots.
\]
As in the discussion before this proposition, we have shown that the holomorphicity of $L\left(s,\pi^\vee\right)$ implies that
\[
    \Ext_{G_n}^*\left(\omega_{n,m},\Pi\right)_{sm} = 0.
\]
Therefore in order to prove the desired conclusion it suffices to show that for any irreducible subquotient $\pi'$ of $\varkappa$, we have 
\begin{equation*}
    \Ext_{G_n}^{*+1}\left(\omega_{n,m},\pi'\right)_{sm} = 0.
\end{equation*}
Now note that by Proposition \ref{P:GJLfunction}(2), the L-function $L\left(s,\left(\pi'\right)^\vee\right)$ is still holomorphic at the point $s=\frac{1+m-n}{2}$. So we can repeat this procedure, and eventually we can reduce the desired conclusion to showing that
\begin{equation}\label{E:Main-IExtvanishshift1}
    \Ext_{G_n}^{*+n}\left(\omega_{n,m},\pi''\right)_{sm} = 0
\end{equation}
for all $\pi''$ in a certain finite set of irreducible representations. By \cite[Pg. 98, Sect. 4.2]{BNote}, we do know that (\ref{E:Main-IExtvanishshift1}) holds, as the degree $*+n$ is greater than the split rank of $G_n$.

\vskip 5pt

Next we assume that $L\left(s,\pi\right)$ is holomorphic at $s=\frac{1+m-n}{2}$ but $L\left(s,\pi^\vee\right)$ is not. 
Consider the MVW-involution of $\pi$. It follows from Corollary \ref{C:ExtvsMVW} that
\[
    \Ext_{G_n}^*\left(\omega_{n,m},\pi\right)_{sm} \simeq \left(\Ext_{G_n}^*\left(\omega_{n,m},\pi^\vee\right)_{sm} \right)^c .
\]
Then by the discussion above, we know that 
\[
    \Ext_{G_n}^*\left(\omega_{n,m},\pi^\vee\right)_{sm} = 0
\]
for all $*>0$. Therefore in this case the desired conclusion still holds.

\end{proof}

\vskip 5pt


\subsection{Zeta integrals in non-equal rank cases} 
\label{sub:zeta_integrals_in_the_non_equal_rank_case}

Now we modify the zeta integral to non equal rank cases. By using the decomposition $M_{n,m}\simeq M_{n,n}\times M_{n,m-n}$, we can write any element $\tilde{X}\in M_{n,m}$ as a block matrix
\[
    \big(X_0 ~|~ X_1\big),
\]
where $X_0\in M_{n,n}$, and $X_1\in M_{n,m-n}$. Let $\varphi\in \calS\left(M_{n,m}\right)$, we set $\varphi^\flat\in\calS\left(M_{n,n}\right)$ to be the function
\begin{equation}\label{E:Schwartzfunction-flat}
    \varphi^\flat\left(X\right) = \varphi\left(X~|~0\right)
\end{equation}
for $X\in M_{n,n}$. Then for a given $\varphi \in \calS\left(M_{n,m}\right)$ and a matrix coefficient $f$ of $\pi$, we can consider the integral
\begin{align*}
Z^\flat\left(s,\varphi,f\right)  \coloneqq{}&  Z\left(s,\varphi^\flat, f\right) \\
 ={}& \int_{G_n} \varphi\left(X~|~0\right) f\left(g\right) |\det g|^{s+\frac{n-1}{2}}\,dg.
\end{align*}
By \cite{MR342495}, we know that this integral is absolutely convergent when ${\rm Re}\left(s\right)$ is sufficiently large, and admits a meromorphic continuation to the whole plane. Moreover, the fraction
\[
    Z^\natural\left(s,\varphi,f\right) \coloneqq \frac{Z^\flat\left(s,\varphi,f\right)}{L\left(s,\pi\right)}
\]
is an entire function. Therefore we get a well-defined map
\begin{equation}\label{E:non-equal-rank-Zeta}
Z^\natural\left(s,-,-\right): \omega_{n,m} \lra \pi\boxtimes\pi^\vee.
\end{equation}

\begin{Lem}\label{L:construction-non-equal-rank-Zeta}
This map (\ref{E:non-equal-rank-Zeta}) is $G_n\times \overline{Q}_{n}$-equivariant and non-zero. Here: 

\begin{itemize}
    \item $Q_n$ is the standard parabolic subgroup of $H_m$ whose Levi component is $H_n \times H_{m-n}$, and $\overline{Q}_n$ is the parabolic subgroup of $H_m$ opposite to $Q_n$;

    \vskip 5pt

    \item $G_n\times \overline{Q}_{n}$ acts on the left hand side of this map by $\omega_{n,m}$;

    \vskip 5pt

    \item $G_n$ acts on the right hand side of this map by $\pi|\det|^{s-\frac{1+m-n}{2}}$;

    \vskip 5pt

    \item the action of $\overline{Q}_n$ on the right hand side factor through the Levi component $H_n \times H_{m-n}$, the factor $H_n$ acts by $\pi^\vee|\det|^{-s+\frac{1}{2}}$, and $H_{m-n}$ acts by the character $|\det|^{\frac{n}{2}}$.
\end{itemize}
\vskip 5pt
In particular, by the Frobenius reciprocity, this map (\ref{E:non-equal-rank-Zeta}) gives rise to a non-zero element in the Hom space
\[
    \Hom_{G_n\times H_m}\left(\omega_{n,m}, \pi|\det|^{s-\frac{1+m-n}{2}}\boxtimes \Ind_{\overline{Q}_n}^{H_m}\left(\pi^\vee|\det|^{-s+\frac{1+m-n}{2}}\boxtimes\mathbbm{1}_{m-n}\right)\right).
\]
  
\end{Lem}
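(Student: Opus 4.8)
The claim is essentially a bookkeeping statement: the classical Godement--Jacquet zeta integral $Z^\flat(s,\varphi,f)$, after division by $L(s,\pi)$, is entire and assembles into a map $\omega_{n,m}\to\pi\boxtimes\pi^\vee$ which transforms appropriately under the group $G_n\times\overline Q_n$. The plan is to track the equivariance one factor at a time, exactly as one does in the equal rank case $n=m$, and then to cite the $n=m$ non-vanishing result of Godement--Jacquet for the final conclusion. First I would recall the defining integral
\[
    Z^\flat(s,\varphi,f) = \int_{G_n}\varphi(g\,|\,0)\,f(g)\,|\det g|^{s+\frac{n-1}{2}}\,dg,
\]
and record how $\varphi(X\,|\,0)$ changes under the action $\omega_{n,m}(g',h')$ of $G_n\times\overline Q_n$ on $\varphi$. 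The key point is that the embedding $M_{n,n}\hookrightarrow M_{n,m}$, $X\mapsto(X\,|\,0)$, is stable under the action of the parabolic $\overline Q_n$ (whose Levi is $H_n\times H_{m-n}$ and whose unipotent radical sits in the lower-left block), because a lower-triangular unipotent in $H_m$ sends $(X\,|\,0)$ to $(X\,|\,0)$ again, while the Levi block $H_n$ acts on the $X_0$ coordinate and $H_{m-n}$ acts only through the determinant twist $|\det|^{n/2}$ coming from \eqref{E:DefWeilRep}. So $\varphi\mapsto\varphi^\flat$ intertwines $\omega_{n,m}|_{G_n\times\overline Q_n}$ with the action on $\calS(M_{n,n})$ given by $G_n$ and the Siegel-type action of $H_n$, together with a scalar $|\det|^{n/2}$ on $H_{m-n}$.

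**Key steps.** (1) Verify the three displayed equivariance properties. For $G_n$ acting on the source: changing variables $g\mapsto g'^{-1}g$ in the integral and using the twist $|\det g'|^{-m/2}$ from \eqref{E:DefWeilRep} produces exactly the character $|\det|^{s-\frac{1+m-n}{2}}$ on the $\pi$-factor (note $s+\frac{n-1}{2}-\frac{m}{2}=s-\frac{1+m-n}{2}$ after accounting for the matrix coefficient). For $H_n\subset\overline Q_n$: right translation $g\mapsto gh'$ and the twist $|\det h'|^{n/2}$ give the $\pi^\vee|\det|^{-s+\frac{1}{2}}$ action (the matrix coefficient $f$ of $\pi$ becomes a matrix coefficient of $\pi^\vee$ under $g\mapsto g^{-1}$-type manipulations as in Godement--Jacquet). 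For $H_{m-n}$: it acts on $\varphi^\flat$ purely by the scalar $|\det|^{n/2}$, so it passes through the map as the character $|\det|^{n/2}=\mathbbm 1_{m-n}\cdot|\det|^{n/2}$, which after accounting for the normalization of parabolic induction (the modulus character of $\overline Q_n$ restricted to $H_{m-n}$ contributes $|\det|^{-n/2}$) yields the trivial representation $\mathbbm 1_{m-n}$. (2) Recall from \cite{MR342495} (as already cited in the excerpt) that $Z^\flat(s,\varphi,f)/L(s,\pi)$ is entire, so $Z^\natural(s,-,-)$ is a well-defined everywhere-holomorphic family of maps \eqref{E:non-equal-rank-Zeta}. (3) For non-vanishing: restrict attention to $\varphi$ supported in the block $(\ast\,|\,0)$, i.e. of the form $\varphi(X\,|\,0)=\varphi_0(X)$ with $\varphi_0\in\calS(M_{n,n})$; then $Z^\flat(s,\varphi,f)=Z(s,\varphi_0,f)$ is literally the equal rank Godement--Jacquet integral, whose quotient by $L(s,\pi)$ is known to be non-zero (indeed it attains the value $1$ for suitable $\varphi_0,f$, by the definition of $L(s,\pi)$ as the gcd of the zeta integrals). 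Hence $Z^\natural$ is non-zero. (4) Apply Frobenius reciprocity (Lemma \ref{L:IndJacAdj}(1), or rather its classical non-derived form) to convert the $G_n\times\overline Q_n$-equivariant map into the asserted element of
\[
    \Hom_{G_n\times H_m}\!\left(\omega_{n,m},\,\pi|\det|^{s-\frac{1+m-n}{2}}\boxtimes\Ind_{\overline Q_n}^{H_m}\!\left(\pi^\vee|\det|^{-s+\frac{1+m-n}{2}}\boxtimes\mathbbm 1_{m-n}\right)\right),
\]
which remains non-zero since Frobenius reciprocity is an isomorphism of Hom spaces.

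**Main obstacle.** The conceptual content is small; the real care is in step (1), namely getting every normalization right — the half-density twists $|\det g|^{-m/2}|\det h|^{n/2}$ in the definition \eqref{E:DefWeilRep} of $\omega_{n,m}$, the shift $|\det g|^{s+\frac{n-1}{2}}$ inside the zeta integral, the passage from a matrix coefficient of $\pi$ to one of $\pi^\vee$, and the modulus character $\delta_{\overline Q_n}$ that enters normalized induction. A sign or a factor of $|\det|^{1/2}$ off anywhere would misalign the exponents $s-\frac{1+m-n}{2}$ and $-s+\frac{1+m-n}{2}$ and the trivial-character claim on $H_{m-n}$. The cleanest way to organize this is to first check the statement at $n=m$ (where it is the classical Godement--Jacquet functional equation setup and the twists are already known to work out) and then observe that enlarging $m$ only introduces the extra $H_{m-n}$ block, on which $\omega_{n,m}$ acts through the single scalar $|\det|^{n/2}$; reconciling this scalar with $\delta_{\overline Q_n}^{1/2}$ is then a one-line computation. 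No genuinely hard analysis is needed — convergence and meromorphic continuation are imported wholesale from \cite{MR342495}.
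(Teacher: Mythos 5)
Your proposal is correct and follows essentially the same route as the paper, whose proof is just the two observations you elaborate: the equivariance (with the stated exponents, including the $\delta_{\overline{Q}_n}^{1/2}$ adjustment) follows directly from the definition of $Z^\flat\left(s,-,-\right)$ and of $\omega_{n,m}$, and the non-vanishing follows because $\varphi\mapsto\varphi^\flat$ is surjective onto $\calS\left(M_{n,n}\right)$, so the normalized family reduces to the equal-rank Godement--Jacquet family, which is non-zero by the gcd characterization of $L\left(s,\pi\right)$. Your exponent bookkeeping ($s+\frac{n-1}{2}-\frac{m}{2}=s-\frac{1+m-n}{2}$, the $-s+\frac{1}{2}$ on $H_n$, the $|\det|^{\frac{n}{2}}$ on $H_{m-n}$, and the modulus-character shift under normalized induction) checks out.
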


\vskip 5pt

\begin{proof}
The equivariant property follows from the definition of $Z^\flat\left(s,-,-\right)$ directly. To see that the map is non-zero, one just needs to note that $\varphi \mapsto \varphi^\flat$ is surjective.

\end{proof}

\vskip 5pt

Specializing this construction to $s=\frac{1+m-n}{2}$, we get a non-zero element $Z^\natural\left(\frac{1+m-n}{2},-,-\right)$ in the Hom space
\[
    \Hom_{G_n\times H_m}\left(\omega_{n,m}, \pi\boxtimes \Ind_{\overline{Q}_n}^{H_m}\left(\pi^\vee\boxtimes\mathbbm{1}_{m-n}\right)\right).
\]
Note that if $\pi = LQ\left(\St\left(\Delta_1\right)\times\St\left(\Delta_2\right)\times\cdots\times\St\left(\Delta_r\right)\right)$, with $\Delta_i=\left[x_i,y_i\right]_{\rho_i}$ and they are properly arranged according to their exponents, then $\Ind_{\overline{Q}_n}^{H_m}\left(\pi^\vee\boxtimes\mathbbm{1}_{m-n}\right) \simeq \mathbbm{1}_{m-n} \times \pi^\vee$ is indeed a subrepresentation of the parabolic induction
\[
    |\cdot|^{-\frac{m-n-1}{2}}\times\cdots\times |\cdot|^{\frac{m-n-1}{2}}\times \St\left(\Delta_1\right)^\vee\times\St\left(\Delta_2\right)^\vee\times\cdots\times\St\left(\Delta_r\right)^\vee
\]
occurring in Theorem \ref{T:HoweDuality}(2).
By the Howe duality and the multiplicity one property of the Langlands quotient, we know that there exists two subrepresentations $\Sigma_0\subset\Sigma_1$ of $\Ind_{\overline{Q}_n}^{H_m}\left(\pi^\vee\boxtimes\mathbbm{1}_{m-n}\right)$, such that:
\begin{itemize}
    \item the quotient $\Sigma_1/\Sigma_0$ is isomorphic to $\theta_{n,m}\left(\pi\right)$;

    \vskip 5pt

    \item any subquotient of $\Ind_{\overline{Q}_n}^{H_m}\left(\pi^\vee\boxtimes\mathbbm{1}_{m-n}\right)/\Sigma_1$ and $\Sigma_0$ is not isomorphic to $\theta_{n,m}\left(\pi\right)$.
\end{itemize}
\vskip 5pt
A priori we do not know that whether $\Sigma_0$ and $\Sigma_1$ are unique. Nevertheless we have the following simple observation.

\begin{Prop}
The map $Z^\natural\left(\frac{1+m-n}{2},-,-\right)$ indeed gives rise to a non-zero element in the Hom space
\[
    \Hom_{G_n\times H_m}\left(\omega_{n,m}, \pi\boxtimes \theta_{n,m}\left(\pi\right)\right),
\]  
in the sense that:

\begin{itemize}
    \item $Z^\natural\left(\frac{1+m-n}{2},-,-\right)$ lies in the subspace
    \[
        \Hom_{G_n\times H_m}\left(\omega_{n,m}, \pi\boxtimes \Sigma_1\right) \hookrightarrow \Hom_{G_n\times H_m}\left(\omega_{n,m}, \pi\boxtimes \left(\mathbbm{1}_{m-n} \times \pi^\vee\right) \right);
    \]

    \vskip 5pt

    \item the image of $Z^\natural\left(\frac{1+m-n}{2},-,-\right)$ under the natural map
    \[
        \Hom_{G_n\times H_m}\left(\omega_{n,m}, \pi\boxtimes \Sigma_1\right) \lra \Hom_{G_n\times H_m}\left(\omega_{n,m}, \pi\boxtimes \Sigma_1/\Sigma_0\right) 
    \]
    is non-zero.
\end{itemize}
\end{Prop}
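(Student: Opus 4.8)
The plan is to deduce both bullets purely formally from the adjunction defining the big theta lift, together with the fact that $\theta_{n,m}(\pi)$ is the cosocle of $\Theta_{n,m}(\pi)$ (irreducible, by Howe duality, Theorem \ref{T:HoweDuality}(1)). First I would record the two inputs: since $\Theta_{n,m}(\pi)$ is the maximal $\pi$-isotypic quotient of $\omega_{n,m}$, for every finite length representation $\sigma$ of $H_m$ there is a natural isomorphism $\Hom_{G_n\times H_m}(\omega_{n,m},\pi\boxtimes\sigma)\simeq\Hom_{H_m}(\Theta_{n,m}(\pi),\sigma)$, and the functor $\Hom_{G_n\times H_m}(\omega_{n,m},\pi\boxtimes-)$ is left exact, because $\sigma\mapsto\pi\boxtimes\sigma$ is exact and $\Hom$ is left exact. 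The key observation I would then isolate is a vanishing lemma: if $\sigma$ is of finite length and no irreducible subquotient of $\sigma$ is isomorphic to $\theta_{n,m}(\pi)$, then $\Hom_{G_n\times H_m}(\omega_{n,m},\pi\boxtimes\sigma)=0$. This is immediate from the adjunction, since the image of any nonzero $H_m$-map $\Theta_{n,m}(\pi)\to\sigma$ is a nonzero quotient of $\Theta_{n,m}(\pi)$, which therefore has $\theta_{n,m}(\pi)$ as a quotient, so $\theta_{n,m}(\pi)$ is a subquotient of $\sigma$, a contradiction.

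Next I would apply the vanishing lemma twice. Write $\Pi_0=\Ind_{\overline{Q}_n}^{H_m}(\pi^\vee\boxtimes\mathbbm{1}_{m-n})\simeq\mathbbm{1}_{m-n}\times\pi^\vee$. By the very properties characterizing $\Sigma_0\subset\Sigma_1\subset\Pi_0$, neither $\Sigma_0$ nor $\Pi_0/\Sigma_1$ has $\theta_{n,m}(\pi)$ as a subquotient, so the lemma gives $\Hom_{G_n\times H_m}(\omega_{n,m},\pi\boxtimes\Sigma_0)=0$ and $\Hom_{G_n\times H_m}(\omega_{n,m},\pi\boxtimes(\Pi_0/\Sigma_1))=0$. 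Now run these through the long exact sequences. Applying the functor to $0\to\Sigma_1\to\Pi_0\to\Pi_0/\Sigma_1\to0$ and using the second vanishing shows that the natural inclusion $\Hom_{G_n\times H_m}(\omega_{n,m},\pi\boxtimes\Sigma_1)\hookrightarrow\Hom_{G_n\times H_m}(\omega_{n,m},\pi\boxtimes\Pi_0)$ is in fact an isomorphism; since $Z^\natural(\tfrac{1+m-n}{2},-,-)$ is a nonzero element of the target, it is the image of a (necessarily nonzero) element of the source, which is the first bullet. Applying the functor to $0\to\Sigma_0\to\Sigma_1\to\theta_{n,m}(\pi)\to0$ and using the first vanishing shows that $\Hom_{G_n\times H_m}(\omega_{n,m},\pi\boxtimes\Sigma_1)\hookrightarrow\Hom_{G_n\times H_m}(\omega_{n,m},\pi\boxtimes\theta_{n,m}(\pi))$; the image of the nonzero element $Z^\natural(\tfrac{1+m-n}{2},-,-)$ of the source under this injection is again nonzero, which is the second bullet.

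I do not expect a genuine obstacle: the substantive work — producing the nonzero map $Z^\natural(\tfrac{1+m-n}{2},-,-)$ and locating $\Sigma_0,\Sigma_1$ inside $\Pi_0$ — has already been done, and what remains is a diagram chase. The only points needing a line of justification are the adjunction $\Hom_{G_n\times H_m}(\omega_{n,m},\pi\boxtimes\sigma)\simeq\Hom_{H_m}(\Theta_{n,m}(\pi),\sigma)$ for finite length $\sigma$ (equivalently $\Theta_{n,m}(\pi)^\vee\simeq\Hom_{G_n}(\omega_{n,m},\pi)_{sm}$ combined with reflexivity of $\sigma$), and the fact that the cosocle of $\Theta_{n,m}(\pi)$ is the irreducible representation $\theta_{n,m}(\pi)$, which is Howe duality together with $\Theta_{n,m}(\pi)$ being of finite length.
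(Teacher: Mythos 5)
Your argument is correct and is essentially the paper's own (the paper's proof is just the remark that everything follows from Theorem \ref{T:HoweDuality}(1)): the multiplicity-one/uniqueness statement forces $\Hom_{G_n\times H_m}\left(\omega_{n,m},\pi\boxtimes\sigma\right)=0$ whenever $\theta_{n,m}\left(\pi\right)$ is not a subquotient of $\sigma$, and left exactness applied to $0\to\Sigma_1\to\Pi_0\to\Pi_0/\Sigma_1\to0$ and $0\to\Sigma_0\to\Sigma_1\to\Sigma_1/\Sigma_0\to0$ yields both bullets. The only cosmetic difference is that you prove the vanishing lemma via the adjunction $\Hom_{G_n\times H_m}\left(\omega_{n,m},\pi\boxtimes\sigma\right)\simeq\Hom_{H_m}\left(\Theta_{n,m}\left(\pi\right),\sigma\right)$ and the cosocle of $\Theta_{n,m}\left(\pi\right)$, whereas it also follows by d\'evissage on the length of $\sigma$ directly from Howe duality and left exactness; both are fine.
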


\vskip 5pt

\begin{proof}
The proposition simply follows from Theorem \ref{T:HoweDuality}(1).

\end{proof}

\vskip 5pt

With this observation at hand, finally we can mimic the argument of Fang--Sun--Xue \cite[Thm. 1.3]{FSX} to complete the proof of Theorem \ref{T:Main-1}. Let $\Omega_n$ be the set of matrices in $M_{n,m}$ of rank $n$, which is stable under the action of $G_n\times H_m$. We denote by $\omega^0_{n,m}$ the subrepresentation of $\omega_{n,m}$ with the underlying vector space $\calS\left(\Omega_n\right)$. 

\begin{Prop}\label{P:Holo-L-vanish-boundary}
Suppose that the L-function $L\left(s,\pi\right)$ is holomorphic at $s=\frac{1+m-n}{2}$. Then we have 
\[
    \Hom_{G_n}\left(\omega_{n,m}/\omega^0_{n,m},\pi\right) = 0.
\]    
\end{Prop}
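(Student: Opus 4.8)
The plan is to combine the rank filtration of $\omega_{n,m}$ with a Jacquet module computation, translating the holomorphicity of $L(s,\pi)$ at $s=\frac{1+m-n}{2}$ — via Proposition~\ref{P:GJLfunction}(1) — into a statement about the segments of $\pi$ which rules out any ``boundary contribution''. By Lemma~\ref{L:rank-filtration}, since $n\leq m$ the bottom nonzero step of the rank filtration is $R_n=\omega^0_{n,m}$, so $\omega_{n,m}/\omega^0_{n,m}$ has a finite filtration with successive quotients $\tau_0,\tau_1,\dots,\tau_{n-1}$. By left-exactness of $\Hom_{G_n}(-,\pi)$ it is therefore enough to show that $\Hom_{G_n}\!\left(\tau_k,\pi\right)=0$ for every $0\leq k\leq n-1$.

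Fix such a $k$ and recall that $\tau_k\simeq\Ind_{P_k\times\overline{Q}_k}^{G_n\times H_m}\!\left(\omgn_k\otimes\xi_k\right)$, which as a $G_n$-module is parabolically induced from $P_k$, the $H_m$-variable contributing only a multiplicity space. Applying the second adjointness (Lemma~\ref{L:IndJacAdj}(2)) rewrites $\Hom_{G_n}\!\left(\tau_k,\pi\right)$ as a Hom-space over $G_k\times G_{n-k}$ with source $\omgn_k\otimes\xi_k|_{G_k}\boxtimes\xi_k|_{G_{n-k}}$ and target $\Jac_{\overline{P}_k}\pi$. The key point is that $\omgn_k$, viewed as a $G_k$-module, is $\calS\!\left(G_k\right)\simeq\ind_{\{1\}}^{G_k}\mathbbm{1}$ with the left regular action, so $\Hom_{G_k}\!\left(\omgn_k\otimes(\text{character}),-\right)$ is exact and detects nothing beyond the underlying $G_{n-k}$-module. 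Consequently
\[
\Hom_{G_n}\!\left(\tau_k,\pi\right)\neq 0
\iff
|{\det}_{n-k}|^{\frac{k-m}{2}}\text{ occurs in the }G_{n-k}\text{-slot of }\Jac_{\overline{P}_k}\pi,
\]
equivalently, after swapping the two Levi factors, iff $|{\det}_{n-k}|^{\frac{k-m}{2}}\boxtimes A$ is an irreducible constituent of $\Jac_{P_{n-k}}\pi$ for some $A\in\Irr\!\left(G_k\right)$. Carrying out this reduction rigorously — keeping track of all the determinant twists in $\xi_k$, of the normalizations, and of the passage from $\overline{P}_k$ to the standard parabolic $P_{n-k}$ — is the most delicate part of the argument and where most of the care is needed; this bookkeeping is the main obstacle, though it is entirely routine.

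It remains to rule out such a constituent. Write $\pi=LQ\!\left(\St\!\left(\Delta_1\right)\times\cdots\times\St\!\left(\Delta_r\right)\right)$ with $\Delta_i=\left[x_i,y_i\right]_{\rho_i}$; then $\pi$ is a subquotient of $\St\!\left(\Delta_1\right)\times\cdots\times\St\!\left(\Delta_r\right)$, so every constituent of $\Jac_{P_{n-k}}\pi$ is one of $\Jac_{P_{n-k}}\!\left(\St\!\left(\Delta_1\right)\times\cdots\times\St\!\left(\Delta_r\right)\right)$. By Proposition~\ref{P:JacSeg} and the Hopf algebra structure on $\calR$, the $G_{n-k}$-slot of such a constituent is a product $\St\!\big(\Delta_{1,I}^{(\alpha_1)}\big)\times\cdots\times\St\!\big(\Delta_{r,I}^{(\alpha_r)}\big)$ of Steinberg representations attached to \emph{initial} sub-segments of the $\Delta_i$. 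Suppose this product admits the character $|{\det}_{n-k}|^{\frac{k-m}{2}}$ as a constituent. Comparing Jacquet modules along the Borel, the Jacquet module of $|{\det}_{n-k}|^{\frac{k-m}{2}}$ is the string of $G_1$-characters with exponents $\frac{n-m-1}{2}-(n-k-1),\dots,\frac{n-m-1}{2}$ taken in \emph{increasing} order, whereas each $\St\!\big(\Delta_{i,I}^{(\alpha_i)}\big)$ contributes either nothing (if $\rho_i\not\simeq\mathbbm{1}_1$) or a string of $G_1$-characters with consecutive exponents in \emph{decreasing} order. Since a shuffle of decreasing strings of consecutive exponents can produce a strictly increasing string only when each input string has length one, we must have $\rho_i\simeq\mathbbm{1}_1$ and $\alpha_i\in\{0,1\}$ for all $i$, and then Zelevinsky's classification forces the multiset $\{x_i:\alpha_i=1\}$ to equal $\big\{\tfrac{n-m-1}{2},\tfrac{n-m-1}{2}-1,\dots,\tfrac{n-m-1}{2}-(n-k-1)\big\}$. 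In particular some $\Delta_i$ satisfies $\rho_i\simeq\mathbbm{1}_1$ and $x_i=\frac{n-m-1}{2}$, so by Proposition~\ref{P:GJLfunction}(1) the factor $L\!\left(s+x_i,\rho_i\right)=\tfrac{1}{1-q^{-(s+x_i)}}$ of $L\!\left(s,\pi\right)$ has a pole at $s=-x_i=\frac{1+m-n}{2}$, contradicting the hypothesis. Therefore $\Hom_{G_n}\!\left(\tau_k,\pi\right)=0$ for all $0\leq k\leq n-1$, and the proposition follows. (Conceptually this last step is the heart of the matter — it is exactly what converts ``the boundary does not contribute'' into ``$L\!\left(s,\pi\right)$ is holomorphic at $s=\frac{1+m-n}{2}$'' — but it is computationally clean once Step~2 is in place.)
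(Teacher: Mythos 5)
Your argument is correct, but it is not the route the paper's own proof of this proposition takes. The paper proves the statement analytically: it specializes the non-equal-rank zeta integral $Z^\natural\left(s,-,-\right)$ at $s=\frac{1+m-n}{2}$, uses the holomorphicity of $L\left(s,\pi\right)$ to see that its restriction to $\omega^0_{n,m}$ is already nonzero (indeed surjective onto $\Ind_{\overline{Q}_n}^{H_m}\left(\pi^\vee\boxtimes\mathbbm{1}_{m-n}\right)$, which is socle irreducible), and then derives a contradiction from the multiplicity-one part of the Howe duality (Theorem \ref{T:HoweDuality}(1)): a nonzero boundary Hom would force $\dim\Hom_{G_n\times H_m}\left(\omega_{n,m},\pi\boxtimes\theta_{n,m}\left(\pi\right)\right)\geq 2$. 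Your proof instead is purely algebraic: rank filtration (Lemma \ref{L:rank-filtration}), second adjointness (Lemma \ref{L:IndJacAdj}(2)), and a cuspidal-support/shuffle computation showing that $|{\det}_{n-k}|^{\frac{k-m}{2}}$ cannot be a constituent of the $G_{n-k}$-slot of $\Jac_{\overline{P}_k}\pi$ unless some segment of $\pi$ has $\rho_i\simeq\mathbbm{1}_1$ and $x_i=\frac{n-m-1}{2}$, which is exactly what holomorphicity of $L\left(s,\pi\right)$ at $\frac{1+m-n}{2}$ forbids. This is essentially the alternative proof the paper itself sketches in the remark immediately after the proposition (there phrased via the embedding $\pi\hookrightarrow\St\left(\Delta_r\right)\times\cdots\times\St\left(\Delta_1\right)$ and a central-character comparison); your shuffle argument fills in the combinatorial step the remark leaves implicit. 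What the paper's zeta-integral route buys is portability to the Archimedean case (Section \ref{sub:digression_the_archimedean_case}), where Jacquet-module bookkeeping is unavailable in the same form; what your route buys is a self-contained non-Archimedean argument that avoids Howe duality and the socle-irreducibility input. Two harmless imprecisions: the ``iff'' in your reduction should only be the implication ``nonzero Hom $\Rightarrow$ the character occurs as a constituent of the $G_{n-k}$-slot'' (occurrence as a subrepresentation versus subquotient is not the same, but only this direction is needed and it follows from the finite length of the Jacquet module), and ``contributes nothing if $\rho_i\not\simeq\mathbbm{1}_1$'' should be read as saying that such a factor either kills the Jacquet module along the Borel or contributes exponents with a nontrivial ramified or unitary twist, which cannot match the unramified string of $|{\det}_{n-k}|^{\frac{k-m}{2}}$; neither affects the validity of the proof.
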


\vskip 5pt

\begin{proof}
We first claim that: under the holomorphicity assumption of the L-function $L\left(s,\pi\right)$, we have $\Sigma_1 = \Ind_{\overline{Q}_n}^{H_m}\left(\pi^\vee\boxtimes\mathbbm{1}_{m-n}\right)$,
and $\Sigma_0$ is the unique maximal subrepresentation of $\Sigma_1$. Moreover, the natural map
\begin{equation*}
    \mathscr{R}_{\theta}: \Hom_{G_n\times H_m}\left(\omega_{n,m},\pi\boxtimes\theta_{n,m}\left(\pi\right)\right) \lra \Hom_{G_n\times H_m}\left(\omega_{n,m}^0,\pi\boxtimes\theta_{n,m}\left(\pi\right)\right)
\end{equation*}   
induced by the inclusion $\omega^0_{n,m}\hookrightarrow\omega_{n,m}$ is non-zero.

\vskip 5pt

To proof our claim, consider the restriction of $Z^\natural\left(\frac{1+m-n}{2},-,-\right)$ to $\omega^0_{n,m}$, namely, the image of $Z^\natural\left(\frac{1+m-n}{2},-,-\right)$ under the map
\begin{equation*}
 \mathscr{R}_{ind}: \Hom_{G_n\times H_m}\left(\omega_{n,m}, \pi\boxtimes \Ind_{\overline{Q}_n}^{H_m}\left(\pi^\vee\boxtimes\mathbbm{1}\right)\right) \lra \Hom_{G_n\times H_m}\left(\omega^0_{n,m}, \pi\boxtimes \Ind_{\overline{Q}_n}^{H_m}\left(\pi^\vee\boxtimes\mathbbm{1}\right)\right).   
\end{equation*}
For any $\varphi\in\calS\left(\Omega_n\right)$, we have $\varphi^\flat\in\calS\left(G_n\right)$, and $\varphi\mapsto\varphi^\flat$ induces a surjection $\calS\left(\Omega_n\right) \twoheadrightarrow \calS\left(G_n\right)$.
Hence for any matrix coefficient $f$ of $\pi$, the integral $Z^\flat\left(s,\varphi,f\right) = Z\left(s,\varphi^\flat,f\right)$ is absolutely convergent and holomorphic for all $s\in\C$.
Since the L-function $L\left(s,\pi\right)$ is holomorphic at $s=\frac{1+m-n}{2}$, the restriction of $Z^\natural\left(\frac{1+m-n}{2},-,-\right)$ to $\omega^0_{n,m}$ is non-zero. This implies that the map $\mathscr{R}_{ind}$ is non-zero.

\vskip 5pt

Now note that the target of $\mathscr{R}_{ind}$ can be rewritten as
\begin{align*}
\Hom_{G_n\times H_m}\left(\omega^0_{n,m}, \pi\boxtimes \Ind_{\overline{Q}_n}^{H_m}\left(\pi^\vee\boxtimes\mathbbm{1}\right)\right) & \simeq \Hom_{H_m}\left(\left(\omega^0_{n,m}\otimes\pi^\vee\right)_{G_n} , \Ind_{\overline{Q}_n}^{H_m}\left(\pi^\vee\boxtimes\mathbbm{1}\right)\right) \\
& \simeq \Hom_{H_m}\left(\Ind_{\overline{Q}_n}^{H_m}\left(\pi^\vee\boxtimes\mathbbm{1}\right) , \Ind_{\overline{Q}_n}^{H_m}\left(\pi^\vee\boxtimes\mathbbm{1}\right)\right).
\end{align*}
It is known that this parabolic induction $\Ind_{\overline{Q}_n}^{H_m}\left(\pi^\vee\boxtimes\mathbbm{1}_{m-n}\right) \simeq \mathbbm{1}_{m-n} \times \pi^\vee$ is \textit{socle irreducible} (namely, its socle is irreducible and occurs with multiplicity one, see \cite[Def. 2.3, Def. 4.3, Cor. 4.9]{MR3573961}), thus any non-zero element in the Hom space
\[
    \Hom_{H_m}\left(\Ind_{\overline{Q}_n}^{H_m}\left(\pi^\vee\boxtimes\mathbbm{1}_{m-n}\right) , \Ind_{\overline{Q}_n}^{H_m}\left(\pi^\vee\boxtimes\mathbbm{1}_{m-n}\right)\right)
\]
is indeed an isomorphism. Therefore, if we re-interpret $Z^\natural\left(\frac{1+m-n}{2},-,-\right)$ as a map
\[
    Z^\natural_{n,m}:\omega_{n,m}\otimes\pi^\vee \lra \Ind_{\overline{Q}_n}^{H_m}\left(\pi^\vee\boxtimes\mathbbm{1}_{m-n}\right),
\]
then this map is surjective, as its restriction to $\omega^0_{n,m}\otimes\pi^\vee$ is already surjective. The surjectivity of $Z^\natural_{n,m}$ implies that $\Sigma_1 = \Ind_{\overline{Q}_n}^{H_m}\left(\pi^\vee\boxtimes\mathbbm{1}_{m-n}\right)$, and the Howe duality then implies that $\Sigma_0$ is the unique maximal subrepresentation of $\Sigma_1$. Moreover, it follows from the surjectivity of $Z^\natural_{n,m}\,\big|_{\omega^0_{n,m}\otimes\pi^\vee}$ that the map $\mathscr{R}_{\theta}$ is non-zero. This completes the proof of our claims.

\vskip 5pt

The rest of the proof of this proposition is identical with the proof of \cite[Lem. 2.3]{FSX}. Suppose on the contrary that $\Hom_{G_n}\left(\omega_{n,m}/\omega^0_{n,m},\pi\right) \neq 0$, then there must be an irreducible representation $\sigma$ of $H_m$, such that
\[
    \Hom_{G_n\times H_m}\left(\omega_{n,m}/\omega^0_{n,m},\pi\boxtimes\sigma\right) \neq 0.
\]
By the Howe duality, we must have $\sigma\simeq \theta_{n,m}\left(\pi\right)$. Consider the exact sequence
\[
    \Hom_{G_n\times H_m}\left(\omega_{n,m}/\omega^0_{n,m},\pi\boxtimes\sigma\right) \hookrightarrow \Hom_{G_n\times H_m}\left(\omega_{n,m},\pi\boxtimes\sigma\right) \xrightarrow{\mathscr{R}_\theta} \Hom_{G_n\times H_m}\left(\omega^0_{n,m},\pi\boxtimes\sigma\right).
\]
Here the first arrow is injective by the left exactness of the Hom functor, and the second arrow $\mathscr{R}_\theta$ is non-zero by our claims. It follows that
\[
    \dim\Hom_{G_n\times H_m}\left(\omega_{n,m},\pi\boxtimes\sigma\right) \geq 2,
\]
which is ridiculous since it violates the multiplicity one result Theorem \ref{T:HoweDuality}(1). Therefore we must have $\Hom_{G_n}\left(\omega_{n,m}/\omega^0_{n,m},\pi\right) = 0$ as desired.

\end{proof}

\vskip 5pt

\begin{Rmk}
The advantage of this zeta integral argument we employed is that it can be adapted to the Archimedean case, see the next subsection for a brief discussion. In fact, this proposition can also be proved in a more straightforward way by using the rank filtration of the Weil representation (Lemma \ref{L:rank-filtration}). Suppose that
\[
    \pi = LQ\left(\St\left(\Delta_1\right)\times\St\left(\Delta_2\right)\times\cdots\times\St\left(\Delta_r\right)\right),
\]
with $\Delta_i=\left[x_i,y_i\right]_{\rho_i}$ for $1\leq i \leq r$. Applying the MVW-contragredient trick, we have 
\[
    \pi \hookrightarrow \St\left(\Delta_r\right)\times\St\left(\Delta_{r-1}\right)\times\cdots\times\St\left(\Delta_1\right).
\]
For each $0\leq k \leq n-1$, the graded piece $\tau_k$ of the rank filtration contributes
\begin{align*}
    \Hom_{G_n}\left(\tau_k,\pi\right)_{sm}  \hookrightarrow &\Hom_{G_n}\left(\tau_k,\St\left(\Delta_r\right)\times\St\left(\Delta_{r-1}\right)\times\cdots\times\St\left(\Delta_1\right)\right)_{sm} \\ 
     \simeq &\Ind_{\overline{Q}_k}^{H_m}\left(\Hom_{G_k\times G_{n-k}}\left(\omega_k^\natural\otimes\xi_k, \Jac_{\overline{P}_k}\left(\St\left(\Delta_r\right)\times\cdots\times\St\left(\Delta_1\right)\right)\right)_{sm}\right).
\end{align*}
By the geometric lemma, we know that the Jacquet module on the right hand side admits a filtration, such that each graded piece is of the form $\delta_\alpha\boxtimes \kappa_\alpha$, where $\alpha=\left(\alpha_1,\alpha_2,\cdots,\alpha_r\right)$ is an ordered partition of $n-k$, and
\[
    \kappa_\alpha = \St\left(\left[x_r,x_r-\alpha_r+1\right]_{\rho_r}\right)\times\cdots \times \St\left(\left[x_2,x_2-\alpha_2+1\right]_{\rho_2}\right)\times \St\left(\left[x_1,x_1-\alpha_1+1\right]_{\rho_1}\right).  
\]
Recall that we have assumed the L-function $L\left(s,\pi\right)$ to be holomorphic at $s=\frac{1+m-n}{2}$, which means that for all $1\leq i \leq r$, either $\rho_i\not\simeq \mathbbm{1}_1$ or $x_i\neq -\frac{1+m-n}{2}$ holds. It follows that
\[
    \Hom_{G_{n-k}}\left(\xi_k, \kappa_\alpha\right) = 0
\]
for all ordered partitions $\alpha$ of $n-k$. Then by the K\"unneth formula Theorem \ref{T:Kunneth}, we have
\[
    \Hom_{G_k\times G_{n-k}}\left(\omega_k^\natural\otimes\xi_k, \Jac_{\overline{P}_k}\left(\St\left(\Delta_r\right)\times\cdots\times\St\left(\Delta_1\right)\right)\right)_{sm} =0.
\]
This implies that $\Hom_{G_n}\left(\tau_k,\pi\right)_{sm} = 0$ for all $0\leq k \leq n-1$, hence the proposition holds.

\end{Rmk}

\vskip 5pt

Now back to the proof of our main result I. Let $\pi$ be an irreducible representation of $G_n$ satisfied the assumption in Theorem \ref{T:Main-1}, namely $L\left(s,\pi\right)$ or $L\left(s,\pi^\vee\right)$ is holomorphic at $s=\frac{1+m-n}{2}$. We have already seen in Proposition \ref{P:Holo-L-vanish-Ext} that the Ext-vanishing assertion in Theorem \ref{T:Main-1} holds. To complete the proof of Theorem \ref{T:Main-1} it only remains to compute $\Theta_{n,m}\left(\pi\right)$. We first assume that $L\left(s,\pi\right)$ is holomorphic at $s=\frac{1+m-n}{2}$. Consider the short exact sequence of $G_n\times H_m$ representations
\[
    0 \lra \omega^0_{n,m} \lra \omega_{n,m} \lra  \omega_{n,m}/\omega^0_{n,m} \lra 0.
\]
Applying the functor $\Hom_{G_n}\left(-,\pi\right)_{sm}$ we obtain the exact sequence
\[
    0 \lra \Hom_{G_n}\left(\omega_{n,m}/\omega^0_{n,m},\pi\right)_{sm} \lra \Hom_{G_n}\left(\omega_{n,m},\pi\right)_{sm} \xrightarrow{~\mathscr{R}~} \Hom_{G_n}\left(\omega^0_{n,m},\pi\right)_{sm}.
\]
By Proposition \ref{P:Holo-L-vanish-boundary}, the contribution of the boundary $\Hom_{G_n}\left(\omega_{n,m}/\omega^0_{n,m},\pi\right)_{sm}$ is zero. Hence the map $\mathscr{R}$ is an injection. Moreover, by Proposition \ref{P:Holo-L-vanish-Ext} and Theorem \ref{T:APS-EP-fomula} we know that $\Hom_{G_n}\left(\omega_{n,m},\pi\right)_{sm}$ and $\Hom_{G_n}\left(\omega^0_{n,m},\pi\right)_{sm}$ have the same semi-simplification. Therefore the map $\mathscr{R}$ is an isomorphism, and
\[
    \Hom_{G_n}\left(\omega_{n,m},\pi\right)_{sm} \xrightarrow[~\mathscr{R}~]{\sim} \Hom_{G_n}\left(\omega^0_{n,m},\pi\right)_{sm} \simeq \mathbbm{1}_{m-n}\times \pi.
\]
In this case the desired formula for $\Theta_{n,m}\left(\pi\right)$ follows from taking contragredients of both sides of above.

\vskip 5pt

Next we assume that $L\left(s,\pi^\vee\right)$ is holomorphic at $s=\frac{1+m-n}{2}$ but $L\left(s,\pi\right)$ is not. Consider the MVW-involution of $\pi$. It follows from Corollary \ref{C:ExtvsMVW} that in this case
\[
    \Theta_{n,m}\left(\pi\right) \simeq \Theta_{n,m}\left(\pi^\vee\right)^c\simeq \pi^\vee\times\mathbbm{1}_{m-n}
\]
as desired. This completes the proof of Theorem \ref{T:Main-1}.

\vskip 5pt


\subsection{Digression: the Archimedean case} 
\label{sub:digression_the_archimedean_case}

In this subsection we briefly discuss on extending Theorem 1.4 to the Archimedean case. So in contrast with the main body of this paper, only in this subsection we assume that $F$ is an Archimedean local field, i.e. $F=\R$ or $\C$, and we will work in the setting of \textit{Schwartz analysis} as in \cite{MR4211018}. In particular, by ``representations'' of a linear Nash group, what we actually mean is ``smooth Fr\'echet representations of moderate growth''.

\vskip 5pt

Under this setting our Weil representation is now defined on the space of Schwartz functions $\calS\left(M_{n,m}\right)$. Thanks to \cite{MR985172}, the Howe duality Theorem \ref{T:HoweDuality}(1) is still valid. Recall that in the proof of Theorem \ref{T:Main-1} we have made use of two tools: 

\begin{itemize}
    \item the Jacquet module: some arguments in our proof involves the computation of Jacquet modules of some specific representations;

    \vskip 5pt

    \item the zeta integral: we use the zeta integral to explicitly construct a non-zero element in certain Hom space.  
\end{itemize}
\vskip 5pt
While the Jacquet module becomes much more complicated in the Archimedean case, the theory of zeta integral is still available \cite[Thm. 8.7]{MR342495}. Therefore we can play the same game as in Section \ref{sub:zeta_integrals_in_the_non_equal_rank_case}. More precisely, we have the following analogy of Lemma \ref{L:construction-non-equal-rank-Zeta}.

\begin{Lem}
Suppose that $n\leq m$. Let $\pi$ be an irreducible representation of $G_n$. Then there exists a family of continuous map
\[
    Z^\natural_s : \omega_{n,m} \lra \pi\boxtimes\pi^\vee,
\] 
which is holomorphic in $s\in\C$, and satisfies the following conditions:

\begin{itemize}
     \item when the real part of $s$ is sufficiently large, then $Z^\natural_s$ is given by
     \[
        Z^\natural\left(s,\varphi,f\right) = \frac{Z\left(s,\varphi^\flat,f\right)}{L\left(s,\pi\right)},
     \]
     where $\varphi\in\omega_{n,m}$, $\varphi^\flat$ is defined in the same manner as (\ref{E:Schwartzfunction-flat}), and $f$ is a matrix coefficient of $\pi$;

     \vskip 5pt

     \item for each $s\in\C$, the map $Z_s^\natural$ gives rise to a non-zero element in the Hom space
     \[
        \Hom_{G_n\times\overline{Q}_n}\left(\omega_{n,m}, \pi|\det|^{s-\frac{1+m-n}{2}}\boxtimes\left(\pi^\vee|\det|^{-s+\frac{1}{2}}\boxtimes|\det|^{\frac{n}{2}}\right)\right),
     \]
     where $Q_n$ is the standard parabolic subgroup of $H_m$ with the Levi component $H_n\times H_{m-n}$, and $\overline{Q}_n$ is the opposite parabolic subgroup.
\end{itemize} 
\end{Lem}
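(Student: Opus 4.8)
The plan is to transplant the $p$-adic proof of Lemma~\ref{L:construction-non-equal-rank-Zeta} to the Archimedean setting, the only ingredient requiring an upgrade being the Godement--Jacquet theory of the zeta integral. First I would recall from \cite[Thm.~8.7]{MR342495}, in the Schwartz-analysis/Casselman--Wallach framework of \cite{MR4211018}, the following package: for $\varphi\in\calS\left(M_{n,n}\right)$ and a matrix coefficient $f\left(g\right)=\langle\pi\left(g\right)v,v^\vee\rangle$ of $\pi$, the integral $Z\left(s,\varphi,f\right)$ converges for ${\rm Re}\left(s\right)$ large (as $\varphi$ is Schwartz and $f$ of moderate growth), extends meromorphically, the normalized integral $Z\left(s,\varphi,f\right)/L\left(s,\pi\right)$ is entire, and for each fixed $s$ it is \emph{jointly continuous} in $\left(\varphi,v,v^\vee\right)$, varying holomorphically in $s$ as a family of continuous trilinear forms. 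I would also recall that the Archimedean $L$-factor is normalized precisely so that for every $s\in\C$ there exist $\varphi,f$ with $Z\left(s,\varphi,f\right)/L\left(s,\pi\right)\neq0$.

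The construction then proceeds exactly as in Section~\ref{sub:zeta_integrals_in_the_non_equal_rank_case}. The restriction map $\varphi\mapsto\varphi^\flat$ of \eqref{E:Schwartzfunction-flat} is a continuous (split) surjection $\calS\left(M_{n,m}\right)\twoheadrightarrow\calS\left(M_{n,n}\right)$, so $Z^\natural\left(s,\varphi,f\right)\coloneqq Z\left(s,\varphi^\flat,f\right)/L\left(s,\pi\right)$ inherits joint continuity in $\left(\varphi,v,v^\vee\right)$ for each $s$ and holomorphy in $s$. Using the standard identification of spaces of continuous (multi)linear maps between Casselman--Wallach representations with morphisms into completed tensor products (so that the image genuinely lies in the smooth Fr\'echet representation $\pi\boxtimes\pi^\vee$, not in a larger space of distribution vectors), this yields the desired holomorphic family $Z^\natural_s\colon\omega_{n,m}\to\pi\boxtimes\pi^\vee$, which for ${\rm Re}\left(s\right)$ large is given by the stated formula by construction.

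Equivariance and non-vanishing are then formal. For ${\rm Re}\left(s\right)$ large, a direct change of variables in the convergent integral, using the explicit formula \eqref{E:DefWeilRep} and the observation that $\left(X_0\mid 0\right)$ is fixed by right multiplication by the unipotent radical of $\overline{Q}_n$, shows that $Z^\natural_s$ intertwines the $G_n\times\overline{Q}_n$-actions, with $G_n$ acting on the target through $\pi|\det|^{s-\frac{1+m-n}{2}}$, the Levi $H_n\times H_{m-n}$ of $\overline{Q}_n$ through $\pi^\vee|\det|^{-s+\frac{1}{2}}\boxtimes|\det|^{\frac{n}{2}}$, and $N_{\overline{Q}_n}$ trivially; this is the verbatim computation of Lemma~\ref{L:construction-non-equal-rank-Zeta}, and it extends to all $s$ by analytic continuation since both sides are holomorphic. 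Non-vanishing of $Z^\natural_s$ at each $s$ reduces, via surjectivity of $\flat$, to the non-vanishing of the normalized Godement--Jacquet zeta integral on $\calS\left(M_{n,n}\right)\times\{\text{matrix coefficients of }\pi\}$, which is the defining property of $L\left(s,\pi\right)$ recalled above.

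The main obstacle -- indeed essentially the only place the Archimedean case demands more than a citation -- is the functional-analytic packaging in the second step: one must ensure that the normalized zeta integral is not merely a holomorphic family of numbers (or of separately continuous forms), but a holomorphic family of \emph{continuous} morphisms with target the smooth Fr\'echet representation $\pi\boxtimes\pi^\vee$ itself. This requires the joint-continuity and holomorphy statements in the Casselman--Wallach/Schwartz-analysis refinement of Godement--Jacquet theory, together with the identification of the relevant morphism spaces with completed tensor products (cf. \cite{MR4211018}); granting these, the rest is the $p$-adic argument read off without change.
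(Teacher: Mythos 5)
Your proposal is correct and follows essentially the same route the paper intends: the paper states this lemma as a direct analogue of Lemma \ref{L:construction-non-equal-rank-Zeta}, justified by the Archimedean Godement--Jacquet theory \cite[Thm. 8.7]{MR342495} in the Schwartz-analysis framework of \cite{MR4211018}, with the equivariance and non-vanishing checked exactly as in the $p$-adic case via the change of variables and the surjectivity of $\varphi\mapsto\varphi^\flat$. In fact you supply more detail than the paper does (which gives no written proof), in particular the functional-analytic point that continuity plus equivariance forces the image to consist of smooth vectors, so that $Z^\natural_s$ genuinely lands in the Casselman--Wallach representation $\pi\boxtimes\pi^\vee$.
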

\vskip 5pt
By the Frobenius reciprocity, we can regard $Z^\natural_s$ as an non-zero element in
\[
     \Hom_{G_n\times H_m}\left(\omega_{n,m}, \pi|\det|^{s-\frac{1+m-n}{2}}\boxtimes \Ind_{\overline{Q}_n}^{H_m}\left(\pi^\vee|\det|^{-s+\frac{1+m-n}{2}}\boxtimes\mathbbm{1}_{m-n}\right)\right).
\]
The most favorable case would be that $\pi$ is \textit{unitary}. In this case, $\Ind_{\overline{Q}_n}^{H_m}\left(\pi^\vee\boxtimes\mathbbm{1}_{m-n}\right)\simeq \mathbbm{1}_{m-n}\times \pi^\vee$ is irreducible, and the argument in Proposition \ref{P:Holo-L-vanish-boundary} (or, \cite[Lem. 2.3]{FSX}) still works with some mild modifications. Let $\Omega_n$ be the set of matrices in $M_{n,m}$ of rank $n$, and denote by $\omega^0_{n,m}$ the subrepresentation of $\omega_{n,m}$ with the underlying vector space $\calS\left(\Omega_n\right)$. We conclude that:

\begin{Thm}
In the context of this subsection, suppose that $n\leq m$. Let $\pi$ be an irreducible unitary representation of $G_n$. If $L\left(s,\pi\right)$ or $L\left(s,\pi^\vee\right)$ is holomorphic at $s=\frac{1+m-n}{2}$, then 
\[
    \Hom_{G_n}\left(\omega_{n,m}/\omega^0_{n,m},\pi\right) = 0,
\]
and moreover the big theta lift
\[
    \Theta_{n,m}\left(\pi\right) \simeq \pi^\vee \times \mathbbm{1}_{m-n}
\]
is irreducible.
\end{Thm}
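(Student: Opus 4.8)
The plan is to transplant the proof of Proposition~\ref{P:Holo-L-vanish-boundary} (equivalently \cite[Lem.~2.3]{FSX}) to the Schwartz setting of \cite{MR4211018}, feeding in the holomorphic family $Z^\natural_s$ from the Lemma above and invoking the unitarity of $\pi$ at the two places where the $p$-adic argument relies on structure theory. First I would reduce to the case that $L\left(s,\pi\right)$ itself is holomorphic at $s=\tfrac{1+m-n}{2}$: the Fourier-transform realization of the MVW involution (the Archimedean incarnation of Lemma~\ref{L:MVW4rier}) gives $\omega_{n,m}^{MVW}\simeq\omega_{n,m}$ and hence $\Theta_{n,m}\left(\pi\right)\simeq\Theta_{n,m}\left(\pi^\vee\right)^{c}$; since $\pi^{c}\simeq\pi^\vee$ for irreducible $\pi$ and $\pi$ is unitary exactly when $\pi^\vee$ is, this interchanges the two hypotheses, and also shows that the two candidate answers $\pi^\vee\times\mathbbm{1}_{m-n}$ and $\mathbbm{1}_{m-n}\times\pi^\vee$ agree once irreducibility is known.

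The decisive consequence of unitarity is that, $\pi^\vee$ being irreducible unitary, the parabolic induction $\mathbbm{1}_{m-n}\times\pi^\vee\simeq\Ind_{\overline{Q}_n}^{H_m}\left(\pi^\vee\boxtimes\mathbbm{1}_{m-n}\right)$ is irreducible --- the classical irreducibility of unitary parabolic induction for general linear groups over an Archimedean field. Specializing $Z^\natural_s$ to $s=\tfrac{1+m-n}{2}$ then produces a non-zero element of $\Hom_{G_n\times H_m}\left(\omega_{n,m},\pi\boxtimes\left(\mathbbm{1}_{m-n}\times\pi^\vee\right)\right)$; since its second factor is irreducible, the Archimedean Howe duality \cite{MR985172} forces $\theta_{n,m}\left(\pi\right)\simeq\mathbbm{1}_{m-n}\times\pi^\vee$ and pins down $Z^\natural$, up to scalar, as the unique non-zero element of the one-dimensional space $\Hom_{G_n\times H_m}\left(\omega_{n,m},\pi\boxtimes\theta_{n,m}\left(\pi\right)\right)$; in particular $\Hom_{G_n}\left(\omega_{n,m},\pi\right)_{sm}\neq0$. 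Here the $\Sigma_0\subset\Sigma_1$ bookkeeping of the general $p$-adic argument is unnecessary.

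Next I would run the boundary argument. With $\Omega_n\subset M_{n,m}$ the open set of rank-$n$ matrices and $\omega^0_{n,m}=\calS\left(\Omega_n\right)$, restriction of Schwartz functions gives a surjection $\calS\left(\Omega_n\right)\twoheadrightarrow\calS\left(G_n\right)$ via $\varphi\mapsto\varphi^\flat$, and $Z^\flat\left(s,\varphi,f\right)=Z\left(s,\varphi^\flat,f\right)$ is entire in $s$ for $\varphi^\flat\in\calS\left(G_n\right)$; choosing $\varphi,f$ with $Z\left(\tfrac{1+m-n}{2},\varphi^\flat,f\right)\neq0$ and using that $L\left(\tfrac{1+m-n}{2},\pi\right)$ is a finite non-zero constant shows that $Z^\natural$ restricted to $\omega^0_{n,m}$ is non-zero, i.e.\ the restriction map $\mathscr{R}_\theta\colon\Hom_{G_n\times H_m}\left(\omega_{n,m},\pi\boxtimes\theta_{n,m}\left(\pi\right)\right)\to\Hom_{G_n\times H_m}\left(\omega^0_{n,m},\pi\boxtimes\theta_{n,m}\left(\pi\right)\right)$ is non-zero. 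If $\Hom_{G_n}\left(\omega_{n,m}/\omega^0_{n,m},\pi\right)$ were non-zero it would --- by the Schwartz-analytic analogue of \cite[Prop.~1.1]{MR3753906}, obtained from the rank filtration (Lemma~\ref{L:rank-filtration}) and Casselman's theory --- be a finite-length $H_m$-module and hence have an irreducible quotient, which Howe duality identifies with $\theta_{n,m}\left(\pi\right)$; applying $\Hom_{G_n\times H_m}\left(-,\pi\boxtimes\theta_{n,m}\left(\pi\right)\right)$ to $0\to\omega^0_{n,m}\to\omega_{n,m}\to\omega_{n,m}/\omega^0_{n,m}\to0$ and using $\mathscr{R}_\theta\neq0$ would give $\dim\Hom_{G_n\times H_m}\left(\omega_{n,m},\pi\boxtimes\theta_{n,m}\left(\pi\right)\right)\geq2$, contradicting multiplicity one. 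Hence $\Hom_{G_n}\left(\omega_{n,m}/\omega^0_{n,m},\pi\right)=0$. Feeding this back into the same short exact sequence yields an injection $\Hom_{G_n}\left(\omega_{n,m},\pi\right)_{sm}\hookrightarrow\Hom_{G_n}\left(\omega^0_{n,m},\pi\right)_{sm}$; since $\omega^0_{n,m}\simeq\Ind_{G_n\times\overline{Q}_n}^{G_n\times H_m}\left(\omgn_n\otimes\xi_n\right)$ is the top graded piece of the rank filtration, the Archimedean open-orbit computation (the analogue of the identity $\Hom_{G_n}\left(\omega^0_{n,m},\pi\right)_{sm}\simeq\mathbbm{1}_{m-n}\times\pi$ used in the proof of Theorem~\ref{T:Main-1}) shows this target is $\mathbbm{1}_{m-n}\times\pi$, irreducible by unitarity of $\pi$. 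A non-zero injection into an irreducible module is an isomorphism, so taking contragredients gives $\Theta_{n,m}\left(\pi\right)\simeq\left(\mathbbm{1}_{m-n}\times\pi\right)^\vee\simeq\pi^\vee\times\mathbbm{1}_{m-n}$, which is irreducible.

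I expect the main obstacles to be exactly the two points where $p$-adic structure theory must be replaced by its real counterpart. One is the finiteness statement that $\Hom_{G_n}\left(\omega_{n,m}/\omega^0_{n,m},\pi\right)_{sm}$ is a finite-length (or at least finitely generated) $H_m$-module, so that it admits an irreducible quotient: over $\R$ or $\C$ this replaces \cite[Prop.~1.1]{MR3753906} and needs the Schwartz formalism of \cite{MR4211018} together with Casselman's second adjointness and control of the Jacquet modules of $\pi$ occurring in the graded pieces $\tau_k$ --- precisely the Jacquet-module computations that are more delicate in the Archimedean world. The other is the open-orbit computation, where in the moderate-growth Fr\'echet category one must verify that the $G_n$-equivariant maps $\calS\left(G_n\right)\to\pi$ are precisely the integrated matrix coefficients $\varphi\mapsto\int_{G_n}\varphi\left(g\right)\pi\left(g\right)v\,dg$, so that the residual right $H_n$-action on $\Hom_{G_n}\left(\omgn_n,\pi\right)_{sm}$ is by $\pi^\vee$. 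By contrast, the analytic ingredients --- meromorphic continuation of $Z\left(s,\varphi,f\right)$, entirety of $Z\left(s,\varphi^\flat,f\right)$, the $L$-factor, and the Fourier-transform intertwiner --- transfer without change by \cite{MR342495}.
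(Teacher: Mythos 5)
Your proposal is correct and follows essentially the same route the paper takes: the Archimedean zeta family $Z^\natural_s$, multiplicity one from the Archimedean Howe duality, the irreducibility of the unitarily induced representation $\mathbbm{1}_{m-n}\times\pi^\vee$ supplied by unitarity of $\pi$, and then the boundary-vanishing argument of Proposition \ref{P:Holo-L-vanish-boundary} (i.e. \cite[Lem. 2.3]{FSX}) followed by the open-orbit identification $\Hom_{G_n}\left(\omega^0_{n,m},\pi\right)_{sm}\simeq \mathbbm{1}_{m-n}\times\pi$, exactly the ``mild modifications'' the paper alludes to. The Archimedean caveats you flag (finiteness of the boundary Hom space and the Schwartz-analytic open-orbit computation) are precisely the points the paper also leaves to the cited references, so no further comparison is needed.
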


\vskip 5pt

We highlight that if $\pi$ is irreducible, unitary and tempered, then $L\left(s,\pi\right)$ is holomorphic in the right half plane ${\rm Re}\left(s\right)>0$. Thus the theorem above holds for $\pi$.


\vskip 10pt


\section{Proof of the main theorem II} 
\label{sec:proof_of_the_main_theorem_ii}

This section is devoted to the proof of the main result II: Theorem \ref{T:Main-2}, and also the complementary result Theorem \ref{T:Main-2-complement}.

\vskip 5pt

\subsection{Piece-wise computations} 
\label{sub:piece_wise_computations}

We first compute contributions of each piece in the rank filtration of the Weil representation. We shall retain the notations in Section \ref{sub:the_rank_filtration}: recall that for each $0\leq k\leq \min\left\{n,m\right\}$, $\Omega_k$ is the set of matrices in $M_{n,m}$ of rank $k$, and the action of $G_n\times H_m$ on $\calS\left(\Omega_k\right)$ inherited from $\omega_{n,m}$ is denoted by $\tau_k$. Lemma \ref{L:rank-filtration} gives a description of this piece
\[
    \tau_k \simeq \Ind_{P_k\times \overline{Q}_k}^{G_n\times H_m} \left(\omega^\natural_k\otimes\xi_k\right).
\]

\begin{Lem}\label{L:ExtGradedPiecesRankFil}
Let $\eta=|\det|^x$ be a character of $G_n$, where $x\in\frac{1}{2}\Z$. Then the Ext spaces associated to each graded piece of the rank filtration and $\eta$ are as follows.
\begin{enumerate}
    \item Suppose that $0\leq k < \min\left\{n,m\right\}$. Then
\[
    \Ext_{G_n}^*\left(\tau_k,\eta\right)_{sm} \simeq 
    \begin{cases}
        \Ind_{\overline{Q}_k}^{H_m}\left(|\det|^{\frac{n-m+k}{2}}\boxtimes |\det|^{\frac{k-n}{2}}\right) \quad & \textit{if } x=k-\frac{m}{2} \textit{ and } * = 0, 1; \\[10pt]

       0  \quad & \textit{otherwise}.
     \end{cases}
\]

\vskip 5pt

\item Suppose that $k=\min\left\{n,m\right\}$. Then 
\[
    \Ext_{G_n}^*\left(\tau_k,\eta\right)_{sm} \simeq 
    \begin{cases}
        \Ind_{\overline{Q}_n}^{H_m}\left(|\det|^x\boxtimes\mathbbm{1}_{m-n}\right) \quad & \textit{if } n\leq m \textit{ and } * = 0; \\[10pt]

        |\det|^{\frac{n}{2}} \quad & \textit{if } n>m \textit{, } x=\frac{m}{2} \textit{ and } * = 0, 1; \\[10pt]

       0  \quad & \textit{otherwise}.
     \end{cases}
\]
\end{enumerate}
\end{Lem}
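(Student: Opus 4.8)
The plan is to compute $\Ext_{G_n}^*(\tau_k,\eta)_{sm}$ directly from the description of $\tau_k$ in Lemma \ref{L:rank-filtration} via the second adjointness (Lemma \ref{L:IndJacAdj}(2)). Writing $\tau_k \simeq \Ind_{P_k\times\overline{Q}_k}^{G_n\times H_m}(\omega_k^\natural\otimes\xi_k)$, since the induction on the $G_n$-side is along the parabolic $P_k$ with Levi $G_k\times G_{n-k}$, and the character $\eta=|\det|^x$ restricted to that Levi becomes $|\det_k|^x\boxtimes|\det_{n-k}|^x$ up to the modulus factor, we get
\[
    \Ext_{G_n}^*(\tau_k,\eta)_{sm}\simeq \Ind_{\overline{Q}_k}^{H_m}\Big(\Ext_{G_k\times G_{n-k}}^*\big(\omega_k^\natural\otimes\xi_k,\Jac_{\overline{P}_k}(|\det|^x)\big)_{sm}\Big),
\]
where the $H_k\times H_{m-k}$-action (through $\xi_k$ on the $H$-side) is carried along. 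The normalized Jacquet module $\Jac_{\overline{P}_k}(|\det_n|^x)$ is just the character $|\det_k|^{x+\frac{n-k}{2}}\boxtimes|\det_{n-k}|^{x-\frac{k}{2}}$ (the shift coming from $\delta_{P_k}^{-1/2}$), so by the K\"unneth formula (Theorem \ref{T:Kunneth}) the inner Ext space factors as a tensor product over $G_k$ and $G_{n-k}$.

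The $G_{n-k}$-factor is $\Ext_{G_{n-k}}^*\big(|\det_{n-k}|^{\frac{k-m}{2}},|\det_{n-k}|^{x-\frac{k}{2}}\big)$ (using the value of $\xi_k$ on $G_{n-k}$), which by Schur's lemma and the fact that $G_{n-k}$ has split rank $n-k$ is nonzero only when $x=\frac{k-m}{2}+\frac{k}{2}=k-\frac{m}{2}$, and even then it is concentrated in degrees $0$ through $n-k$. The $G_k$-factor is $\Ext_{G_k}^*\big(\omega_k^\natural\otimes|\det_k|^{\frac{k-n-m}{2}},|\det_k|^{x+\frac{n-k}{2}}\big)_{sm}$. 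Here $\omega_k^\natural$ is the geometric action of $G_k\times H_k$ on $\calS(G_k)$, which is nothing but $L^2(G_k)$-type regular representation; tensoring with the character $|\det_k|^{\frac{k-n-m}{2}}$ and comparing, $\omega_k^\natural$ twisted is essentially $\calS(G_k)$ with a twisted left-right translation action. The key point is that $\calS(G_k)$, as a left $G_k$-module, is (a direct limit of) projective modules — indeed $\calS(G_k)=\varinjlim_K \calS(K\backslash G_k)$ and each $\calS(K\backslash G_k)=\Ind_K^{G_k}\mathbbm{1}$ is projective in $\calM(G_k)$ — so all higher Ext into any smooth representation vanish, and $\Hom_{G_k}(\calS(G_k),\chi)_{sm}$ as an $H_k$-module is just $\chi$ viewed via the right-translation, i.e. one copies the character through. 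Concretely this gives, when $x=k-\frac m2$, that the $G_k$-factor is one-dimensional in degree $0$ only, carrying the $H_k$-character $|\det_k|^{\frac{n+m-k}{2}}$, and combining with $\xi_k$ on $H_k$ and the $G_{n-k}$-factor yields exactly $\Ind_{\overline{Q}_k}^{H_m}\big(|\det|^{\frac{n-m+k}{2}}\boxtimes|\det|^{\frac{k-n}{2}}\big)$ in degrees $0,1$ — the degree range collapsing to $\{0,1\}$ because $\min\{n-k,1\}$ times projectivity on the $G_k$-side forces at most degree $1$ when $n-k=1$, and for $n-k\geq 2$ one must additionally invoke the computation of $\Ext^*_{G_{n-k}}$ of a character with itself, which is the exterior algebra on an $(n-k)$-dimensional space; one then has to check that after twisting by $\omega_k^\natural$ the total contribution still truncates. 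Actually, the cleanest route is: because $\calS(G_k)$ is $G_k$-projective, $\Ext^*_{G_k\times G_{n-k}}(\omega_k^\natural\otimes\xi_k,-)$ reduces to $\Ext^*_{G_{n-k}}$ of a character, and the statement in the lemma should then read off directly from $\Ext^*_{G_{n-k}}(|\det|^a,|\det|^a)=\bigwedge^*(\C^{?})$ — so I should double check whether the intended claim is really concentrated in $\{0,1\}$ rather than $\{0,1,\dots,n-k\}$; I suspect there is an additional input (perhaps the specific structure forces $n-k$ irrelevant, or the relevant Ext is over $G_1=F^\times$ after all reductions).

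For part (2), when $k=\min\{n,m\}$ the parabolic $P_k$ on the $G_n$-side is either all of $G_n$ (if $n\leq m$, wait, no — $k=n$ here) so $P_n=G_n$ with trivial unipotent radical: then $\tau_n\simeq \Ind_{\overline{Q}_n}^{H_m}(\omega_n^\natural\otimes\xi_n)$ as a $G_n\times H_m$-module with $\omega_n^\natural=\calS(G_n)$ carrying the full left-right $G_n\times H_n$-action, and $\Hom_{G_n}(\calS(G_n),|\det|^x)_{sm}$ is one-dimensional (projectivity kills all higher Ext), carrying the $H_n$-character obtained by transporting $|\det|^x$ through the inversion-twisted right action, so one lands in $\Ind_{\overline{Q}_n}^{H_m}(|\det|^x\boxtimes\mathbbm{1}_{m-n})$ in degree $0$ only; no holomorphicity constraint on $x$ appears because $\calS(G_n)$ is projective over $G_n$ and $\Hom$ never vanishes. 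When $n>m$, $k=m$ and $P_m$ has Levi $G_m\times G_{n-m}$; now the $Q_m=H_m$ part is the whole group so there is no $H$-induction, and the computation is $\Ext^*_{G_m\times G_{n-m}}(\calS(G_m)\otimes\xi_m,\Jac_{\overline{P}_m}|\det_n|^x)_{sm}$, which by projectivity of $\calS(G_m)$ over $G_m$ reduces to $\Ext^*_{G_{n-m}}$ of two characters of $G_{n-m}=$ split rank $n-m$; but the asserted answer $|\det|^{\frac n2}$ only in degrees $0,1$ suggests again that in fact the relevant reduction lands on a single $G_1$ or that the characters agree only when $x=\frac m2$ and a degree-$\{0,1\}$ truncation is being invoked — I would need to reconcile the split-rank-$(n-m)$ count with the claimed $\{0,1\}$, which is the point deserving the most care.

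The main obstacle, then, is not the formal reduction — the second adjointness plus K\"unneth plus projectivity of $\calS(G_k)$ as a $G_k$-module do almost all the work — but rather pinning down the exact degrees in which the surviving Ext groups live and proving they really collapse to $\{0,1\}$ rather than spreading out to the split rank of the residual $G_{n-k}$ (or $G_{n-m}$) factor; resolving this likely requires the observation that after twisting by the geometric module $\omega_k^\natural$, the relevant self-Ext of a character is being computed not over a full $\GL$ but effectively over its center or a one-dimensional torus, together with careful bookkeeping of which unramified twist of $|\det|$ makes the central characters match. I would also need to verify the exact exponents of all the $|\det|$-characters appearing, which is a routine but error-prone tally of the modulus characters $\delta_{P_k}$, $\delta_{Q_k}$ and the values of $\xi_k$ from Lemma \ref{L:rank-filtration}; I would organize this by first normalizing everything so that $\omega_k^\natural$ carries the \emph{un}twisted geometric action and absorbing all shifts into a single character on the Levi, then reading off the answer.
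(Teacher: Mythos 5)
Your formal reductions coincide with the paper's: second adjointness to move to the Levi $G_k\times G_{n-k}$, the computation $\Jac_{\overline{P}_k}\eta=|\det|^{x+\frac{n-k}{2}}\boxtimes|\det|^{x-\frac{k}{2}}$, the central-character comparison on the $G_{n-k}$-factor forcing $x=k-\frac m2$, and the K\"unneth formula combined with the projectivity of $\omega_k^\natural=\calS(G_k)$ as a $G_k$-module, which isolates the factor $\Hom_{G_k}(\omega_k^\natural\otimes\xi_k,|\det|^{x+\frac{n-k}{2}})_{sm}$ in degree $0$. Up to that point your argument is the paper's argument.

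The genuine gap is exactly the point you flag but do not resolve: the value of the remaining factor $\Ext^*_{G_{n-k}}\bigl(|\det|^{x-\frac k2},|\det|^{x-\frac k2}\bigr)$. You propose that it is ``the exterior algebra on an $(n-k)$-dimensional space,'' spread over degrees $0,\dots,n-k$, and then speculate that some twisting by $\omega_k^\natural$ reduces matters to the center or to a one-dimensional torus. The split-rank bound only gives vanishing above degree $n-k$; the exterior-algebra guess would be correct for self-extensions of a character of a rank-$(n-k)$ torus, but it is false for the full group $G_{n-k}$, and no twisting by $\omega_k^\natural$ is involved (the K\"unneth factorization has already separated the $G_k$- and $G_{n-k}$-variables). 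The input the paper uses here is its Lemma \ref{L:ExtSteinberg}: for a character of $G_{n-k}$ (a Speh representation of a segment built from a character of $G_1$), the self-Ext is $\C$ in degrees $0$ and $1$ and vanishes in all higher degrees; this is proved earlier by induction on the length of the segment using the short exact sequences of Theorem \ref{T:IndSeg} and the homological duality theorem. Without this (or an equivalent computation), the concentration in degrees $\{0,1\}$ in part (1), and likewise the $n>m$, $x=\frac m2$ case of part (2) where the same self-Ext over $G_{n-m}$ appears, is not established --- and that concentration is the entire nontrivial content of the lemma beyond bookkeeping. Your treatment of the $k=n\leq m$ case of part (2) via projectivity of $\calS(G_n)$ is fine, since there no residual $G_{n-k}$-factor occurs.
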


\vskip 5pt

\begin{proof}
We first prove (1). For each $*\geq 0$, we have
\begin{align*}
    \Ext_{G_n}^*\left(\tau_k,\eta\right)_{sm} & \simeq \Ind_{\overline{Q}_k}^{H_m} \Ext^*_{G_n}\left(\Ind_{P_k}^{G_n}\left(\omega^\natural_k\otimes\xi_k\right) , \eta\right)_{sm} \\
    & \simeq \Ind_{\overline{Q}_k}^{H_m} \Ext^*_{G_k\times G_{n-k}}\left(\omega^\natural_k\otimes\xi_k , |\det|^{x+\frac{n-k}{2}}\boxtimes |\det|^{x-\frac{k}{2}}\right)_{sm}.
\end{align*}
Here in the last equality, we have made use of the second adjointness (see Lemma \ref{L:IndJacAdj}(2)) and the fact that $\Jac_{\overline{P}_k} \eta = |\det|^{x+\frac{n-k}{2}}\boxtimes |\det|^{x-\frac{k}{2}}$. If $k<n$, then by comparing the central characters of $G_{n-k}$-actions on $\xi_k$ and $|\det|^{x-\frac{k}{2}}$, one can see immediately that the Ext space above vanishes unless
\[
    |\det|^{\frac{k-m}{2}} = |\det|^{x-\frac{k}{2}},
\]
or equivalently $x=k-\frac{m}{2}$. In this case, by the K\"unneth formula Theorem \ref{T:Kunneth} and the projectivity of $\omega_k^\natural$ (as a $G_k$-module), we have  
\[
    \Ind_{\overline{Q}_k}^{H_m} \Ext^*_{G_k\times G_{n-k}}\left(\omega^\natural_k\otimes\xi_k , |\det|^{x+\frac{n-k}{2}}\boxtimes |\det|^{x-\frac{k}{2}}\right)_{sm}
\]
\[
    \simeq \Ind_{\overline{Q}_k}^{H_m} \left(\Hom_{G_k}\left(\omega^\natural_k\otimes\xi_k , |\det|^{x+\frac{n-k}{2}}\right)_{sm}\otimes \Ext^*_{G_{n-k}}\left(|\det|^{x-\frac{k}{2}},|\det|^{x-\frac{k}{2}}\right)\right).
\]
Then the rest follows from Lemma \ref{L:ExtSteinberg} easily. The proof for (2) is similar and we omit the details here. 

\end{proof}

\vskip 5pt

The computation above shows that the case $n>m$ is extremely simple: there is at most one graded piece can have non-zero Ext spaces with $\eta$. As a direct consequence, we can complete the proof of Theorem \ref{T:Main-2-complement}.

\vskip 5pt

\begin{proof}[Proof of Theorem \ref{T:Main-2-complement}]
If $\eta \neq |\det|^{k-\frac{m}{2}}$ for any integer $0\leq k\leq m$, then by the previous lemma there is no graded piece (of the rank filtration of the Weil representation) has non-zero Ext spaces with $\eta$, thus
\[
    \Ext_{G_n}^*\left(\omega_{n,m},\eta\right)_{sm} = 0.
\] 
If $\eta=|\det|^{k-\frac{m}{2}}$ for some integer $0\leq k\leq m$, then by the previous lemma and a standard argument of homological algebra we have 
\[
    \Ext_{G_n}^*\left(\omega_{n,m},\eta\right)_{sm} \simeq \Ext_{G_n}^*\left(\tau_k,\eta\right)_{sm}.
\]
Note that under the assumption $n>m$, we know that
\[
    \Ind_{\overline{Q}_k}^{H_m}\left(|\det|^{\frac{n-m+k}{2}}\boxtimes |\det|^{\frac{k-n}{2}}\right) \simeq |{\det}_{m-k}|^{\frac{k-n}{2}}\times|{\det}_k|^{\frac{n-m+k}{2}}
\]
is indeed irreducible. The theorem then follows from the previous lemma.

\end{proof}

\vskip 5pt


\subsection{Gluing graded pieces} 
\label{sub:gluing_graded_pieces}

Next we shall focus on the case that $n\leq m$. Thanks to Theorem \ref{T:Main-1}, we only need to consider the case that $n\geq 2$, $m\leq 2n-2$ and those characters $\eta$ of the form 
\[
    \eta = |\det|^{k-\frac{m}{2}}
\]
for some integer $1+m-n \leq k \leq n-1$. The computation in Lemma \ref{L:ExtGradedPiecesRankFil} shows that there are exactly two graded pieces of the rank filtration can have non-zero Ext spaces with $\eta$. The main goal of this subsection is to use some long exact sequences to ``glue'' these Ext spaces associated to graded pieces and $\eta$.

\vskip 5pt

We recall that the rank filtration on $\omega_{n,m}$ is a descending sequence of $G_n\times H_m$-modules
\[
    0 \subset R_n \subset R_{n-1} \subset \cdots \subset R_0 = \omega_{n,m},
\]
and for each $0\leq i\leq n$, we have $\tau_i\simeq R_i/R_{i+1}$. By Lemma \ref{L:ExtGradedPiecesRankFil}, only $\Ext_{G_n}^*\left(\tau_k,\eta\right)_{sm}$ and $\Ext_{G_n}^*\left(\tau_n,\eta\right)_{sm}$ can be non-zero. Therefore it is natural to consider the short exact sequence
\begin{equation}\label{E:SES-glue-rank-fil}
    0 \lra R_{k+1} \lra \omega_{n,m} \lra \omega_{n,m}/R_{k+1} \lra 0,
\end{equation}
since this exact sequence separates $\tau_k$ and $\tau_n$. 

\begin{Lem}
In the context of above discussions, the Ext spaces associated to $R_{k+1}$ and $\eta$ are
\[
    \Ext_{G_n}^*\left(R_{k+1},\eta\right)_{sm} \simeq \begin{cases}
        \mathbbm{1}_{m-n}\times|\det_n|^{k-\frac{m}{2}} \quad & \textit{if } * = 0; \\[10pt]
       0  \quad & \textit{otherwise}.
     \end{cases}
\]   
Likewise, the Ext spaces associated to $\omega_{n,m}/R_{k+1}$ and $\eta$ are
\[
    \Ext_{G_n}^*\left(\omega_{n,m}/R_{k+1},\eta\right)_{sm} \simeq \begin{cases}
        |{\det}_{m-k}|^{\frac{k-n}{2}}\times|{\det}_k|^{\frac{n-m+k}{2}} \quad & \textit{if } * = 0, 1; \\[10pt]
       0  \quad & \textit{otherwise}.
     \end{cases}
\]  
\end{Lem}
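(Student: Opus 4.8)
The plan is a dévissage along the rank filtration, using Lemma~\ref{L:ExtGradedPiecesRankFil} to see that only a single graded piece contributes in each of the two cases. Recall that, as a $G_n\times H_m$-module, $R_{k+1}$ carries the finite filtration $0\subset R_n\subset R_{n-1}\subset\cdots\subset R_{k+1}$ with $R_j/R_{j+1}\simeq\tau_j$ for $k+1\leq j\leq n$, while $\omega_{n,m}/R_{k+1}$ carries the finite filtration $0\subset R_k/R_{k+1}\subset R_{k-1}/R_{k+1}\subset\cdots\subset R_0/R_{k+1}=\omega_{n,m}/R_{k+1}$ with successive quotients $\tau_0,\dots,\tau_k$. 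Since $n\leq m$ we have $\min\{n,m\}=n$, so part~(1) of Lemma~\ref{L:ExtGradedPiecesRankFil} applies to every $\tau_j$ with $j\leq n-1$ and part~(2) applies to $\tau_n$. Because $\eta=|\det|^{k-\frac{m}{2}}$, part~(1) shows $\Ext^*_{G_n}(\tau_j,\eta)_{sm}=0$ for all $j\neq k$ (the central characters of the relevant Jacquet modules do not match), while parts~(1) and (2) show that $\Ext^*_{G_n}(\tau_k,\eta)_{sm}$ is concentrated in degrees $0,1$ and that $\Ext^*_{G_n}(\tau_n,\eta)_{sm}$ is concentrated in degree $0$.

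For $R_{k+1}$ the unique contributing graded piece is $\tau_n=R_n$. I would feed each short exact sequence $0\to R_{j+1}\to R_j\to\tau_j\to 0$, for $j=n-1,n-2,\dots,k+1$, into the long exact sequence of the cohomological $\delta$-functor $\Ext^*_{G_n}(-,\eta)_{sm}$, which is legitimate since this functor was identified earlier with a right derived functor (contravariant in the first variable). As $\Ext^*_{G_n}(\tau_j,\eta)_{sm}=0$ for these $j$, each step yields $\Ext^*_{G_n}(R_j,\eta)_{sm}\simeq\Ext^*_{G_n}(R_{j+1},\eta)_{sm}$, and downward induction gives $\Ext^*_{G_n}(R_{k+1},\eta)_{sm}\simeq\Ext^*_{G_n}(\tau_n,\eta)_{sm}$; by Lemma~\ref{L:ExtGradedPiecesRankFil}(2) the latter is $\Ind_{\overline{Q}_n}^{H_m}\bigl(|{\det}_n|^{k-\frac{m}{2}}\boxtimes\mathbbm{1}_{m-n}\bigr)$ in degree $0$ and zero otherwise. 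It then remains to rewrite this in the notation of the statement: conjugating $\overline{Q}_n$ to the standard parabolic of $H_m$ with Levi $H_{m-n}\times H_n$ by the Weyl element swapping the two blocks (equivalently, invoking the compatibility of the MVW-functor with parabolic induction from Section~\ref{sub:mvw_involution_of_general_linear_groups}) identifies $\Ind_{\overline{Q}_n}^{H_m}\bigl(|{\det}_n|^{k-\frac{m}{2}}\boxtimes\mathbbm{1}_{m-n}\bigr)$ with $\mathbbm{1}_{m-n}\times|{\det}_n|^{k-\frac{m}{2}}$, which is the first formula.

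For $\omega_{n,m}/R_{k+1}$ the argument is entirely parallel: the unique contributing piece is $\tau_k\simeq R_k/R_{k+1}$, so feeding the short exact sequences $0\to R_{j+1}/R_{k+1}\to R_j/R_{k+1}\to\tau_j\to 0$ for $j=k-1,\dots,0$ into the long exact sequence and using $\Ext^*_{G_n}(\tau_j,\eta)_{sm}=0$ for $j<k$ gives $\Ext^*_{G_n}(\omega_{n,m}/R_{k+1},\eta)_{sm}\simeq\Ext^*_{G_n}(\tau_k,\eta)_{sm}$; by Lemma~\ref{L:ExtGradedPiecesRankFil}(1) this equals $\Ind_{\overline{Q}_k}^{H_m}\bigl(|{\det}_k|^{\frac{n-m+k}{2}}\boxtimes|{\det}_{m-k}|^{\frac{k-n}{2}}\bigr)$ in degrees $0,1$ and zero otherwise, which the same block-swap identifies with $|{\det}_{m-k}|^{\frac{k-n}{2}}\times|{\det}_k|^{\frac{n-m+k}{2}}$. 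I do not anticipate a genuine obstacle here: both filtrations are finite, so there is no concern about the smooth-vectors functor commuting with colimits, and the long exact sequence machinery is already available in the paper. The only points requiring a little care are the central-character bookkeeping that rules out every $\tau_j$ other than $\tau_k$ and $\tau_n$, and the translation of the parabolic inductions from $\overline{Q}_k$ and $\overline{Q}_n$ into the $\times$-notation of the statement.
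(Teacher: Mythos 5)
Your argument is correct and is essentially the paper's own proof: the paper likewise deduces both formulas from Lemma \ref{L:ExtGradedPiecesRankFil} by d\'evissage along the (finite) rank filtration, observing that only $\tau_n$ contributes to $R_{k+1}$ and only $\tau_k$ to $\omega_{n,m}/R_{k+1}$, and then rewriting the opposite-parabolic inductions $\Ind_{\overline{Q}_n}^{H_m}$ and $\Ind_{\overline{Q}_k}^{H_m}$ in the $\times$-notation via the block-swapping Weyl conjugation. The only cosmetic point is that this last identification needs nothing beyond that conjugation, so the parenthetical appeal to the MVW-functor is superfluous.
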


\vskip 5pt

\begin{proof}
Again, by appealing to Lemma \ref{L:ExtGradedPiecesRankFil}, we know that 
\[
    \Ext_{G_n}^*\left(R_{k+1},\eta\right)_{sm} \simeq \Ext_{G_n}^*\left(\tau_n,\eta\right)_{sm}.
\]
Note that $\Ind_{\overline{Q}_n}^{H_m}\left(|\det|^{k-\frac{m}{2}}\boxtimes\mathbbm{1}_{m-n}\right)\simeq \mathbbm{1}_{m-n}\times|\det_n|^{k-\frac{m}{2}}$. This implies the desired formula for $\Ext_{G_n}^*\left(R_{k+1},\eta\right)_{sm}$. The computation of $\Ext_{G_n}^*\left(\omega_{n,m}/R_{k+1},\eta\right)_{sm}$ is similar and we shall not repeat.

\end{proof}

\vskip 5pt

Now apply the functor $\Hom_{G_n}\left(-,\eta\right)_{sm}$ to the short exact sequence (\ref{E:SES-glue-rank-fil}), we get a long exact sequence
\begin{flushleft}
$0 \lra |{\det}_{m-k}|^{\frac{k-n}{2}}\times|{\det}_k|^{\frac{n-m+k}{2}} \lra \Hom_{G_n}\left(\omega_{n,m},\eta\right)_{sm} \xrightarrow{~\mathscr{I}_k~} \mathbbm{1}_{m-n}\times|\det_n|^{k-\frac{m}{2}}$
\end{flushleft}
\begin{flushright}
$\lra  |{\det}_{m-k}|^{\frac{k-n}{2}}\times|{\det}_k|^{\frac{n-m+k}{2}}  \lra \Ext^1_{G_n}\left(\omega_{n,m},\eta\right)_{sm} \lra 0,$   
\end{flushright}
and also
\[
    \Ext^*_{G_n}\left(\omega_{n,m},\eta\right)_{sm} = 0
\]
for all $* \geq 2$. Therefore, to prove Theorem \ref{T:Main-2} it only remains to show that the map $\mathscr{I}_k$ is zero. Recall that $1+m-n\leq k \leq n-1$, so the segment corresponding to $\mathbbm{1}_{m-n}$ is strictly contained in the segment corresponding to $|\det_n|^{k-\frac{m}{2}}$. It follows that $\mathbbm{1}_{m-n}\times|\det_n|^{k-\frac{m}{2}}$ is irreducible. Moreover, by Theorem \ref{T:IndSeg}(2), we have 
\[
    {\rm soc}\left(\Hom_{G_n}\left(\omega_{n,m},\eta\right)_{sm}\right) = {\rm soc}\left(|{\det}_{m-k}|^{\frac{k-n}{2}}\times|{\det}_k|^{\frac{n-m+k}{2}}\right) = \mathbbm{1}_{m-n}\times|{\det}_n|^{k-\frac{m}{2}}.
\]
Observe that if we can show that $\Hom_{G_n}\left(\omega_{n,m},\eta\right)_{sm}$ is a socle irreducible representation of $H_m$ (i.e. its socle is irreducible and occurs with multiplicity one), then the map $\mathscr{I}_k$ must be zero; otherwise  
$\mathbbm{1}_{m-n}\times|{\det}_n|^{k-\frac{m}{2}}$
will appear in $\Hom_{G_n}\left(\omega_{n,m},\eta\right)_{sm}$ twice. So to complete the proof of Theorem \ref{T:Main-2} it suffices to show the following lemma.

\vskip 5pt


\begin{Lem}
Suppose that $n\geq 2$, $n\leq m\leq 2n-2$. Let $\eta = |{\det}_n|^{k-\frac{m}{2}}$ be a character of $G_n$, where $1+m-n\leq k\leq n-1$ is a positive integer. Then 
\[
    \Hom_{G_n}\left(\omega_{n,m},\eta\right)_{sm}
\]    
is socle irreducible.
\end{Lem}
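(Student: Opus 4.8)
The plan is to separate the two claims packaged in ``socle irreducible'': that the socle of $M:=\Hom_{G_n}(\omega_{n,m},\eta)_{sm}$ equals the irreducible representation $S:=\mathbbm{1}_{m-n}\times|{\det}_n|^{k-\frac{m}{2}}$, and that $S$ occurs as a composition factor of $M$ with multiplicity one. The first claim is already within reach: with $A:=|{\det}_{m-k}|^{\frac{k-n}{2}}\times|{\det}_k|^{\frac{n-m+k}{2}}$, the submodule $\Hom_{G_n}(\omega_{n,m}/R_{k+1},\eta)_{sm}\cong A$ of $M$ has socle $S$ by Theorem \ref{T:IndSeg}(2), and the long exact sequence shows that any irreducible submodule of $M$ either lies in $A$, hence is $S$, or injects into $M/A\hookrightarrow S$, hence is again $S$. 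For the second claim, note that $\EP_{G_n}(\omega_{n,m},\eta)_{sm}=[S]$ in $\calR(H_m)$ by Theorem \ref{T:APS-EP-fomula} (here $|{\det}_n|^{k-\frac{m}{2}}\times\mathbbm{1}_{m-n}\simeq S$ is irreducible) and that $\Ext^{\geq 2}_{G_n}(\omega_{n,m},\eta)_{sm}=0$, so $[M]=[S]+[\Ext^1_{G_n}(\omega_{n,m},\eta)_{sm}]$. Comparing this with the long exact sequence
\[
    0\longrightarrow A\longrightarrow M\xrightarrow{\ \mathscr{I}_k\ }S\longrightarrow A\longrightarrow\Ext^1_{G_n}(\omega_{n,m},\eta)_{sm}\longrightarrow 0,
\]
one sees that ``$S$ occurs once in $M$'', ``$\mathscr{I}_k=0$'', and ``$\Ext^1_{G_n}(\omega_{n,m},\eta)_{sm}\cong\pi(n,m;k)$ is irreducible'' are all equivalent. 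Thus it suffices to prove that $S$ is \emph{not} a composition factor of $\Ext^1_{G_n}(\omega_{n,m},\eta)_{sm}$.

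To this end I would compute a Jacquet module of $\Ext^1_{G_n}(\omega_{n,m},\eta)_{sm}$ along $H_m$. One first checks the compatibility
\[
    \Jac^{H_m}_{\overline{Q}}\bigl(\Ext^i_{G_n}(\omega_{n,m},\eta)_{sm}\bigr)\ \cong\ \Ext^i_{G_n}\bigl(\Jac^{H_m}_{Q}\omega_{n,m},\,\eta\bigr)_{sm}
\]
for every standard parabolic $Q$ of $H_m$, the Jacquet functor on the right being taken in the $H_m$-factor and the passage $\overline{Q}\rightsquigarrow Q$ being the usual one; this follows from the exactness of the Jacquet functor together with the $i=0$ case, which is immediate on the standard projective generators $\calS(G_n\times H_m)$ (and their summands), whose $\Jac^{H_m}_Q$ remain $G_n$-projective. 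Now $\Jac^{H_m}_{Q}\omega_{n,m}$ carries a Kudla-type filtration --- the mirror of Proposition \ref{P:KudlaFiltration}, with the roles of $G_n$ and $H_m$ interchanged --- whose graded pieces are parabolically induced from data of the form $\mu\otimes\omega^\natural_i\otimes\omega_{n-i,m-j}$ with explicit character twists. Applying $\Ext^1_{G_n}(-,\eta)_{sm}$ to each piece, using the K\"unneth formula (Theorem \ref{T:Kunneth}), the projectivity of $\omega^\natural_i$ as a $G_i$-module, Lemma \ref{L:ExtGradedPiecesRankFil} and Corollary \ref{C:KudlaIndCompatible}, and --- where a lower Weil representation $\omega_{n-i,m-j}$ with $n-i>m-j$ appears --- Theorem \ref{T:Main-2-complement} (or a descending induction on $n$), one finds for a suitable choice of $Q$ that the resulting $H_m$-Levi module does not contain the nonzero module $\Jac^{H_m}_{\overline{Q}}(S)$. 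Since $\Jac^{H_m}_{\overline{Q}}$ is exact, this forces $S\notin s.s.\bigl(\Ext^1_{G_n}(\omega_{n,m},\eta)_{sm}\bigr)$, and the lemma follows. (Equivalently, this computation identifies the cokernel of the connecting map $\delta\colon S\to A$ above and shows $\delta\neq0$.)

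The real work, and the main obstacle, is in the last computation rather than in the formalism. The subtle feature is that $S=\mathbbm{1}_{m-n}\times|{\det}_n|^{k-\frac{m}{2}}$ and $\pi(n,m;k)$ have the same cuspidal support, so a crude count of composition factors in a Jacquet module will not distinguish them; one must choose $Q$ --- or work along a full flag of $H_m$ and keep track of the individual characters of the torus --- carefully enough that $\Jac^{H_m}_{\overline{Q}}(S)$ is visibly absent from the answer while remaining nonzero. One must also pin down the exact twisting characters in the interchanged Kudla filtration and verify that, under the hypotheses $1+m-n\leq k\leq n-1$ and $m\leq 2n-2$, the graded pieces surviving $\Ext^1_{G_n}(-,\eta)_{sm}$ assemble into precisely $\pi(n,m;k)$ and nothing more. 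A possibly cleaner reformulation of the same point is to show directly that the extension $0\to R_{k+1}\to R_k\to\tau_k\to 0$ of $G_n\times H_m$-modules does not split after applying $\Hom_{G_n}(-,\eta)_{sm}$ --- equivalently $\delta\neq0$ --- which again is detected after passing to a Jacquet module of $H_m$ along which $A$ becomes multiplicity free, where the obstruction appears as the ``logarithmic'' resonance of the rank-$k$ stratum at the eigenvalue $|{\det}|^{k}$.
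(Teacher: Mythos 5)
Your reduction is sound as far as it goes: the identification of the long exact sequence $0\to A\to M\xrightarrow{\mathscr{I}_k} S\to A\to \Ext^1_{G_n}(\omega_{n,m},\eta)_{sm}\to 0$, the Euler--Poincar\'e bookkeeping $[M]=[S]+[\Ext^1]$, and the equivalence of ``$S$ occurs once in $M$'', ``$\mathscr{I}_k=0$'' and ``$\Ext^1\cong\pi(n,m;k)$'' are all correct, and your argument that every irreducible submodule of $M$ is isomorphic to $S$ is fine. But everything up to that point is only a restatement of what must be proved (the paper makes the same observation: socle irreducibility $\Leftrightarrow$ $\mathscr{I}_k=0$). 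The actual content of the lemma is the multiplicity-one statement, and your proposal does not prove it: the decisive step --- ``one finds for a suitable choice of $Q$ that the resulting $H_m$-Levi module does not contain $\Jac^{H_m}_{\overline{Q}}(S)$'' --- is not carried out, no parabolic $Q$ is exhibited, and you yourself note that $S$ and $\pi(n,m;k)$ have the same cuspidal support so that a composition-factor count in a Jacquet module cannot separate them without a genuinely finer device. The paper supplies exactly such a device: it embeds $\eta\hookrightarrow |\cdot|^{k+\frac{1-n-m}{2}}\times|\det_{n-1}|^{k-\frac{m-1}{2}}$, uses Corollary \ref{C:KudlaIndCompatible} to embed $M$ into $|\cdot|^{k+\frac{1-n-m}{2}}\times\Hom_{G_{n-1}}(\omega_{n-1,m-1},|\det_{n-1}|^{k-\frac{m-1}{2}})_{sm}$, and then counts the multiplicity of $S$ there via the refined functor $\Jac_x$ at the extremal exponent $x=k+\frac{1-n-m}{2}$, with an induction on $n$ that feeds Theorem \ref{T:Main-1} or \ref{T:Main-2} at rank $n-1$ into the computation. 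Your plan has no analogue of this extremal-exponent trick, and also no induction set up, even though the graded pieces of your ``mirror'' Kudla filtration will produce Weil representations $\omega_{n-i,m-j}$ in the range $n-i\leq m-j\leq 2(n-i)-2$, where you would need precisely the statement being proved at lower rank (Theorem \ref{T:Main-2-complement} only covers the $n-i>m-j$ pieces).

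A second, smaller soft spot: the asserted commutation $\Jac^{H_m}_{\overline{Q}}\bigl(\Ext^i_{G_n}(\omega_{n,m},\eta)_{sm}\bigr)\cong \Ext^i_{G_n}\bigl(\Jac^{H_m}_{Q}\omega_{n,m},\eta\bigr)_{sm}$ is not ``immediate''. The Jacquet functor is a coinvariants (colimit-type) functor and commutes readily with the big theta lift $(\omega\otimes\pi^\vee)_{G_n}$, but $\Hom_{G_n}(-,\eta)_{sm}$ is a limit-type functor, and for non-admissible inputs Jacquet modules do not automatically pass through Hom and smooth-vector functors; to justify your formula one needs an argument via finite length/admissibility of the outputs (or a derived duality with coinvariants), which should be spelled out. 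In short: the formal framework is plausible and the equivalences are right, but the lemma's essential multiplicity-one computation --- the part the paper handles by induction on $n$ together with the $\Jac_x$ analysis --- is missing from your proposal.
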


\vskip 5pt

\begin{proof}
We shall prove the lemma by induction on $n\in\Z_{>0}$. The basic case is that $n=m=2$, and $\eta=\mathbbm{1}_2$ is the trivial character of $G_2$. In this case, there is an embedding $\eta\hookrightarrow|\cdot|^{-\frac{1}{2}}\times|\cdot|^{\frac{1}{2}}$. Hence
\begin{align*}
 \Hom_{G_2}\left(\omega_{2,2},\eta\right)_{sm} \hookrightarrow &\Hom_{G_2}\left(\omega_{2,2},|\cdot|^{-\frac{1}{2}}\times|\cdot|^{\frac{1}{2}}\right)_{sm} 
 \simeq  |\cdot|^{-\frac{1}{2}}\times |\cdot|^{\frac{1}{2}}.
\end{align*}
Here in the last identity we have made use of Corollary \ref{C:KudlaIndCompatible} and Theorem \ref{T:Main-1}. Since $|\cdot|^{-\frac{1}{2}}\times |\cdot|^{\frac{1}{2}}$ is socle irreducible, we know that $\Hom_{G_2}\left(\omega_{2,2},\eta\right)_{sm}$ is also socle irreducible. 

\vskip 5pt

Suppose now the lemma holds for all $n'<n$, we prove it also holds for $n$. Let $\eta = |{\det}_n|^{k-\frac{m}{2}}$. Similar to the basic case, one can consider the embedding
\[
    \eta \hookrightarrow |\cdot|^{k+\frac{1-n-m}{2}} \times |{\det}_{n-1}|^{k-\frac{m-1}{2}}.
\]
Note that $k+\frac{1-n-m}{2} < \frac{1+m-n}{2}$. Hence we can apply Corollary \ref{C:KudlaIndCompatible}, it follows that
\[
    \Hom_{G_n}\left(\omega_{n,m},\eta\right)_{sm}\hookrightarrow |\cdot|^{k+\frac{1-n-m}{2}} \times \Hom_{G_{n-1}}\left(\omega_{n-1,m-1},|{\det}_{n-1}|^{k-\frac{m-1}{2}}\right)_{sm}.
\]
To show the representation $\Hom_{G_n}\left(\omega_{n,m},\eta\right)_{sm}$ is socle irreducible, it suffices to show that the socle of $\Hom_{G_n}\left(\omega_{n,m},\eta\right)_{sm}$, namely $\mathbbm{1}_{m-n}\times|{\det}_n|^{k-\frac{m}{2}}$, appears in
\[
   \Pi \coloneqq |\cdot|^{k+\frac{1-n-m}{2}} \times \Hom_{G_{n-1}}\left(\omega_{n-1,m-1},|{\det}_{n-1}|^{k-\frac{m-1}{2}}\right)_{sm}
\]
with multiplicity one. We shall do it by computing some Jacquet modules. Let $d\in\Z_{>0}$, and $Q_{1,d-1}$ 
be the standard parabolic subgroup of $H_d$ with Levi component $H_1\times H_{d-1}$. For a finite length representation $\pi$ of $H_d$ and a real number $x\in\R$, if the semi-simplified Jacquet module of $\pi$ along $Q_{1,d-1}$ takes the form
\[
    s.s.\Jac_{Q_{1,d-1}}\pi = \sum_\alpha \kappa_\alpha\boxtimes \sigma_\alpha
\]
for some characters $\kappa_\alpha$ of $H_1$ and irreducible representations $\sigma_\alpha$ of $H_{d-1}$, then we set 
\[
    \Jac_{x}\pi = \sum_{\left\{\alpha\,:\,\kappa_\alpha=|\cdot|^x\right\}} \sigma_\alpha,
\]
which is a finite length semi-simple representation of $H_{d-1}$. Note that $k+\frac{1-n-m}{2} < \frac{1-m+n}{2}$. Hence on the one hand by Proposition \ref{P:JacSeg} and the geometric lemma we have 
\[
     \Jac_{k+\frac{1-n-m}{2}}\left(\mathbbm{1}_{m-n}\times|{\det}_n|^{k-\frac{m}{2}}\right) = \mathbbm{1}_{m-n}\times|{\det}_{n-1}|^{k-\frac{m-1}{2}}
\]
is non-zero and irreducible. On the other hand, again by the geometric lemma, we have 
\begin{flushleft}
$\Jac_{k+\frac{1-n-m}{2}} \Pi = \Hom_{G_{n-1}}\left(\omega_{n-1,m-1},|{\det}_{n-1}|^{k-\frac{m-1}{2}}\right)_{sm} + $
\end{flushleft}
\begin{flushright}
    $|\cdot|^{k+\frac{1-n-m}{2}} \times \Jac_{k+\frac{1-n-m}{2}}\Hom_{G_{n-1}}\left(\omega_{n-1,m-1},|{\det}_{n-1}|^{k-\frac{m-1}{2}}\right)_{sm}.$
\end{flushright}
There are two cases depending on the positive integer $k$, and now we discuss case by case.

\vskip 5pt

$\bullet$ \textit{Case 1}: if $1+m-n\leq k\leq n-2$, then by the induction hypothesis the representation $\Hom_{G_{n-1}}\left(\omega_{n-1,m-1},|{\det}_{n-1}|^{k-\frac{m-1}{2}}\right)_{sm}$ is socle irreducible. Therefore by above discussions, Theorem \ref{T:Main-2} holds for $|{\det}_{n-1}|^{k-\frac{m-1}{2}}$, namely
\[
    \Hom_{G_{n-1}}\left(\omega_{n-1,m-1},|{\det}_{n-1}|^{k-\frac{m-1}{2}}\right)_{sm} \simeq |{\det}_{m-k-1}|^{\frac{k-n+1}{2}}\times|{\det}_k|^{\frac{n-m+k}{2}}.
\]
So once again by Proposition \ref{P:JacSeg} and the geometric lemma we have
\[
    \Jac_{k+\frac{1-n-m}{2}}\Hom_{G_{n-1}}\left(\omega_{n-1,m-1},|{\det}_{n-1}|^{k-\frac{m-1}{2}}\right)_{sm}
\]
\[
    =\left(\Jac_{k+\frac{1-n-m}{2}}|{\det}_{m-k-1}|^{\frac{k-n+1}{2}}\right)\times|{\det}_k|^{\frac{n-m+k}{2}} + |{\det}_{m-k-1}|^{\frac{k-n+1}{2}}\times \left(\Jac_{k+\frac{1-n-m}{2}}|{\det}_k|^{\frac{n-m+k}{2}}\right)
\]
vanishes. Combining these together we get
\[
    \Jac_{k+\frac{1-n-m}{2}} \Pi \simeq |{\det}_{m-k-1}|^{\frac{k-n+1}{2}}\times|{\det}_k|^{\frac{n-m+k}{2}}.
\]
According to Theorem \ref{T:IndSeg}(2), the degenerate principal series on the right hand side of above is of length two, with non-isomorphic sub and quotient. Hence $\mathbbm{1}_{m-n}\times|{\det}_n|^{k-\frac{m}{2}}$ can only appear in $\Pi$ once.

\vskip 5pt

$\bullet$ \textit{Case 2}: if $k=n-1$, then by Proposition \ref{P:GJLfunction} the L-function $L\left(s,|{\det}_{n-1}|^{k-\frac{m-1}{2}}\right)$ is holomorphic at $s=\frac{1+m-n}{2}$. So we are in a situation that Theorem \ref{T:Main-1} is applicable, we have 
\[
    \Hom_{G_{n-1}}\left(\omega_{n-1,m-1},|{\det}_{n-1}|^{k-\frac{m-1}{2}}\right)_{sm} \simeq \mathbbm{1}_{m-n}\times |{\det}_{n-1}|^{k-\frac{m-1}{2}}.
\]
Similar to the previous case, it follows that
\[
    \Jac_{k+\frac{1-n-m}{2}}\Hom_{G_{n-1}}\left(\omega_{n-1,m-1},|{\det}_{n-1}|^{k-\frac{m-1}{2}}\right)_{sm}
\]
\[
    =\left(\Jac_{k+\frac{1-n-m}{2}}\mathbbm{1}_{m-n}\right)\times|{\det}_{n-1}|^{k-\frac{m-1}{2}} + \mathbbm{1}_{m-n}\times \left(\Jac_{k+\frac{1-n-m}{2}}|{\det}_{n-1}|^{k-\frac{m-1}{2}}\right)
\]
vanishes. Combining these together we get
\[
    \Jac_{k+\frac{1-n-m}{2}} \Pi \simeq \mathbbm{1}_{m-n}\times |{\det}_{n-1}|^{k-\frac{m-1}{2}}.
\]
According to Theorem \ref{T:IndSeg}, the degenerate principal series on the right hand side of above is either irreducible, or of length two with non-isomorphic sub and quotient. Hence $\mathbbm{1}_{m-n}\times|{\det}_n|^{k-\frac{m}{2}}$ can only appear in $\Pi$ once.

\vskip 5pt

This shows that the lemma also holds for $n$, hence completes the proof.

\end{proof}

\vskip 5pt

As we have explicated, the vanishing of the map $\mathscr{I}_k$ is a direct consequence of this lemma, and Theorem \ref{T:Main-2} then follows from the vanishing of $\mathscr{I}_k$.

\vskip 5pt

\begin{Rmk}
In the special case $n=m=2$ and $k=1$, Xue showed this map $\mathscr{I}_1=0$ \cite[Prop. 2.9]{xue2023full} using a completely different way. His idea is to consider the Haar measure (twisted by determinant) $D$ on $G_2$, which gives rise to an invariant distribution on $\Omega_2$. Given this $D$, he constructed a particular extension $\widetilde{D}$ to $M_{2,2}$ by considering the constant term of the Laurent expansion of the zeta integral. This $\widetilde{D}$ is generalized semi-invariant but not invariant, because of the pole of the zeta integral. Then he showed that there is no invariant extension of $D$ using this particular extension $\widetilde{D}$: suppose such an invariant extension $\widetilde{D}'$ exists, then 
\[
    \widetilde{D}- \widetilde{D}'
\]
yields a generalized semi-invariant distribution supported on $M_{2,2}\backslash \Omega_2$; but on $M_{2,2}\backslash \Omega_2$ generalized semi-invariance implies invariance, so $\widetilde{D} = \widetilde{D}' + \left(\widetilde{D} - \widetilde{D}'\right)$ is also invariant, this leads to a contradiction. This implies that $D$ is not in the image of $\mathscr{I}_k$, so $\mathscr{I}_k=0$.

\vskip 5pt

It would be interesting if one can adapt Xue's argument to $2\leq n\leq m$, or to the Archimedean case.

\end{Rmk}


\vskip 10pt


\section{Proof of the main theorem III} 
\label{sec:proof_of_the_main_theorem_iii}

Throughout this section we assume that $n\leq m$. Our main purpose is to prove the main result III: Theorem \ref{T:Main-3}. By Theorem \ref{T:Main-2}, we only need to show that if $m\geq 2n-1$, then $\omega_{n,m}$ is projective. 
After showing this, we will give an upper bound of the \textit{projective dimension} of the $K$-fixed part $\omega_{n,m}^K$ of the Weil representation when $n\leq m\leq 2n-2$, where $K$ is an open compact subgroup of $H_m$.

\vskip 5pt

\subsection{Locally finiteness of the Weil representation} 
\label{sub:locally_finiteness_of_the_weil_representation}

We first show that, given an open compact subgroup $K$ of $H_m$, then the ``level $K$'' Weil representation, i.e. $\omega_{n,m}^K$, is a so called \textit{locally finite} representation of $G_n$, as defined in \cite[Def. 5.2]{MR4054816}. The tool that we shall use is the rank filtration (Lemma \ref{L:rank-filtration}). By the definition of locally finite representations, it suffices to show that (the ``level $K$'' part of) each graded piece of the rank filtration
\[
    \tau_i^K \simeq \left(\Ind_{P_i\times \overline{Q}_i}^{G_n\times H_m} \left(\omega^\natural_i\otimes\xi_i\right)\right)^K
\]
is actually a locally finite representation of $G_n$. 
Since here we are mainly concerned about the $G_n$-module structure, we can rewrite the right hand side of above as 
\begin{equation}\label{E:LocallyFiniteLevelK}
    \left(\Ind_{P_i\times \overline{Q}_i}^{G_n\times H_m} \left(\omega^\natural_i\otimes\xi_i\right)\right)^K \simeq \bigoplus_{x\,\in \,\overline{Q}_i\backslash H_m /K} \left(\Ind_{P_i}^{G_n}\left(\omega^\natural_i\otimes\xi_i\right)\right)^{\overline{Q}_i\,\cap\, x\cdot K\cdot x^{-1}}.
\end{equation}
Here the isomorphism is given by sending a function
\[
    f\in \left(\Ind_{P_i\times \overline{Q}_i}^{G_n\times H_m} \left(\omega^\natural_i\otimes\xi_i\right)\right)^K \simeq \left(\Ind_{\overline{Q}_i}^{H_m}\left(\Ind_{P_i}^{G_n}\left(\omega^\natural_i\otimes\xi_i\right)\right)\right)^K
\]
to its values at a set of representatives of the double coset $\overline{Q}_i\backslash H_m /K$. Note that the double coset $\overline{Q}_i\backslash H_m /K$ is finite. Therefore we only need to show that each summand appearing on the right hand side of (\ref{E:LocallyFiniteLevelK}) is locally finite. Let $K_x$ be an open compact subgroup of $H_i$, such that $K_x\subset \overline{Q}_i\cap x\cdot K\cdot x^{-1}$. Then
\[
    \left(\Ind_{P_i}^{G_n}\left(\omega^\natural_i\otimes\xi_i\right)\right)^{\overline{Q}_i\,\cap\, x\cdot K\cdot x^{-1}} \subset \Ind_{P_i}^{G_n}\left(\left(\omega^\natural_i\right)^{K_x}\otimes\xi_i\right).
\]
Easy to see that $\left(\omega^\natural_i\right)^{K_x}\otimes\xi_i$ is a locally finite representation of the Levi subgroup $G_i\times G_{n-i}$ of $P_i$. Since the parabolic induction functor preserves locally finiteness, we know that the representation $\Ind_{P_i}^{G_n}\left(\left(\omega^\natural_i\right)^{K_x}\otimes\xi_i\right)$ is locally finite. Therefore by the Noetherian property of the category $\calM\left(G\right)$, its subrepresentation $\left(\Ind_{P_i}^{G_n}\left(\omega^\natural_i\otimes\xi_i\right)\right)^{\overline{Q}_i\,\cap\, x\cdot K\cdot x^{-1}}$ is also locally finite.

\vskip 5pt

This completes the proof of the locally finiteness of $\omega_{n,m}^K$.

\vskip 5pt


\subsection{Zelevinsky's classification and Ext-vanishing} 
\label{sub:zelevinsky_s_classification_and_ext_vanishing}

Secondly, we shall prove that the ``level $K$'' part of the Weil representation $\omega_{n,m}^K$ is projective as a representation of $G_n$ when $m\geq 2n-1$. Since we already know that $\omega_{n,m}^K$ is locally finite, by the result of Chan--Savin \cite[Thm. A.1]{MR3910471}, it suffices to show that 
\begin{equation}\label{E:ExtVanishingLevelK}
    \Ext_{G_n}^*\left(\omega_{n,m}^K,\pi\right) = 0
\end{equation}
for all irreducible smooth representation $\pi$ of $G_n$ and all $* >0$. By the proof of \cite[Lem. 5.14]{MR3753906}, one knows that
\[
    \Ext_{G_n}^*\left(\omega_{n,m}^K,\pi\right) \simeq \Ext_{G_n}^*\left(\omega_{n,m},\pi\right)^K.
\]
Therefore the desired Ext-vanishing result (\ref{E:ExtVanishingLevelK}) follows from the following lemma.

\begin{Prop}\label{P:AlmostStableRangeExtVanishing}
Suppose that $m\geq 2n-1$. Then for any irreducible representation $\pi$ of $G_n$ and any $*>0$, we have
\[
    \Ext_{G_n}^*\left(\omega_{n,m},\pi\right)_{sm} = 0.
\]  
\end{Prop}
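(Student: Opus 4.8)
The plan is to prove the Ext-vanishing by induction on $n$; combined with the local finiteness established above and \cite[Thm. A.1]{MR3910471}, it gives Theorem \ref{T:Main-3}. Throughout one may replace $\pi$ by $\pi^\vee$ at will, by Corollary \ref{C:ExtvsMVW}. If $L(s,\pi)$ or $L(s,\pi^\vee)$ is holomorphic at $s=\frac{1+m-n}{2}$, then Proposition \ref{P:Holo-L-vanish-Ext} already yields the vanishing --- note it does not use $m\geq 2n-1$ at all; in particular the case $n=1$ is settled by Remark \ref{RMK:holomorphicity-L-n=1}, recovering \cite[Lem. 6.1]{MR3753906}. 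Hence the real task is the case in which both $L(s,\pi)$ and $L(s,\pi^\vee)$ have a pole at $s=\frac{1+m-n}{2}$; by Proposition \ref{P:GJLfunction} this forces $n\geq 2$ and forces the Langlands data of $\pi$ to contain at least one ``high'' segment $[x,\frac{1+m-n}{2}]_{\mathbbm{1}_1}$ and at least one ``low'' segment $[-\frac{1+m-n}{2},y]_{\mathbbm{1}_1}$.

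When $\pi$ is supercuspidal (so $L(s,\pi)=L(s,\pi^\vee)=1$ and $n\geq2$), one runs through the rank filtration $0=R_{n+1}\subset R_n\subset\cdots\subset R_0=\omega_{n,m}$. For $0<i<n$, the second adjointness (Lemma \ref{L:IndJacAdj}) and $\Jac_{\overline{P}_i}\pi=0$ give $\Ext^*_{G_n}(\tau_i,\pi)_{sm}=0$; for $i=n$, $\tau_n$ is $G_n$-projective --- it is built from the projective module $\omega^\natural_n=\calS(G_n)$ by a character twist and parabolic induction in the $H_m$-variable --- so $\Ext^{>0}_{G_n}(\tau_n,\pi)_{sm}=0$; and $\tau_0$ is the character $|{\det}_n|^{-m/2}$, which for $n\geq2$ lies in a Bernstein block distinct from that of $\pi$, so $\Ext^*_{G_n}(\tau_0,\pi)_{sm}=0$. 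D\'evissage through the filtration gives the claim.

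When $\pi$ is not supercuspidal, the plan is to peel off segments one at a time. Using the Zelevinsky/Langlands classification, realize $\pi$ as the unique irreducible submodule, resp.\ quotient, of some $\St(\Delta)\times\pi_0$ with $\pi_0$ irreducible of $G_{n-k}$, $0<k<n$. Whenever the segment $\Delta=[x,y]_\rho$ can be chosen with $\rho\not\simeq\mathbbm{1}_1$ or $y\neq\frac{1+m-n}{2}$, Corollary \ref{C:KudlaIndCompatible} identifies $\Ext^*_{G_n}(\omega_{n,m},\St(\Delta)\times\pi_0)_{sm}$ with $\St(\Delta)\times\Ext^*_{G_{n-k}}(\omega_{n-k,m-k},\pi_0)_{sm}$; since $m-k\geq2(n-k)-1$, the inductive hypothesis kills its positive-degree part, and one transfers the conclusion from $\St(\Delta)\times\pi_0$ to $\pi$ through the associated long exact sequence, running a secondary induction on the complexity of the multisegment so that the error terms (whose constituents have strictly smaller data) are handled by the same hypothesis. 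Iterating, one reduces to $\Ext^{>0}_{G_{n'}}(\omega_{n',m'},\Pi'')_{sm}=0$ for $\Pi''=\St([x_1,c]_{\mathbbm{1}_1})\times\cdots\times\St([x_t,c]_{\mathbbm{1}_1})$ with common right endpoint $c=\frac{1+m'-n'}{2}$ (irreducible, since its segments are nested, hence pairwise unlinked) and $m'\geq2n'-1$. This last case is handled by the rank filtration of $\omega_{n',m'}$: the hypothesis $m'\geq2n'-1$ forces $c\geq n'/2>0$, so every character in the cuspidal support of $\Pi''$ is $|\cdot|^e$ with $e\geq c>0$; on the other hand the twist $\xi_i$ attached to $\tau_i$ restricts to the character $|{\det}_{n'-i}|^{(i-m')/2}$ on the complementary factor $G_{n'-i}$, whose exponent $(i-m')/2\leq -n'/2$ is negative (and $\tau_0$ is the negative-exponent character $|{\det}_{n'}|^{-m'/2}$); in every case this produces a Bernstein-block mismatch against all constituents of the relevant Jacquet modules of $\Pi''$, so the K\"unneth formula (Theorem \ref{T:Kunneth}) and the projectivity of $\omega^\natural_i$ give $\Ext^*_{G_{n'}}(\tau_i,\Pi'')_{sm}=0$ for $i<n'$ and $\Ext^{>0}_{G_{n'}}(\tau_{n'},\Pi'')_{sm}=0$; d\'evissage concludes.

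The main difficulty lies in the bookkeeping of the non-supercuspidal step: choosing the order of peeling so that Corollary \ref{C:KudlaIndCompatible} applies, and handling the situation in which a ``high'' segment $[x,\frac{1+m-n}{2}]_{\mathbbm{1}_1}$ must be peeled before one reaches a configuration of type $\Pi''$. In that situation Corollary \ref{C:KudlaIndCompatible} fails and one must return to the full Kudla filtration (Proposition \ref{P:KudlaFiltration}); the extra graded pieces contribute Ext terms of $\Speh$-$\St$ type on a factor $G_{k-i}$, tensored with Ext terms against $\omega_{n-k,m-i}$, and one must show by a Jacquet-module computation in the spirit of Section \ref{sub:gluing_graded_pieces} that the connecting maps in the long exact sequence force the positive-degree contributions of these boundary pieces to cancel --- and it is precisely here that $m\geq2n-1$, which places $(n-k,m-i)$ in the stable range for every $0\leq i<k$, is indispensable.
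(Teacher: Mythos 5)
Your scaffolding (induction on $n$, the MVW reduction via Corollary \ref{C:ExtvsMVW}, the rank filtration for supercuspidal $\pi$, and Corollary \ref{C:KudlaIndCompatible} for the rest) is the same toolkit the paper uses, and your supercuspidal case and your direct treatment of the all-high product $\Pi''$ are fine. But the argument is not complete: in the non-supercuspidal step you yourself isolate the configuration in which the segment to be peeled is of the form $\left[x,\frac{1+m-n}{2}\right]_{\mathbbm{1}_1}$, so that Corollary \ref{C:KudlaIndCompatible} is unavailable, and there you only assert that, returning to the full Kudla filtration of Proposition \ref{P:KudlaFiltration}, ``the connecting maps force the positive-degree contributions of these boundary pieces to cancel.'' That assertion is precisely the hard content of the proposition and is nowhere proved: nothing in your sketch explains why, when $m\geq 2n-1$, the lower graded pieces $\lambda_i$, $i<k$, contribute nothing in positive degrees or why their contributions are killed by connecting maps --- in general such boundary pieces do contribute (this is exactly why the hypothesis of Corollary \ref{C:KudlaIndCompatible} excludes this case, and why nonvanishing $\Ext^1$ appears in Theorem \ref{T:Main-2} for $m\leq 2n-2$), and establishing a cancellation of this kind is typically as delicate as the socle-irreducibility analysis of Section \ref{sub:gluing_graded_pieces}. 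The subsidiary bookkeeping (that a peelable Steinberg can always be commuted to the leftmost position while keeping $\pi$ as the unique irreducible quotient, and the well-foundedness of your ``secondary induction on complexity'') is also left unchecked, but the unproved cancellation is the essential gap.

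The paper avoids this bad configuration altogether by peeling a single supercuspidal line rather than a whole Steinberg: after normalizing the central exponent of $\pi$ to be non-positive via Corollary \ref{C:ExtvsMVW}, it takes the Zelevinsky presentation $\Speh\left(\Delta_1\right)\times\cdots\times\Speh\left(\Delta_r\right)\twoheadrightarrow\pi$ ordered by central exponents and strips $\rho_1|\det|^{x_1}$ off the first Speh factor; the two inequalities $x_1+y_1\leq 0$ and $k_1\left(x_1-y_1+1\right)\leq n$ give $x_1\leq\frac{n-1}{2}$, which is strictly less than $\frac{1+m-n}{2}$ precisely because $m\geq 2n-1$. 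Hence Corollary \ref{C:KudlaIndCompatible} always applies, the induction on $n$ kills $\Ext^{>0}_{G_{n-k_1}}\left(\omega_{n-k_1,m-k_1},\Pi\right)_{sm}$, and the kernel of $\rho_1|\det|^{x_1}\times\Pi\twoheadrightarrow\pi$ is disposed of by pure degree shifting until the degree exceeds the split rank of $G_n$, with no induction on multisegments and no leftover ``all-high'' case. If you want to complete your write-up, the cleanest fix is to replace the Steinberg-peeling by this supercuspidal-peeling; otherwise you must actually prove the boundary-cancellation claim you left open.
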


\vskip 5pt

\begin{proof}
We shall prove this proposition by induction on $n$. The desired conclusion for $n=1$ is already shown in \cite[Lem. 6.1]{MR3753906}, so we assume that $n\geq 2$, and the desired conclusion holds for all $n'<n$. Given an irreducible representation $\pi$ of $G_n$. If $\pi$ is supercuspidal, consider the rank filtration of the Weil representation (see Lemma \ref{L:rank-filtration}). For any $0\leq i <n$ and any $*\geq 0$, by the supercuspidality of $\pi$ we have
\[
    \Ext_{G_n}^*\left(\tau_i, \pi\right)_{sm} \simeq \Ind_{\overline{Q}_i}^{H_m} \Ext_{G_i\times G_{n-i}}^*\left(\omega_{i}^\natural\otimes\xi_i, \Jac_{\overline{P}_i}\pi\right)_{sm} = 0.
\]
Therefore it follows that
\[
    \Ext_{G_n}^*\left(\tau_i, \pi\right)_{sm} \simeq \Ind_{\overline{Q}_n}^{H_m}\left(\Ext_{G_n}^*\left(\omega_n^\natural, \pi\right)_{sm}\otimes\xi_n\right)
\]
for all $*>0$. If $\pi$ is not supercuspidal, then by the \textit{Zelevinsky's classification} \cite[Thm. 6.1]{ZeleII}, we can write $\pi$ as a quotient
\[
    \Speh\left(\Delta_1\right) \times \Speh\left(\Delta_2\right) \times \cdots \times \Speh\left(\Delta_r\right) \twoheadrightarrow \pi,  
\]
where $\Delta_i = \left[x_i,y_i\right]_{\rho_i}$, and $x_1+y_1 \leq x_2+y_2 \leq \cdots \leq x_r+y_r$. Note that by Corollary \ref{C:ExtvsMVW}, without loss of generality we can assume that the central exponent of $\pi$ is non-positive. Under this assumption we have
\[
    x_1+ y_1\leq 0.
\]
On the other hand, suppose that $\rho_i$ is an irreducible supercuspidal representation of $G_{k_i}$, then $G_{k_1(x_1-y_1+1)}\times G_{k_2(x_2-y_2+1)} \times \cdots\times G_{k_r(x_r-y_r+1)}$ is a Levi subgroup of $G_n$, in particular
\[
    k_1\left(x_1-y_1+1\right) \leq n.
\]
These two inequalities, together with the assumption $m\geq 2n-1$, imply that
\[
    x_1 \leq \frac{n-1}{2} < \frac{1+m-n}{2}.
\]
Now let $\Pi = \Speh\left(\left[x_1-1,y_1\right]_{\rho_1}\right) \times \Speh\left(\Delta_2\right) \times \cdots \times \Speh\left(\Delta_r\right)$, so $\pi$ is a quotient of $\rho_1|\det|^{x_1}\times\Pi$, and we complete this quotient map to a short exact sequence  
\[
    0 \lra \varkappa \lra \rho_1|\det|^{x_1}\times\Pi \lra \pi \lra 0.
\]
Here $\varkappa$ is a finite length representation of $G_n$. Applying the functor $\Hom_{G_n}\left(\omega_{n,m},-\right)$ to this short exact sequence, we get 
\[
    \cdots\ra\Ext_{G_n}^*\left(\omega_{n,m},\rho_1|\det|^{x_1}\times\Pi\right)_{sm} \lra \Ext_{G_n}^*\left(\omega_{n,m},\pi\right)_{sm} \lra \Ext_{G_n}^{*+1}\left(\omega_{n,m},\varkappa\right)_{sm}\ra\cdots
\]
Since $x_1 < \frac{1+m-n}{2}$, we may invoke Corollary \ref{C:KudlaIndCompatible}, which gives
\[
    \Ext_{G_n}^*\left(\omega_{n,m},\rho_1|\det|^{x_1}\times\Pi\right)_{sm} \simeq \rho_1|\det|^{x_1}\times \Ext_{G_n}^*\left(\omega_{n-k_1,m-k_1},\Pi\right)_{sm} = 0.
\]
Here in the second equality, we have made use of our induction hypothesis. Therefore in order to prove the proposition it only remains to show that for any irreducible representation $\pi$, we have 
\begin{equation*}
    \Ext_{G_n}^{*+1}\left(\omega_{n,m},\pi\right)_{sm} = 0.
\end{equation*}
Repeating this argument, eventually we can reduce the desired conclusion to showing that
\begin{equation*}
    \Ext_{G_n}^{*+n}\left(\omega_{n,m},\pi\right)_{sm} = 0
\end{equation*}
for all irreducible representation $\pi$, which simply follows from \cite[Pg. 98, Sect. 4.2]{BNote}. Hence the proposition also holds for $n$.

\end{proof}

\vskip 5pt

Thirdly, we pass from the ``level $K$'' part $\omega_{n,m}^K$ to the whole $\omega_{n,m}$. Fix an open compact subgroup $K$ of $H_m$. Then we can decompose $\omega_{n,m}$ into a direct sum
\[
    \omega_{n,m} = \bigoplus_{\sigma\in\Irr\left(K\right)} \omega_\sigma,
\]
where each $\omega_\sigma \simeq \Hom_{K}\left(\sigma,\omega_{n,m}\right)\boxtimes\sigma$ is the $\sigma$-isotypic component of $\omega_{n,m}$; it is a $G_n\times K$-submodule of $\omega_{n,m}$. For each $\sigma\in\Irr\left(K\right)$, since $\sigma$ is finite dimensional, there exists an open compact subgroup $K_0$ of $K$, such that all vectors in $\omega_\sigma$ is fixed by $K_0$. It follows that $\omega_\sigma$ is a direct summand of
\[
    \omega_{n,m}^{K_0} = \bigoplus_{\sigma'\in\Irr\left(K\right)} \omega_{\sigma'}^{K_0}.
\]
Since we have proved that $\omega_{n,m}^{K_0}$ is a projective $G_n$-module, its direct summand $\omega_\sigma$ is also a projective $G_n$-module. Therefore, being a direct sum of projective modules $\omega_\sigma$, $\omega_{n,m}$ is itself projective.

\vskip 5pt


\subsection{Projective dimension of the Weil representation} 
\label{sub:projective_dimension_of_the_weil_representation}

In this subsection, we assume that $m\leq 2n-2$; so by Theorem \ref{T:Main-2}, the Weil representation $\omega_{n,m}$ is \textit{not} projective as a $G_n$-module. As usual, given an open compact subgroup $K$ of $H_m$, one can consider the projective dimension of $\omega_{n,m}^K$: if 
\[
    0 \lra \calP_d \lra \calP_{d-1} \lra \cdots \lra \calP_1 \lra\calP_0 \lra 0
\]
is a projective resolution of $\omega_{n,m}^K$ in the category $\calM\left(G_n\right)$ of minimal length, then one sets
\[
    {\rm proj.}\dim \omega_{n,m}^K = d.
\]
Likewise, one can also consider projective dimension of $\omega_{n,m}$ itself, which is defined in the same manner. The advantage of working with $\omega_{n,m}^K$ instead of $\omega_{n,m}$ is that $\omega_{n,m}^K$, as we have already seen in Section \ref{sub:locally_finiteness_of_the_weil_representation}, is a locally finite representation of $G_n$. The main purpose of this subsection is to prove the following result.

\begin{Prop}\label{P:BoundExtVanishingDegree}
Suppose that $n\geq 2$, and $n\leq m\leq 2n-2$. Then for any open compact subgroup $K$ of $H_m$, we have
\[
    {\rm proj.}\dim \omega_{n,m}^K \leq \left\lceil n - \frac{m+1}{2}\right\rceil.
\]  
Here for $x\in\R$, we denote by $\left\lceil x \right\rceil$ the smallest integer greater than or equal to $x$. 
\end{Prop}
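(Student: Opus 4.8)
The plan is to bound the projective dimension of $\omega_{n,m}^K$ by combining the rank filtration with the Ext-vanishing results already proved, and then to invoke a standard homological criterion: for a locally finite module, the projective dimension equals the largest $i$ for which $\Ext^i_{G_n}(\omega_{n,m}^K,\pi)$ is nonzero for some irreducible $\pi$. Since $\Ext^i_{G_n}(\omega_{n,m}^K,\pi)\simeq\Ext^i_{G_n}(\omega_{n,m},\pi)^K$ (by the proof of \cite[Lem.~5.14]{MR3753906}), it suffices to show that $\Ext^i_{G_n}(\omega_{n,m},\pi)_{sm}=0$ for all irreducible $\pi$ once $i>\lceil n-\frac{m+1}{2}\rceil$.

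First I would set up the induction on $n$, exactly paralleling the proof of Proposition \ref{P:AlmostStableRangeExtVanishing}, but keeping track of the degree. Using the MVW-involution (Corollary \ref{C:ExtvsMVW}) we may assume $\pi$ has non-positive central exponent. If $\pi$ is supercuspidal, the rank filtration (Lemma \ref{L:rank-filtration}) together with supercuspidality kills all graded pieces $\tau_i$ with $i<n$, so $\Ext^*_{G_n}(\omega_{n,m},\pi)_{sm}\simeq\Ext^*_{G_n}(\tau_n,\pi)_{sm}$, and since $\omega_n^\natural$ is projective as a $G_n$-module this already vanishes in all positive degrees — so supercuspidal $\pi$ contribute nothing beyond degree $0$. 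If $\pi$ is not supercuspidal, write it via Zelevinsky's classification as a quotient of $\Speh(\Delta_1)\times\cdots\times\Speh(\Delta_r)$ with $x_1+y_1\le 0$ and $k_1(x_1-y_1+1)\le n$; the key numerical point is that these constraints force $x_1\le\frac{n-1}{2}$, and under our hypothesis $m\le 2n-2$ one checks $x_1<\frac{1+m-n}{2}$ fails to hold in general — instead one gets a \emph{controlled gap}. Precisely, the quantity $\frac{1+m-n}{2}-x_1$ is bounded below, and each application of the short exact sequence $0\to\varkappa\to\rho_1|\det|^{x_1}\times\Pi\to\pi\to0$ together with Corollary \ref{C:KudlaIndCompatible} (valid since $x_1<\frac{1+m-n}{2}$ whenever $n-\frac{m+1}{2}>0$) reduces the computation of $\Ext^*_{G_n}(\omega_{n,m},\pi)$ to $\Ext^{*}_{G_{n-k_1}}(\omega_{n-k_1,m-k_1},\Pi)$ plus a degree-shift term $\Ext^{*+1}_{G_n}(\omega_{n,m},\varkappa)$.

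The heart of the argument is to iterate this reduction and count: each reduction step either drops $n$ by $k_1\ge1$ (passing to $\omega_{n-k_1,m-k_1}$, for which $2(n-k_1)-2-(m-k_1)=2n-m-2-k_1$ decreases, so the pair moves toward the stable range) or shifts the Ext-degree up by one while keeping $n$ fixed. The pair $(n',m')=(n-j,m-j)$ reaches the almost-stable range $m'\ge 2n'-1$ — where Proposition \ref{P:AlmostStableRangeExtVanishing} forces all positive-degree Ext to vanish — precisely when $j\ge 2n-m-1$, i.e. after the ``defect'' $2n-m-1$ has been consumed. Tracking how many degree-shifts can accumulate before this happens yields the bound $\lceil n-\frac{m+1}{2}\rceil = \lceil\frac{2n-m-1}{2}\rceil$. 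I would organize this as: show $\Ext^i_{G_n}(\omega_{n,m},\pi)_{sm}=0$ for all irreducible $\pi$ and all $i>\lceil\frac{2n-m-1}{2}\rceil$ by induction on $2n-m$, with the base case $2n-m\in\{1,2\}$ being Proposition \ref{P:AlmostStableRangeExtVanishing} (giving bound $0$ or $1$), and the inductive step handling supercuspidal $\pi$ directly and non-supercuspidal $\pi$ via the Zelevinsky reduction above, each non-supercuspidal reduction costing at most one degree but also lowering $2n-m$ by $k_1\geq 1$.

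The main obstacle I anticipate is making the bookkeeping of the degree-shifts airtight: when $\pi$ is non-supercuspidal, the reduction to $\Pi=\Speh([x_1-1,y_1]_{\rho_1})\times\Speh(\Delta_2)\times\cdots$ does not immediately lower $n$; it only lowers the length of the first segment, and one must re-examine whether the new leading exponent still satisfies $x<\frac{1+m-n}{2}$. One has to argue that after finitely many such ``segment-shrinking'' steps (whose number is controlled by $n$, hence contributes to the ceiling, not unboundedly) one genuinely peels off a $\rho_1|\det|^{x_1}$ factor and drops to $(n-k_1,m-k_1)$. Care is also needed to ensure Corollary \ref{C:KudlaIndCompatible} applies at every stage — i.e. that the relevant segment never has the forbidden form $\rho\simeq\mathbbm{1}_1$ with $y=\frac{1+m-n}{2}$, which under $x_1+y_1\le0$ and $m\le 2n-2$ one should be able to rule out, possibly after a further MVW-twist. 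Once these points are settled, the homological criterion for locally finite modules (the same input used via Chan--Savin in Section \ref{sub:zelevinsky_s_classification_and_ext_vanishing}) converts the Ext-vanishing into the stated bound on $\mathrm{proj.}\dim\,\omega_{n,m}^K$.
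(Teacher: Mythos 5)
Your overall frame (reduce via local finiteness to the vanishing of $\Ext^i_{G_n}\left(\omega_{n,m},\pi\right)_{sm}$ for $i>\left\lceil n-\frac{m+1}{2}\right\rceil$, handle supercuspidal $\pi$ by the rank filtration, and peel off a supercuspidal factor otherwise) matches the paper, but the heart of your argument contains a genuine gap. You assert that Corollary \ref{C:KudlaIndCompatible} is ``valid since $x_1<\frac{1+m-n}{2}$ whenever $n-\frac{m+1}{2}>0$'', and later that the forbidden configuration ($\rho\simeq\mathbbm{1}_1$ with critical exponent $\frac{1+m-n}{2}$) ``should be able to be ruled out, possibly after a further MVW-twist''. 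Both claims are false in the range $m\leq 2n-2$: for example, with $n=m=2$ and $\pi=\mathbbm{1}_2$ (self-dual, so the MVW-twist does not help), the leading exponent is $x_1=\frac{1}{2}=\frac{1+m-n}{2}$. More structurally, the characters of Theorem \ref{T:Main-2} have $\Ext^1_{G_n}\left(\omega_{n,m},\eta\right)_{sm}\neq 0$; if your peeling step always applied, the same induction as in Proposition \ref{P:AlmostStableRangeExtVanishing} would give vanishing of \emph{all} positive-degree Ext, contradicting that theorem. So the exceptional case cannot be avoided, and your ``controlled gap'' bookkeeping never identifies the actual mechanism by which the allowance $\left\lceil n-\frac{m+1}{2}\right\rceil$ is consumed.

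The paper's proof confronts this case head on: when the peeled factor is $k=1$, $\rho\simeq|\cdot|^{\frac{1+m-n}{2}}$, it does not use Corollary \ref{C:KudlaIndCompatible} but applies the Kudla filtration (Proposition \ref{P:KudlaFiltration}) directly to $\Jac_{P_1}\omega_{n,m}$, a two-step filtration with graded pieces $\lambda_0\simeq|\cdot|^{\frac{1+m-n}{2}}\boxtimes\omega_{n-1,m}$ and $\lambda_1\simeq\Ind_{Q_1}^{H_m}\left(\omega_1^\natural\boxtimes\omega_{n-1,m-1}\right)$. The K\"unneth formula (Theorem \ref{T:Kunneth}) turns the $\lambda_0$-contribution into $\Ext^{*-1}_{G_{n-1}}\left(\omega_{n-1,m},\pi_0\right)_{sm}\oplus\Ext^{*}_{G_{n-1}}\left(\omega_{n-1,m},\pi_0\right)_{sm}$, and the one-degree loss coming from $\Ext^1_{G_1}$ is exactly absorbed because the inductive bound for the pair $\left(n-1,m\right)$ is one smaller than for $\left(n,m\right)$; this is where the ceiling in the statement really comes from. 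Your induction scheme also has a base-case defect: you propose $2n-m\in\left\{1,2\right\}$ as the base, citing Proposition \ref{P:AlmostStableRangeExtVanishing}, but that proposition only covers $m\geq 2n-1$, i.e.\ $2n-m\leq 1$; the stratum $2n-m=2$ consists of all pairs $\left(n,2n-2\right)$ and is precisely part of what must be proved (the paper instead inducts on $n$ with base case $n=m=2$, settled by Theorems \ref{T:Main-1} and \ref{T:Main-2}). Without the exceptional-case computation and a correct base case, the proposed proof does not go through.
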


\vskip 5pt

\begin{proof}
Since $\omega_{n,m}^K$ is locally finite, by some elementary homological algebra argument, we know that ${\rm proj.}\dim \omega_{n,m}^K \leq \left\lceil n - \frac{m+1}{2}\right\rceil$ if and only if 
\[
    \Ext_{G_n}^*\left(\omega_{n,m}^K,\pi\right) = 0
\]
holds for all irreducible representation $\pi$ of $G_n$ and all $*>\left\lceil n - \frac{m+1}{2}\right\rceil$. Let $K$ varies. Then these Ext spaces vanish for all $K$ if and only if
\begin{equation}\label{E:BoundingProjDim}
    \Ext_{G_n}^*\left(\omega_{n,m},\pi\right)_{sm} = 0.
\end{equation}
So next we shall prove (\ref{E:BoundingProjDim}) by induction on $n$.

\vskip 5pt

The basic case is when $n=m=2$. In this case, one can check that there is only one representation $\pi$ of $G_2$ such that both $L\left(s,\pi\right)$ and $L\left(s,\pi^\vee\right)$ are not holomorphic at $s=1/2$, namely the trivial representation $\mathbbm{1}_2$. Hence for non-trivial irreducible representation $\pi$ of $G_2$, by Theorem \ref{T:Main-1} all these Ext spaces vanish when $*>0$. Moreover, all Ext spaces associated to $\omega_{2,2}$ and $\mathbbm{1}_2$ have been computed in Theorem \ref{T:Main-2}. Hence the desired conclusion can be checked by hand.

\vskip 5pt

Suppose now the desired conclusion (\ref{E:BoundingProjDim}) has been proved for all $n'<n$. Let $\pi$ be an irreducible representation of $G_n$. If $\pi$ is supercuspidal, then by the same argument as in the proof of Proposition \ref{P:AlmostStableRangeExtVanishing}, we know that (\ref{E:BoundingProjDim}) holds for $\pi$. If $\pi$ is not supercuspidal, then there exists a positive integer $0<k<n$, an irreducible supercuspidal representation $\rho$ of $G_k$ and an irreducible representation $\pi_0$ of $G_{n-k}$, such that there is a surjection
\[
    \rho\times\pi_0 \twoheadrightarrow \pi.
\]
By a standard degree shifting argument, to show that (\ref{E:BoundingProjDim}) holds for $\pi$, it suffices to show that
\begin{equation}\label{E:BoundingExtDegIndRep}
    \Ext_{G_n}^*\left(\omega_{n,m}, \rho\times\pi_0\right)_{sm} = 0
\end{equation}
for all $*>\left\lceil n - \frac{m+1}{2}\right\rceil$. If $k\neq 1$, or $\rho\not\simeq|\cdot|^{\frac{1+m-n}{2}}$, then the desired conclusion follows from the combination of Corollary \ref{C:KudlaIndCompatible} and the induction hypothesis. On the other hand, if $k=1$ and $\rho\simeq|\cdot|^{\frac{1+m-n}{2}}$, we shall appeal to the Kudla's filtration directly. Let $P_1$ be the standard parabolic subgroup of $G_n$ with Levi component $G_1\times G_{n-1}$. Then by the Frobenius reciprocity we have
\[
    \Ext_{G_n}^*\left(\omega_{n,m}, \rho\times\pi_0\right)_{sm} \simeq \Ext_{G_1\times G_{n-1}}^*\left(\Jac_{P_1}\omega_{n,m}, \rho\boxtimes\pi_0\right)_{sm}.
\]
By Proposition \ref{P:KudlaFiltration}, there is a two-step filtration on $\Jac_{P_1}\omega_{n,m}$:
\[
    0 = J_2 \subset J_1 \subset J_0 = \Jac_{P_1}\omega_{n,m},
\]
whose successive quotient $\lambda_i = J_i/J_{i+1}$ can be described as follows:
\[
    \lambda_0 \simeq |\cdot|^{\frac{1+m-n}{2}} \boxtimes \omega_{n-1,m},
\]
and 
\[
    \lambda_1 \simeq \Ind_{Q_1}^{H_m}\left(\omega_1^\natural\boxtimes\omega_{n-1,m-1}\right).
\]
Applying the functor $\Hom_{G_n}\left(\omega_{n,m},-\right)_{sm}$ to this short exact sequence
\[
    0 \lra \lambda_1 \lra \Jac_{P_1}\omega_{n,m} \lra \lambda_0 \lra 0,
\]
we obtain the long exact sequence
\[
    \cdots\ra\Ext^*_{G_n}\left(\lambda_0, \rho\boxtimes\pi_0\right)_{sm} \lra \Ext^*_{G_n}\left(\Jac_{P_1}\omega_{n,m}, \rho\boxtimes\pi_0\right)_{sm} \lra \Ext^{*}_{G_n}\left(\lambda_1, \rho\boxtimes\pi_0\right)_{sm}\ra\cdots
\]
By the K\"unneth formula Theorem \ref{T:Kunneth}, we have
\begin{align*}
 \Ext^*_{G_n}\left(\lambda_0, \rho\boxtimes\pi_0\right)_{sm} = \Ext_{G_{n-1}}^{*-1}\left(\omega_{n-1,m},\pi_0\right)_{sm} \oplus \Ext_{G_{n-1}}^{*}\left(\omega_{n-1,m},\pi_0\right)_{sm}.
\end{align*}
Both two Ext spaces on the right hand side of above vanish when $*>\left\lceil n - \frac{m+1}{2}\right\rceil$ by the induction hypothesis. Moreover, we always have
\[
    \Ext^{*}_{G_n}\left(\lambda_1, \rho\boxtimes\pi_0\right)_{sm} \simeq \rho\times \Ext^{*}_{G_{n-1}}\left(\omega_{n-1,m-1}, \pi_0\right)_{sm},
\]
which also vanishes when $*>\left\lceil n - \frac{m+1}{2}\right\rceil$ by the induction hypothesis. Therefore (\ref{E:BoundingExtDegIndRep}) holds, and it follows that (\ref{E:BoundingProjDim}) holds for $n$. This completes the proof of the proposition.

\end{proof}

\vskip 5pt

Unfortunately, this bound is not sharp. For example, when $n=m=4$, $\left\lceil n - \frac{m+1}{2}\right\rceil=2$. However, with a bit more effort, one can show that
\begin{equation}\label{E:G4Ext2vanish}
    \Ext_{G_4}^*\left(\omega_{4,4},\pi\right)_{sm} =0
\end{equation} 
for all irreducible representation $\pi$ of $G_4$ and all $*>1$. Indeed, by a standard degree shifting argument, to show (\ref{E:G4Ext2vanish}), it suffices to show that: for any irreducible representation $\pi$ of $G_4$, there exists a finite length representation $\Pi$ of $G_4$, such that $\pi$ is a quotient of $\Pi$, and
\[
    \Ext_{G_4}^2\left(\omega_{4,4}, \Pi\right)_{sm} = 0.
\] 
We now check this when $\pi$ belongs to the principal Bernstein block (i.e. the Bernstein block corresponding to the cuspidal component $\left[T_4, \mathbbm{1}_{T_4}\right]$, where $T_4$ is the diagonal torus of $G_4$, and $\mathbbm{1}_{T_4}$ is the trivial representation of $T_4$). For other Bernstein blocks, the proof is similar (and easier). So we can take $\Pi= \chi_1\times\chi_2\times\chi_3\times\chi_4$ to be a principal series representation such that there is a surjection
\[
    \chi_1\times\chi_2\times\chi_3\times\chi_4 \twoheadrightarrow \pi.
\]
Similar to the proof of Proposition \ref{P:BoundExtVanishingDegree}, by using the Frobenius reciprocity and the Kudla's filtration we obtain an exact sequence
\begin{flushleft}
$\cdots \lra \Ext_{G_1\times G_3}^2\left(|\cdot|^{\frac{1}{2}}\boxtimes\omega_{3,4},\chi_1\boxtimes\left(\chi_2\times\chi_3\times\chi_4\right)\right)_{sm} $
\end{flushleft}
\begin{flushright}
$\lra\Ext_{G_4}^2\left(\omega_{4,4},\chi_1\times\chi_2\times\chi_3\times\chi_4\right)_{sm} \lra \chi_1\times\Ext_{G_3}^2\left(\omega_{3,3}, \chi_2\times\chi_3\times\chi_4\right)_{sm} \lra \cdots$   
\end{flushright}
It follows from Proposition \ref{P:BoundExtVanishingDegree} that
\[
    \Ext_{G_3}^2\left(\omega_{3,3}, \chi_2\times\chi_3\times\chi_4\right)_{sm} =0.
\]
On the other hand, by the K\"unneth formula Theorem \ref{T:Kunneth} and also Proposition \ref{P:BoundExtVanishingDegree} we have
\begin{flushleft}
    $\Ext_{G_1\times G_3}^2\left(|\cdot|^{\frac{1}{2}}\boxtimes\omega_{3,4},\chi_1\boxtimes\left(\chi_2\times\chi_3\times\chi_4\right)\right)_{sm}$
\end{flushleft}
\begin{flushright}
    $\simeq \Ext^1_{G_1}\left(|\cdot|^{\frac{1}{2}},\chi_1\right)\otimes\Ext_{G_3}^1\left(\omega_{3,4}, \chi_2\times\chi_3\times\chi_4\right)_{sm}$.
\end{flushright}
There are three cases.

\vskip 5pt

$\bullet$ \textit{Case 1}: if $\chi_1\not\simeq|\cdot|^{\frac{1}{2}}$, then by Lemma \ref{L:ExtSteinberg}, we have
\[
    \Ext^1_{G_1}\left(|\cdot|^{\frac{1}{2}},\chi_1\right)=0.
\]

\vskip 5pt

$\bullet$ \textit{Case 2}: if $\chi_1\simeq|\cdot|^{\frac{1}{2}}$ but $\chi_2\not\simeq |\cdot|^1$, then by Corollary \ref{C:KudlaIndCompatible}, we have
\[
    \Ext_{G_3}^1\left(\omega_{3,4}, \chi_2\times\chi_3\times\chi_4\right)_{sm} \simeq \chi_2 \times \Ext_{G_2}^1\left(\omega_{2,3}, \chi_3\times\chi_4\right)_{sm}.
\]
Thanks to Theorem \ref{T:Main-3}, we have $\Ext_{G_2}^1\left(\omega_{2,3}, \chi_3\times\chi_4\right)_{sm}=0$, since $\omega_{2,3}$ is projective.

\vskip 5pt

$\bullet$ \textit{Case 3}: if $\chi_1\simeq|\cdot|^{\frac{1}{2}}$ and $\chi_2\simeq |\cdot|^1$, then 
\[
    \chi_1\times\chi_2\simeq \chi_2\times\chi_1.
\]
Therefore we can exchange the role of $\chi_1$ and $\chi_2$ by exchanging the induction order in $\Pi$, and reduce this case to Case 1 above.

\vskip 5pt

In conclusion, we always have
\[
    \Ext_{G_1\times G_3}^2\left(|\cdot|^{\frac{1}{2}}\boxtimes\omega_{3,4},\chi_1\boxtimes\left(\chi_2\times\chi_3\times\chi_4\right)\right)_{sm} = 0.
\]
This implies that $\Ext_{G_4}^2\left(\omega_{4,4},\Pi\right)_{sm}=0$ as desired. Combining Proposition \ref{P:BoundExtVanishingDegree} with these computations, we get the following consequence.

\begin{Cor}
Suppose that $n\leq 4$, and $n\leq m$. Then for any open compact subgroup $K$ of $H_m$, we have
\[
    {\rm proj.}\dim \omega_{n,m}^K \leq 1.
\]   
\end{Cor}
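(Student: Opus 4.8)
The plan is to reduce everything to results already in hand and then check the finitely many pairs $(n,m)$ with $1\leq n\leq 4$ and $n\leq m$, splitting in each case according to whether $m\geq 2n-1$. First, whenever $m\geq 2n-1$, Theorem \ref{T:Main-3} tells us that $\omega_{n,m}$ is projective as a $G_n$-module. Writing $\omega_{n,m}^K=e_K\cdot\omega_{n,m}$ for the idempotent $e_K$ attached to $K\subset H_m$, and noting that the $H_m$-action commutes with the $G_n$-action, $e_K$ acts as a $G_n$-module endomorphism, so $\omega_{n,m}^K$ is a direct summand of $\omega_{n,m}$ as a $G_n$-module (this is the device already used in the last paragraph of the proof of Theorem \ref{T:Main-3}); hence it is projective and ${\rm proj.}\dim\omega_{n,m}^K=0\leq 1$. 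In particular this settles $n=1$ completely, since there $m\geq 1=2n-1$ always, and it settles all $m\geq 2n-1$ for $n=2,3,4$.

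For the remaining pairs, which satisfy $n\leq m\leq 2n-2$, I would invoke Proposition \ref{P:BoundExtVanishingDegree}, which gives ${\rm proj.}\dim\omega_{n,m}^K\leq\lceil n-\tfrac{m+1}{2}\rceil$. A direct check shows $\lceil n-\tfrac{m+1}{2}\rceil\leq 1$ exactly when $m\geq 2n-3$. Thus the bound already suffices for $(n,m)=(2,2)$, for $(3,3)$ and $(3,4)$, and for $(4,5)$ and $(4,6)$. The only pair not yet covered is $(n,m)=(4,4)$, where Proposition \ref{P:BoundExtVanishingDegree} yields merely the bound $\lceil 3/2\rceil=2$.

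To finish, I would use the sharper vanishing (\ref{E:G4Ext2vanish}) established just above, namely $\Ext_{G_4}^*(\omega_{4,4},\pi)_{sm}=0$ for every irreducible representation $\pi$ of $G_4$ and every $*>1$. By the identity $\Ext_{G_4}^*(\omega_{4,4}^K,\pi)\simeq\Ext_{G_4}^*(\omega_{4,4},\pi)^K$ coming from the proof of \cite[Lem.~5.14]{MR3753906}, these Ext groups vanish at level $K$ as well for all $*>1$; since $\omega_{4,4}^K$ is locally finite by Section \ref{sub:locally_finiteness_of_the_weil_representation}, the same elementary homological-algebra criterion used in the proof of Proposition \ref{P:BoundExtVanishingDegree} upgrades this to ${\rm proj.}\dim\omega_{4,4}^K\leq 1$. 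Assembling the three cases completes the proof. There is no genuine obstacle here: the only non-formal input is the $n=m=4$ Ext-vanishing, which has already been carried out in the discussion preceding the corollary, and the rest is a finite case-check against Theorem \ref{T:Main-3} and Proposition \ref{P:BoundExtVanishingDegree}.
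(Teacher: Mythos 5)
Your proposal is correct and follows essentially the same route as the paper: the bound of Proposition \ref{P:BoundExtVanishingDegree} handles all pairs with $n\leq m\leq 2n-2$ except $(4,4)$, the projectivity from Theorem \ref{T:Main-3} (whose proof in fact establishes projectivity of $\omega_{n,m}^K$ directly) covers $m\geq 2n-1$, and the explicit vanishing (\ref{E:G4Ext2vanish}) together with locally finiteness settles $n=m=4$. The only cosmetic difference is that you recover projectivity of $\omega_{n,m}^K$ as a direct summand of $\omega_{n,m}$ via the idempotent $e_K$, which is fine but could be replaced by a direct citation of the level-$K$ step in the proof of Theorem \ref{T:Main-3}.
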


\vskip 5pt

\begin{Rmk}
It is worth noting that, among all examples we have computed, there is no irreducible representation $\pi$ of $G_n$ such that $\Ext_{G_n}^2\left(\omega_{n,m},\pi\right)_{sm}\neq 0$. It is interesting to ask: do we always have
\[
    {\rm proj.}\dim \omega_{n,m}^K \leq 1?
\] 
Also, throughout this subsection we only consider the ``level $K$'' part $\omega_{n,m}^K$. One can also ask what is the relation between ${\rm proj.}\dim \omega_{n,m}$ and ${\rm proj.}\dim \omega_{n,m}^K$. For example, do we have
\[
    {\rm proj.}\dim \omega_{n,m} = \sup_K\left\{{\rm proj.}\dim \omega_{n,m}^K\right\}?
\]
We do not know the answer.
\end{Rmk}


\vskip 10pt


\section{Further evidence for the speculation: low rank examples} 
\label{sec:speculation_ext_spaces_versus_l_functions}

In this section we provide further evidences for our speculations Question \ref{Q:ExtcontrolLfunction}. 
The main goal is to prove Proposition \ref{P:leq3computation}. Before we begin the proof, we briefly describe the idea. Suppose that $n=m$, and $\pi$ is an irreducible representation of $G_n$ such that both $L\left(s,\pi\right)$ and $L\left(s,\pi^\vee\right)$ have poles at $s=1/2$. If we can show that $\Hom_{G_n}\left(\omega_{n,n},\pi\right)_{sm}$ is not irreducible, then by Theorem \ref{T:APS-EP-fomula} there must exist some $*>0$ such that
\[
    \Ext_{G_n}^*\left(\omega_{n,n},\pi\right)_{sm} \neq 0,
\]
which completes the proof. Our idea to show the reducibility of $\Hom_{G_n}\left(\omega_{n,n},\pi\right)_{sm}$ is as follows. Suppose that we can find a \textit{length two} representation $\Pi$, such that: 
\begin{itemize}
    \item $\Pi$ fits into a short exact sequence
    \[
        0 \lra \pi \lra \Pi \lra \pi' \lra 0,
    \]
    which is \textit{non-split}, and $\pi\not\simeq \pi'$;

    \vskip 5pt

    \item the assertions in \cite[Thm. 3.3(1)-(3)]{MR342495} hold for $\Pi$, and $L\left(s,\Pi\right) = L\left(s,\pi\right)$;

    \vskip 5pt

    \item the ratio of the L-functions
    $L\left(s,\pi'\right)/L\left(s,\pi\right)$
    has a \textit{zero} at $s=1/2$.
\end{itemize}
\vskip 5pt
Given such a representation $\Pi$, the zeta integral of $\Pi$ then yields a non-zero map
\[
    \frac{Z\left(\frac{1}{2},-,-\right)}{L\left(\frac{1}{2},\Pi\right)}: \Pi \lra \Hom_{G_n}\left(\omega_{n,n}, \Pi\right)_{sm}.
\]
Since we have assumed that $L\left(s,\pi'\right)/L\left(s,\pi\right)$ has a zero at $s=1/2$, easy to check this map factors through the inclusion
\[
    \Hom_{G_n}\left(\omega_{n,n}, \pi\right)_{sm} \hookrightarrow \Hom_{G_n}\left(\omega_{n,n}, \Pi\right)_{sm},
\]
Hence we get a non-zero map
\[
    \Pi \lra \Hom_{G_n}\left(\omega_{n,n}, \pi\right)_{sm}.
\]
Note that by the Howe duality Theorem \ref{T:HoweDuality}, this map must be injective. Therefore the Hom space $\Hom_{G_n}\left(\omega_{n,n},\pi\right)_{sm}$ is not irreducible. As an example, when $n=m=2$ and $\pi=\mathbbm{1}_2$ is the trivial representation, $\Pi = |\cdot|^{-\frac{1}{2}}\times|\cdot|^{\frac{1}{2}}$ satisfies these requirements. 

\vskip 5pt

Now back to the proof of Proposition \ref{P:leq3computation}. As explicated in Section \ref{sub:speculation_i}, it only remains to check the case $n=m=3$. There are several subcases, we consider them separately.

\vskip 5pt

$\bullet$ \textit{Subcase 1}: $\pi\simeq|\det|^{\pm\frac{1}{2}}$ is a character. We have already proved in Theorem \ref{T:Main-2} that $\Hom_{G_3}\left(\omega_{3,3},\pi\right)_{sm}$ is not irreducible. 

\vskip 5pt

$\bullet$ \textit{Subcase 2}: $\pi\simeq \chi\times\mathbbm{1}_2$ for some character $\chi\in\Irr\left(G_1\right)\backslash\left\{|\cdot|^{\pm\frac{3}{2}}\right\}$. By Corollary \ref{C:ExtvsMVW}, without loss of generality we may assume that $\chi\not\simeq|\cdot|^{\frac{1}{2}}$. Applying Corollary \ref{C:KudlaIndCompatible}, we get
\[
    \Hom_{G_3}\left(\omega_{3,3},\pi\right)_{sm} \simeq \chi \times \Hom_{G_2}\left(\omega_{2,2},\mathbbm{1}_2\right)_{sm}.
\]
Since $\Hom_{G_2}\left(\omega_{2,2},\mathbbm{1}_2\right)_{sm}$ is not irreducible, $\Hom_{G_3}\left(\omega_{3,3},\pi\right)_{sm}$ is also not irreducible.

\vskip 5pt

$\bullet$ \textit{Subcase 3}: $\pi\simeq LQ\left(\St\left(\Delta_+\right)\times |\cdot|^{-\frac{1}{2}}\right)$ or $\pi \simeq LQ\left(|\cdot|^{\frac{1}{2}}\times\St\left(\Delta_-\right)\right)$, where the segment $\Delta_+=\left[\frac{3}{2},\frac{1}{2}\right]_{\mathbbm{1}_1}$ and $\Delta_-=\left[-\frac{1}{2},-\frac{3}{2}\right]_{\mathbbm{1}_1}$.
In this subcase we can simply take
\[
    \Pi = |\cdot|^{-\frac{1}{2}}\times\St\left(\Delta_+\right) \quad \textit{or} \quad \Pi= \St\left(\Delta_-\right) \times |\cdot|^{\frac{1}{2}},
\]
depending on the representation $\pi$. Then by Theorem \ref{T:IndSeg}, $\Pi$ satisfies desired properties. Therefore we have an injection
\[
    \Pi\hookrightarrow \Hom_{G_3}\left(\omega_{3,3},\pi\right)_{sm},
\]
and it follows that $\Hom_{G_3}\left(\omega_{3,3},\pi\right)_{sm}$ is not irreducible.

\vskip 5pt

Hence we have verified in all subcases that $\Hom_{G_3}\left(\omega_{3,3},\pi\right)_{sm}$ is not irreducible. In conclusion, Proposition \ref{P:leq3computation} holds.

\vskip 15pt

\bibliographystyle{alpha}
\bibliography{LfunctionNHThetaRef}

\end{document}